\newsavebox\tempbox
\let\svwidetilde\widetilde
\renewcommand\widetilde[1]{\sbox\tempbox{$#1$}\svwidetilde{\usebox{\tempbox}}}
\numberwithin{figure}{section}
\numberwithin{equation}{section}
\newtheorem{theorem}{Theorem}[section]
\newtheorem{corollary}[theorem]{Corollary}
\newtheorem{lemma}[theorem]{Lemma}
\newtheorem{proposition}[theorem]{Proposition}
\theoremstyle{definition}
\newtheorem{definition}[theorem]{Definition}
\newtheorem{definition*}{Definition}
\theoremstyle{remark}
\newtheorem{remark}[theorem]{Remark}
\newtheorem*{claim}{Claim}
\newcommand{\defin}[1]{\textit{#1}}
\newcommand{\Ind}{\operatorname{Ind}}
\newcommand{\codim}{\operatorname{codim}}
\newcommand{\R}{\mathbb{R}}
\newcommand{\C}{\mathbb{C}}
\newcommand{\Q}{\mathbb{Q}}
\newcommand{\CP}{\mathbb{CP}}
\newcommand{\Z}{\mathbb{Z}}
\newcommand{\Id}{\operatorname{Id}}
\newcommand{\id}{\operatorname{id}}
\newcommand{\Y}{\mathcal{Y}}
\newcommand{\X}{\mathcal{X}}
\newcommand{\A}{\mathcal{A}}
\newcommand{\pb}{{}^{*}}
\newcommand{\T}{{\,T}}
\newcommand{\M}{\mathcal{M}}
\newcommand{\U}{\mathcal{U}}
\newcommand{\V}{\mathcal{V}}
\newcommand{\MM}{\widetilde{ \M }}
\newcommand{\e}{\operatorname{e}}
\newcommand{\loc}{\text{loc}}
\newcommand{\CZ}{\text{CZ}}
\newcommand{\tensor}{\otimes}
\DeclareMathOperator{\gw}{GW}             %
\DeclareMathOperator{\Crit}{Crit}             %
\DeclareMathOperator{\ind}{ind}             %
\DeclareMathOperator{\supp}{supp}             %
\DeclareMathOperator{\rot}{rot}         %
\DeclareMathOperator{\domain}{domain}         %
\DeclareMathOperator{\target}{target}         %
\newcommand{\CC}{\mathcal C}
\newcommand{\F}{\mathcal{F}}
\newcommand{\ev}{\operatorname{ev}}
\newcommand{\wc}[1]{\widecheck {#1}}            %
\newcommand{\wh}[1]{\widehat {#1}}              %
\newcommand{\ee}{\mathbf{e}}
\newcommand{\ff}{\mathbf{f}}
\renewcommand{\gg}{\mathbf{g}}
\newcommand{\moduliFloerCyl}{\mathcal M}
\newcommand{\topo}{\mathrm{topo}}
\newcommand{\geom}{\mathrm{geom}}
\begin{document}

\title{Symplectic homology of complements of smooth divisors}
\date{\today}
\author{Lu\'is Diogo}
\author{Samuel T. Lisi}

\begin{abstract}
    If $(X, \omega)$ is a closed symplectic manifold, and $\Sigma$ is a
    smooth symplectic submanifold Poincar\'e dual to a positive multiple of $\omega$, 
    then $X \setminus \Sigma$ can be completed to a Liouville manifold $(W, d\lambda)$.

    Under monotonicity assumptions on $X$ and on $\Sigma$, 
    we construct a chain complex whose homology computes the symplectic homology
    of $W$. We show the differential is given in terms of Morse contributions, 
    terms computed from Gromov--Witten invariants of $X$ relative to $\Sigma$ 
    and terms computed from the Gromov--Witten invariants of $\Sigma$.

    We use a Morse--Bott model for symplectic homology. Our proof involves
    comparing Floer cylinders with punctures to pseudoholomorphic curves in
    the symplectization of the unit normal bundle to $\Sigma$.
\end{abstract}

\maketitle
\tableofcontents

\section{Introduction} \label{Sec:Introduction}

Symplectic homology 
is a version of Hamiltonian Floer homology for a class of open symplectic 
manifolds with contact boundary, including Liouville manifolds \cites{SymplecticHomology,ViterboSH}. 
Some of the applications of symplectic homology include special cases of 
the Weinstein conjecture \cite{ViterboSH}, and a proof that there are 
infinitely many distinct symplectic structures on $\R^{2n}$, for $n\geq 4$ \cite{McLean1}.

One of the interests of symplectic homology is the
fact that it relates to other important tools of symplectic
topology, including contact homology \cite{BOExactSequence},
Rabinowitz Floer homology \cite{CieliebakFrauenfelderOancea} and the Fukaya category
\cites{AbouzaidGeneration,GanatraThesis}, as well as to string
topology \cite{AbbondandoloSchwarzRing}.  In this paper, we explore a
relation between symplectic homology and quantum cohomology. For other
relations between these two invariants, see Seidel \cite{SeidelLFIII} 
as well as Borman and Sheridan \cite{BormanSheridan}.

Symplectic homology has a wealth of algebraic structures, including a product
and a Batallin--Vilkovisky operator, as observed by Seidel
\cite{SeidelBiasedView}. This paper will not address the computation of these
algebraic structures, but we intend to explore that direction in future work
using the methods developed here.   

Despite its great relevance, symplectic homology is often very hard to
compute. Among the few known results is the fact that it vanishes for
subcritical Weinstein manifolds (including $(\R^{2n},\omega_{std})$)
\cite{CieliebakHandleAttaching} and 
for flexible Weinstein domains \cite{BEEsurgery}.
Symplectic homology of the cotangent
bundle of a closed, orientable spin manifold $M$ is isomorphic
as a ring to the Chas--Sullivan string topology ring of $M$
\cites{AbbondandoloSchwarzRing,AbouzaidViterbo}.  There is also a
surgery formula for the symplectic homology of a Weinstein manifold
\cites{BEEsurgery,BEEProduct}, which has been used to compute new examples
\cites{EkholmNg,EtguLekiliKoszul}.

In this paper, we explore the problem of computing the symplectic homology
of a special class of symplectic manifolds, namely complements of certain
symplectic hypersurfaces $\Sigma$ in closed symplectic manifolds $X$. An
important source of examples comes from smooth ample divisors in projective
varieties.  Allowing for $\Sigma$ to be a normal crossings divisor would
be a significant improvement in generality, but is currently beyond
our reach. Important progress has been made in that direction, including
\cites{Pascaleff,NguyenThesis,GanatraPomerleano1,GanatraPomerleano2,McLeanMBSequence}.

Under these assumptions on $X$ and $\Sigma$, the complement
$X\setminus\Sigma$ is the interior of a Liouville domain, hence its
symplectic homology can be defined.  The goal of this paper is to
compute the symplectic homology groups of such divisor complements, in terms of counts of rigid
pseudoholomorphic spheres in $\Sigma$ and $X$. These counts can sometimes be expressed as
absolute Gromov--Witten invariants of $(\Sigma,\omega_\Sigma)$ and relative Gromov--Witten
invariants of $(X,\Sigma,\omega)$. 

The following is the simplest formulation of our main result. We call a Morse function {\em perfect} 
if the corresponding Morse differential vanishes.  

\begin{theorem}
If $\Sigma$ and $X \setminus \Sigma$ admit perfect Morse functions, then
there is a chain complex computing the symplectic homology of $X\setminus
\Sigma$, whose differential is expressed in terms of Gromov--Witten invariants of
$(\Sigma,\omega_\Sigma)$, relative Gromov--Witten invariants of $(X,\Sigma,\omega)$ and
the Morse differentials in the contact boundary $Y$ and in $X\setminus \Sigma$.  
\end{theorem}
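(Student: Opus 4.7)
The plan is to build a Morse--Bott chain complex for the symplectic homology of the Liouville completion $W$ of $X\setminus\Sigma$, and then to identify the differential by analyzing Floer trajectories through a neck-stretching argument along the contact boundary $Y$.

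First, I would exploit the geometry of the contact boundary. Because $\Sigma$ is Poincar\'e dual to a positive multiple of $\omega$, the contact boundary $Y$ of the Liouville domain whose interior is $X\setminus\Sigma$ is the unit normal bundle of $\Sigma$ with respect to a suitable Hermitian structure, and the Reeb flow rotates the $S^1$-fibers. Consequently, for each multiplicity $k\ge 1$, the closed Reeb orbits of covering number $k$ form a Morse--Bott family diffeomorphic to $Y$, fibering over $\Sigma$. I would then take an autonomous admissible Hamiltonian $H$ depending only on the cylindrical radial coordinate (with slopes avoiding the period spectrum) and $C^2$-small Morse in the interior. Using a cascade formalism \`a la Frauenfelder, the generators of the chain complex are: (a) critical points of a Morse function on $X\setminus\Sigma$, corresponding to constant orbits; and (b) for each $k\ge 1$, critical points of an auxiliary Morse function on the Morse--Bott family above $\Sigma$ at multiplicity $k$. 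Under the perfect Morse assumption, the purely Morse pieces of the cascade differential on $\Sigma$ and on $X\setminus\Sigma$ vanish, but the Morse differential on $Y$ survives because $Y\to\Sigma$ is in general a nontrivial $S^1$-bundle.

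Next, I would identify the genuine Floer contributions to the cascade differential by an SFT-style neck-stretching argument along $Y$. A Floer cylinder with cascade joining a Morse--Bott orbit of multiplicity $k$ to one of multiplicity $k'$ degenerates in the limit of infinite neck length to a pseudoholomorphic building, with levels in the symplectization $\R\times Y$ and, depending on the asymptotics, in the completion $W$ or in the closed manifold $X$ obtained by capping off the negative end with a neighborhood of $\Sigma$. Away from trivial cylinder components, the symplectization levels are branched covers of trivial cylinders with spheres bubbling off. Bubbles in $X$ having prescribed orders of tangency to $\Sigma$ at their punctures are exactly the objects counted by relative Gromov--Witten invariants of $(X,\Sigma,\omega)$, while bubbles contained in $\Sigma$ are counted by absolute Gromov--Witten invariants of $(\Sigma,\omega_\Sigma)$. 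Composing these counts with the evaluation maps from the Morse--Bott families and the Morse trajectories between critical points on $Y$ and in $X\setminus\Sigma$ assembles the differential out of the four advertised ingredients.

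The main obstacle is to make the neck-stretching compactness and the inverse gluing argument rigorous enough to equate the Floer count with a product of Morse and Gromov--Witten counts. Concretely, this requires: (i) establishing Morse--Bott regularity for Floer cylinders with asymptotics along the $S^1$-families of Reeb orbits, which is delicate because of multiply covered trivial cylinders in the symplectization and requires a careful choice of perturbations; (ii) producing a bijective gluing between broken SFT configurations and rigid Floer cylinders, with matching orientations; and (iii) controlling intersection theory between limiting spheres and $\Sigma$ so that the tangency orders match the inputs of the relative Gromov--Witten invariants on the nose. The monotonicity hypotheses on $X$ and on $\Sigma$ enter here to exclude sphere bubbling of the wrong dimension and multi-level buildings that would otherwise obstruct a term-by-term identification with Gromov--Witten counts.
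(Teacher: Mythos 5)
Your overall strategy coincides with the paper's: a Morse--Bott cascade model, SFT neck-stretching along the prequantization hypersurface $Y$, and an identification of the resulting pieces with Gromov--Witten counts in $\Sigma$ and relative Gromov--Witten counts in $(X,\Sigma)$, with the Morse differentials in $Y$ and $X\setminus\Sigma$ supplying the remaining terms. However, there is a genuine gap in the step where you describe the symplectization levels.

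You write that ``away from trivial cylinder components, the symplectization levels are branched covers of trivial cylinders with spheres bubbling off.'' This misdescribes what the stretching produces. Because the Hamiltonian $H$ is supported in $(\log 2,\infty)\times Y$, which lies above the hypersurface $\{0\}\times Y$ along which the neck is stretched, the upper-level pieces are \emph{still solutions of Floer's equation} (with the Hamiltonian term), not pseudoholomorphic curves. They are not branched covers of trivial cylinders in general: in Case~1 of Proposition~\ref{P:split SH cases} the Floer cylinder in $\R\times Y$ projects to a non-constant $J_\Sigma$-holomorphic sphere in $\Sigma$. Converting these Floer cylinders into honest $J_Y$-holomorphic curves is precisely the content of Part~\ref{Ansatz}: one shows that a Floer cylinder $\tilde v=(b,v)$ in $\R\times Y$ differs from a $J_Y$-holomorphic curve $\tilde u=(a,u)$ by a map $\mathbf{e}=(e_1,e_2)$ to $\R\times S^1$ solving an auxiliary inhomogeneous Cauchy--Riemann-type PDE (Equations~\eqref{E:cylinder0} and \eqref{E:cylinder1}), and then proves existence and uniqueness of $\mathbf{e}$ via a Fredholm and positivity-of-intersections argument (Propositions~\ref{Ansatz1} and \ref{Ansatz2}). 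Without this bijection, there is no way to pass from counts of upper-level Floer cylinders to counts of holomorphic spheres in $\Sigma$ or to meromorphic sections (Lemma~\ref{L:cylinders to spheres}), and the identification with Gromov--Witten invariants does not go through.

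A second, smaller imprecision: the spheres in $X$ with tangency to $\Sigma$ do not arise as bubbles on the symplectization level; they arise as finite-energy $J_W$-holomorphic planes in the lower level $W$, asymptotic to a Reeb orbit, and become spheres in $X$ tangent to $\Sigma$ only after the biholomorphic reidentification of Lemma~\ref{planes = spheres}. Likewise, the matching of asymptotic markers introduces combinatorial multiplicities ($k_+-k_-$ and $k_+$) coming from Lemma~\ref{number of lifts}, and the signs require the fibre-sum orientation analysis of Section~\ref{compute signs}; your sketch correctly flags orientations as an obstacle but does not see the marker multiplicities at all. In summary, the high-level decomposition is right, but the proposal omits the central analytic device (the Ansatz PDE) that makes the symplectization levels into objects that project to holomorphic spheres in $\Sigma$, and it would not reproduce the precise coefficients appearing in Theorem~\ref{T:differential}.
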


For more general and precise statements, see Theorem \ref{T:differential}
and Lemmas \ref{lacunary GW} and \ref{augmentation rel GW}. 
The strategy of the proof is inspired by
\cite{BOExactSequence}, which relates the symplectic homology of a symplectic manifold with contact type boundary with the 
contact homology of the boundary. The idea consists of stretching the neck along the
boundary $Y$ of a tubular neighborhood of $\Sigma$ in $X$, and keeping track of 
the degenerations of Floer cascades that contribute to the differential.
The limit configurations will include pieces in the symplectization of
the contact manifold $Y$, and pieces in (the completion of) $X\setminus
\Sigma$. The former project to pseudohomolorphic spheres
in $\Sigma$. 
These are related to Gromov--Witten invariants of $(\Sigma,\omega_\Sigma)$.
The curves in $X \setminus \Sigma$ 
are related to pseudoholomorphic spheres in $X$ with tangency
constraints in $\Sigma$. Under suitable hypotheses, they are counted by relative Gromov--Witten invariants of $(X,\Sigma,\omega)$. 

\begin{remark}

The theorem also holds if $X$ and $\Sigma$ are semipositive
\cite{McDuffSalamon}*{Definition 6.4.1} with {\em negative} monotonicity
constants.
Such symplectic manifolds have no non-constant 
$J$-holomorphic spheres for regular $J$ \cite{McDuffSalamonOld}*{proof of Lemma 5.1.3},
and hence the symplectic homology differential is obtained from purely Morse
theoretic terms.
(This was pointed out to us by Fran\c cois Charest).

\end{remark}

An important step in our argument is the proof of a correspondence between moduli spaces of solutions to Floer's equation in $\R\times Y$ and moduli spaces of punctured pseudoholomorphic spheres in $\R\times Y$. This is done by showing that the {\em difference} $\bf e$ (in a sense we make precise) between a Floer solution and a corresponding pseudoholomorphic curve solves a certain PDE, and then proving that solutions to this PDE exist and are unique (see Sections \ref{S:ansatz_Floer_to_holo} and \ref{S:ansatz_holo_to_Floer}). 

\begin{remark}
 As mentioned earlier, this paper drew much inspiration from
 \cite{BOExactSequence} and its use of neck-stretching. We should
 point out some of the main differences between the two papers. In
 \cite{BOExactSequence}, the authors consider a more general
 situation, in which the symplectic manifold is not necessarily obtained
 from the complement of a smooth symplectic divisor.  For this reason, the
 contact boundaries $Y$ they consider are not necessarily prequantization
 bundles over $\Sigma$, as in our case.  The fact that our $Y$ are so
 special has a number of advantages which we use,
 notably in Part \ref{Ansatz}. 
 This enables
 us to obtain a bijection between moduli spaces of Floer trajectories
 and of pseudoholomorphic curves (rather than just a continuation map relating symplectic homology and a non-equivariant version of contact homology),
 which is then used to compute the symplectic homology differential
 explicitly. 
 As studied in our paper \cite{DiogoLisiSplit}, this also allows us to achieve
 transversality by geometric arguments involving monotonicity and automatic
 transversality.
 For more on the relation between this work and \cite{BOExactSequence}, see Remark \ref{Cone}.
\end{remark}

\begin{remark}
The contact boundaries $Y$ of the Liouville domains $X\setminus
\Sigma$ are such that the Reeb flow is a free $S^1$-action. As explained in
\cite{SeidelBiasedView}*{(3.2)}, there is a spectral sequence converging
to symplectic homology, whose first page consists of homology groups of
$Y$ and $X\setminus \Sigma$. One can think of this paper as computing
the differential on that page of the spectral sequence (and there happen to
be no higher order differentials). This is also related to Remark \ref{Cone}.   
\end{remark}
 
This paper is complemented by \cite{DiogoLisiSplit} and is organized as follows. 
Part 1 provides more details about
our geometric setup and the almost complex structures that we use. 
Part 2 describes the chain complexes that compute symplectic
homology, before and after stretching the neck. We use a Morse--Bott
approach, so the differentials count {\em Floer cylinders with cascades}. The full discussion of the stretched  
case (which we refer to as {\em split}) is given in \cite{DiogoLisiSplit}, including the relevant transversality results
and an explanation of how our monotonicity assumptions imply that the Floer cylinders with cascades 
that can contribute to the split differential can only be of four simple types. 
Part 3 is the most technical of this paper, where we explain why counting punctured 
Floer cylinders in $\R\times Y$ is equivalent to counting punctured 
pseudohomolorphic cylinders in $\R\times Y$, and how the latter can be
expressed in terms of counts of pseudoholomorphic spheres in $\Sigma$
(which are in turn related to Gromov--Witten invariants).  Finally, in Part 4, 
we illustrate this computation scheme by calculating the symplectic homology of $T^*S^2$ 
(which, as we mentioned earlier, is isomorphic to the well-known homology of the free loop 
space of $S^2$, see for instance \cite{CohenJonesYan}).

Besides the references mentioned throughout the paper, this article shares 
some important ideas with several works in the literature. A few of these are 
\cites{BourgeoisThesis,GHK,EliashbergPolterovichQuasi,RitterNegativeBundles,
FabertII,Uebele1,Uebele2,Fauck,Kerman,AlbersGuttHein,KimKwonLee}.

\subsection*{Acknowledgements}
Our work has benefitted greatly from help and guidance we have received from
many people. We thank especially Yasha Eliashberg. We are also grateful for
helpful discussions with and feedback from
Mohammed Abouzaid, 
Paul Biran,
Strom Borman,
Fr\'ed\'eric Bourgeois,
Kai Cieliebak,
Oliver Fabert,
Joel Fish,
Eleny Ionel,
Janko Latschev,
Melissa Liu,
Dusa McDuff,
Mark McLean,
Leonid Polterovich,
Daniel Pomerleano,
Dietmar Salamon,
Mohammad Tehrani,
Jean-Yves Welschinger, and 
Aleksey Zinger.
Thanks are also due to the anonymous referee for a careful reading of the text and helpful suggestions. 

S.L.~was partially supported by the ERC Starting Grant of Fr\'ed\'eric Bourgeois
StG-239781-ContactMath and also by Vincent Colin's ERC Grant geodycon.

L.D.~thanks Stanford University, ETH Z\"urich, Columbia University and Uppsala University 
for excellent working conditions. L.D.~was partially supported by the Knut and Alice Wallenberg Foundation.

\part{Setup, almost complex structures and neck-stretching}

\section{Almost complex structures and neck-stretching}

In this section, we consider a local model of the divisor, and identify a class
of almost complex structures that are well-behaved in this local model.

\subsection{Symplectic hyperplane sections}

We adopt some of our terminology from \cite{BiranKhanevsky}*{Section 2.2}, 
with minor modifications.
A variant on this construction is also explicitly described in
\cite{Albers_Frauenfelder_Negative_Line_Bundles}*{Section 3.1}.

Let  $(X,\omega)$ be a closed symplectic manifold such that
$[K\omega]$ admits a lift to $H^2(X;\Z)$ for some $K>0$.
We refer to a closed codimension 2 symplectic submanifold
$\Sigma^{2n-2}\stackrel{\iota}{\subset} X$ as a \defin{symplectic
divisor}.  Write $\omega_\Sigma= \iota^*\omega$.

Let $\pi \colon E\to \Sigma$ be a complex line bundle over $\Sigma$
such that $c_1(E)= [K\omega_\Sigma]$ in $H^2(\Sigma;\R)$, and  
choose a Hermitian metric on $E$. 
Then, there exists a connection 1-form $\Theta\in
\Omega^1(E\setminus \Sigma)$ such that $d\Theta = - K \pi^*\omega_\Sigma$. 
\footnote{In \cite{BiranKhanevsky}, $\Theta$ is referred to as
the global angular 1-form $\alpha^{\nabla}$.} 
 If the coordinate $\rho$ stands for the norm in $E$ measured with
respect to the Hermitian metric, and $f \colon [0, +\infty) \to (0, 1]$
is a smooth function with $f(0) = 1$, $f' < 0$, then 
\begin{equation} \label{E:connectionForm}
\omega_E:= -d \left( \frac{f( \rho^2)}{K} \Theta \right) = -\frac{2 \rho f'( \rho^2)}{K} d\rho \wedge \Theta + f(\rho^2) \pi^* \omega_\Sigma 
\end{equation}
is a symplectic form on $E\setminus \Sigma$ that extends smoothly to all of $E$. 
Its restriction to the $0$-section is $\omega_\Sigma$. Following  
\cite{BiranKhanevsky}, we will explicitly use $f(\rho^2) = \e^{-\rho^2}$. 
By construction, this symplectic form is exact on $E \setminus 0$.
The Liouville vector field dual to the primitive $-\frac{f( \rho^2)}{K} \Theta$ is 
\[
    \frac{f(\rho^2)}{2\rho f'(\rho^2)} \frac{\partial}{\partial \rho},
\]
which points inwards along the fibres. 
This then induces a contact form on the $\rho_0$--sphere bundle, and we denote this space 
by $Y$. This contact form is 
\[
    \alpha_{\rho_0} = -\frac{f(\rho_0^2)}{K} \Theta|_Y.
\]
Observe that by following the flow of the Liouville vector field, we obtain that 
$E\setminus \Sigma$ 
equipped with the symplectic form $\omega_E$ is then symplectomorphic 
to a piece of symplectization $\left ( (-\infty, -\ln f(\rho_0^2)) \times Y,
d(\e^r \alpha_{\rho_0}) \right )$. 

With a choice of contact form coming from this construction, 
$(Y,\alpha)$ is a {\em prequantization bundle} associated to $(\Sigma, K\omega_\Sigma)$. 
Different choices of $\rho_0$ give diffeomorphic sphere bundles (by scaling in the
fibre direction), but change the scaling of the contact form. It will be convenient to
take $\alpha = - \frac{1}{K} \Theta|_Y$. This corresponds to a real blow-up
of the $0$-section (i.e.~a limit as $\rho_0 \to 0$).

Say that a {\em symplectic tubular neighborhood} of the symplectic divisor $\Sigma \stackrel{\iota}{\subset} X$ is an extension of $\iota$ to a
symplectomorphism $\varphi \colon \U \to X$, where $\U$ is a disk neighborhood of the zero section on a complex line bundle $E$ over $\Sigma$, equipped with a Hermitian metric and a symplectic form $\omega_E$ as above.  
It is well known that every symplectic divisor has a symplectic tubular
neighborhood (see \cite{McDuffSalamonIntro}), where $E$ is the symplectic normal bundle to $\Sigma$ in $X$.

\begin{definition}
The symplectic divisor $\Sigma$ is a \defin{symplectic hyperplane section}
of $X$ if $[\Sigma]$ is Poincar\'e dual (over $\Q$) to $[K\omega]$ for some $K >0$.

\label{hyperplane}
\end{definition}

The condition of $\Sigma$ being a symplectic hyperplane section
sometimes includes the assumption that the complement of a neighborhood
of $\Sigma$ is a Weinstein domain. We will not need this assumption for
most of the paper, and will point out the one point where it is used,
in the statement of Theorem \ref{T:differential}.

Sources of examples of symplectic hyperplane sections 
include smooth ample divisors in smooth complex projective
varietes \cite{BiranBarriers} and Donaldson divisors in symplectic
manifolds with rational $[\omega]$ \citelist{\cite{GirouxOpenBook}
\cite{Giroux_remarks_Donaldson}}. Note that
this latter class of examples typically does not satisfy the
additional monotonicity requirements we impose on the divisor.

We now observe that the complement of the neighbourhood of the 
hyperplane section is a Liouville domain. In different language, this
is \cite{Opshtein_Polarizations}*{Lemma 2.1}. See also
\cite{Giroux_remarks_Donaldson}*{Proposition 5}.

\begin{lemma} \label{L:WisLiouville}

    Let $\Sigma \subset X$ be a symplectic hyperplane section, as in
    Definition \ref{hyperplane}, and let $E \to \Sigma$ be the normal bundle
    to $\Sigma$, endowed with a Hermitian metric and symplectic form
    $\omega_E$ as in Equation \eqref{E:connectionForm}.

    Then, 
    \begin{enumerate}[(i)]
        \item the symplectic form $\omega|_{X \setminus \Sigma}$ is exact on
            $X \setminus \Sigma$; 
        \item there exist a primitive $\lambda$ on $X \setminus \Sigma$,
            an open $\rho_0$-disk bundle $\U \subset E$ and a
            symplectic embedding $\varphi \colon \U \to X$ such that $\varphi$
            restricted to the $0$-section is the inclusion and 
            \[
                \varphi^*\lambda = - \frac{f(\rho^2)}{K} \Theta
            \]
            on $\U \setminus \Sigma$. 
    \end{enumerate}

    In particular then, $X\setminus \Sigma$ is symplectomorphic to
    $\overline{W} \setminus \partial \overline{W}$ where $\overline{W}$ is a
    Liouville domain with boundary $Y$, where $Y$ is diffeomorphic to the 
    $\rho_0$-circle bundle in $E$. 
    The primitive $\lambda$ extends to the
    boundary and induces the contact form 
    \[
        \alpha = - \frac{1}{K} \Theta|_Y
    \]
\end{lemma}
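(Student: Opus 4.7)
For part (i), apply the long exact sequence of the pair $(X, X\setminus\Sigma)$ in real cohomology,
\[
H^2(X, X\setminus\Sigma;\R) \xrightarrow{j^*} H^2(X;\R) \xrightarrow{i^*} H^2(X\setminus\Sigma;\R),
\]
together with excision and the Thom isomorphism for the normal bundle $E \to \Sigma$. These identify $H^2(X, X\setminus\Sigma;\R)$ with the $\R$-span of Thom classes of the components of $\Sigma$, mapping under $j^*$ to $\pd[\Sigma] \in H^2(X;\R)$. Since $\pd[\Sigma] = K[\omega]$ over $\Q$ by Definition \ref{hyperplane}, and hence over $\R$, the class $K[\omega]$ (and therefore $[\omega]$) lies in $\ker i^*$. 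Thus $\omega|_{X\setminus\Sigma}$ is exact and admits a primitive $\mu$.

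For part (ii), first invoke Weinstein's symplectic neighborhood theorem to produce a symplectic embedding $\varphi_0 \colon \U_0 \to X$ with $\varphi_0^*\omega = \omega_E$, restricting to $\iota$ on the zero section. On $\U_0 \setminus \Sigma$, both $\varphi_0^*\mu$ and the model primitive $\lambda_0 := -\frac{f(\rho^2)}{K}\Theta$ are primitives of $\omega_E$, so $\eta := \varphi_0^*\mu - \lambda_0$ is closed. The goal is to modify $\mu$ and $\varphi_0$ so that, on a smaller sub-bundle $\U \subset \U_0$, $\eta \equiv 0$. The Gysin sequence of the prequantization bundle $Y \to \Sigma$, combined with the nontriviality of $c_1(E) = K[\omega_\Sigma]$, yields $H^1(Y;\R) \cong H^1(\Sigma;\R)$; in particular fiber circles of $Y$ are null-homologous over $\R$, so $\int_{S^1_\rho}\eta$ vanishes automatically (also directly verified via Stokes against $\int_{D_\rho}\omega_E$). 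The remaining class $[\eta] \in H^1(\Sigma;\R)$ is then killed by a combination of: (a) a Mayer--Vietoris argument providing a closed 1-form on $X\setminus\Sigma$ to subtract from $\mu$ (comparing $X = (X\setminus\Sigma) \cup \U_0$); (b) a Moser-type isotopy of $\varphi_0$ along horizontal lifts of vector fields on $\Sigma$ (using nondegeneracy of $\omega_\Sigma$ to solve the resulting tangential equation); and (c) a cutoff-function argument applied to a primitive of the now-exact form $\eta$, producing an exact modification of $\mu$ that yields $\eta \equiv 0$ on $\U$.

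With (i) and (ii) in hand, set $\overline{W} := X \setminus \varphi(\interior\U)$, a compact manifold with boundary the $\rho_0$-sphere bundle $Y$. A direct computation from $\lambda_0$ shows the dual Liouville vector field equals $\frac{f(\rho^2)}{2\rho f'(\rho^2)}\partial_\rho$ near $\partial\overline{W}$, transverse to the boundary and pointing outward from $\overline{W}$ (inward along the fibers, since $f' < 0$ and $f > 0$). The induced contact form on $Y$ is $-\frac{f(\rho_0^2)}{K}\Theta|_Y$, which rescales via the Liouville flow (equivalently, by taking the blow-up limit $\rho_0 \to 0$) to $\alpha = -\frac{1}{K}\Theta|_Y$, as stated.

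The main obstacle I anticipate is the modification step in part (ii): combining the Gysin/Mayer--Vietoris extension with the Moser isotopy and the cutoff-based exact modification while preserving $d\mu = \omega$ globally on $X\setminus\Sigma$. Each piece is standard, but the combined argument requires careful bookkeeping of the cohomological obstructions.
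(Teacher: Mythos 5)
Your parts (i) and (ii) follow essentially the same route as the paper: exactness of $\omega|_{X\setminus\Sigma}$ from the Poincar\'e duality hypothesis, a Weinstein neighbourhood embedding, the Gysin sequence of $Y\to\Sigma$ to write the closed difference of primitives as $\pi^*\mu + dg$, absorbing the exact part into the global primitive, and killing $\pi^*\mu$ by a fibre-preserving Moser-type isotopy along horizontal lifts of the $\omega_\Sigma$-dual vector field --- your item (b) is exactly the paper's flow of $-\frac{1}{f(\rho^2)}\tilde V$, which preserves each level $\{\rho = c\}$ and satisfies $\frac{d}{dt}(\phi^t)^*(\lambda_E - t\pi^*\mu)=0$. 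Your extra ingredient (a) is unnecessary, and would not suffice on its own, since the restriction $H^1(X\setminus\Sigma;\R)\to H^1(\U\setminus\Sigma;\R)\cong H^1(\Sigma;\R)$ need not be surjective; but since (b) alone disposes of the class $\pi^*[\mu]$, this is a matter of streamlining rather than a gap.

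The genuine gap is in your final paragraph. With $\overline W:=X\setminus\varphi(\interior\U)$, the interior $\overline W\setminus\partial\overline W$ is $X$ minus a \emph{closed} disk-bundle neighbourhood of $\Sigma$; this is not symplectomorphic to $X\setminus\Sigma$ (their volumes differ by the volume of the deleted collar), and the contact form induced on its boundary is $\alpha_{\rho_0}=\e^{-\rho_0^2}\alpha$, not $\alpha$. Saying the form ``rescales via the Liouville flow'' or ``by the blow-up limit $\rho_0\to 0$'' is not a construction: the Liouville flow starting on the $\rho_0$-sphere bundle only reaches the blow-up locus in the limit, which lies outside $X\setminus\Sigma$, so no fixed hypersurface in $X\setminus\Sigma$ carries the form $\alpha$. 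What the lemma asserts --- and what the paper proves --- requires the coordinate change $\e^r=f(\rho^2)$, which identifies the entire deleted neighbourhood $\varphi(\U)\setminus\Sigma$ with $(\log f(\rho_0^2),0)\times Y$ carrying the primitive $\e^r\alpha$, and then compactifies $X\setminus\Sigma$ itself by gluing in $\{0\}\times Y$ (the real blow-up of $\Sigma$). The resulting domain has $X\setminus\Sigma$ as its interior, outward-pointing Liouville field at the boundary, and boundary contact form exactly $\alpha=-\frac{1}{K}\Theta|_Y$. This defect is localized and fixable, but as written your last step does not establish the ``in particular'' clause of the lemma.
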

\begin{proof}

First, the assumption that $\Sigma$ is Poincar\'e dual to a multiple of
$\omega$ implies 
    that $[\omega|_{X \setminus \Sigma}] = 0 \in H^2(X \setminus
    \Sigma; \R)$. Thus, 
    $\omega|_{X \setminus \Sigma} = d\lambda_0$ for some 1-form
    $\lambda_0$ on $X \setminus \Sigma$. 

Now, by the symplectic neighbourhood theorem (see, for instance,
\cite{McDuffSalamonIntro}*{Theorem 3.30}), there exists a neighbourhood of the
$0$-section $\U$ (which we may take to be a $\rho_0$-disk bundle) and a symplectic
embedding 
\[
    \varphi_0 \colon \U \to X,
\]
inducing the inclusion on the $0$--section.

Writing $\lambda_E = -\frac{f(\rho^2)}{K} \Theta$, we have $\omega_E = d\lambda_E$.

We now have 
\[
    d \left ( \varphi_0^*\lambda_0 - \lambda_E \right ) = 0
\]
on $\U \setminus \Sigma$.

The deleted neighbourhood $\U \setminus \Sigma$ retracts on to a $\rho$-circle
bundle $Y$ (for any $0 < \rho < \rho_0$). 
The Gysin sequence applied to the circle bundle $Y \to \Sigma$ gives
\[
        0 \to H^1(\Sigma; \R) \xrightarrow{\pi^*} H^1(Y; \R) \xrightarrow{\pi_*}
        H^0(\Sigma; \R) \xrightarrow{\cup [K\omega_\Sigma]} H^2(\Sigma; \R).
    \]
    The last map is an injection from $H^0(\Sigma; \R)$ 
    to $H^2(\Sigma; \R)$ and so $\pi^*$ is an isomorphism. 

It follows then that 
\[
    \varphi_0^*\lambda_0 - \lambda_E = \pi^*\mu + dg
\]
for some closed 1-form $\mu$ on $\Sigma$ and some function $g \colon \U \to \R$. 

Extend $g\circ \varphi_0^{-1}$, defined on $\varphi_0(\U)$, to a function on $X$
that we also denote by $g$. 
The desired primitive is then $\lambda = \lambda_0 - dg$.
In particular then, \[
    \varphi_0^*\lambda = -\frac{f(\rho^2)}{K} \Theta + \pi^*\mu.
\]

We will now construct a symplectomorphism $\phi$ of $E$, sending the
$0$-section to the $0$-section, so that $\phi^*\varphi^*\lambda = \lambda_E$.

    Let $V$ be a vector field on $\Sigma$ defined by $\omega_\Sigma(V, \cdot)
    =  \mu$. The flow of $V$ induces a symplectomorphism since $\mu$ is
    closed. Now, construct a horizontal lift $\tilde V$ to $E$, with the
    properties that $\rho^2 \Theta(\tilde V) = 0$ and $\rho d\rho(\tilde V) = 0$. 
    In particular, $\tilde V$ is defined on the total space $E$ and its
    restriction to the $0$-section is $V$.

Let $\phi^t$ be the
    flow of the vector field $-\frac{1}{f(\rho^2)} \tilde V$. 
    Notice that this flow preserves the
    levels $\rho = \text{constant}$.
    
    Also notice that $\pi^*\mu(\tilde V) =
    \mu(V) =  \omega_\Sigma(V, V) = 0$ and $\omega_E(\tilde V, \cdot) =
    f(\rho^2) \pi^*\omega_\Sigma(V, \cdot) = f(\rho^2)\pi^*\mu$.
    
    Finally, we calculate:
    \begin{align*}
        \frac{d}{dt} (\phi^t)^*\left( \lambda_E - t\pi^*\mu \right ) 
        &= (\phi^t)^* \left( \frac{1}{f(\rho^2)}L_{\tilde V} \lambda_E - \pi^*\mu \right ) \\
        &= (\phi^t)^* \left( \e^{\rho^2} \omega_E(\tilde V, \cdot) - \pi^*\mu \right )\\
        &= (\phi^t)^* \left( \pi^*\omega_\Sigma( \tilde V, \cdot) - \pi^*\mu \right ) \\
        &= 0.  
    \end{align*}

    Thus, by precomposing with the time-1 map $\phi^1$, we then have 
    $(\phi^1)^*\varphi^*\lambda|_Y = \lambda_E|_Y = \alpha_{\rho_0}$, as required.

    Notice now that 
    \begin{align*}
        [0, +\infty) &\to (-\infty, 0]\\
        \rho &\mapsto \log f(\rho^2)
    \end{align*} 
    is a diffeomorphism, using the fact that $f(0) = 1$ and $f' < 0$. 
    Let $\rho(r)$ denote the inverse function.
    In particular then, $\e^r = f( \rho(r)^2 )$.
    
    Then, if $Y$ is the $\rho_0$-circle bundle in $E$, and $\alpha = -
    \frac{1}{K}\Theta|_Y$, we have a diffeomorphism
    \begin{align*}
        (\log f(\rho_0^2), 0) \times Y \to (\U \setminus \Sigma)\\
        (r, y) \mapsto \frac{\rho(r)}{\rho_0}y
    \end{align*}
    Notice that $| \frac{\rho(r)}{\rho_0}y| = \rho(r)$, so this pulls-back the
    1-form $\lambda_E$ to $-\frac{e^r}{K}\Theta|_Y = \e^r \alpha$.
    
    This therefore shows that $\varphi(\U) \setminus \Sigma$ is
    symplectomorphic to $(\log( f(\rho_0^2) ), 0) \times Y$ with symplectic
    form given by $d(\e^r \alpha)$. We may now compactify by gluing in
    $\partial \overline{W} = \{0 \} \times Y$. 

	Note that this implies that the vector field dual to $\lambda$
	points outwards at the boundary, and is thus a Liouville vector field
	and $\lambda$ is a Liouville form for $\omega$.  
        \footnote{This fact was also pointed out to us by McLean, using a different
        argument.}

\end{proof}

Fix now the data of the complex line bundle $E$ with Hermitian metric and
connection form, together with a neighbourhood of the $0$-section $\U$ and
symplectic neighbourhood $\varphi \colon \U \to X$. 
We will also henceforward assume that $f(\rho^2) = \e^{-\rho^2}$ for
concreteness.
Assume furthermore that $\U$ is chosen so that $\varphi$ extends to an
embedding $\varphi \colon \overline{\U} \to X$ of the closure of $\U$. 
We will allow our almost complex structures on $X$ to be perturbed in $\V = X \setminus \varphi(\overline \U)$, which is non-empty.

\subsection{Almost complex structures}
\label{sec:almost complex structures}

Since the Hermitian connection on the complex line bundle $E$ above defines a horizontal distribution,
any almost complex structure $J_\Sigma$ on 
$(\Sigma, \omega_\Sigma)$ may be lifted to an almost complex structure $J_E$ 
by requiring that $J_E$ preserve the horizontal and vertical subspaces,
is $i$ on the vertical subspaces and is the lift of $J_\Sigma$ to the horizontal subspaces. In particular, if $\pi \colon E \to \Sigma$ is the bundle projection map, we have $d\pi \cdot J_E = J_\Sigma \cdot d\pi$.

\begin{definition} \label{admissible J}
An almost complex structure $J_X$ on $(X, \omega)$, 
compatible with $\omega$, is called
\defin{admissible} if, in the fixed neighbourhood $\U$ of $\Sigma$,  
$J_X = \varphi_*J_E$, where $J_E$ is the lift of an almost complex structure $J_\Sigma$ on $\Sigma$.
In particular, $\Sigma$ has $J_X$ invariant tangent spaces 
(i.e.~is $J_X$-holomorphic).  
\end{definition}

In order to compute the symplectic homology of $X \setminus \Sigma$, we will need
to study Floer trajectories in the completion $W$ of $X \setminus \Sigma$ (which we construct below).
This will involve SFT-style splitting (neck-stretching) along a contact-type hypersurface
in $W$. We will construct a suitable family of almost complex structures on $W$ to implement this process.

\begin{remark}
The idea of this construction is very simply illustrated by an example where this 
becomes trivial: let $X = S^2$ with area 1, and let $\Sigma$ be a point.
It follows that $X \setminus \Sigma$ is symplectomorphic to the open unit disk. 
Its completion involves attaching $[0, +\infty) \times S^1$ to this open disk
to give $\C$ with its standard symplectic form.

However, $X$ can be identified with $\CP^1$
and thus $X \setminus \Sigma$ is, holomorphically, the complex plane, which, as we argued, is the completion of the disk.

More generally, this same phenomenon occurs. First, $X \setminus \Sigma$ must be completed
by attaching a cylindrical end. This completion is then biholomorphic (but not symplectomorphic) to $X \setminus \Sigma$.
Our construction is to start with an admissible almost complex structure $J_X$ on $X$, complete $X \setminus \Sigma$ and equip it with a family 
of almost complex structures that allow us to simultaneously define
symplectic homology and stretch the neck. In the trivial example of $X = \CP^1$, we 
stretch along a copy of $S^1$. The resulting split
almost complex manifold (as in \cite{SFTcompactness}) will consist of two levels: 
a lower level that is $\C$ and an upper level that is $\R \times S^1$. 
The lower level will then be biholomorphic to $X\setminus \Sigma$ with $J_X$, and the upper level will be biholomorphic to the normal 
bundle of $\Sigma$ with the 0-section removed.
\end{remark}

Let $Y \subset E$ be the $\rho_0$ circle bundle with respect to the
Hermitian metric on $E$. Let 
$\alpha = -\frac{1}{K}\Theta$. Recall that the induced contact form on $Y$,
seen as the $\rho_0$-circle bundle, is $\alpha_{\rho_0} = \e^{-\rho_0^2} \alpha$.

As observed earlier, there is an exact symplectomorphism 
\begin{align*}
    \psi_1 \colon (E \setminus \Sigma, \omega_E) &\to \left ( (-\infty, 0) \times Y, d(\e^{r} \alpha) \right ) \\
    v &\mapsto \big (- |v|^2 , \frac{\rho_0 v}{|v|} \big )
\end{align*}
In particular then, we have $X \setminus \Sigma$ (with primitive $\lambda$) 
has a neighbourhood of $\Sigma$ exact symplectomorphic 
(by composing $\varphi^{-1}$ with $\psi_1$) to $(-\rho_0^2, 0) \times Y$.

Since $\Sigma$ has a neighborhood in $X$ modelled after a neighborhood
of the zero section in $E$, we use $\psi_1$ to glue a copy of
$\big([0,\infty)\times Y,d(\e^r \alpha)\big)$ to 
$(X\setminus \Sigma, \omega)$. We then obtain a symplectic manifold we denote
by $W$, and call it the \defin{completion} of $X \setminus \Sigma$.
By Lemma \ref{L:WisLiouville}, there is a global primitive $\lambda$ of the
symplectic form that agrees with $\e^r \alpha$ on the cylindrical end $[0,
\infty) \times Y$.

\begin{remark}
Observe that $J_1\coloneq (\psi_1)_*J_E$ becomes singular as $r \to 0$. Indeed, if $R$ is
the Reeb vector field on $Y$ for the contact form $\alpha = - \frac{1}{K}\Theta$, we obtain:
\[
J_1 \partial_r = -\frac{1}{4r \pi K}R.
\]
Consider now the diffeomorphism  (that is not symplectic!)
\begin{align*}
\psi_2 \colon E\setminus \Sigma &\to \R\times Y \\
v &\mapsto (- \frac{1}{2\pi K} \ln|v|, \frac{\rho_0 v}{|v|})  
\end{align*}
Let $J_Y \coloneqq (\psi_2)_*J_E$ be the pushforward complex structure. 
Observe that \[
(\psi_2)_*( -2\pi K \rho \partial_\rho )= \partial_r.\]
We also have $\Theta( J_E \rho \partial_\rho ) = \frac{1}{2\pi}$. 
Hence, $\Theta( J_Y \partial_r ) = -K$. 
It follows then that $J_Y \partial_r = R$. 
Furthermore, $\psi_2$ commutes with the projection maps $\pi \colon E
\to \Sigma$ and $\pi_\Sigma \colon Y \to \Sigma$. In particular then, this
gives the translation invariance of $J_Y|_{\ker \alpha}$ and that $J_Y$
preserves the contact structure. 
This verifies that $J_Y$ is a cylindrical almost complex structure on $\R
\times Y$, 
adapted to the contact form $\alpha$.
Notice that 
$J_Y$ is both $\R$--translation
invariant and Reeb-flow invariant since these two actions correspond to 
the $\C^*$ action on 
the Hermitian line bundle $E$. Even though $\psi_2$ is not a symplectomorphism,
$J_Y$ is compatible with the symplectic form $d(\e^r \alpha)$.
\end{remark}

We now want to define an almost complex structure $J_W$ on $W$ that agrees
with $J_X$ on a compact subset and is cylindrical in the complement of a
slightly larger compact subset. To achieve this, we will use an interpolation
between the complex structures $J_1$ and $J_Y$.

Fix a small number $0 < \epsilon \le \rho_0^2$ such that $(-\epsilon,\epsilon)\times Y\subset W$. 
The interpolating region will be contained in the piece of the symplectization given
by 
$(-\epsilon/2,-\epsilon/4)\times Y$.
Let $g\colon (-\infty,0) \to \R$ be a diffeomorphism (whose graph is given in Figure \ref{F:g}), such that 
\begin{equation*}
\begin{cases}
g(r) = r $ in $ (-\infty,-\epsilon/2) \\
g(-\epsilon/4) = -\epsilon/4 \\
g' > 0 \\
g'(r) = -\frac{1}{4\pi K r} $ for $r \geq -\epsilon/4.
\end{cases}
\end{equation*}

\begin{figure} 
\begin{tikzpicture}[domain=0:4]
    \draw[->] (-4.2,0) -- (1,0) node[right] {$x$};
    \draw[->] (0,-4.2) -- (0,2.2) node[above] {$g(x)$};
    \draw[line width=1.5pt, color=black] (-4.2,-4.2) -- (-2.8,-2.8);
    \draw[line width=1.5pt,domain=-1.7:-.001,smooth,variable=\x,black] plot ({\x},{-0.5*ln(-\x)+0.5*ln(1.5) - 1.5});
    \draw[line width=1.5pt, color=black] (-2.8,-2.8)  ..  controls(-2.2,-2.2) and (-2.2,-1.733333) .. (-1.7,-1.5626);
    \draw [dashed, color=black] (-3,0.1) -- (-3,-3);
    \draw [dashed, color=black] (-3,-3) -- (0.1,-3);
    \draw [dashed, color=black] (-1.5,0.1) -- (-1.5,-1.5);
    \draw [dashed, color=black] (-1.5,-1.5) -- (0.1,-1.5);
    \node at (-3,.3) {$-\epsilon/2$};
    \node at (.5,-3) {$-\epsilon/2$};
    \node at (-1.5,.3) {$-\epsilon/4$};
    \node at (.5,-1.5) {$-\epsilon/4$};
\end{tikzpicture}
\caption{The function $g$}
\label{F:g}
\end{figure}
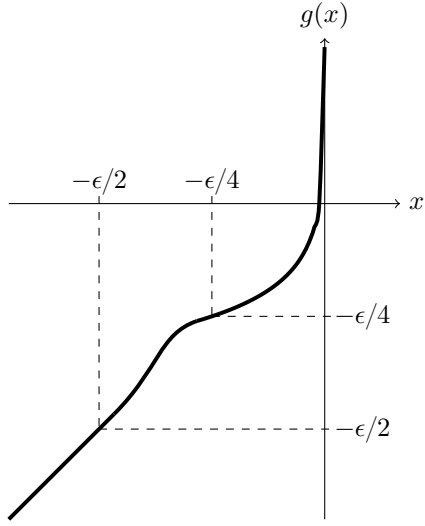

Let $G \colon (-\infty,0)\times Y\to \R\times Y$ be the diffeomorphism $G(r,x)= (g(r),x)$. 
Define the almost complex structure $J_W$ on $W$ as follows:
\begin{equation*}
J_W := 
\begin{cases}
J_X & \text{ on } W\setminus \big([-\epsilon/2,\infty)\times Y\big) \\
G_*J_1 & \text{ on } [-\epsilon/2,-\epsilon/4)\times Y \\
J_Y & \text{ on } [-\epsilon/4,\infty)\times Y
\end{cases}
\end{equation*}

Notice that we may use $G$ to construct a diffeomorphism $\hat G \colon X
\setminus \Sigma \to W$ as follows
\begin{align*}
    &\hat G(x) = x \quad \text{ for } x \in X \setminus \U \\
    &\hat G(\varphi \circ \psi^{-1}(r, y)) = G(r, y) \quad \text{ for } (r, y)
    \in (-\rho_0^2, 0) \times Y.
\end{align*}
Then, it follows immediately that $\hat G_*J_X = J_W$.

The following result compares $J_W$-holomorphic planes with
$J_X$-holomorphic spheres, and will be useful to relate the symplectic
homology differential with relative Gromov--Witten invariants. 

\begin{lemma} \label{planes = spheres}
There is a diffeomorphism $\psi \colon W \to X \setminus \Sigma$ such that $\psi_*J_W = J_X$. This map defines a bijection between $J_W$-holomorphic planes in $W$ asymptotic to Reeb orbits of multiplicity $k$ in $Y$ and $J_X$-holomorphic spheres in $X$ intersecting $\Sigma$ at precisely one point, with order of contact $k$.
\end{lemma}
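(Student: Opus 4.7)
The plan is to take $\psi = \hat G^{-1}$, where $\hat G \colon X \setminus \Sigma \to W$ was constructed just above and shown to satisfy $\hat G_* J_X = J_W$. This immediately yields the first assertion $\psi_* J_W = J_X$, so the bulk of the work is in establishing the bijection on moduli spaces.

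I would first handle the forward direction. Given a $J_W$-holomorphic plane $u \colon \C \to W$ asymptotic at $\infty$ to the $k$-fold cover of a simple Reeb orbit $\gamma$ of $\alpha$ in $Y$, set $v := \psi \circ u \colon \C \to X \setminus \Sigma$. The relation $\psi_* J_W = J_X$ makes $v$ a $J_X$-holomorphic map. Combining the asymptotic convergence of $u$ in cylindrical coordinates $(s,t)$ near $\infty$ to $(s, \gamma(kt))$ with the description of $\psi$ on the cylindrical end (which, through $\psi_2^{-1}$, sends $\{r\} \times Y$ into smaller and smaller sphere bundles of the normal bundle to $\Sigma$ as $r \to \infty$), we deduce that $v$ is bounded in $X$ and approaches $\Sigma$ as $|z| \to \infty$. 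Removability of singularities for $J_X$-holomorphic maps then provides a smooth extension $\bar v \colon S^2 \to X$ with $\bar v(\infty) \in \Sigma$. Since outside the asymptotic region $u$ is contained in a compact subset of $W$ bounded away from the cylindrical end, $v$ is bounded away from $\Sigma$ there, so $\bar v$ meets $\Sigma$ only at $\infty$.

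Next I would verify that the order of contact equals the multiplicity $k$. In the local model $(\U, J_E)$ near $\Sigma$, the asymptotic cylinder $(s, t) \mapsto (s, \gamma(kt))$ in $\R \times Y$ corresponds under $\psi_2^{-1}$ to a map into the normal fibre whose fibrewise coordinate winds $k$ times around $0$ per $S^1$-slice while shrinking to zero; equivalently, in complex coordinates on the fibre, the fibre component of $\bar v$ vanishes to order exactly $k$ at $\infty$. Conversely, given a $J_X$-holomorphic sphere $w \colon S^2 \to X$ meeting $\Sigma$ at a single point with order of contact $k$, I would restrict to $\C \cong S^2 \setminus \{\infty\}$ and precompose with $\hat G$ to obtain a $J_W$-holomorphic plane in $W$. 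The order-$k$ tangency determines a local expansion of $w$ near $\infty$ whose fibre component behaves like $z^{-k}$ times a nonvanishing section; passing this through $\psi_2$ shows that the resulting plane converges at $\infty$ to the $k$-fold cover of a simple Reeb orbit in $Y$.

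I expect this last asymptotic convergence statement to be the main technical hurdle, since one must translate the analytic order of tangency with $\Sigma$ into an SFT-style asymptotic statement along a Reeb orbit. I would derive it from the standard exponential decay estimates for finite-energy punctured pseudoholomorphic curves, exploiting the explicit form of $J_Y$ coming from the Hermitian connection on $E$ together with the fact that the Reeb flow on $Y$ coincides with the $S^1$-action on the fibres. The two constructions are inverse to each other on the nose, which completes the bijection.
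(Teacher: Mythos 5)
Your proposal is correct and follows essentially the same approach as the paper's proof: define $\psi = \hat G^{-1}$, invoke removal of singularities for the forward direction, and identify the order of contact with the winding number of the fibre coordinate near the puncture, which equals the Reeb multiplicity. The paper's own proof is much terser (it does not spell out the converse direction or the exponential decay estimates), but the underlying argument is the same.
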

\begin{proof}
Define $\psi$ to be the inverse of the diffeomorphism $\hat G$ defined above. 
It follows from our definitions that $\psi_*J_W = J_X$. 

The $J_W$-plane then corresponds to a punctured sphere in $X$ with finite
energy. Applying Gromov's removal of singularities theorem and observing that
the multiplicity of the Reeb orbit corresponds to windings around $\Sigma$ of
a loop near the puncture. This then gives the claimed order of contact.
\end{proof}

Having defined the class of admissible almost complex structures on $W$,
cylindrical and Reeb--invariant at infinity, we now also describe how we stretch the
neck of $W$ along $\{ 0 \} \times Y$ 
(in the sense of SFT compactness \cite{SFTcompactness}).

\begin{lemma} \label{L:stretch}
	Suppose that $\Sigma$ is a symplectic hyperplane section of $(X, \omega)$
	and let $J_X$ be an admissible almost complex structure on 
	$X$. 

	Then, there exists a 1-parametric family of almost complex structures
	$J_\kappa$ on the completion $W$ of $X \setminus \Sigma$, which are
	cylindrical outside a compact set, so that $(W, J_\kappa)$ converge
	in the SFT sense as $\kappa\to \infty$ to a split almost complex manifold whose 
	bottom level is $(W, J_W)$ and whose upper level is $(\R \times Y,J_Y)$.
\end{lemma}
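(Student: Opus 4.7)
The plan is to apply the standard SFT neck-stretching construction \cite{SFTcompactness} along the contact-type hypersurface $H = \{0\} \times Y \subset W$. The crucial observation is that $J_W$ is already cylindrical (equal to $J_Y$) on the entire region $[-\epsilon/4, \infty) \times Y$ containing $H$, so the construction applies without modification.

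Concretely, I would define $J_\kappa$ as follows. Fix $b > 0$ small enough that $[-b, b] \times Y \subset [-\epsilon/4, \infty) \times Y$. For each $\kappa \geq 0$, choose a smooth positive function $f_\kappa \colon \R \to (0, \infty)$ that equals $1$ outside $(-b, b)$ and satisfies $\int_{-b}^{b} f_\kappa(s)\, ds = 2b + \kappa$. On $(-b, b) \times Y$, define $J_\kappa$ by the conditions $J_\kappa \partial_r = f_\kappa(r) R$ and $J_\kappa|_{\ker \alpha} = J_Y|_{\ker \alpha}$, and set $J_\kappa = J_W$ elsewhere. Because $f_\kappa \equiv 1$ near $\pm b$, this $J_\kappa$ is a smooth almost complex structure on $W$ that agrees with $J_W$ outside the compact set $[-b, b] \times Y$, and is therefore cylindrical outside a compact set.

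To see that this family realizes the desired neck-stretching, introduce the reparametrization $\tilde r = -b + \int_{-b}^{r} f_\kappa(s)\, ds$, under which $\partial_{\tilde r} = f_\kappa(r)^{-1}\partial_r$ and hence $J_\kappa \partial_{\tilde r} = R$. Thus, in the $\tilde r$ coordinate, $J_\kappa$ is precisely the cylindrical $J_Y$, and the neck region $(-b, b)$ is stretched to $(-b, b+\kappa)$. Equivalently, $(W, J_\kappa)$ is biholomorphic to the manifold obtained from $(W, J_W)$ by cutting along $H$ and inserting a symplectization piece $[0, \kappa] \times Y$ equipped with $J_Y$. This is exactly the standard neck-stretching setup along $H$, and the SFT compactness theorem of \cite{SFTcompactness} then yields the desired convergence of $(W, J_\kappa)$ as $\kappa \to \infty$ to the split almost complex manifold with bottom level $(W, J_W)$ and upper level $(\R \times Y, J_Y)$.

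The only nontrivial point to verify is that $J_\kappa$ is tamed by an appropriate (possibly $\kappa$-dependent) family of symplectic forms so that the Hofer-type energy bounds underlying SFT compactness apply uniformly in $\kappa$. Since the deformation is supported in the two-plane spanned by $\partial_r$ and $R$, and cylindrical almost complex structures on $\R \times Y$ are tamed by $d\alpha + dr \wedge \alpha$, this is routine: one patches the cylindrical taming form in the neck region with the Liouville form $d\lambda$ outside. This is the main (minor) obstacle; everything else is a direct translation of the BEHWZ framework into our setting.
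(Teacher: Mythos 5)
Your proposal is correct and takes essentially the same route as the paper: you define $J_\kappa$ by prescribing $J_\kappa\partial_r = f_\kappa(r)R$ in a collar of $\{0\}\times Y$ and then exhibit the reparametrizing coordinate $\tilde r$, whereas the paper pulls back $J_Y$ by the equivalent diffeomorphism $F_\kappa(r,x)=(f_\kappa(r),x)$ on $[-\epsilon/4,\epsilon/4]\times Y$ — these are the same construction. Your added remarks on uniform taming are a sensible (if standard) supplement that the paper leaves implicit when invoking \cite{SFTcompactness}.
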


\begin{proof}

We explicitly define a family $J_\kappa$, for $\kappa>>0$, of almost complex
structures on $W$. Let $J_W$ be the almost complex structure corresponding to
$J_X$ by the construction above, for fixed diffeomorphism $g \colon (-\infty, 0)
\to \R$. 

For each $\kappa \geq \epsilon/4$ take a diffeomorphism 
$$f_\kappa: [-\epsilon/4,\epsilon/4] \to [-\kappa,\kappa]$$
such that $f_\kappa' \equiv 1$ in a neighborhood of $\pm \epsilon/4$ and $f_\kappa'> 0$.
Define the diffeomorphisms 
\begin{align*}
F_\kappa: [-\epsilon/4,\epsilon/4]\times Y &\to [-\kappa,\kappa]\times Y \\
(r,x) &\mapsto (f_\kappa(r),x)
\end{align*}
Let 
\begin{equation}
J_\kappa := 
\begin{cases}
J_W & \text{ on } W\setminus \big([-\epsilon/4,\infty)\times Y\big) \\
(F_\kappa)^*J_Y & \text{ on } [-\epsilon/4,\epsilon/4]\times Y \\
J_Y & \text{ on } (\epsilon/4,\infty)\times Y
\end{cases}
\label{J_kappa}
\end{equation}
See Figure \ref{Jkappa_fig}. Note that if $\kappa=\epsilon/4$ and $f_\kappa(r) = r$, then $J_{\epsilon/4} = J_W$.

\begin{figure}%
  \begin{center}
    \def\svgwidth{0.5\textwidth}

\begingroup
  \makeatletter
  \providecommand\color[2][]{%
    \errmessage{(Inkscape) Color is used for the text in Inkscape, but the package 'color.sty' is not loaded}
    \renewcommand\color[2][]{}%
  }
  \providecommand\transparent[1]{%
    \errmessage{(Inkscape) Transparency is used (non-zero) for the text in Inkscape, but the package 'transparent.sty' is not loaded}
    \renewcommand\transparent[1]{}%
  }
  \providecommand\rotatebox[2]{#2}
  \ifx\svgwidth\undefined
    \setlength{\unitlength}{294.97670279pt}
  \else
    \setlength{\unitlength}{\svgwidth}
  \fi
  \global\let\svgwidth\undefined
  \makeatother
  \begin{picture}(1,1.02731484)%
    \put(0,0){\includegraphics[width=\unitlength]{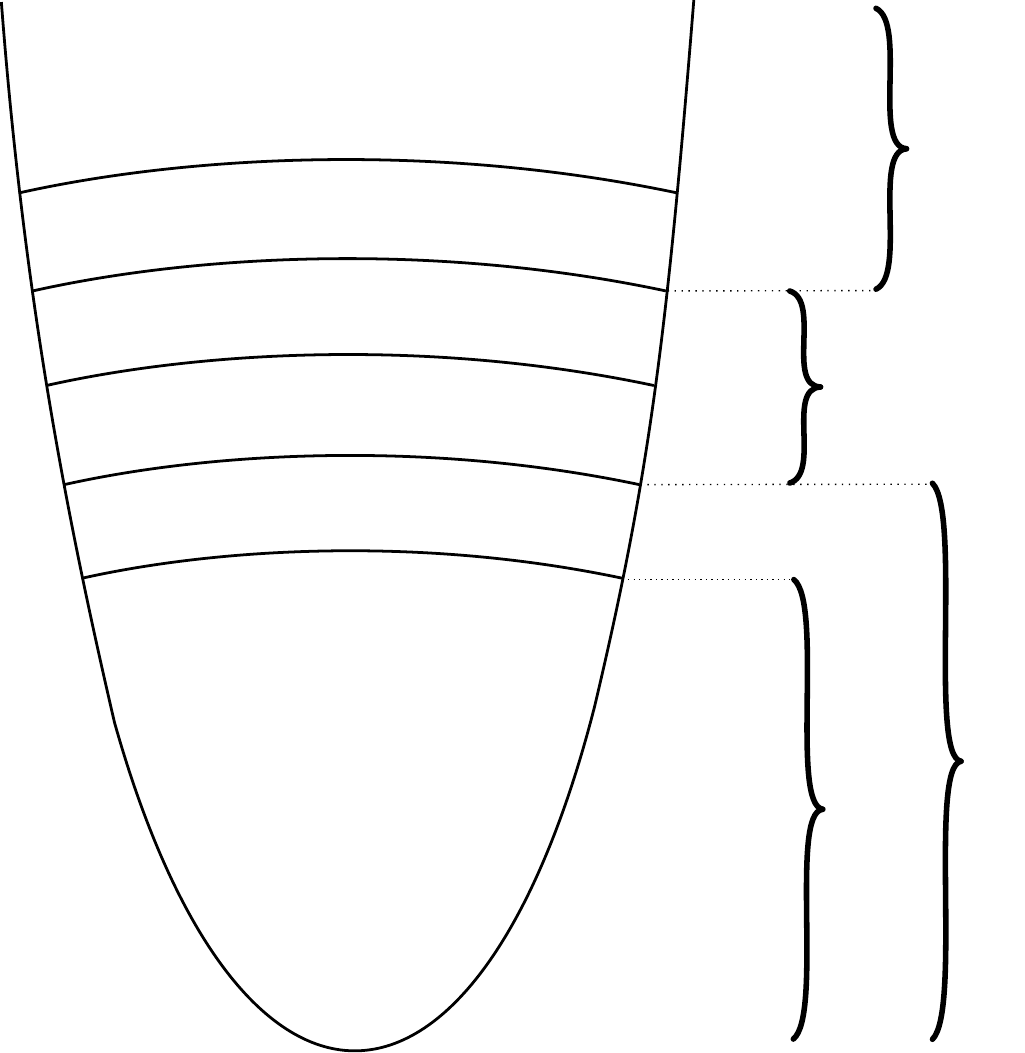}}%
    \put(0.62996209,0.47386319){\color[rgb]{0,0,0}\makebox(0,0)[lb]{\smash{$-\epsilon/2$}}}%
    \put(0.63921907,0.56346689){\color[rgb]{0,0,0}\makebox(0,0)[lb]{\smash{$-\epsilon/4$}}}%
    \put(0.66193603,0.64250162){\color[rgb]{0,0,0}\makebox(0,0)[lb]{\smash{$0$}}}%
    \put(0.67381388,0.75380551){\color[rgb]{0,0,0}\makebox(0,0)[lb]{\smash{$\epsilon/4$}}}%
    \put(0.67735345,0.83118533){\color[rgb]{0,0,0}\makebox(0,0)[lb]{\smash{$\epsilon/2$}}}%
    \put(0.90781135,0.87608029){\color[rgb]{0,0,0}\makebox(0,0)[lb]{\smash{$J_Y$}}}%
    \put(0.95759067,0.27313536){\color[rgb]{0,0,0}\makebox(0,0)[lb]{\smash{$J_W$}}}%
    \put(0.81830753,0.22993992){\color[rgb]{0,0,0}\makebox(0,0)[lb]{\smash{$J_X$}}}%
    \put(0.82329991,0.63681824){\color[rgb]{0,0,0}\makebox(0,0)[lb]{\smash{$(F_\kappa)^*J_Y$}}}%
  \end{picture}%
\endgroup

   \end{center}
  \caption[The almost complex structure $J_\kappa$]{The almost complex structure $J_\kappa$ in
      \protect{\eqref{J_kappa}}}
  \label{Jkappa_fig}
\end{figure}

By SFT compactness \cite{SFTcompactness}, a sequence of finite energy $J_{\kappa_i}$-holomorphic curves $u_{\kappa_i}$ in $W$, with $\kappa_i \to \infty$, has a subsequence converging to a pseudoholomorphic building with levels mapping to either $(\R \times Y, J_Y)$ or $(W,J_W)$. 
\end{proof}

Recall that the symplectic homology of $W$ is independent of choice of almost complex structure. 
In our context, it will be useful to use $J_\kappa$, for $\kappa$ large enough.

\part{Moduli spaces and symplectic chain complex}

\section{The chain complex} \label{S:chain complex}

We will describe two chain complexes associated to $W$ whose generators are (essentially) the same, but for which the differentials are \textit{a priori} different.
We first define the group underlying these chain complexes.

\begin{figure} 
\begin{tikzpicture}[domain=0:6]
    \draw[->] (-0.2,0) -- (3.1,0) node[right] {$\e^r = \rho$};
    \draw[->] (0,-1.7) -- (0,2.7) node[above] {$h(\e^r)$};%
    \draw[line width=1.5pt, color=black] (0,0) -- (1,0);
    \draw[line width=1.5pt,domain=1:2.7,smooth,variable=\x,black] plot ({\x},{(\x-1)*(\x-1)});
    \draw[line width=0.5pt, color=black] (0,-1.25) -- (3,1.75);
    \draw [dashed, color=black] (1.5,-0.1) -- (1.5,0.25);
    \node at (1.65,-.3) {$b_k$};
    \node at (-.65,-1.25) {$-\A(\gamma_k)$};
\end{tikzpicture}
\caption{An admissible Hamiltonian and the graphical procedure for computing the action of a periodic orbit}
\label{F:Jshaped}
\end{figure}
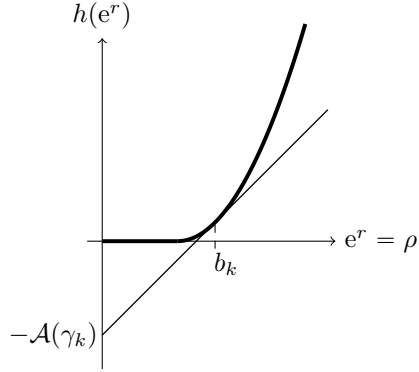

\begin{definition} \label{def:J_shaped}
Consider a function $h \colon (0, +\infty) \to \R$ with the following properties:
\begin{enumerate}[(i)]
\item $h(\rho) = 0$ for $\rho \le 2$;
\item $h'(\rho) > 0$ for $\rho > 2$;
\item $h'(\rho) \to +\infty$ as $\rho \to \infty$;
\item $h''(\rho) > 0$ for $\rho > 2$; 
\end{enumerate}

From this, we obtain a Hamiltonian function $H$ on either $\R \times
Y$ or on $W$ by setting $H(r, y) = h(\e^r)$ on $\R^+ \times Y$ and
extending it by $0$ elsewhere. We will refer to such  Hamiltonians as
\defin{admissible}. See Figure \ref{F:Jshaped}.

\end{definition}

Since $\omega = d(\e^r \alpha)$ on $\R_+\times Y$, the Hamiltonian vector field associated to $H$ is $h'(\e^r)R$, where $R$ is the Reeb vector
field associated to $\alpha$. Recall that $(Y,\alpha)$ is a prequantization bundle over $(\Sigma,\omega_\Sigma)$, and that the corresponding periodic Reeb orbits correspond to covers of the $S^1$-fibres of $Y\to \Sigma$. The periods of these orbits are positive integer multiples of $\frac{1}{K}$, giving the multiplicities of the covers.  The 1-periodic orbits of $H$ are thus of two types:
\begin{enumerate}
 \item constant orbits: one for each point in 
 \[
     W_0 \coloneqq \{w\in W \,|\, (dH)_w=0\} = \overline{W \setminus \supp (dH)};
 \]
 \item non-constant orbits: for each $k\in \Z_+$, there is a $Y$-family of 1-periodic $X_H$-orbits, contained in the level set $Y_k
 \coloneqq \{b_k\}\times Y$, for the unique $b_k>\log 2$ such that $h'(\e^{b_k}) = kK$. Each point in $Y_k$ is the starting point of one such orbit.
\end{enumerate}

\begin{remark}
It is important to observe that these Hamiltonians are Morse--Bott
non-degenerate \textit{except} at the boundary $\partial W_0 = \partial \supp (dH)$. In Section \ref{S:compactness}, we prove that the moduli spaces of curves we consider will not interact with these degenerate, constant orbits.

Recall that a family of periodic Hamiltonian orbits for a time-dependent Hamiltonian vector field
is said to be Morse--Bott non-degenerate if the parametrized 1-periodic orbits form a manifold, 
and the tangent space of the family of orbits at a point is given by the eigenspace of $1$ for the corresponding Poincar\'e return map.
(Morse non-degeneracy requires the return map not to have 1 as an eigenvalue
and hence these periodic orbits must be isolated.)
\end{remark}

We also fix some auxiliary data, consisting of Morse functions and
vector fields.  Fix throughout a Morse function $f_\Sigma \colon
\Sigma \to \R$ and a gradient-like vector field $Z_\Sigma\in\mathfrak X(\Sigma)$, which means that 
$\frac{1}{c} |df_\Sigma|^2 \le df_\Sigma(Z_\Sigma)\le c |df_\Sigma|^2$ for some constant $c>0$.
Denote the time-$t$ flow
of $Z_\Sigma$ by $\varphi^t_{Z_\Sigma}$. 
Given $p\in \Crit (f_\Sigma)$, its stable and unstable manifolds (or ascending and descending manifolds, respectively) are
\begin{equation}\label{(un)stable}
W^s_{\Sigma}(p) := \left\{ q\in \Sigma | \lim_{t\to \infty} \varphi_{Z_\Sigma}^{-t}(q) = p \right\}
 \text{,} \,
W^u_{\Sigma}(p) := \left\{ q\in \Sigma | \lim_{t\to -\infty} \varphi_{Z_\Sigma}^{-t}(q) = p \right\}.
\end{equation}
Notice the sign of time in the flow. 
We further require that $(f_\Sigma,Z_\Sigma)$ be a Morse--Smale pair, i.e.~that all stable and unstable
manifolds of $Z_\Sigma$ intersect transversally.

The contact distribution $\xi$ defines an Ehresmann connection on the
circle bundle $S^1 \hookrightarrow Y\stackrel{\pi_\Sigma}{\to} \Sigma$ (by an abuse of notation, we will sometimes 
also denote the projection $\R\times Y\to \Sigma$ by $\pi_\Sigma$). Denote the horizontal
lift of $Z_\Sigma$ by $\pi_\Sigma^*Z_\Sigma\in \mathfrak X(Y)$.  We fix
a Morse function $f_Y \colon Y \to \R$ %
and a gradient-like vector field $Z_Y\in \mathfrak
X(Y)$ such that $(f_Y,Z_Y)$ is a Morse--Smale pair and the vector field
$Z_Y-\pi_\Sigma^*Z_\Sigma$ is vertical (tangent to the $S^1$-fibres).
Under these assumptions, flow lines of $Z_Y$ project under $\pi_\Sigma$
to flow lines of $Z_\Sigma$.

Observe that critical points of $f_Y$ must lie in the fibres above the critical points 
of $f_\Sigma$ (and these are zeros of $Z_Y$ and $Z_\Sigma$ respectively). 
For notational simplicity, we suppose that $f_Y$ has two critical points in each
fibre. 
In the following, given a critical point for $f_\Sigma$, $p \in \Sigma$, we denote the two critical points in the fibre above $p$ by $\widehat p$ and $\widecheck p$, the fibrewise maximum and fibrewise minimum of $f_Y$, respectively.

We will denote by $M(p)$ the Morse index of a critical point $p \in \Sigma$ of $f_\Sigma$, and by $\tilde M (\tilde p) = M(p) + i(\tilde p)$ the Morse index of the critical point $\tilde p = \widehat p$ or $\tilde p = \widecheck p$ of $f_Y$. The fibrewise index has 
$i(\widehat p) = 1$ and $i(\widecheck p) = 0$.

Since $H$ is admissible, we can identify $W \setminus \overline W$ with
$(\log 2,\infty) \times Y$. Fix also a Morse function $f_W$ and a gradient-like vector field $Z_W$ on $W$, such
that $(f_W,Z_W)$ is a Morse--Smale pair and $Z_W$ restricted to $[-\epsilon/4,\infty)\times Y$ is the constant
vector field $\partial_r$, where $r$ is the coordinate function on the
first factor and $\epsilon>0$ is as in Section \ref{sec:almost complex structures}. 
We denote also by $(f_W,Z_W)$ the Morse--Smale pair that is induced on $X\setminus\Sigma$ by the diffeomorphism 
in Lemma \ref{planes = spheres}.
Denote by $M(x)$ the Morse index of $x\in \Crit(f_W)$ with respect to $f_W$.

We now define the Morse--Bott symplectic chain complex of $W$ and
$H$. Recall that for every $k>0$, each point in $Y_k \coloneqq \{ b_k \} \times Y \subset \R^+ \times Y$ is the starting point of a 
1-periodic orbit of $X_H$, which covers $k$ times the simple Reeb orbit through the corresponding point in $Y$. 
For each critical
point $\tilde p = \widehat p$ or $\tilde p = \widecheck p$ of $f_Y$,
there is a generator corresponding to the pair $(k, \tilde p)$. We will
denote this generator by $\tilde p_k$.  The complex is then given by:
\begin{equation} \label{eqn:chain complex}
SC_*(W,H) = \left(\bigoplus_{k>0} \,
        \bigoplus_{p\in \text{Crit}(f_\Sigma)}\Z
            \langle \widecheck p_k, \widehat p_k \rangle\right) 
        \oplus \left(\bigoplus_{x\in \text{Crit}(f_W)}\Z\langle x \rangle\right) 
\end{equation}

Recall that $\lambda$ is a Liouville form on $W$. 

\begin{definition} \label{def:symplectic action}
The \defin{Hamiltonian action} of a loop $\gamma \colon S^1 \to W$ is 
$$\A( \gamma ) = \int \gamma^*( \lambda - H dt).$$ 
\end{definition}

In particular, for any constant orbit $\gamma \in \overline W$, $\A(\gamma ) = 0$ and for any orbit $\gamma_k \in Y_k$, we have 
\begin{equation} \label{E:HamiltonianAction}
\A( \gamma_k) = \e^{b_k} h'(\e^{b_k}) - h( \e^{b_k})>0,
\end{equation}
where $b_k$ is as above. %
The action of $\gamma_k$ is the negative of the $y$-intersept of the tangent line to the graph of $h$ at $\e^{b_k}$. See Figure \ref{F:Jshaped}. Note that the convexity of $h$ implies that $\A(\gamma_k)$ is monotone increasing in $k$.

We will now define the gradings of the generators. For more details on these formulas, see \cite{DiogoLisiSplit}*{Section 3.1}. 
For a critical point $\widetilde p$ of $f_Y$, 
and a multiplicity $k$, we define
\begin{equation} \label{eqn:grading Y}
| \widetilde p_k | = \tilde M( \widetilde p) + 1-n + 2\frac{\tau_X - K}{K} k \in \R, 
\end{equation}
where we recall that $\tau_X$ is the monotonicity constant of $X$ and $c_1(N\Sigma) = [K \omega_\Sigma]$.

For a critical point $x$ of $f_W$, we define
\begin{equation} \label{eqn:grading W}
|x| = n - M(x).
\end{equation}

\section{Floer moduli spaces before and after splitting}

In this section, we discuss the differential in the Morse--Bott symplectic homology of $W$, before and after stretching the neck. 

\subsection{Morse--Bott symplectic homology}

Consider $H \colon W \to \R$ as defined in Section \ref{S:chain complex}, together with the auxiliary data of $(f_\Sigma, Z_\Sigma)$, $(f_Y, Z_Y)$ and $(f_W, Z_W)$.
Fix an almost complex structure $J_\kappa$ as in \eqref{J_kappa} for a large $\kappa>0$.

\begin{definition} \label{D:Floer cylinder}
A map $\tilde v \colon \R \times S^1 \to W$ is a \defin{Floer cylinder} if
\begin{equation}
\partial_s \tilde v + J_\kappa(\partial_t \tilde v - X_H(\tilde v)) = 0.
\label{Floer eq W}
\end{equation}

Let $\moduliFloerCyl(x_-, x_+)$ denote the space of all such Floer cylinders such that, for fixed 1-periodic orbits $x_\pm$ of $X_H$, $\lim_{s \to \pm \infty} \tilde v(s,.) = x_{\pm}$. 

Let $S_-$ and $S_+$ denote connected spaces of 1-periodic $X_H$-orbits (either $\overline W$ or $Y_k$). Then we define
\[
\moduliFloerCyl(S_-, S_+) = \bigcup_{x_- \in S_-, x_+ \in S_+} \moduliFloerCyl(x_-, x_+).
\]

Given a Floer cylinder $\tilde v$, we denote its asymptotic limits by $\tilde
v(+\infty) = x_+$ and $\tilde v(-\infty) = x_-$.
\end{definition}

Note that $\A(\tilde v(s_0,.)) \leq \A(\tilde v(s_1,.))$ if $s_0 \leq s_1$. 

\begin{definition} \label{D:presplit energy}
The {\em energy} of a Floer cylinder $\tilde v \colon \R \times S^1 \to W$ is given by:
\[
E(\tilde v) = \int_{\R \times S^1} \tilde v^*d\lambda - \tilde v^*dH \wedge dt 
    = \int_{\R\times S^1} || \tilde v_s ||^2_{J_\kappa} \,ds \wedge dt,
\]
where $|| \tilde v_s ||^2_{J_\kappa} = d\lambda(\tilde v_s, J_\kappa \tilde v_s)$.
\end{definition}

Since the symplectic form is exact, a Floer cylinder $\tilde v$ with $\tilde v(\pm\infty) = x_\pm$ has
\[
E(\tilde v) = \A(x_+) - \A(x_-) =  \int x_+^*( \lambda - h(\e^r) dt) - \int x_-^*(\lambda - h(\e^r) dt).
\]
A Floer cylinder is non-trivial if $E(\tilde v) > 0$, or equivalently, if
$\tilde v$ is not of the form $(s, t) \mapsto \gamma(t)$ for some 1-periodic $X_H$-orbit $\gamma$. 

Let us recall some convergence properties of finite energy Floer trajectories. 
This is a combination of statements of several theorems from the literature,
with slightly different hypotheses.
In the Morse--Bott case, we refer to
\citelist{\cite{BOSymplecticHomology} \cite{BourgeoisThesis} \cite{HWZ4}}.
In the non-degenerate case, the relevant ideas have appeared in 
\citelist{\cite{RobbinSalamonAsymptotics} \cite{HWZ1}}, and of course the
original work was done by Floer \cite{Floer88}.
\begin{lemma} \label{L:convergence to orbit}
Suppose that $\tilde v\colon \R \times S^1 \to W$ is a finite energy Floer cylinder contained in a compact subset of $W$.
Then, for any sequence $s_k \to +\infty$ [resp.,  $s_k \to -\infty$] there is a subsequence we also denote $(s_k)_{k=1}^\infty$ and a 1-periodic orbit $\gamma(t)$ of the Hamiltonian vector field so that $\tilde v(s_k, t) \to \gamma(t)$ in $C^\infty$.
\begin{enumerate}
\item If $\gamma$ is a Morse--Bott non-degenerate orbit, then the limit $\gamma$ does not depend on the initial choice of sequence, and furthermore, we have the much stronger result that $\tilde v(s,t)$ converges to $\gamma$ exponentially fast in $s$;
\item If $\gamma$ is a non-isolated degenerate orbit, the limit may depend on the initial sequence $(s_k)_{k=1}^\infty$. Any two limits are connected by a family of periodic orbits of the same action.
	\label{2}
\end{enumerate}
\end{lemma}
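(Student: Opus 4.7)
The plan is to combine standard compactness arguments for Floer cylinders with the asymptotic analysis developed in the Morse–Bott setting, essentially reducing the statement to results already available in the cited literature.

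First, I would establish uniform bounds on the family of shifted maps $\tilde v_k(s,t) \coloneqq \tilde v(s+s_k, t)$. A $C^0$ bound is automatic since $\tilde v$ takes values in a fixed compact subset of $W$. Finite energy implies $E(\tilde v_k; [-1,1] \times S^1) \to 0$ as $k \to \infty$. A $C^1$ bound follows from a standard bubbling argument: if $|d\tilde v_k|$ were unbounded, one could rescale and extract (in the limit) a non-constant $J_\kappa$-holomorphic sphere — the Hamiltonian term disappears after rescaling — but the energy on any small strip around the would-be bubble point tends to zero, a contradiction. Higher derivatives are then controlled by elliptic bootstrapping applied to Floer's equation \eqref{Floer eq W}.

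Second, Arzelà–Ascoli produces a subsequence (still indexed by $k$) with $\tilde v_k \to u$ in $C^\infty_{\loc}(\R \times S^1)$. Because $E(\tilde v_k; [-R,R] \times S^1) \to 0$ for any $R > 0$, we get $\partial_s u \equiv 0$, so $u(s,t) = \gamma(t)$ where $\gamma$ solves $\dot\gamma = X_H(\gamma)$, i.e. $\gamma$ is a 1-periodic orbit of $X_H$. Its action lies in the interval $[\A(x_-), \A(x_+)]$. Specialising this argument to $s$-translates of $s_k$ by a bounded amount shows that the $C^\infty$ convergence $\tilde v(s_k, \cdot) \to \gamma$ holds along the original (sub)sequence.

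Third, for part (1), I would invoke the asymptotic analysis of Robbin–Salamon \cite{RobbinSalamonAsymptotics} and Hofer–Wysocki–Zehnder \cite{HWZ1} in the non-degenerate setting, and its Morse–Bott extension by Bourgeois \cite{BourgeoisThesis} and Bourgeois–Oancea \cite{BOSymplecticHomology}. Concretely, in a tubular neighbourhood of the Morse–Bott family one writes $\tilde v(s,t) = \exp_{\gamma(t)}(\xi(s,t))$ and rewrites Floer's equation as $\partial_s \xi + A_s \xi = N(\xi)$, with $A_s \to A_\infty$ where $A_\infty$ is a self-adjoint operator whose kernel is exactly the tangent space to the Morse–Bott family. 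Morse–Bott non-degeneracy provides a spectral gap transverse to the kernel; projecting $\xi$ onto the kernel and its orthogonal complement and applying the standard contraction estimates yields exponential decay of both components and the uniqueness of $\gamma$.

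Fourth, for part (2), finite energy gives that $\A(\tilde v(s,\cdot))$ is monotone and bounded, hence has a limit as $s \to \pm\infty$; by the action identity $E(\tilde v) = \A(x_+) - \A(x_-)$ applied on half-cylinders, any two subsequential limit orbits $\gamma, \gamma'$ share this same action. Since both lie in the closure of $\tilde v(\R \times S^1)$, both are constant orbits in $\overline{W\setminus \supp(dH)}$ (the only non-isolated degenerate family of orbits of $H$), and the obvious path in the constant locus connects them through orbits of the same (zero) action. The main technical obstacle is the Morse–Bott asymptotic convergence in part (1), which I plan to cite rather than reprove.
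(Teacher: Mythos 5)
The paper does not actually write out a proof of this lemma; immediately before the statement it says it is ``a combination of statements of several theorems from the literature, with slightly different hypotheses'' and cites \cite{BOSymplecticHomology}, \cite{BourgeoisThesis}, \cite{HWZ4} for the Morse--Bott case and \cite{RobbinSalamonAsymptotics}, \cite{HWZ1}, \cite{Floer88} for the non-degenerate case. Your reconstruction follows exactly the strategy those references implement: uniform gradient bounds via exclusion of bubbling (a non-constant sphere would carry a positive quantum of energy while the shifted strips' energy tends to zero), Arzel\`a--Ascoli plus energy decay to produce a $1$-periodic orbit as $C^\infty_{\loc}$ limit, and then the spectral-gap contraction estimate for exponential decay in the Morse--Bott case, which you correctly defer to the literature just as the paper does.

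For part (2) you take a somewhat more concrete route than the paper (which simply points to Siefring's examples as evidence that multiple limits can occur in general): you use the monotonicity of the Hamiltonian action along the cylinder to deduce that all subsequential limits share a common action, and then identify this action as $0$ and the limit orbits as constants lying in the connected set $W_0 = \overline{W\setminus\supp(dH)}$, so the constant locus itself supplies the connecting family. This specialization is valid for the paper's admissible Hamiltonians and achieves the same conclusion. The only thing worth flagging as a point of care, not a gap, is that you should distinguish between the constant orbits in the interior of $W_0$ (which form a Morse--Bott family, so fall under part (1)) and those at $\partial W_0 = \partial\supp(dH)$, which are the ones failing Morse--Bott non-degeneracy and to which part (2) is actually addressed; the action argument still applies in either situation, since all constant orbits have action zero. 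With that caveat your argument is correct and consistent with the paper's approach of citing the asymptotic analysis literature.
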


Furthermore, if the curve converges exponentially fast, 
at a negative [resp., positive] puncture the rate of 
convergence is governed by the smallest positive [largest negative] eigenvalue of the
corresponding asymptotic operator \cite{SiefringAsymptotics}.

\begin{remark} \label{R:not MB not converge}
There are several constructions of examples where the limit is not
unique in the degenerate case. In particular, Siefring has carefully
constructed such examples in the case of pseudoholomorphic curves in
symplectizations \cite{SiefringMultipleLimits}.  Similarly, a gradient
trajectory of a smooth function that is not Morse--Bott can fail to
converge to a single critical point, and may contain sequences converging
to different critical points.

Even if the limit were unique, these examples illustrate also that convergence
will not typically be at an exponential rate, and thus will not provide a good
Fredholm problem.
\end{remark}

\begin{remark}

In the case of a Morse--Bott limit, a cylinder contained in a compact subset of 
$W$ has finite energy if and only if it converges to 1-periodic
Hamiltonian orbits at its punctures, see
\citelist{
    \cite{SalamonFloerHomology}*{Proposition 1.21}
    \cite{SFTcompactness}*{Proposition 5.13}
}.

 Yoel Groman pointed out to us that the assumption that $\tilde v$
 has finite energy already implies that its image is contained in
 a compact subset of $W$, so the assumptions of Lemmas \ref{L:convergence
 to orbit} and \ref{no + limits in W} can be simplified.  This is
 because the action functional for our Hamiltonians satisfies the
 Palais--Smale condition \cite{GromanOpen}.

\end{remark}

The next result shows that there are no non-trivial Floer cylinders
whose positive asymptote $x_+$ is in $W_0$.  Note that if
such a cylinder has asymptotic limits at $+\infty$ and $-\infty$,
then it is contained in a compact subset of $W$, by the maximum
principle.

\begin{lemma} \label{no + limits in W}
Let $\tilde v\colon \R \times S^1 \to W$ be a solution of \eqref{Floer eq W}
that has finite energy and is contained in a compact subset of $W$. If there is
a sequence $s_k^+\to \infty$ such that the $C^0$-limit $\lim_{k\to \infty}\tilde
v(s_k^+,.) = x_+ \in W_0$, then $\tilde v$ is constant.  
\end{lemma}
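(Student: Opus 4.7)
The plan is to run an action-monotonicity argument and reach a contradiction unless the energy is zero. The key inputs are: (a) the Hamiltonian action $\A$ is non-decreasing along Floer cylinders; (b) $H \equiv 0$ on $W_0$, since $h(\rho) = 0$ for $\rho \le 2$, so every constant orbit at a point of $W_0$ has action zero; and (c) every 1-periodic orbit of $X_H$ has $\A \ge 0$, with strict inequality for the non-constant orbits $\gamma_k \subset Y_k$ by the convexity computation $\A(\gamma_k) = \e^{b_k} h'(\e^{b_k}) - h(\e^{b_k}) > 0$ from \eqref{E:HamiltonianAction}.

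First I would establish that $\lim_{s \to +\infty} \A(\tilde v(s,\cdot)) = 0$. By Lemma \ref{L:convergence to orbit}, after passing to a subsequence we may assume $\tilde v(s_k^+, \cdot) \to x_+$ in $C^\infty$; since $\A$ is continuous in $C^1$, this gives $\A(\tilde v(s_k^+,\cdot)) \to \A(x_+) = 0$. Because $s \mapsto \A(\tilde v(s, \cdot))$ is non-decreasing (a direct consequence of the energy identity in Definition \ref{D:presplit energy}), the full limit as $s \to +\infty$ equals $0$, and therefore $\A(\tilde v(s,\cdot)) \le 0$ for every $s \in \R$.

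Next, since the total energy $E(\tilde v)$ is finite and $\A$ is monotone, the limit $A_{-\infty} := \lim_{s\to -\infty} \A(\tilde v(s,\cdot))$ exists and is finite, with
\[
E(\tilde v) = 0 - A_{-\infty}.
\]
Suppose for contradiction that $E(\tilde v) > 0$. Then $A_{-\infty} < 0$. Pick any sequence $s_k^- \to -\infty$; by Lemma \ref{L:convergence to orbit} (applied inside the compact set containing the image of $\tilde v$), some subsequence converges in $C^\infty$ to a 1-periodic orbit $\gamma_-$ of $X_H$, and $\A(\gamma_-) = A_{-\infty} < 0$. This contradicts (c), that all 1-periodic orbits have non-negative action.

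Hence $E(\tilde v) = 0$, so $\tilde v_s \equiv 0$ and $\tilde v$ is $s$-independent, equal to a fixed 1-periodic orbit. The hypothesis then forces this orbit to be $x_+$, a constant, so $\tilde v$ is constant. The main subtle point is step one: because the constant orbits at points of $W_0$ are only Morse--Bott degenerate (as flagged in the remark after Definition \ref{def:J_shaped}), the $C^0$ convergence from the hypothesis does not a priori upgrade to $C^\infty$ convergence; we circumvent this using Lemma \ref{L:convergence to orbit}(2) to extract a $C^\infty$-converging subsequence, which suffices to evaluate the limiting action.
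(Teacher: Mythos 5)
Your proof is correct and follows essentially the same approach as the paper: invoke Lemma \ref{L:convergence to orbit} to extract a $C^\infty$-convergent subsequence, use the energy--action identity together with $\A(x_+)=0$ and $\A \ge 0$ for all 1-periodic orbits to force $E(\tilde v)=0$, and conclude $\tilde v$ is constant. The paper states it more compactly (directly writing $0 \le E(\tilde v) = \A(x_+) - \A(x_-) \le 0$), whereas you phrase it via action monotonicity and a contradiction, but the underlying mechanism is identical, and your explicit note on why the $C^0$ hypothesis suffices despite the Morse--Bott degeneracy at $\partial W_0$ is a helpful clarification.
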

\begin{proof}
Take any sequence $s_k^- \to -\infty$. Lemma \ref{L:convergence to orbit} 
implies that there are subsequences (denoted also by $s_k^\pm$) such that
$\tilde v(s_k^\pm,.)$ converge in $C^1$ (actually $C^\infty$) to 1-periodic
$X_H$-orbits $x_\pm$, with $x_+\in W_0$. Then,
\begin{align*}
0 \leq E(\tilde v) &= \lim_{k\to \infty} E(\tilde v|_{[s_k^-,s_k^+]\times S^1}) = \lim_{k\to \infty} \big(\A(\tilde v(s_k^+,.)) - \A(\tilde v(s_k^-,.)) \big) = \\
&= \A(x_+) - \A(x_-) .
\end{align*}
Since $\A(x_+) = 0$ and $\A(\gamma)\geq 0$ for every 1-periodic $X_H$-orbit, we conclude that $E(\tilde v) = 0$. This implies the result. 
\end{proof}

The following is the main definition of this section. 

\begin{definition} \label{def:FloerCylinderWithCascades}
Fix $N \ge 0$. Let $S_0, S_1, \dots, S_{N}$ be a collection of connected spaces
of orbits, which can either be one of the $Y_k$ or $W_0$. 
Let $(f_i, Z_i)$, $i=0, \dots, N+1$ be the pair of Morse function and 
gradient-like vector field of $f_i = f_Y$, $Z_i = Z_Y$ if $S_i = Y_k$ for some
$k$, and $f_i = -f_W$, $Z_i=-Z_W$ if $S_i = W_0$.

Let $x$ be a critical point of $f_0$ and $y$ a critical point of $f_N$ (so $x$ and $y$ are generators of the chain complex (\ref{eqn:chain complex})). 

A {\em Floer cylinder with $0$ cascades} ($N = 0$), with positive end at $y$ and negative end at $x$, consists of a 
positive gradient trajectory $\nu \colon \R \to S_0$, such that $\nu(-\infty) = x$, $\nu(+\infty) = y$ and $\dot \nu =  Z_0(\nu)$.

A {\em Floer cylinder with $N$ cascades}, $N \ge 1$, with positive end at $y$ and negative end at $x$, consists of the following data:
\begin{enumerate}
\renewcommand{\theenumi}{\roman{enumi}}
\item $N-1$ length parameters $l_i > 0, i=1, \dots, N-1$;
\item Two half-infinite gradient trajectories, $\nu_0 \colon (-\infty, 0] \to S_0$ and $\nu_{N} \colon [0, +\infty) \to S_{N}$
with $\nu_0(-\infty) = x$, $\nu_N(+\infty) = y$ and $\dot \nu_i = Z_i(\nu_i)$ for $i=0$ or $N$;
\item $N-1$ gradient trajectories $\nu_i$ defined on intervals of length $l_i$, $\nu_i \colon [0, l_i] \to S_i$ for $i=1, \dots, N-1$ 
such that $\dot \nu_i = Z_i(\nu_i)$;
\item $N$ non-trivial Floer cylinders $\tilde v_i \colon \R \times S^1 \to W$, 
    $i = 1, \dots, N$, satisfying equation (\ref{Floer eq W})
	where (defining $l_0 = 0$)
	\[
            \tilde v_i(+\infty, \cdot) = \nu_i(0) \in S_i, 
                \quad \tilde v_i(-\infty, \cdot) = \nu_{i-1}(l_{i-1}) \in S_{i-1}.
	\]
\end{enumerate}	
\label{Floer cascade}
In the case of a Floer cylinder with $N \ge 1$ cascades, we refer to the 
non-trivial Floer cylinders $\tilde v_i$ as {\em sublevels}. 
See Figure \ref{fig:Floer cylinder with cascades} for a schematic illustration.
\end{definition}

\begin{figure}
  \begin{center}
    \def\svgwidth{0.5\textwidth}

\begingroup
  \makeatletter
  \providecommand\color[2][]{%
    \errmessage{(Inkscape) Color is used for the text in Inkscape, but the package 'color.sty' is not loaded}
    \renewcommand\color[2][]{}%
  }
  \providecommand\transparent[1]{%
    \errmessage{(Inkscape) Transparency is used (non-zero) for the text in Inkscape, but the package 'transparent.sty' is not loaded}
    \renewcommand\transparent[1]{}%
  }
  \providecommand\rotatebox[2]{#2}
  \ifx\svgwidth\undefined
    \setlength{\unitlength}{323.78738945pt}
  \else
    \setlength{\unitlength}{\svgwidth}
  \fi
  \global\let\svgwidth\undefined
  \makeatother
  \begin{picture}(1,0.79842498)%
    \put(0,0){\includegraphics[width=\unitlength]{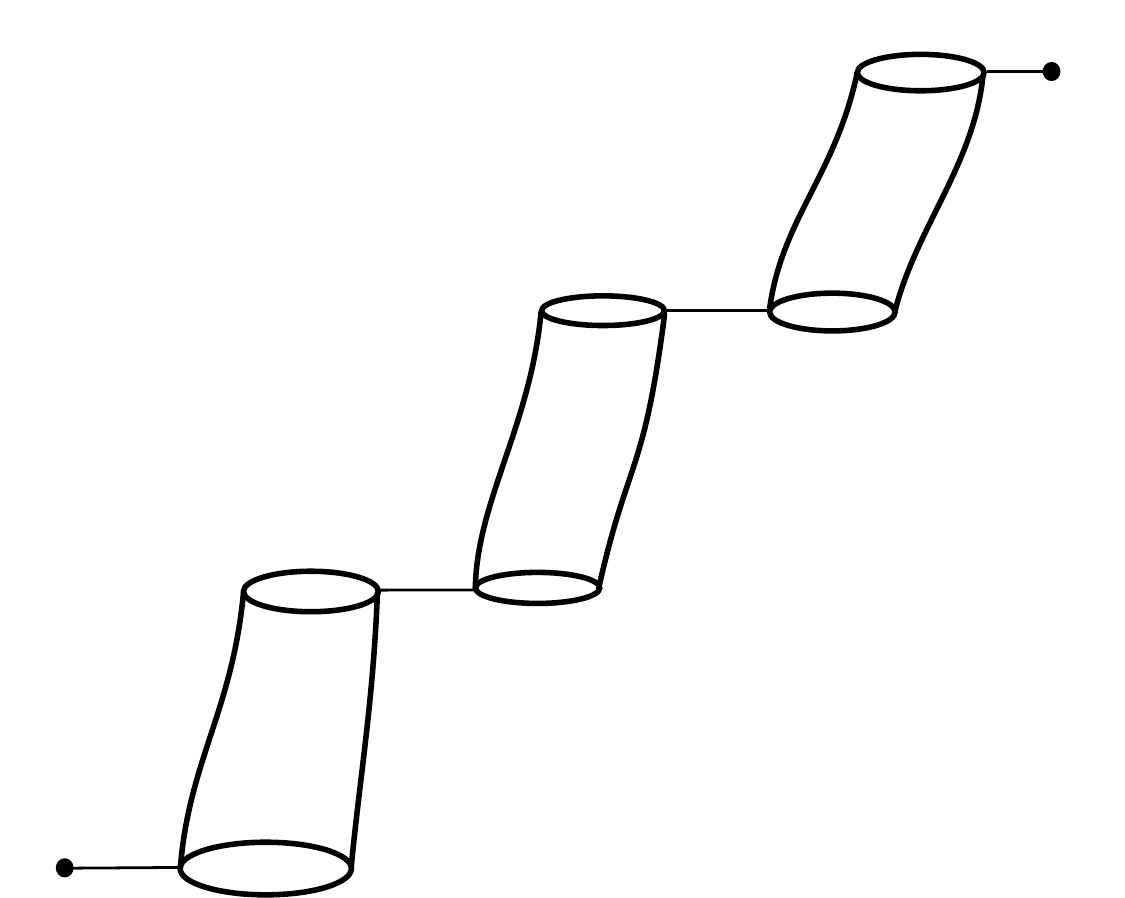}}%
    \put(-0.00164074,0.01853505){\color[rgb]{0,0,0}\makebox(0,0)[lb]{\smash{$x$}}}%
    \put(0.97698142,0.72810202){\color[rgb]{0,0,0}\makebox(0,0)[lb]{\smash{$y$}}}%
    \put(0.23839863,0.13196028){\color[rgb]{0,0,0}\makebox(0,0)[lb]{\smash{$\tilde v_1$}}}%
    \put(0.47843802,0.36408628){\color[rgb]{0,0,0}\makebox(0,0)[lb]{\smash{$\tilde v_2$}}}%
    \put(0.75013097,0.60940121){\color[rgb]{0,0,0}\makebox(0,0)[lb]{\smash{$\tilde v_3$}}}%
    \put(0.09068209,0.06865309){\color[rgb]{0,0,0}\makebox(0,0)[lb]{\smash{$\nu_0$}}}%
    \put(0.35446167,0.29814131){\color[rgb]{0,0,0}\makebox(0,0)[lb]{\smash{$\nu_1$}}}%
    \put(0.61296557,0.55400752){\color[rgb]{0,0,0}\makebox(0,0)[lb]{\smash{$\nu_2$}}}%
    \put(0.87938298,0.76503116){\color[rgb]{0,0,0}\makebox(0,0)[lb]{\smash{$\nu_3$}}}%
  \end{picture}%
\endgroup
   \end{center}
  \caption{A Floer cylinder with 3 cascades, as in Definition \protect{\ref{def:FloerCylinderWithCascades}}, with positive end at $p$ and negative end at $q$.} 
   \label{fig:Floer cylinder with cascades}
\end{figure}

The energy of a Floer cylinder with $N$ cascades is the sum of the energies of each of its $N$ sublevels. 

\begin{lemma} 
Floer cylinders with cascades do not contain sublevels $\tilde v_i$ such 
that $\lim_{s\to+\infty} \tilde v_i(s,.) \in W_0$. If $\lim_{s\to-\infty} \tilde
v_i(s,.) = x_-\in W_0$, then $i=1$ and $x_- \notin \partial W_0$. 
\label{non-MB}
\end{lemma}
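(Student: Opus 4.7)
The lemma has three claims: (a) no sublevel $\tilde v_i$ has positive asymptote in $W_0$; (b) if a sublevel has negative asymptote in $W_0$, it must be the first sublevel; (c) that negative asymptote cannot lie on $\partial W_0$. The first two follow rapidly from Lemma \ref{no + limits in W}, while (c) uses the explicit form of $Z_W$ near $\partial W_0$.

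For (a) and (b), the key input is that each sublevel $\tilde v_i$ is, by Definition \ref{def:FloerCylinderWithCascades}, non-trivial and of finite energy, with asymptotic limits in the compact set $W_0 \sqcup \bigsqcup_k Y_k$. A maximum principle for the radial coordinate $r$ on the cylindrical end (exploiting $h'' > 0$) confines the image of $\tilde v_i$ to a compact subset of $W$, so Lemma \ref{no + limits in W} applies: if $\tilde v_i(+\infty,\cdot) \in W_0$ the cylinder must be constant, contradicting non-triviality, which gives (a). For (b), suppose $\tilde v_i(-\infty,\cdot) = x_- \in W_0$ with $i \ge 2$. Definition \ref{def:FloerCylinderWithCascades}(iv) yields $x_- = \nu_{i-1}(l_{i-1}) \in S_{i-1}$. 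Since $Y_k \subset \{r = b_k\}$ with $b_k > \log 2$, whereas $W_0 \subset \{r \le \log 2\}$, the subsets $W_0$ and $Y_k$ are disjoint, forcing $S_{i-1} = W_0$. But then $\tilde v_{i-1}(+\infty,\cdot) = \nu_{i-1}(0) \in W_0$, contradicting (a); hence $i=1$.

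For (c), suppose for contradiction that $x_- \in \partial W_0 = \{r = \log 2\} \times Y$. Definition \ref{def:FloerCylinderWithCascades}(ii) furnishes a gradient half-trajectory $\nu_0 \colon (-\infty, 0] \to W_0$ with $\dot\nu_0 = -Z_W(\nu_0)$ and $\nu_0(0) = x_-$. The construction in Section \ref{S:chain complex} specifies that $Z_W = \partial_r$ on $[-\epsilon/4, \infty) \times Y$, a set containing a neighborhood of $\partial W_0$. Hence $\dot\nu_0(0) = -\partial_r$ points outward from $W_0$, and for small $\delta > 0$ the point $\nu_0(-\delta)$ sits at $r = \log 2 + \delta > \log 2$, outside $W_0$, contradicting that $\nu_0$ takes values in $W_0$. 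The principal technical point is justifying the compactness of each sublevel's image used in (a)--(b) via a maximum principle on the cylindrical end; given the admissibility of $H$ this is standard, and the remainder of the argument is structural bookkeeping.
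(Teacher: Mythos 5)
Your proof is correct and follows the same overall structure as the paper's. Part (a) is Lemma \ref{no + limits in W} applied to each sublevel (with the same maximum-principle justification for compactness of the image that the paper invokes in the remark preceding that lemma); part (b) is the observation that $x_-\in W_0$ forces $S_{i-1}=W_0$ and hence, if $i\ge 2$, forces $\tilde v_{i-1}(+\infty,\cdot)=\nu_{i-1}(0)\in W_0$, a contradiction. You make the step $S_{i-1}=W_0$ explicit by noting $W_0\cap Y_k=\emptyset$, which the paper leaves implicit, and you correct an inconsequential typo in the paper's proof (which reads ``$i>0$'' where it means ``$i>1$''). For part (c), your argument differs slightly in flavor: you derive a local contradiction at $x_-$ itself, observing that $-Z_W=-\partial_r$ near $\partial W_0$, so the backward flow $\nu_0(-\delta)$ immediately exits $W_0$, violating the requirement that $\nu_0$ map into $S_0=W_0$. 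The paper instead notes that $\nu_0(-\infty)\in\Crit(f_W)$ lies strictly below $\partial W_0$, and since $-Z_W=-\partial_r$ decreases $r$ on $[-\epsilon/4,\infty)\times Y$, the trajectory emanating from a critical point can never enter that region, a fortiori never reaching $\partial W_0$. Both arguments rest on the same geometric fact about $Z_W$ near $\partial W_0$; yours is a clean pointwise argument, the paper's a global confinement argument (which in fact yields the stronger conclusion that $x_-$ lies below $\{r=-\epsilon/4\}$). Either is adequate. One minor imprecision: the phrase ``the compact set $W_0\sqcup\bigsqcup_k Y_k$'' is not literally correct since that union has infinitely many components, but this does not affect the argument, which only needs the two relevant critical manifolds together with the maximum principle.
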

\begin{proof}
The case $s\to +\infty$ follows from Lemma \ref{no + limits in W}.  
For the case $s\to -\infty$, observe that $\nu_{i-1}$ is a negative flow line of 
$Z_W$, which agrees with $\partial_r$ on $[-\epsilon/4,\infty) \times Y$ (in
particular on $\partial W_0$). 
If $i>0$, then the sublevel $\tilde v_{i-1}$ must be such that
$\lim_{s\to+\infty} \tilde v_{i-1}(s,.) \in W_0$, which as we saw is impossible. 
If $i=1$, then $\nu_0(-\infty) \in \Crit(f_W)$, and all critical points of $f_W$
are ``below $\partial W_0$''. Therefore, $x_-\notin \partial W_0$. 
\end{proof}

\begin{remark} \label{R:not MB}
The points in $\partial W_0 = \partial \supp dH$, which were ruled out as asymptotic orbits of sublevels 
by the previous Lemma, correspond to constant orbits of $X_H$ that are not of Morse--Bott type. 
Convergence to orbits that are not Morse--Bott non-degenerate would introduce
severe analytical difficulties
(as discussed in Remark \ref{R:not MB not converge}).
\end{remark}

\begin{lemma}
Let $S_0, S_1, \dots, S_N$ be the spaces of orbits of a Floer cylinder with $N$
cascades. Let $k(S_i) = k$ if $S_i = Y_k$ and let $k(S_i) = 0$ if $S_i = W_0$. 
Then, $k(S_i)$ is monotone increasing in $i$.  
\label{monotone multiplicities}
\end{lemma}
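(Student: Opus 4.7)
The plan is to leverage the fact that the Hamiltonian action is strictly monotone along the cascade and that the multiplicity of a Reeb orbit family $Y_k$ determines its action.

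First, I would observe that the action $\A$ is \emph{constant} on each orbit space $S_i$: by Definition \ref{def:symplectic action}, it equals $0$ on $W_0$ (since those orbits are constant and $H$ vanishes there), and by Equation (\ref{E:HamiltonianAction}) it equals the common value $\A(\gamma_k) = \e^{b_k} h'(\e^{b_k}) - h(\e^{b_k})$ on $Y_k$. In particular, since the gradient trajectories $\nu_i$ remain within a single $S_i$, they do not change the action of the corresponding Hamiltonian orbit at their endpoints.

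Next, I would invoke the energy identity for each sublevel. Each $\tilde v_i$ is required to be a \emph{non-trivial} Floer cylinder, and by Definition \ref{D:presplit energy}, the exactness of $\omega$ together with the non-triviality gives
\[
    0 < E(\tilde v_i) = \A(\tilde v_i(+\infty,\cdot)) - \A(\tilde v_i(-\infty,\cdot)).
\]
Concatenating across all $N$ sublevels and using the fact that the gradient flow pieces preserve action, this produces the chain of strict inequalities
\[
    \A(S_0) < \A(S_1) < \dots < \A(S_N).
\]

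Finally, I would translate action monotonicity into multiplicity monotonicity. By Lemma \ref{non-MB}, $S_i$ is forced to be some $Y_{k_i}$ for every $i \ge 1$, so the actions $\A(S_1), \dots, \A(S_N)$ are all of the form $\A(\gamma_{k_i})$. The graphical interpretation immediately following Equation (\ref{E:HamiltonianAction}) (the action is minus the $y$-intercept of the tangent line to the graph of $h$ at $\e^{b_k}$) together with the convexity of $h$ shows that $k \mapsto \A(\gamma_k)$ is strictly increasing, and the case $S_0 = W_0$ gives $k(S_0) = 0$ which is trivially at most $k(S_1)$. Thus the strict action inequalities convert to $k(S_0) \le k(S_1) < k(S_2) < \dots < k(S_N)$, which is the claimed monotonicity.

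I do not expect a genuine obstacle here: the content of the lemma is essentially that action-monotonicity for Floer cylinders, a standard fact used repeatedly above, restricts the topology of admissible cascades. The only mild bookkeeping is to handle the special case $S_0 = W_0$ separately from $S_0 = Y_{k_0}$, but in either case the conclusion is immediate from the two ingredients above.
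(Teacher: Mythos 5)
Your proof is correct and takes essentially the same route as the paper: the paper's proof reads, in its entirety, ``This follows immediately from the fact that $0 < E(\tilde v_i) = \A( \tilde v_i(+\infty)) - \A( \tilde v_i(-\infty))$ and from the monotonicity of $\A$ in the multiplicity $k$.'' You have simply unpacked the same two ingredients (positive energy of each non-trivial sublevel, and convexity of $h$ giving strict monotonicity of $k\mapsto\A(\gamma_k)$) with more care about the $S_0 = W_0$ edge case.
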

\begin{proof}
This follows immediately from the fact that $0 < E(\tilde v_i) = \A( \tilde v_i(+\infty)) - \A( \tilde v_i(-\infty))$ and from the monotonicity of $\A$ in the multiplicity $k$. 
\end{proof}

\begin{remark}
	These lemmas have a number of important consequences. In particular, 
a Floer cylinder with cascades between two critical points of $f_W$ 
must have 0 cascades, which is to say that it consists of a flow line of $-Z_W$. 

Furthermore, a Floer cylinder with cascades whose positive end is on a $Y_k$, $k
\ge 1$, and whose negative end is on $W_0$ must have the first sublevel
connecting an orbit in a $Y_l$ to a (constant) orbit in the interior of $W_0$. 
This is the unique sub-level in the cascade that converges to a point in $W_0$.
\label{0 cascades W}
\end{remark}

We are now able to define a differential on the chain complex (\ref{eqn:chain complex}).
Given generators $x, y$, denote by
$$
\M_{H,N}(x,y;J_\kappa)
$$
the space of Floer cylinders with $N$ cascades from $x$ to
$y$ (i.e. with negative end at $x$ and positive end at $y$).

We then define
\begin{equation} \label{eqn:differential}
    \partial_{\text{pre}} \, y = \sum_{|x|=|y| - 1} \# \left( \M_{H, 0}(x,y; J_\kappa) / \R \right ) x
            + \sum_{|x| = |y| - 1} \sum_{N = 1}^\infty \# \left ( \M_{H, N}(x,y; J_\kappa) / \R^N
            \right ) x.
\end{equation}

Observe that if $N = 0$, the Floer cylinder with $0$ cascades is a flow line, and the $\R$ action is given by its standard reparametrization. If, instead, $N \ge 1$, the $\R^{N}$ action is given by domain
translation on the $N$ sublevels.

We call $\partial_{\text{pre}}$ the {\em presplit} Floer differential, to distinguish it from the {\em split} differential that will be discussed next. 

\begin{remark}
To give a complete definition of $\partial_{\text{pre}}$, we should
discuss the orientations of the moduli spaces involved and the associated
signs. Instead, we will relate the presplit Floer differential with the
split Floer differential, and refer to the discussion of orientations in
\cite{DiogoLisiSplit}*{Section 7}. Orientations for the split differential
will be further studied in Section \ref{compute signs} below.
\end{remark}

\subsection{Morse--Bott split symplectic homology}

\label{cascades after splitting}

Recall the definition of the almost complex structures $J_\kappa$, $J_Y$
and $J_W$ in Section \ref{sec:almost complex structures}. Given a sequence
$\tilde v_{\kappa_n}$ of finite energy $J_{\kappa_n}$-Floer cylinders in
$W$, with $\kappa_n\to \infty$, SFT compactness \cite{SFTcompactness}
implies that there is a subsequence converging to an SFT building.
Observe that the Hamiltonian $H$ is supported in $(\log 2, \infty)\times
Y$, which is above the hypersurface $\{0\}\times Y$ along which we stretch
the neck. This makes it possible to apply the usual SFT compactness
argument.  A gluing argument gives that a transverse SFT-type building
can be glued to obtain a ``presplit'' Floer cylinder, as in the previous
section.

This Hamiltonian induces a function on $\R\times Y$, vanishing on
$\R_-\times Y$, which we also denote by $H$.  The non-constant orbits
again form manifolds $Y_k$, where $k\geq 1$. The critical points of the
Morse function $f_W \colon W\to \R$ are below the neck-stretching region.

This then gives a different description of the differential on the
chain complex (\ref{eqn:chain complex}) (defined above in terms of Floer
cylinders with cascades) by counting {\em split} Floer cylinders with
cascades. In principle, these could become quite complicated. Thanks
to the monotonicity assumptions we impose on $X$ and on $\Sigma$,
the cascades that have the correct Fredholm index to appear in the
differential turn out to be relatively simple. This is analyzed in detail
in \cite{DiogoLisiSplit}, see in particular 
\cite{DiogoLisiSplit}*{Section 6.1}.  
We now provide a brief summary of the cascades that
contribute to the differential in split symplectic homology.

First, we describe the component pieces. The upper level of a cascade in the
differential will consist of a (possibly punctured) Floer cylinder
$\tilde v \colon \R \times S^1 \setminus \Gamma \to \R \times Y$,
where $\Gamma$ is either empty or contains at most one point $P \in \R \times
S^1$. We refer to such a $P$ as an \textit{augmentation puncture}. 
This solves the Floer equation
\begin{equation} \label{Floer eq Y}
\partial_s \tilde v + J_Y (\partial_t \tilde v - X_H(\tilde v)) = 0,
\end{equation}
and has finite \textit{hybrid energy}, given
by
\begin{equation} \label{E:hybridEnergy}
    E( \tilde v ) = \sup \left \{ 
\int_{\R \times S^1} \tilde v \pb \left ( d(  \eta \alpha ) - dH \wedge dt
\right) \, | \, \eta \colon \R \to [0, \infty), \eta' \ge 0, \eta(r) = \e^r
\text{ for } r \ge 0
\right \}.
\end{equation}
Such a cylinder converges to a non-constant closed Hamiltonian orbit at $+\infty$, to a
Reeb orbit in $\{ -\infty \} \times Y$ at the puncture $P$ and, at $-\infty$,
converges either to a non-constant closed Hamiltonian orbit or to a Reeb orbit 
in $\{ -\infty \} \times Y$. 
A cascade may also have a lower level consisting of a $J_W$-holomorphic plane
converging to a Reeb orbit in $\{+\infty \} \times Y$ at its positive puncture.

Notice that by the Reeb invariance of both the Hamiltonian $H$ and of the almost
complex structure $J_Y$, such a (punctured) Floer cylinder projects to a
$J_\Sigma$ holomorphic sphere (with removable singularities at the punctures
$\pm \infty$ and $P$). 

The following result is a restatement of \cite{DiogoLisiSplit}*{Propositions 6.2 and 6.3}. 
It states 
that under our monotonicity assumptions, the cascades contributing to the split Floer differential can be of only four simple types.

\begin{figure}
  \begin{center}
    \def\svgwidth{0.3\textwidth}
\begingroup
  \makeatletter
  \providecommand\color[2][]{%
    \errmessage{(Inkscape) Color is used for the text in Inkscape, but the package 'color.sty' is not loaded}
    \renewcommand\color[2][]{}%
  } 
  \providecommand\transparent[1]{%
    \errmessage{(Inkscape) Transparency is used (non-zero) for the text in Inkscape, but the package 'transparent.sty' is not loaded} 
    \renewcommand\transparent[1]{}%
  } 
  \providecommand\rotatebox[2]{#2} 
  \ifx\svgwidth\undefined 
    \setlength{\unitlength}{153.65167524pt} 
  \else 
    \setlength{\unitlength}{\svgwidth} 
  \fi 
  \global\let\svgwidth\undefined
  \makeatother
  \begin{picture}(1,0.52879344)%
    \put(0,0){\includegraphics[width=\unitlength]{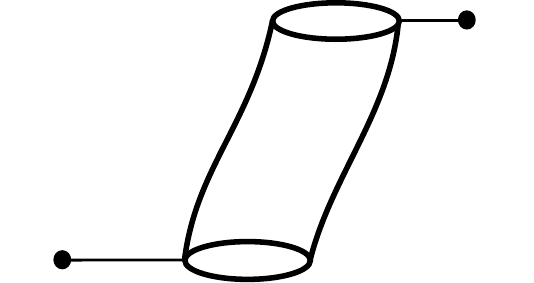}}%
    \put(-0.0634575,0.02310476){\color[rgb]{0,0,0}\makebox(0,0)[lb]{\smash{$\widehat q_{k_-}$}}}%
    \put(0.95149337,0.47648333){\color[rgb]{0,0,0}\makebox(0,0)[lb]{\smash{$\widecheck p_{k_+}$}}}%
    \put(0.52131919,0.24979405){\color[rgb]{0,0,0}\makebox(0,0)[lb]{\smash{$\tilde v$}}}%
    \put(0.902131919,0.164979405){\color[rgb]{0,0,0}\makebox(0,0)[lb]{\smash{$\R\times Y$}}}%
  \end{picture}%
\endgroup

   \end{center}
  \caption[Case 1]{Case 1 in Proposition \protect{\ref{P:split SH cases}}}
  \label{case_1_fig}
\end{figure}

\begin{figure}
  \begin{center}
    \def\svgwidth{0.25\textwidth}

\begingroup
  \makeatletter
  \providecommand\color[2][]{%
    \errmessage{(Inkscape) Color is used for the text in Inkscape, but the package 'color.sty' is not loaded}
    \renewcommand\color[2][]{}%
  }
  \providecommand\transparent[1]{%
    \errmessage{(Inkscape) Transparency is used (non-zero) for the text in Inkscape, but the package 'transparent.sty' is not loaded}
    \renewcommand\transparent[1]{}%
  }
  \providecommand\rotatebox[2]{#2}
  \ifx\svgwidth\undefined
    \setlength{\unitlength}{155.47384928pt}
  \else
    \setlength{\unitlength}{\svgwidth}
  \fi
  \global\let\svgwidth\undefined
  \makeatother
  \begin{picture}(1,1.34900323)%
    \put(0,0){\includegraphics[width=\unitlength]{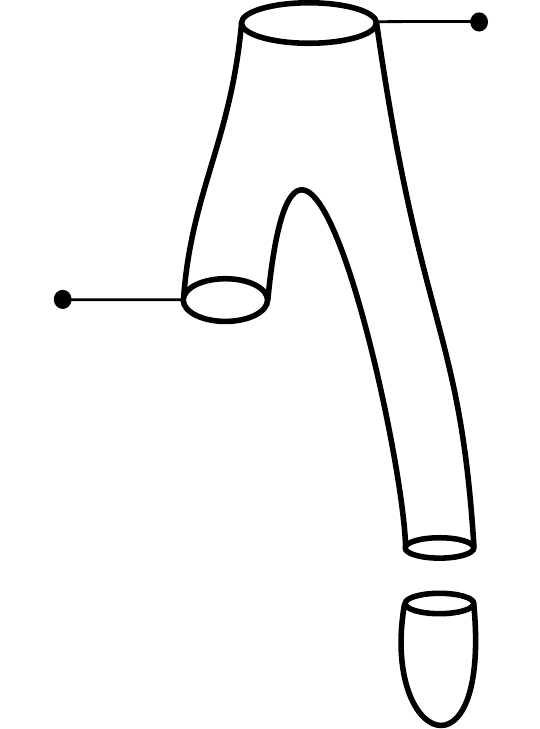}}%
    \put(-0.08341697,0.77649313){\color[rgb]{0,0,0}\makebox(0,0)[lb]{\smash{$\widehat p_{k_-}$}}}%
    \put(0.95206187,1.29539538){\color[rgb]{0,0,0}\makebox(0,0)[lb]{\smash{$\widecheck p_{k_+}$}}}%
    \put(0.54043248,1.11827007){\color[rgb]{0,0,0}\makebox(0,0)[lb]{\smash{$\tilde v$}}}%
    \put(0.6052953,0.09792872){\color[rgb]{0,0,0}\makebox(0,0)[lb]{\smash{$U$}}}%
    \put(0.95206187,0.77649313){\color[rgb]{0,0,0}\makebox(0,0)[lb]{\smash{$\R\times Y$}}}%
    \put(1.04206187,0.097649313){\color[rgb]{0,0,0}\makebox(0,0)[lb]{\smash{$W$}}}%
  \end{picture}%
\endgroup

   \end{center}
  \caption[Case 2]{Case 2 in Proposition \protect{\ref{P:split SH cases}}}
  \label{case_2_fig}
\end{figure}

\begin{figure}
  \begin{center}
    \def\svgwidth{0.2\textwidth}

\begingroup
  \makeatletter
  \providecommand\color[2][]{%
    \errmessage{(Inkscape) Color is used for the text in Inkscape, but the package 'color.sty' is not loaded}
    \renewcommand\color[2][]{}%
  }
  \providecommand\transparent[1]{%
    \errmessage{(Inkscape) Transparency is used (non-zero) for the text in Inkscape, but the package 'transparent.sty' is not loaded}
    \renewcommand\transparent[1]{}%
  }
  \providecommand\rotatebox[2]{#2}
  \ifx\svgwidth\undefined
    \setlength{\unitlength}{146.34692076pt}
  \else
    \setlength{\unitlength}{\svgwidth}
  \fi
  \global\let\svgwidth\undefined
  \makeatother
  \begin{picture}(1,1.81416906)%
    \put(0,0){\includegraphics[width=\unitlength]{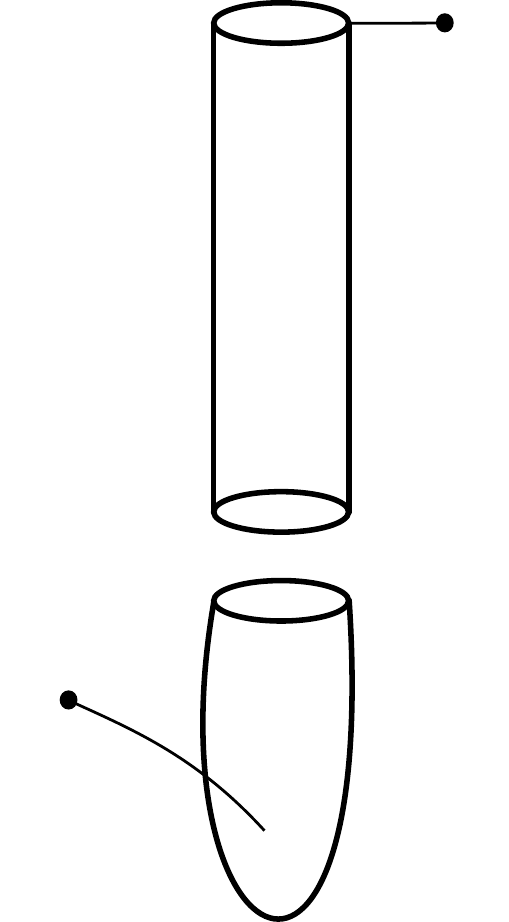}}%
    \put(-0.00144135,0.41693098){\color[rgb]{0,0,0}\makebox(0,0)[lb]{\smash{$x$}}}%
    \put(0.94907221,1.75491512){\color[rgb]{0,0,0}\makebox(0,0)[lb]{\smash{$\widecheck p_{k_+}$}}}%
    \put(0.5192222,1.27663133){\color[rgb]{0,0,0}\makebox(0,0)[lb]{\smash{$\tilde v_1$}}}%
    \put(0.5192222,0.3684972){\color[rgb]{0,0,0}\makebox(0,0)[lb]{\smash{$\tilde v_0$}}}%
    \put(0.94907221,1.12663133){\color[rgb]{0,0,0}\makebox(0,0)[lb]{\smash{$\R\times Y$}}}%
    \put(1.04907221,0.3284972){\color[rgb]{0,0,0}\makebox(0,0)[lb]{\smash{$W$}}}%
  \end{picture}%
\endgroup

   \end{center}
  \caption[Case 3]{Case 3 in Proposition \protect{\ref{P:split SH cases}}} 
  \label{case_3_fig}
\end{figure}

\begin{proposition} \label{P:split SH cases}
In what follows, $q,p\in \Crit(f_\Sigma)$ and $x\in \Crit(f_W)$. There are the following contributions to the 
split symplectic homology differential:
\begin{enumerate}
\item[(0)] An index $1$ gradient trajectory in $Y_k$, connecting $\tilde q_k$ to $\tilde p_k$ or 
an index $1$ gradient trajectory in $W$, connecting two critical points of $f_W$.
\item A half-infinite gradient trajectory $\gamma_- \colon (-\infty, 0] \to
    Y_{k_-}$, with $\gamma_-(-\infty) = \widehat q_{k_-}$, followed by a Floer
    cylinder $\tilde v \colon \R \times S^1 \to \R \times Y$ 
    followed by a
    half-infinite gradient trajectory $\gamma_+ \colon [0, +\infty) \to
    Y_{k_+}$, with $\gamma_+(+\infty) = \widecheck p_{k_+}$. 
    Writing $A \in H_2(\Sigma; \Z)$ for the (spherical) homology class
    represented by the projection of $\tilde v$ to $\Sigma$, these satisfy the
    conditions
    \begin{align*}
        &k_+ - k_- = K \omega(A) > 0 \\
        &\gamma_-(0) = \lim_{s \to -\infty} \tilde v(s, 0) \\
        &\lim_{s \to +\infty} \tilde v(s, 0) = \gamma_+(0).
    \end{align*}
        See Figure \ref{case_1_fig}. 
    \item A half-infinite gradient trajectory $\gamma_- \colon (-\infty, 0] \to
        Y_{k_-}$ with $\gamma_-(-\infty) = \widehat p_{k_-}$, followed by a
        Floer cylinder $\tilde v \colon \R \times S^1 \setminus \{ P \} \to \R
        \times Y$ and one augmentation puncture, with trivial projection to $\Sigma$, and a
$J_W$--holomorphic plane $U \colon \C \to W$, followed by a
half-infinite gradient trajectory $\gamma_+ \colon [0, +\infty) \to Y_{k_+}$
with $\gamma_+(+\infty) = \widecheck p_{k_+}$.  
Writing $B \in H_2(X; \Z)$ for the homology class represented by the plane $U$,
these satisfy
\begin{align*}
    &k_+-k_-= K \omega(B) > 0 \\
    &\gamma_-(0) = \lim_{s \to -\infty} \tilde v(s, 0) \\
    &\lim_{s \to +\infty} \tilde v(s, 0) = \gamma_+(0),
\end{align*}
and the Reeb orbit to which $\tilde v$ converges at $P$ is the same Reeb orbit
to which $U$ converges at $+\infty$.
Note that $\widecheck p, \widehat p \in \Crit(f_Y)$ both project to the same
critical point $p\in \Crit(f_\Sigma)$. 
See Figure \ref{case_2_fig}.
\item A half-infinite gradient trajectory $\gamma_- \colon [0, +\infty) \to W$, with
    $\gamma_-(+\infty) = x$, 
    followed by a pseudoholomorphic plane $\tilde v_0$ in $W$, followed by a
    Floer cylinder $\tilde v_1$ in $\R \times Y$, and a half-infinite gradient
    trajectory $\gamma_+ \colon [0, +\infty) \to Y_{k_+}$ with
    $\gamma_+(+\infty) = \widecheck p_{k_+}$. The Floer cylinder
    $\tilde v_1$ projects to a constant in $\Sigma$.  
    These satisfy
    asymptotic matching conditions as follows: $\gamma(0) = \tilde v_0(0)$, the
    plane $\tilde v_0$ converges at its positive puncture to the same Reeb orbit
    as $\tilde v_1$ converges to at its negative puncture, which has
    multiplicity $k_+$, and $\gamma_+(0) = \lim_{s \to \infty} \tilde v_1(s,
    0)$.
See Figure \ref{case_3_fig}.
\end{enumerate}
\end{proposition}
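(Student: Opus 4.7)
The plan is to combine SFT compactness for the family $J_\kappa$ with an index/dimension count, using monotonicity as the main quantitative input. By Lemma \ref{L:stretch}, a sequence of $J_{\kappa_n}$-Floer cylinders with $\kappa_n\to\infty$ subconverges to a building whose upper levels are (possibly punctured) solutions of \eqref{Floer eq Y} in $(\R\times Y, J_Y)$ and whose lower level consists of $J_W$-holomorphic curves in $W$, matched along Reeb orbits; the Hamiltonian is supported above the stretching hypersurface $\{0\}\times Y$, so the usual SFT argument applies. Combining this with the Morse--Bott cascade framework of Definition \ref{def:FloerCylinderWithCascades} and with Lemmas \ref{non-MB} and \ref{monotone multiplicities} gives the coarse shape of any contributing configuration: an ordered chain of nontrivial sublevels joining orbit families of non-decreasing multiplicity, possibly augmented by $J_W$-planes attached at interior punctures, and with any $W_0$-asymptote appearing only as the negative end of the first sublevel.

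The second step is to package the monotonicity inputs. By Reeb-invariance of $H$ and $J_Y$, each upper-level sublevel $\tilde v$ projects to a $J_\Sigma$-holomorphic sphere in some class $A\in H_2(\Sigma;\Z)$, and by Lemma \ref{planes = spheres}, each $J_W$-plane corresponds to a $J_X$-sphere in $X$ with a single tangency of order $k$ to $\Sigma$, representing a class $B\in H_2(X;\Z)$. Monotonicity of $\Sigma$ and of $X$ (with constants $K$ and $\tau_X$) forces $\omega_\Sigma(A)\ge 0$ and $\omega(B)\ge 0$, with strict positivity whenever the relevant curve is nonconstant, and converts each sphere's Chern number into the area through these constants. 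Combined with the grading formulas \eqref{eqn:grading Y} and \eqref{eqn:grading W}, this lets me write the virtual dimension of each piece of the cascade as a linear function of the multiplicities $k_i$, the Morse indices of the endpoints, the classes $A$ and $B$, and the Conley--Zehnder contributions at the Reeb matching orbits.

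The heart of the argument is now an optimization. For a cascade to contribute to $\partial$, the total virtual dimension, taken after quotienting by the $\R^N$ translation action on the sublevels and taking into account the parameters $l_i$, must equal $1$. For generic $J_\kappa$ each individual sublevel, each $J_W$-plane, and each Morse gradient trajectory must have nonnegative expected dimension, while matching conditions at Reeb orbits and at sublevel-to-sublevel junctions each consume dimension equal to that of the corresponding evaluation target. The monotonicity inequalities above turn these constraints into bounds on $A,B$ and on the number of sublevels and augmentation punctures. Running through the possibilities, the only configurations that fit exactly within the index budget are: (0) a single Morse trajectory in $Y_k$ or in $W$; (1) one sublevel with $A\ne 0$ and no augmentation puncture; (2) one sublevel with trivial $\Sigma$-projection and a single augmentation puncture capped off by a $J_W$-plane of class $B$; or (3) one sublevel of trivial $\Sigma$-projection joined at its negative end to a $J_W$-plane whose negative gradient trajectory limits to $x\in\Crit(f_W)$. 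Any additional sublevel, additional puncture, or larger class $A$ or $B$ exceeds the dimension budget.

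The main obstacle is the delicate Morse--Bott dimension bookkeeping: each sublevel contributes its own Fredholm index; each asymptotic end carries a cost equal to the dimension of its orbit family; each Reeb matching with a $J_W$-plane is both a free parameter (the augmentation puncture $P\in\R\times S^1$) and a matching constraint on a $(2n-1)$-dimensional family of Reeb orbits; and the gradient segments of length $l_i$ interpolate between evaluation maps on the orbit spaces. Rather than redo this entire accounting here, I would cite \cite{DiogoLisiSplit}*{Section 6.1} (in particular Propositions 6.2 and 6.3 there), which carries out the calculation in full and shows that the enumeration terminates precisely at cases (0)--(3), thereby yielding the proposition.
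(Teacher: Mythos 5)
Your proposal is correct and matches the paper's own treatment: the paper does not prove this proposition here but explicitly states it as a restatement of \cite{DiogoLisiSplit}*{Propositions 6.2 and 6.3}, and your outline of the SFT compactness, projection to $\Sigma$, monotonicity, and index-budget argument correctly sketches what that companion paper carries out before you likewise defer to it for the detailed Morse--Bott bookkeeping. (One small terminological slip: $K$ is the constant with $c_1(N\Sigma)=[K\omega_\Sigma]$, not the monotonicity constant of $\Sigma$, which is $\tau_X-K$; this does not affect the argument.)
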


In Cases 1, 2 and 3, call the (possibly punctured) Floer cylinder in $\R\times Y$ the {\em upper level} of the split Floer cylinder. 
In Case 2, the augmentation plane $U$ is considered modulo its domain
automorphisms. In Case 3, notice that the gradient trajectory is descending from
$x$ to $\tilde v_0(0)$. 

We say that a Floer cylinder is simple if 
$\tilde v \colon \R\times S^1\setminus \Gamma \to \R\times Y$ projects to 
a somewhere injective curve in $\Sigma$, or if it projects to a constant.
Using the notation from \cite{DiogoLisiSplit}, 
we write 
$$
\M^*_{H,l,\R\times Y;k_-,k_+}(A;J_Y)
$$
for the space of simple Floer cylinders $\tilde v \colon \R\times S^1\setminus \Gamma \to \R\times Y$, 
where $\Gamma$ is a set of $l$ augmentation punctures, $\lim_{s\to \pm\infty} \tilde v(s,.)$ are Hamiltonian orbits of 
multiplicity $k_\pm$ and $\pi_\Sigma \circ \tilde v$ represents the homology
class $A\in H_2(\Sigma;\Z)$. 

The spaces of Floer cylinders with cascades in Case 1 are unions of fibre products 
\begin{equation}
 W^s_{Y}(\widehat q) \times_{\ev} \M^*_{H,0,\R\times Y;k_-,k_+}(A;J_Y) \times_{\ev} W_{Y}^u(\widecheck p).
\label{fib prod1}
\end{equation}
The fibre products are defined with respect to the inclusion maps
\begin{align*}
W^s_{Y}(\widehat q), W_{Y}^u(\widecheck p) &\to Y
\end{align*}
and the evaluation maps
\begin{align*}
\widetilde{\ev}_Y \colon \M^*_{H,0,\R\times Y;k_-,k_+}(A;J_Y) &\to Y\times Y. %
\end{align*}

To give a similar description of Cases 2 and 3, we denote by
$$
\M_H^*(B;J_W)
$$
the space of simple $J_W$-holomorphic cylinders in $W$ with a removable singularity at $-\infty$, realizing the class 
$B\in H_2(X;\Z)$. Denote the quotient of $\M_H^*(B;J_W)$ by the $\C^*$-action as $\M_X^*(B;J_W)$. We also define 
$\M^*_{H,k_+}(0;J_Y)$ as the space of Floer cylinders $\tilde v \colon \R\times S^1 \to \R \times Y$ such that 
$\lim_{s\to\infty}\tilde v(s,.)$ is a Hamiltonian orbit of multiplicity $k_+$, $\lim_{s\to-\infty}\tilde v(s,.)$ 
is the corresponding Reeb orbit and $\pi_\Sigma \circ \tilde v$ is constant in $\Sigma$. 
 
The spaces of Floer cylinders with cascades in Case 2 can now be written as unions of fibre products 
\begin{equation}
 W^s_{Y}(\wh p) \times_{\ev} \big(\M_X^*(B;J_W) \times_{\tilde \ev} \M^*_{H,1,\R\times Y;k_-,k_+}(0;J_Y) \big) \times_{\ev} W_{Y}^u(\wc p).
\label{fib prod2}
\end{equation}
and in Case 3 we have unions of 
\begin{equation}
 W^u_{W}(x) \times_{\ev^1_-} \left(\M^*_{H}(B;J_W)  \prescript{}{\ev^1_+\,}\times\prescript{}{\ev^2_-}{}  \M^*_{H,k_+}(0;J_Y) \right) \times_{\ev^2_+} W_{Y}^u(\wc p).
\label{fib prod3}
\end{equation}

The relevant transversality results for the spaces \eqref{fib prod1}, \eqref{fib prod2} and \eqref{fib prod3} are established in 
\cite{DiogoLisiSplit}*{Section 5}. These spaces are oriented using the fibre product orientation convention.
For details, see \cite{DiogoLisiSplit}*{Section 7} and Section \ref{compute signs} below.

\ 

Given our assumptions on $H$ and $J_Y$, there are geometric actions on the spaces of solutions to Floer's 
equation in $\R\times Y$. 
Let $\tilde v = (b,v) \colon \R\times S^1 \setminus \Gamma \to \R \times Y$ be a solution to Floer's equation (\ref{Floer eq Y}). 
There is an action of $S^1 \times S^1$ on the space of such solutions, with each circle factor acting by rotation on the domain and on the target, respectively:
$$
 (\theta_1,\theta_2) . \tilde v = \tilde v_{(\theta_1,\theta_2)}
$$
where
$$
\tilde v_{(\theta_1,\theta_2)} (s,t) = (b(s,t+\theta_1), \phi^{\theta_2}_R \circ v(s,t+\theta_1)).
$$
Here, $\phi^\theta_R$ denotes the Reeb flow for time $\theta$.
If we identify the images of the simple Reeb orbits underlying $x_\pm(t) = \lim_{s\to \pm\infty} v(s,t)$ with $S^1$, then the following map keeps track of the asymptotic effect of the action: 
\begin{align*}
 \rot_{\tilde v}\colon S^1 \times S^1 &\to S^1 \times S^1 \\
 (\theta_1,\theta_2) &\mapsto \left(\lim_{s\to -\infty} v(s,\theta_1) + \theta_2, \lim_{s\to +\infty} v(s,\theta_1) + \theta_2 \right).
\end{align*}
If $k_\pm$ are the multiplicities of the periodic orbits $x_\pm (t)$, then, $\rot_{\tilde v}$ is the linear map represented in matrix form as 
\begin{equation}\label{rot v}
\rot_{\tilde v} = \begin{pmatrix}
k_- & 1 \\
k_+ & 1
\end{pmatrix}.
\end{equation}

Suppose now that $\tilde v_0 \colon \R \times S^1 \to W$ is a non-constant
$J_W$-holomorphic cylinder with removable singularity at $-\infty$ (as
in Case 3 in Proposition \ref{P:split SH cases}). There is an $S^1$-action on such
curves, by rotation on the domain: $$ \theta . \tilde v_0 (s,t) = \tilde v_0^\theta (s,t) 
= \tilde v_0(s,t+\theta).  $$ Identifying the image of the simple Reeb
orbit underlying $\gamma(t) = \lim_{s\to \infty} \pi_Y(\tilde v_0(s,t))$ with
$S^1$, we have 
\begin{align} \label{rot v0}
 \rot_{\tilde v_0}\colon S^1 &\to S^1 \\ 
 \theta &\mapsto \lim_{s\to +\infty}
 \tilde v_0(s,\theta). \nonumber
\end{align} 
If the multiplicity of the periodic Reeb orbit $\gamma(t)$
is $k$, then $\rot_{\tilde v_0}$ is multiplication by $k$.

\section{Sketch of isomorphism between the two Floer complexes}\label{S:compactness}

\begin{proposition}
For a generic choice of almost complex structures of the cylindrical type $J_\kappa$, $J_W$ and $J_Y$ described in Section \ref{sec:almost complex structures}, and for $R>0$ large enough,
the presplit and split chain complexes are well-defined and are chain isomorphic.
Furthermore, these complexes compute the symplectic homology of $W$.
\end{proposition}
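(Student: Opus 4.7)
The plan is to establish each claim in turn: (a) both complexes are well-defined; (b) they are chain isomorphic; (c) they compute $SH_*(W)$. For well-definedness, the underlying abelian group is fixed by \eqref{eqn:chain complex}, so I must show the sums in $\partial_{\text{pre}}$ (and its split analogue) are finite and square to zero. Finiteness follows from a standard action filtration: $\A$ is monotone increasing along Floer cylinders, and Lemma \ref{monotone multiplicities} gives monotonicity of the multiplicities $k$ along a cascade, so for fixed generators only finitely many orbit tuples $(S_0,\dots,S_N)$ can appear; combined with the compactness statements (Lemma \ref{L:convergence to orbit}, the maximum principle for admissible $H$, Lemma \ref{no + limits in W} excluding convergence to $\partial W_0$, and Lemma \ref{non-MB}), each individual moduli space has a Gromov--Floer--Morse--Bott compactification by broken cascades. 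Transversality for the relevant cascades is established for generic $J_\kappa$, $J_Y$, $J_W$ in \cite{DiogoLisiSplit}*{Section 5}; granted this, the standard boundary analysis of one-dimensional components of $\M_{H,N}(x,y;J_\kappa)/\R^N$ shows $\partial_{\text{pre}}^2=0$, and similarly for the split differential assembled from the fibre products \eqref{fib prod1}--\eqref{fib prod3}.

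For the chain isomorphism, I use the neck-stretching family $J_\kappa$ of Lemma \ref{L:stretch}. Fix zero-dimensional moduli spaces of presplit cascades contributing to $\partial_{\text{pre}}$ with action cap $\le C$. Take a sequence $\kappa_n\to\infty$ and, by SFT compactness \cite{SFTcompactness} applied sublevel by sublevel (the Hamiltonian is supported above the neck, so the standard SFT argument applies on $\R\times Y$ and finite hybrid energy \eqref{E:hybridEnergy} is preserved), extract a limit split building. The index and monotonicity constraints combined with Proposition \ref{P:split SH cases} force the limit to be exactly one of the four listed configurations (all other buildings have excess Fredholm index or violate $k_+ - k_- = K\omega(A)$). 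In the other direction, one applies a gluing theorem at the neck and at the matching conditions of the fibre products: each transverse split configuration glues, for $\kappa$ large, to a unique (up to reparametrization) presplit Floer cylinder with cascades. This gives, for each $\kappa\gg 0$, a bijection of the zero-dimensional moduli spaces below any fixed action; passing to the limit defines a chain isomorphism. Signs are compatible with the fibre product orientation conventions recalled in \cite{DiogoLisiSplit}*{Section 7}.

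Finally, to identify the homology with $SH_*(W)$, I compare our Morse--Bott cascade complex with the standard small time-dependent perturbation definition of symplectic homology. The generators of \eqref{eqn:chain complex} correspond, after a small Morse-type perturbation localized near each orbit manifold $Y_k$ using $f_Y$ and near $W_0$ using $-f_W$, to the 1-periodic orbits of a non-degenerate admissible Hamiltonian $H_\delta$; the usual comparison argument for Morse--Bott Floer homology \cite{BOSymplecticHomology, BourgeoisThesis} shows the cascade differential $\partial_{\text{pre}}$ computes the Floer homology of $H_\delta$. Taking a cofinal family of such admissible $H$ with slopes $h'(\e^r)\to\infty$ and passing to the direct limit gives $SH_*(W)$; the required cofinality and isotopy invariance follow because the admissible class of Hamiltonians is itself cofinal among Liouville-admissible Hamiltonians on $W$.

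The hard part, and the step that requires the most external input, will be the gluing theorem producing the inverse to the SFT limit map. The breaking happens at several different kinds of interfaces simultaneously: ordinary Floer breakings along gradient flows between cascade sublevels, a neck-stretching breaking along $\{0\}\times Y$, and in Cases 2 and 3 a further breaking at an augmentation puncture where a $J_W$-plane is attached. Each of these gluings is by now standard in isolation, but orchestrating them consistently and checking that the resulting bijection respects the fibre product orientations used in the split differential is the real technical content; we appeal to the careful treatment in \cite{DiogoLisiSplit} for the transversality and sign conventions that make this gluing well-posed.
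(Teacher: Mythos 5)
Your overall structure matches the paper's sketch: well-definedness via transversality and action filtration, a stretching-and-gluing bijection between presplit and split moduli spaces, and a comparison with a non-degenerate Hamiltonian. The gluing-orchestration caveat you raise at the end is fair, and the paper also defers the details there. However, there is one genuine gap in step (c), and two smaller points worth flagging.

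\textbf{Main gap.} Your final step asserts that ``the usual comparison argument for Morse--Bott Floer homology \cite{BOSymplecticHomology, BourgeoisThesis} shows the cascade differential $\partial_{\text{pre}}$ computes the Floer homology of $H_\delta$.'' This is precisely the step where the standard argument \emph{does not} apply: the Hamiltonian $H$ is not Morse--Bott non-degenerate along $\partial W_0 = \partial \supp dH$ (see Remark \ref{R:not MB}), and the constant orbits there form a codimension-one boundary family where the Morse--Bott framework breaks down. You invoke Lemmas \ref{no + limits in W} and \ref{non-MB} for the differential, which is correct; but the comparison with a non-degenerate $H_\delta$ requires a continuation map, and the continuation-map cascades (index $0$ and index $1$) can a priori run into the degenerate orbits in two ways that those lemmas do not cover: the positive asymptote of a continuation cylinder can lie on the non-compact ascending manifold of a critical point of $f_W$ that limits onto $\partial W_0$, and a sequence of index-$1$ continuation cascades can converge in $C^\infty_{\mathrm{loc}}$, after reparametrization, to a cylinder with a non-exponential, sequence-dependent limit at a degenerate orbit (Remark \ref{R:not MB not converge}). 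Ruling these out is the central analytic content of the paper's proof; it requires imposing radial monotonicity conditions on the interpolating Hamiltonian $\overline H$ and applying the Abouzaid--Seidel energy estimate (Lemma \ref{L:ASestimate}) via Lemma \ref{lem:noConvergeToBadOrbits}. Without this, the claim that $\Phi$ is a chain map is not established.

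\textbf{Smaller points.} First, the direct limit over cofinal slopes is unnecessary here: the admissible Hamiltonians of Definition \ref{def:J_shaped} already satisfy $h'(\rho)\to\infty$, so a single such $H$ detects all Reeb periods and computes $SH_*(W)$ directly, without a colimit. Second, in the stretching argument it is worth making explicit that the $\R\times Y$ part of the limit building is \emph{connected} (the argument of \cite{BOExactSequence}, Step 1 in the proof of Proposition 5, applied here using Lemma \ref{L:ASestimate} as a maximum principle); without this, the SFT limit could a priori have several symplectization levels and would not match the list of Proposition \ref{P:split SH cases}. Third, for presplit transversality one should note that perturbing $J$ only in $\V\subset\overline W$ (which the construction of $J_\kappa$ requires) suffices in the presplit case because the whole cylinder has ``toes'' reaching into $\V$, in contrast with the split case where such a perturbation only sees the $J_W$-plane; this asymmetry is why \cite{DiogoLisiSplit}*{Section 5} is stated for the split setting.
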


We will provide a sketch of the proof of this proposition. 
The main steps of such a proof are as follows:
\begin{itemize}
	\item Show that these complexes are well-defined. For this, we need 
		to obtain transversality for generic almost complex structures
		in the (very restrictive) class we consider, which are both $\R$ and
		$S^1$ (Reeb) invariant in the cylindrical end. This problem is
		similar in both the presplit and split complexes, and is addressed
                in detail in \cite{DiogoLisiSplit}*{Section 5}  
		in the more difficult setting of the split complex. 
		(The additional difficulty in the split complex comes from 
		Floer cylinders with cascades that have punctures capped
		with planes in $W$. The planes can be made transverse
		by a perturbation of $J$ supported in $\V \subset \overline W$. 
                Such a perturbation
		does not make the upper level(s) transverse. 
		In the presplit case,
		such a building corresponds to a single cylinder with ``toes''
		that dip in to $\V \subset \overline W$, and thus 
		a perturbation in $\V$ suffices to obtain 
		transversality. See Figure
		\ref{fig:transversality split presplit}.)

	\item For these complexes to be well-defined, we also need to establish
		that there are no curves counted either in $\partial$ or in the proof of $\partial^2=0$ that
		are asymptotic to the degenerate constant orbits at 
		$\partial \supp dH$. As we pointed out in Remark \ref{R:not MB}, these orbits fail to even be Morse--Bott,
		and thus represent a break-down of the standard analytical theory.
		We saw in Lemma \ref{non-MB} why these orbits don't appear in $\partial$. A similar argument (applying 
		Lemma \ref{no + limits in W}) implies that they also don't need to be considered when proving $\partial^2 = 0$.
	\item Compactness and gluing arguments allow us to identify the presplit and split moduli spaces
		for sufficiently large stretching parameter. Notice that by formula \eqref{J_kappa}, 
		each of the almost complex structures $J_\kappa$ used in 
		stretching is biholomorphic to $J_W$, though the support of $H$ is moved towards $+\infty$
		by the biholomorphism. Thus, a slight modification 
		of the standard SFT compactness theorem works in our setting.
		It is important to point out that after stretching the neck, the component of a Floer cylinder that is contained in $\R\times Y$ is connected, which is why the split Floer cascades we described above contain information about all the presplit Floer cascades. 
		This follows from an argument in \cite{BOExactSequence}*{Step 1 in proof of Proposition 5}.  
		By a gluing argument, for each subcomplex of
		the split complex given by a bound in action, there exists a sufficiently large stretching 
		parameter for which we get an identification with the corresponding
		subcomplex of the presplit complex.
	\item Show that the presplit complex is quasi-isomorphic to a symplectic homology complex
		obtained from a non-degenerate Hamiltonian. This involves
		constructing a continuation map $\Phi$ connecting the two chain complexes. 
		The delicate part of the
		proof that this is a chain map
		stems again from the failure of $H$ to be 
		Morse--Bott non-degenerate along the boundary $\partial \supp dH$.
		We address this difficulty by means of the Abouzaid--Seidel
		Lemma \ref{non-MB} below.
		A usual action filtration argument
		gives that this is a chain isomorphism. 
\end{itemize}

\begin{figure}
  \begin{center}
    \def\svgwidth{0.8\textwidth}

\begingroup
  \makeatletter
  \providecommand\color[2][]{%
    \errmessage{(Inkscape) Color is used for the text in Inkscape, but the package 'color.sty' is not loaded}
    \renewcommand\color[2][]{}%
  }
  \providecommand\transparent[1]{%
    \errmessage{(Inkscape) Transparency is used (non-zero) for the text in Inkscape, but the package 'transparent.sty' is not loaded}
    \renewcommand\transparent[1]{}%
  }
  \providecommand\rotatebox[2]{#2}
  \ifx\svgwidth\undefined
    \setlength{\unitlength}{505.90228431pt}
  \else
    \setlength{\unitlength}{\svgwidth}
  \fi
  \global\let\svgwidth\undefined
  \makeatother
  \begin{picture}(1,0.41457556)%
    \put(0,0){\includegraphics[width=\unitlength]{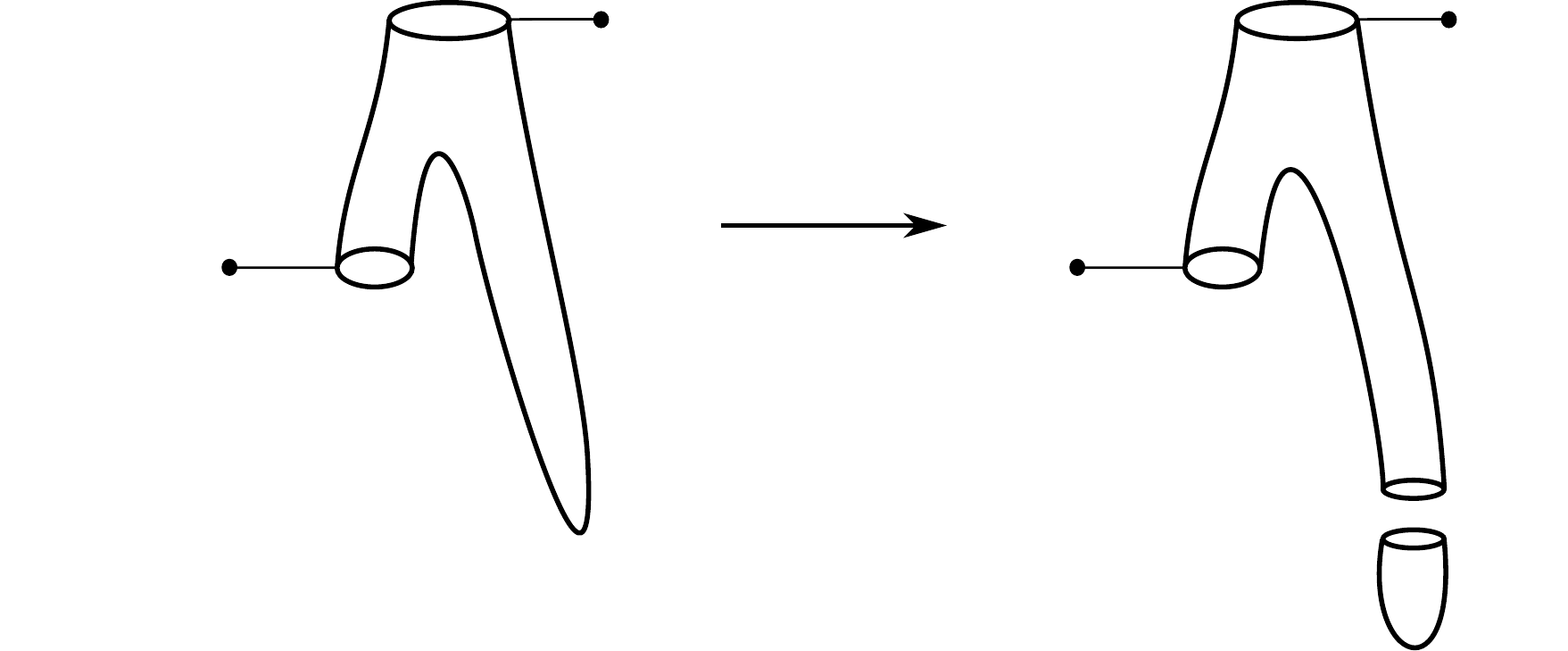}}%
    \put(0.10819533,0.24029146){\color[rgb]{0,0,0}\makebox(0,0)[lb]{\smash{$q$}}}%
    \put(0.15491812,0.35196881){\color[rgb]{0,0,0}\makebox(0,0)[lb]{\smash{$W$}}}%
    \put(0.94931704,0.26739761){\color[rgb]{0,0,0}\makebox(0,0)[lb]{\smash{$\R\times Y$}}}%
    \put(0.96558072,0.02018953){\color[rgb]{0,0,0}\makebox(0,0)[lb]{\smash{$W$}}}%
    \put(0.64779644,0.24029146){\color[rgb]{0,0,0}\makebox(0,0)[lb]{\smash{$q$}}}%
    \put(0.40744728,0.39208592){\color[rgb]{0,0,0}\makebox(0,0)[lb]{\smash{$p$}}}%
    \put(0.94704832,0.39208592){\color[rgb]{0,0,0}\makebox(0,0)[lb]{\smash{$p$}}}%
    \put(0.47599853,0.29229921){\color[rgb]{0,0,0}\makebox(0,0)[lb]{\smash{stretch}}}%
    \put(0.46526508,0.23096499){\color[rgb]{0,0,0}\makebox(0,0)[lb]{\smash{the neck}}}%
  \end{picture}%
\endgroup
   \end{center}
  \caption{ By perturbing $J$ in $\V$
(i.e.~away from the cylindrical end of $W$), the curve on the left can
be made transverse. Such a perturbation only makes the plane in $W$
transverse for the building on the right.} 
   \label{fig:transversality split presplit}
\end{figure}

\ 

We now address the difficulty in the proof of the fact that a continuation map between presplit symplectic homology and a Morse perturbed version is a chain map.

First, we will explain what the construction would entail if all
of the 1-periodic orbits of the admissible Hamiltonian $H$ (as in Definition \ref{def:J_shaped}) 
were Morse--Bott non-degenerate.

Recall that we defined a differential $\partial_{pre}$ on $SC_*(W,H)$, as in
\eqref{eqn:chain complex}, which we called the presplit differential. 
We then construct a non-degenerate Hamiltonian $\tilde H\colon S^1 \times W \to \R$ such that $\tilde H(t,w) = H(w)$ outside a small neighbourhood of the periodic orbits of $X_H$. 
We require that $\tilde H$ be $C^2$-small and Morse on $W \setminus \big((-\varepsilon,\infty)\times Y\big)$ 
for some $\varepsilon >0$. In addition, 
$\tilde H$ is a small time-dependent perturbation of $H$ near the non-constant periodic orbits of $X_H$, using auxiliary Morse functions on the manifolds of orbits in a manner similar to \cite{BOSymplecticHomology}*{page 73}.  
Picking a generic almost complex structure on $W$, we get a chain complex $SC_*(W,\tilde H)$ that computes the symplectic homology of $W$. Denote its differential by $\tilde \partial$.  

We construct a chain map 
$$
\Phi \colon SC_*(W,H) \to SC_*(W,\tilde H)
$$
as follows. Let $\overline H\colon \R\times S^1 \times W \to \R$ be such that
\begin{itemize}
 \item $\overline H(s,t,w) = H(w)$ if $s> 1$;
 \item $\overline H(s,t,w) = \tilde H(t,w)$ if $s<-1$;  
 \item $\partial_s \overline H \leq 0$.
\end{itemize}
We will impose two more technical conditions on $\overline H$, in Lemma \ref{lem:noConvergeToBadOrbits}. 

Picking a domain-dependent almost complex structure $\overline{J}$ interpolating
between the ones used in the two chain complexes, we define
$\Phi$ via counts of index 0 Floer cylinders with cascades for
the data of the pair ($\overline H$, $\overline J$) 
(these are cascades that satisfy \eqref{eq:0,1 continuation} below with $\beta = dt$).
We will refer to such cylinders with cascades as \textit{continuation map cascades}.

If all the periodic orbits of $H$ were Morse--Bott non-degenerate, 
the argument is standard: we consider index $1$ families of cascades solving
the continuation map equation. These families are compact, and their
boundaries are unions of terms that correspond to configurations either in
$\Phi \circ \partial_{pre}$ or in $\tilde \partial \circ \Phi$. Furthermore,
any such configuration in $\Phi \circ \partial_{pre}$ or in $\tilde \partial
\circ \Phi$ can be glued, showing that all such configurations arise as
boundary components of these 1-dimensional moduli spaces. 

The presence of degenerate orbits of $X_H$ along $\partial \supp dH$
introduces two main difficulties. The first is in the consideration of index
$0$ and index $1$ cascades solving the continuation map equation, specifically
in the case of $\Phi(x)$ where $x$ is a generator of the complex coming from a 
critical point of $f_W$ (or the index $1$ analogue). In this case, the
asymptotic boundary condition for the cascade requires that the positive end
converge to a (constant) orbit in $W$ in the \textit{ascending} manifold of
$x$. This ascending manifold is non-compact, and thus the family of solutions is not necessarily
compact. We will show that in actual fact, for energy reasons and for suitably
chosen $\tilde H$, no non-constant cascade of this form exists. 

The second potential problem comes from the compactification of the moduli
space of index $1$ continuation map cascades. A portion of such cascades can
converge in $C^\infty_{loc}$ to a Floer cylinder with an end that converges
(badly) to the degenerate orbits in $\partial \supp dH$. (We may arrange for
such convergence to happen at the $+\infty$ puncture by changing our
subsequence and shifts.) We will again show that this kind of behaviour does
not occur for energy reasons.

The rest of this section will now prove these two claims. 
In order to rule both of these problem configurations out, we will 
combine 
the convergence of sequences 
in Lemma \ref{L:convergence to orbit} 
and a careful application of the following result 
originally due to Abouzaid and Seidel (see \cite{AbouzaidSeidel}*{Lemma 7.2} 
and also \cite{Ritter}*{Lemma 19.3}). 
We will state the result in slightly greater generality than required for our
purposes here.
In the following lemma, for instance, the annulus $S = (-\epsilon, 0] \times
S^1$ should be thought of as a subset of the punctured cylinder, and $\beta$
will then be the restriction of the form $dt$ to this annulus. 

\begin{lemma} \label{L:ASestimate}
Let $Y$ be a co-oriented contact manifold with contact form $\alpha$ and denote the corresponding 
Reeb vector field by $R$. 
Let $S$ be the annulus $S = (-\epsilon, 0] \times S^1 \subset \R \times S^1$,
with coordinates $\sigma \in (-\epsilon, 0]$ and $\tau \in S^1 = \R / \Z$.
Let $i$ denote the complex structure on $S$ with $i \partial_\sigma =
\partial_\tau$.

Let $J$ be an $S$-dependent family of almost complex structures on $[r_0, r_0 + \delta] \times Y$ for small $\delta>0$
and let $H \colon S \times [r_0, r_0 + \delta] \times Y \to \R$ be an $S$-dependent family of Hamiltonians
with the following locally radial behaviour:
\begin{enumerate}
\item $J(z, r, y) \partial_r = R$, $J(z, r, y) \ker \alpha = \ker \alpha$,
\item $H(z, r, y) = h(z, \e^r)$, for some function $h(z,\rho)$.
\end{enumerate} 
Let $\beta$ be a closed $1$-form on $S$. 

Suppose that $\tilde v \colon S \to [r_0, r_0 + \delta) \times Y$ satisfies the following equation:
\begin{equation}
\label{eq:0,1 continuation}
0 = 2 (d\tilde v - X_H \otimes \beta)^{0,1} = d\tilde v + J(z, \tilde v) d\tilde v \circ i - X_H \otimes \beta - J(z, \tilde v) X_H \otimes \beta \circ i,
\end{equation}
and $\tilde v( \{ 0 \} \times S^1 ) \subset \{ r_0 \} \times Y$.

Then, taking $\lambda = \e^r \alpha$,
\begin{align*}
\int_{ \{ 0 \} \times S^1 } \tilde v^*\lambda - H(z, \tilde v(z)) \beta &\le \int_{ \{ 0 \} \times S^1 } ( \lambda(X_H) - H ) \beta = \\
&=\int_{ \{ 0 \} \times S^1 } \left ( \e^{r_0} \frac{\partial h}{\partial \rho}(z, \e^{r_0}) - h(z, \e^{r_0}) \right ) \beta
\end{align*}
\end{lemma}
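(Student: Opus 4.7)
The plan is to reduce the inequality to a pointwise boundary computation, then invoke the Floer equation together with the elementary minimum-principle observation that $r\circ \tilde v$ is minimized on $\partial S$.

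First I would write $\xi := d\tilde v - X_H\otimes \beta$ and observe the pointwise identity of $1$-forms on $S$:
\[
    (\tilde v^*\lambda - H\beta) - (\lambda(X_H) - H)\beta \;=\; \tilde v^*\lambda - \lambda(X_H)\beta \;=\; \lambda\circ \xi.
\]
The asserted inequality is therefore equivalent to $\int_{\{0\}\times S^1} \lambda(\xi(\partial_\tau))\,d\tau \le 0$. Since $\tilde v(\partial S) \subset \{r_0\}\times Y$, on tangent vectors to this level set the form $\lambda = \e^r \alpha$ restricts to $\e^{r_0}\alpha$; moreover both $d\tilde v(\partial_\tau)$ and $X_H = h'(z,\e^r)R$ are tangent to $\{r_0\}\times Y$, so $\xi(\partial_\tau)$ is as well. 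It therefore suffices to prove the pointwise bound $\alpha(\xi(\partial_\tau)) \le 0$ along $\partial S$.

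Next I would establish this pointwise bound from two independent ingredients. The constraints $\tilde v(S)\subset [r_0, r_0+\delta)\times Y$ and $\tilde v(\partial S)\subset \{r_0\}\times Y$ mean $r\circ\tilde v$ attains its minimum $r_0$ along $\partial S$; a first-order Taylor expansion in the inward direction $-\partial_\sigma$ immediately yields $\partial_\sigma(r\circ\tilde v)\le 0$ on $\partial S$, with no maximum principle required. Separately, the equation \eqref{eq:0,1 continuation} reads $\xi(\partial_\tau) = J\xi(\partial_\sigma)$; decomposing $\xi(\partial_\sigma) = a\,\partial_r + bR + \eta$ with $\eta\in \ker\alpha$, and using the hypotheses $J\partial_r = R$ and $J(\ker\alpha)=\ker\alpha$, one extracts
\[
    \alpha(\xi(\partial_\tau)) \;=\; dr(\xi(\partial_\sigma)) \;=\; \partial_\sigma(r\circ\tilde v),
\]
where the last equality uses $dr(X_H)=0$ because $X_H = h'(z,\e^r)R$ has no $\partial_r$ component. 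Combining the two observations gives $\alpha(\xi(\partial_\tau))\le 0$ on $\partial S$, completing the argument. The identification of the right-hand side with the explicit expression $\int_{\{0\}\times S^1}(\e^{r_0}\partial_\rho h(z,\e^{r_0}) - h(z,\e^{r_0}))\beta$ is then immediate from $\lambda(X_H) = \e^{r} h'(z,\e^r)$ evaluated at $r=r_0$.

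The proof is essentially a one-sided first-order boundary computation and uses no integration by parts or genuine elliptic maximum principle. The main obstacle will be sign bookkeeping: I will need to confirm that the paper's convention for the Floer equation $(d\tilde v - X_H\otimes\beta)^{0,1}=0$ together with the radial normalization $J\partial_r = R$ produces the identity $\alpha(\xi(\partial_\tau)) = +dr(\xi(\partial_\sigma))$ with a positive sign, and that on $S = (-\epsilon,0]\times S^1$ the inward direction at the boundary $\{\sigma=0\}$ is indeed $-\partial_\sigma$, so that the constraint $r\circ\tilde v\ge r_0$ really yields the claimed sign $\partial_\sigma(r\circ\tilde v)\le 0$. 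Once these conventions are fixed, the computation collapses to a few lines.
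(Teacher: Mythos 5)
Your proposal is correct and is essentially the paper's own argument in a lightly repackaged form: the paper applies $\lambda$ to \eqref{eq:0,1 continuation} using $\lambda \circ J = \e^r dr$ and $\lambda(JX_H)=0$ to get $\tilde v^*\lambda + \e^r dr(d\tilde v \circ i) = \lambda(X_H)\beta$, and then uses exactly your boundary observation that $dr(d\tilde v \circ i)(\partial_\tau) = -\partial_\sigma(r\circ\tilde v) \ge 0$ along $\{0\}\times S^1$ since $r\circ\tilde v$ attains its minimum there. Your $\xi$-formulation with $\xi(\partial_\tau)=J\xi(\partial_\sigma)$ and the decomposition into $\partial_r$, $R$, $\ker\alpha$ is the same computation, and your sign conventions check out.
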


\begin{proof}

Applying $\lambda$ to \eqref{eq:0,1 continuation} and using $\lambda \circ J = \e^r dr$ and $\lambda( JX_H) = 0$, we obtain
\[
\tilde v^*\lambda + \e^r dr( d\tilde v \circ i) = \lambda(X_H) \beta.
\]
By hypothesis, we have that $r( \tilde v(0,\tau) ) \equiv r_0$. 
Furthermore, since the image of the annulus $S$ is not below $r=r_0$, 
we have $dr( d\tilde v \circ i)(\frac{\partial}{\partial \tau}) \ge 0$ along $\{0\} \times S^1$, hence
\[
\tilde v^*\lambda(\frac{\partial}{\partial \tau}) \le \lambda(X_H) \beta(\frac{\partial}{\partial \tau}).
\]
The result now follows.
\end{proof}

Following \cites{AbouzaidSeidel,Ritter}, we introduce the following terminology:

\begin{definition}
Let $(S, j)$ be a Riemann surface and let $\beta$ be a 1-form on $S$ so $d\beta \le 0$. 
Let $(W, \omega)$ be a symplectic manifold, and let $J$ be an $S$-dependent family of almost complex structures on $W$, compatible with $\omega$.
Let $H \colon S \times W \to \R$ be as $S$-dependent Hamiltonian. We write $dH$ to indicate the (total) differential of $H$ and $d_W H$ to denote the $S$-dependent family of 1-forms on $W$ given by $d_W H|_{z \in S} = d (H(z, \cdot))$.

For a map $\tilde v \colon S \to W$, we define its
\begin{itemize}
\item {\em Topological energy}: $E_\topo(\tilde v) = \int_{S} \tilde v^*( \omega - d(H \beta) )$;
\item {\em Geometric energy}: $E_\geom(\tilde v) = \int_{S} \tilde v^*\omega - (\tilde v^*d_W H) \wedge \beta$.
\end{itemize}
\end{definition}

Note that for any solution to Floer's equation \eqref{eq:0,1 continuation}, $E_\geom(\tilde v) = \int_S \| \partial_s \tilde v\|^2 \geq 0$ is (pointwise) non-negative. 
We say that the triple $(\beta, H,J)$ is \defin{monotone} if the topological energy dominates
the geometric energy pointwise (in the sense that the integrand in the geometric energy is equal to the integrand in the topological energy times a function $f\leq 1$). Observe that they are equal if $H$ is $z$-independent and $\beta = dt$.

\begin{lemma} \label{lem:noConvergeToBadOrbits}
Let $(W, d\lambda)$ be an exact symplectic manifold with cylindrical end $((r_0, +\infty) \times Y, d( \e^r \alpha ))$, $r_0 < \log 2$.

Let $(\beta,H, J)$ be a monotone triple on the cylinder $(\R \times S^1, i)$ with 1-form $\beta = dt$ and 
with $H_{(+\infty, t)}$ an admissible Hamiltonian as in Definition \ref{def:J_shaped}.

Let $\gamma_+$ be a (constant) 1-periodic orbit of $H_{(+\infty, t)}$ contained in the level $\{ r_1 \} \times Y$ for some $r_0< r_1 \leq \log 2$
and let $\gamma_-$ be a 1-periodic orbit
of $H_{(-\infty, t)}$, contained entirely below the the level $ \{r_1 \} \times Y$.

Suppose furthermore that $H \colon \R \times S^1 \times W \to \R$ and $J$ are locally radial (in the sense of Lemma \ref{L:ASestimate}) 
on $(r_1-\delta, r_1 + \delta ) \times Y$ for small $\delta>0$, with the additional condition that 
$H(s,t, r, y) = h(s, \e^r)$ for some function $h(s,\rho)$, with 
\begin{align*}
\e^r \frac{\partial h}{\partial \rho}(s, \e^r) - h(s, \e^r) &\ge 0 \\
\partial_s \Big( \e^r \frac{\partial h}{\partial \rho}(s, \e^r) - h(s, \e^r) \Big) &\le 0.
\end{align*}

Then, there does not exist any $\tilde v \colon \R \times S^1 \to W$ solving Floer's equation \eqref{eq:0,1 continuation}
for which there exists a sequence $s_k \to +\infty$ along which $\tilde v(s_k, t) \to \gamma_+(t) \subset \{ r_1 \} \times Y$  
and for which $\tilde v(s,t) \to \gamma_-(t), s\to -\infty$.
\end{lemma}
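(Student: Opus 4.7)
The plan is to argue by contradiction, combining Stokes' theorem on a subdomain of the cylinder near the positive puncture with the Abouzaid--Seidel estimate of Lemma \ref{L:ASestimate}, and exploiting the monotonicity of the triple $(\beta, H, J)$. Suppose such a $\tilde v$ exists. Since $r_1 \le \log 2$ and the admissible Hamiltonian $H_{(+\infty,\cdot)}$ satisfies $h(+\infty,\rho) = 0$ for $\rho \le 2$, the orbit $\gamma_+$ is a constant and $\A_{H_+}(\gamma_+) = 0$. The first step is to fix a regular value $r_* \in (r_1 - \delta, r_1)$ of $r \circ \tilde v$, chosen close enough to $r_1$ that $\gamma_-$ lies strictly below $\{r_*\} \times Y$. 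Let $\Omega_+$ denote the connected component of $\tilde v^{-1}(\{r > r_*\} \times Y)$ containing a neighborhood of the positive puncture, and let $C := \partial \Omega_+ \subset \R \times S^1$ be the resulting disjoint union of smooth circles, each mapping into $\{r_*\} \times Y$.

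Next, I apply Stokes and Abouzaid--Seidel on $\Omega_+$. Since $\omega - d(H \beta) = d(\lambda - H \beta)$, Stokes' theorem gives
\[
E_\topo(\tilde v|_{\Omega_+}) = \A_{H_+}(\gamma_+) + \int_C \tilde v^*(\lambda - H\beta) = \int_C \tilde v^*(\lambda - H\beta),
\]
while Lemma \ref{L:ASestimate}, applied in a tubular neighborhood of each component of $C$ (with $r_0$ of that lemma equal to $r_*$, and with orientations matched), yields
\[
E_\topo(\tilde v|_{\Omega_+}) \le \int_C \bigl(\e^{r_*} h'_\rho(s, \e^{r_*}) - h(s, \e^{r_*})\bigr)\, dt.
\]
The two technical conditions to be imposed on $\overline H$ are designed so that the right-hand integrand vanishes along $C$. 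Concretely, one requires $\overline H(s, t, \cdot) = H_+(\cdot)$ for $s$ beyond some threshold $s_0$ (already in the bulleted list of conditions on $\overline H$), and one uses the monotonicity hypothesis $\partial_s(\e^r h'_\rho - h) \le 0$ together with the subsequence convergence $\tilde v(s_k, \cdot) \to \gamma_+$ to ensure, for $r_*$ sufficiently close to $r_1$, that $C \subset \{s \ge s_0\} \times S^1$. Since $r_* < \log 2$, this forces $h(s, \e^{r_*}) = 0$ and $h'_\rho(s, \e^{r_*}) = 0$ along $C$, hence $E_\topo(\tilde v|_{\Omega_+}) \le 0$.

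Finally, the monotonicity of $(\beta, H, J)$ gives $0 \le E_\geom(\tilde v|_{\Omega_+}) \le E_\topo(\tilde v|_{\Omega_+}) \le 0$, so $\partial_s \tilde v \equiv 0$ on $\Omega_+$. The map $\tilde v$ is therefore $s$-independent on $\Omega_+$; together with $\tilde v(s_k,\cdot) \to \gamma_+$ this identifies $\tilde v \equiv \gamma_+$ on $\Omega_+$, and unique continuation for the Floer equation then forces $\tilde v \equiv \gamma_+$ on the entire cylinder, contradicting $\tilde v(-\infty) = \gamma_- \ne \gamma_+$. The main obstacle will be the confinement $C \subset \{s \ge s_0\}$: $\tilde v$ could a priori make excursions above $\{r_*\} \times Y$ at earlier values of $s$, and one must use the monotonicity $\partial_s(\e^r h'_\rho - h) \le 0$ together with a careful design of $\overline H$ to rule out that such excursions belong to the same connected component as the positive puncture, which is the point where the Abouzaid--Seidel upper bound would otherwise fail to collapse to zero.
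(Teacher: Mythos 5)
There is a genuine gap, and it sits exactly where you flag "the main obstacle." Your argument needs the boundary circles $C$ to lie in a region $\{s \ge s_0\}$ where the continuation Hamiltonian is the $s$-independent admissible $H$, so that $\e^{r_*}\partial_\rho h - h$ vanishes along $C$ and the Abouzaid--Seidel bound collapses to zero. But no "careful design of $\overline H$" can arrange this: the connected component of $\tilde v^{-1}(\{r>r_*\})$ that meets the circles $\{s_k\}\times S^1$ may perfectly well extend to arbitrarily negative $s$, where $\overline H$ is the time-dependent perturbed Hamiltonian and need not vanish at the level $r_*$; nothing in the hypotheses forbids such excursions from lying in the same component. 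A second, related problem is that $\Omega_+$ itself is ill-defined: since $\gamma_+$ is a degenerate orbit, $\tilde v$ is only assumed to converge to it along the subsequence $s_k$ (this is the whole point of the lemma, cf.\ Remark \ref{R:not MB not converge}), so there need not be any neighbourhood of the positive puncture mapped above $\{r_*\}\times Y$, and you also cannot evaluate a "boundary term at $+\infty$" as $\A(\gamma_+)$.

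The paper's proof avoids both issues by truncating rather than confining. One takes $S_k=\{(s,t)\in(-\infty,s_k)\times S^1 : \tilde r(\tilde v(s,t))>r_1-c\}$ (possibly disconnected, with compact closure since $\gamma_-$ lies below the level), whose boundary is $\Gamma\cup\{s_k\}\times S^1$ with $\tilde v(\Gamma)\subset\{r_1-c\}\times Y$. Lemma \ref{L:ASestimate} is applied along $\Gamma$ at \emph{arbitrary} values of $s$, producing the bound $\int_\Gamma\big(\e^{r_1-c}\partial_\rho h(s,\e^{r_1-c})-h(s,\e^{r_1-c})\big)\,dt$; this is then rewritten by a second application of Stokes in the $s$-direction over $S_k$, and it is precisely here that the two sign hypotheses enter: $\partial_s\big(\e^{r}\partial_\rho h-h\big)\le 0$ disposes of the interior term and $\e^{r}\partial_\rho h-h\ge 0$ of the new term along $\{s_k\}\times S^1$, leaving $E_\topo(\tilde v|_{S_k})\le\int_{s=s_k}\tilde v^*\lambda-H\,dt$, whose limit along the subsequence is $\A(\gamma_+)=0$. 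Monotonicity and nestedness of the $S_k$ then give $\partial_s\tilde v\equiv 0$ on $S_k$, which contradicts convergence to $\gamma_-$ directly (no unique continuation is needed, which is just as well since your last step invokes it for an $s$-dependent equation where it is delicate). In short: the sign conditions are not there to rule out low-$s$ excursions of the super-level set, they are there to absorb their boundary contribution; your proposal misreads their role, and as written the key confinement step would fail.
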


\begin{proof}
Suppose there is a $\tilde v$ as in the statement. Consider a smooth function $\tilde r \colon W \to \R$ that is equal to the first coordinate $r$ on $(r_1-\delta/2, r_1 + \delta/2) \times Y$ and constant outside of $(r_1-\delta, r_1 + \delta) \times Y$.
We can assume that $\delta>0$ is small enough that $\tilde r \circ \gamma_-(t) < r_1-\delta/2$. 
Take a regular value $r_1-c$ of $\tilde r \circ \tilde v \colon \R \times S^1 \to \R$ in the interval $(r_1-\delta/2, r_1)$. 

By hypothesis, there exists a $k_0$ sufficiently large so that for each $k \ge k_0$, $\min_{t \in S^1} \tilde r( \tilde v(s_{k}, t) ) > r_1-c/2$. Fix such a $k$.

Define the subdomain $S_k$ of $\R \times S^1$ by 
\[
 S_k = \{ (s,t) \in (-\infty, s_{k}) \times S^1 \, | \, \tilde r( \tilde v(s,t)) > r_1-c \}.
\]
Note that while this subdomain may be disconnected, its closure in $\R \times S^1$ is compact, due the fact that $\tilde v(s, t) \to \gamma_-(t)$ as $s \to -\infty$ and that $\tilde r \circ \gamma_-(t) < r_1-\delta/2$.  Since $k \ge k_0$, this set is non-empty.

By the fact that $r_1-c$ is a regular value of 
$\tilde r \circ \tilde v$, we have that $\partial S_k$ is a collection of smooth, embedded closed curves.
Let $\Gamma$ be the collection of embedded closed curves in $\R \times S^1$ so that the boundary of $S_k$ is $\partial S_k = \Gamma \cup \{ s_{k} \} \times S^1$. Let $\Gamma$ be oriented with the orientation induced as the boundary of $S_k$. 
As an aside, note that the projection of $\Gamma \subset \R \times S^1$ to $S^1$ has total degree $-1$, since it separates the cylinder.

By construction, $\tilde v(\Gamma) \subset \{ r_1- c \} \times Y$, and by definition $\tilde r (\tilde v(S_k))\subset (r_1-c,r_1+\delta]$. 
Combining Stokes's Theorem with Lemma \ref{L:ASestimate} (and implicitly using biholomorphisms between neighborhoods of the connected components of $\Gamma$ in $S_k$ and annuli of the form $(-\epsilon,0]\times S^1$), we obtain
\begin{align*}
E_{\topo} ( \tilde v|_{S^k} ) &=  \int_{ \{ s_k \} \times S^1 } \tilde v^*\lambda - H(s,t,\tilde v(s,t) ) dt  + \int_{\Gamma} \tilde v^*\lambda -H(s,t,\tilde v(s,t) ) dt \\
	&\le \int_{ s=s_k } \tilde v^*\lambda - H(s_k,t,\tilde v(s_k,t) ) dt + \int_{\Gamma} (\e^{r_1-c} \frac{\partial h}{\partial \rho}(s, \e^{r_1-c}) - h(\e^{r_1-c}) ) dt \\
	&= \int_{ s=s_k }  \tilde v^*\lambda  - H(s_k,t,\tilde v(s_k,t) ) dt - \\
						& \hphantom{mmmmmmm}  -  \int_{ s=s_k} ( \e^{r_1-c} \frac{\partial h}{\partial \rho}(s, \e^{r_1-c}) - h(\e^{r_1-c}) ) dt + \\
						& \hphantom{mmmmmmm}  + \int_{S_k} \partial_s \big( \e^{r_1-c} \frac{\partial h}{\partial \rho}(s, \e^{r_1-c}) - h(\e^{r_1-c}) \big) \, ds \wedge dt \\
	&\le \int_{s=s_k} \tilde v^*\lambda  - H(s_k,t,\tilde v(s_k,t) ) dt.  
\end{align*}

As the parameter $k \to +\infty$, the right hand side converges to the action of $\gamma_+$, which is $0$. Furthermore, since the Floer data is assumed to be monotone, $E_{\geom}( \tilde v|_{S_k}) \le E_{\topo}( u|_{S_k})$ and since the $S_k$ are nested, 
$E_{\geom}(\tilde v|_{S_k})$ is monotone non-decreasing in $k$. It follows that $E_{\geom}( \tilde v|_{S_k}) = 0$ for all $k$. As $S_k$ has non-empty interior, we conclude that $\partial_s\tilde v = 0$, which contradicts the fact that $\tilde v(s,t) \to \gamma_-$ as $s \to -\infty$.
\end{proof}

We now explain how the previous result completes the argument that none of the problematic configurations described earlier occur, and thus $\Phi$ is a chain map. 

We take $\overline H$ satisfying the radial
conditions of Lemma \ref{lem:noConvergeToBadOrbits}. 
The lemma implies that the moduli
space of index $0$ continuation cascades that are counted in $\Phi$ are
compact. Furthermore, none has positive end with an asymptotic limit in the
degenerate orbits at $\partial \supp dH$.  

Now, consider index $1$ continuation cascades. First of all, by this lemma,
none of the curves has a positive puncture at a degenerate orbit. A sequence of such
cascades, with a fixed number of cascades and with a uniform bound on the
action of the positive puncture, will then either converge to a broken
cylinder with cascades (corresponding to configurations in 
$\tilde \partial \circ \Phi$ or in $\Phi \circ \partial_{pre}$) or will admit
a sequence of domain reparametrizations along which it converges in
$C^\infty_{loc}$ to a cylinder of the kind ruled out by Lemma
\ref{lem:noConvergeToBadOrbits}.

Finally, we observe that Lemma \ref{L:ASestimate} gives us control over the configurations that 
can occur as possible limits after splitting. Specifically, this lemma, understood as a maximum 
principle, is behind the requirement that a split Floer
cylinder with negative end asymptotic to a non-constant Hamiltonian orbit in
$\R \times Y$ must have its positive end asymptotic to a non-constant
Hamiltonian orbit. (This is by an argument similar to 
\cite{BOExactSequence}*{Proof of Proposition 5, Step 1}.)

\part{Ansatz}

\label{Ansatz}

\section{A correspondence between Floer and pseudoholomorphic cylinders}

The content of this section is technical, so we begin
with an overview and explanation of its relevance to the study of the split symplectic
homology differential. The main results are Propositions \ref{Ansatz1}
and \ref{Ansatz2}, which we summarize in the following result. For precise
statements, notably of the uniqueness, see the full statements of the
propositions in the later sections.

\begin{proposition} \label{P:summary ansatz}
 There is a bijection 
 $$
 \{\tilde v\colon \R \times S^1\setminus \Gamma \to \R\times Y\} \longleftrightarrow \{(\tilde u,{\bf e})\} / \R
 $$
 where 
 \begin{itemize}
     \item $\tilde v$ is an augmented Floer cylinder of finite hybrid energy (as
         in Equation \eqref{E:hybridEnergy}),
  \item $\tilde u\colon \R \times S^1\setminus \Gamma \to \R\times Y$ is a finite energy pseudoholomorpic curve with negative ends at $\Gamma \cup \{-\infty\}$ and positive end at $+\infty$,
  \item ${\bf e}\colon \R\times S^1\setminus \Gamma \to \R\times S^1$ is an essentially unique solution 
      to a partial differential equation that depends on $\tilde u$ (Equation \eqref{E:cylinder1}),
  \item if we write $\tilde u = (a,u)$ and ${\bf e} = (e_1,e_2)$, the quotient
      is by the $\R$-action given, for $c \in \R$, by
  $$
  c \cdot ( (a,u),(e_1,e_2)) = ((a+c,u),(e_1-c,e_2) ), \qquad c\in \R,
  $$
  \item if $\tilde v$ converges to a Hamiltonian orbit of the form $(b,\gamma)$ in $\R\times Y$ at one of its ends (where $b$ is a 
  constant and $\gamma$ is a Reeb orbit), then $\tilde u$ converges to the Reeb orbit $\gamma$ at the corresponding end; if $\tilde v$ 
  converges to a Reeb orbit in $Y$ at one of its ends, then $\tilde u$ converges to the same orbit at the corresponding end.
 \end{itemize}
\end{proposition}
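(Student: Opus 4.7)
My plan is to make the bijection explicit through an ansatz that exploits the symmetries of the target, and then reduce the proof to the two analytic statements Propositions \ref{Ansatz1} and \ref{Ansatz2}. Because $J_Y$ and the Hamiltonian $H$ on $\R\times Y$ are both invariant under $\R$-translation in the symplectization coordinate and under the free $S^1$-action given by the Reeb flow (a consequence of the prequantization structure on $Y$), the natural correspondence is
\[
\tilde v(s,t) \;=\; \Phi_{e_1(s,t),\,e_2(s,t)}\bigl(\tilde u(s,t)\bigr),
\]
where $\Phi_{a,b}$ denotes the composition of $r$-translation by $a$ with the time-$b$ Reeb flow and $\tilde u = (a,u)$. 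A direct calculation using the invariances, together with the identities $J_Y\partial_r = R$ and $J_Y R = -\partial_r$, shows that $\tilde v$ solves Floer's equation \eqref{Floer eq Y} if and only if $\tilde u$ is $J_Y$-holomorphic and ${\bf e}=(e_1,e_2)$ solves the inhomogeneous Cauchy--Riemann-type system
\[
\partial_s e_1 - \partial_t e_2 \;=\; -h'\bigl(\e^{a+e_1}\bigr), \qquad \partial_s e_2 + \partial_t e_1 \;=\; 0,
\]
which is the PDE referenced as \eqref{E:cylinder1} in the statement.

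Given the ansatz, the bijection reduces to two analytic statements I would establish in the following sections. Proposition \ref{Ansatz1} takes an augmented Floer cylinder $\tilde v$ of finite hybrid energy and produces an essentially unique pair $(\tilde u,{\bf e})$ by solving the above system for ${\bf e}$ with asymptotic data dictated by the ends of $\tilde v$, and then setting $\tilde u = \Phi_{-e_1,-e_2}\circ \tilde v$; Proposition \ref{Ansatz2} runs in the reverse direction, taking a finite energy $J_Y$-holomorphic $\tilde u$ with the prescribed asymptotics and producing ${\bf e}$ so that $\Phi_{e_1,e_2}(\tilde u)$ is a finite hybrid energy Floer cylinder. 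The uniqueness clauses in the two propositions then guarantee that the two constructions are mutually inverse.

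Compatibility with the $\R$-action and the asymptotic conditions would be immediate from the form of the ansatz. The substitution $((a,u),(e_1,e_2))\mapsto((a+c,u),(e_1-c,e_2))$ leaves $a+e_1$ and $e_2$ unchanged, hence preserves $\tilde v$; conversely, since the $\R\times S^1$-action by $\Phi$ on $\R\times Y$ is free, this is the only ambiguity in recovering $(\tilde u,{\bf e})$ from $\tilde v$. For the asymptotic correspondence, at an end where $\tilde v$ converges to a Hamiltonian orbit $(b,\gamma)\in\R\times Y$, the asymptotic control on ${\bf e}$ extracted in Propositions \ref{Ansatz1}--\ref{Ansatz2} forces $a+e_1\to b$ while $a\to\pm\infty$, so that $\tilde u$ converges to the Reeb orbit $\gamma$ at the same end; at an augmentation puncture, both $\tilde v$ and $\tilde u$ converge to the same Reeb orbit in $\{-\infty\}\times Y$. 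Finite hybrid energy for $\tilde v$ translates similarly, under the ansatz, to finite Hofer energy for $\tilde u$ together with an $L^2$-type bound on ${\bf e}$.

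The main analytic obstacle is precisely the content of Propositions \ref{Ansatz1} and \ref{Ansatz2}: one must establish existence, essential uniqueness, and the correct exponential decay rates for ${\bf e}$ on the non-compact punctured cylinder. The system is elliptic and its nonlinearity in $e_1$ is monotone because $h'$ is increasing, which suggests attacking uniqueness by a maximum principle and existence by a continuation/fixed-point argument in weighted Sobolev spaces. The delicate part will be matching the exponential decay rates of ${\bf e}$ at each puncture with the spectra of the corresponding asymptotic operators of the Floer and Cauchy--Riemann equations, so that the correspondence preserves the Fredholm-theoretic moduli structures; this is the subject of Sections \ref{S:ansatz_Floer_to_holo} and \ref{S:ansatz_holo_to_Floer}.
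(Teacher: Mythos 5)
Your proposal follows essentially the same route as the paper: Proposition~\ref{P:summary ansatz} is explicitly stated there as a summary of Propositions~\ref{Ansatz1} and~\ref{Ansatz2}, and your ansatz $\tilde v = (a+e_1, \phi_R^{e_2}\circ u)$ together with the inhomogeneous Cauchy--Riemann system for $\mathbf{e}$ is exactly what those propositions use. One small divergence worth flagging: for uniqueness of $\mathbf{e}$ in the direction of Proposition~\ref{Ansatz2}, the paper does not invoke a classical maximum principle but instead a deformation to an ODE-soluble model equation (Lemma~\ref{L:countingSolutions1}) combined with positivity of intersections of pseudoholomorphic graphs in dimension four, though this is morally in the same spirit as the monotonicity you observe in $h'$.
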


Recall that by Proposition \ref{P:split SH cases}, there are four types of contributions to the differential, referred to as Cases 0, 1, 2 and 3.  

Case 0 configurations are gradient flow lines in $Y$, so they can be described by the Morse differential in $Y$. 

\

Case 1 configurations are elements of 1-dimensional fibre products \eqref{fib prod1}
$$
W^s_{Y}(\wh q) \times_{\ev} \MM^*_{H,0,\R\times Y;k_-,k_+}(A;J_Y) \times_{\ev} W_{Y}^u(\wc p)
$$
where $q\neq p\in \Crit(f_\Sigma)$ and $A\in H_2(\Sigma;\Z)$. Recall that $\MM^*_{H,0,\R\times Y;k_-,k_+}(A;J_Y)$ is the 
space of parametrized unpunctured Floer cylinders $\tilde v$, going from orbits of multiplicity $k_-$ (as $s\to -\infty$) to orbits 
of multiplicity $k_+$ (as $s\to +\infty$), where $k_+>k_-$.
The fibre product above has a free action of $\R$ by translations of $\tilde v$ in the $s$-direction, so it is a finite disjoint union of copies of $\R$.

By Proposition \ref{P:summary ansatz}, we get a bijection between $\MM^*_{H,0,\R\times Y;k_-,k_+}(A;J_Y)$ 
and the space of $J_Y$-holomorphic cylinders in $\R\times Y$, going from Reeb orbits of multiplicity $k_-$ to Reeb orbits of multiplicity $k_+$. 
In Section \ref{S:meromorphic}, we will relate such pseudoholomorphic cylinders to 
pseudoholomorphic spheres in $\Sigma$ and meromorphic sections of line bundles over $\CP^1$.  

\ 

Case 2 configurations are elements of 1-dimensional fibre products \eqref{fib prod2} 
$$
W^s_{Y}(\wh p) \times_{\ev} \left(\M^*_X(B;J_W) \times_{\tilde\ev} \MM^*_{H,1,\R\times Y;k_-,k_+}(0;J_Y) \right) \times_{\ev} W_{Y}^u(\wc p)
$$
where $p\in \Crit(f_\Sigma)$, $B\in H_2(X;\Z)$. Recall that $\M^*_X(B;J_W)$ is a space of unparameterized $J_W$-holomorphic planes $U$ in $W$ 
(which can be identified with a space of $J_X$-holomorphic spheres in $X$, by Lemma \ref{planes = spheres}) and 
$\MM^*_{H,1,\R\times Y;k_-,k_+}(0;J_Y)$ is a space of Floer cylinders $\tilde v\colon \R\times S^1\setminus \{P\} \to \R \times Y$ 
projecting to a point in $\Sigma$, asymptotic at $-\infty$ (resp. $+\infty$) to an orbit of multiplicity $k_-$ (resp. $k_+$) and with 
a negative puncture at $P\in \R\times S^1$ that is asymptotic to a Reeb orbit of multiplicity $k_+-k_-$. The stable and unstable manifolds 
in the fibre product force $\tilde v$ to project to $p\in \Sigma$. Hence, $U$ is asymptotic to a Reeb orbit of multiplicity 
$k_+-k_- = B\bullet \Sigma$ over $p$. Proposition \ref{P:summary ansatz} now implies that $\MM^*_{H,1,\R\times Y;k_-,k_+}(0;J_Y)$ 
can be identified with the space of punctured $J_Y$-holomorphic cylinders $\R\times S^1 \setminus \{P\} \to \R\times Y$ that are branched covers of a trivial cylinder, with 
negative punctures at $-\infty$ and $P$ asymptotic to orbits of multiplicity $k_-$ and $k_+-k_-$, respectively, and with a positive 
puncture at $+\infty$ asymptotic to an orbit of multiplicity $k_+$. 

\ 

Finally, Case 3 configurations are elements of fibre products \eqref{fib prod3}
$$
 W^u_{W}(x) \times_{\ev^1_-} \left(\M^*_{H}(B;J_W)  \prescript{}{\ev^1_+\,}\times\prescript{}{\ev^2_-}{}  \M^*_{H,k_+}(0;J_Y) \right) \times_{\ev^2_+} W_{Y}^u(\wc p)
$$
where $p\in \Crit(f_\Sigma)$, $x\in \Crit(f_W)$ and $B\in H_2(X;\Z)$. Here, $\M^*_{H}(B;J_W)$ is a space of 
$J_W$-holomorphic planes $\tilde v_0$ in $W$, asymptotic to a Reeb orbit of multiplicity $k_+=B\bullet\Sigma$ 
(recall that $H=0$ in $W$ after neck-stretching, so we can think of $J_W$-holomorphic planes as Floer cylinders that asymptote to 
constants at $-\infty$). An element in $\M^*_{H,k_+}(0;J_Y)$ is a Floer cylinder $\tilde v_1$ in $\R\times Y$ that projects to a point in $\Sigma$, is asymptotic at $+\infty$ 
to a Hamiltonian orbit of multiplicity $k_+$, and asymptotic at $-\infty$ to a Reeb orbit of multiplicity $k_+$. The unstable manifold 
at the right of the fibre product forces $\tilde v_1$ to project to $p$ in $\Sigma$. The fibre product forces the positive asymptotic orbit of $\tilde v_0$ to be the same as the negative asymptote of $\tilde v_1$ (which projects to $p$ in $\Sigma$). 
Proposition \ref{P:summary ansatz} implies that $\M^*_{H,k_+}(0;J_Y)$ can be identified with the 
space of trivial $J_Y$-holomorphic spheres in $\R\times Y$, over Reeb orbits of multiplicity $k_+$.

\subsection{Sobolev spaces and Morse--Bott Riemann--Roch}

Before we state and prove the main results in this section, we need to briefly set-up 
the appropriate Fredholm theory. We refer to 
\cite{DiogoLisiSplit}*{Section 5.2} for more details. 

We will need to consider exponentially weighted Sobolov spaces of sections of the trivial complex 
line bundle over a Riemann surface $\dot S= \R\times S^1\setminus \Gamma$, where $\Gamma$ is a finite set.  
We think of this as a punctured sphere, with punctures at $\Gamma \cup \{ \pm
    \infty \}$. 
These are partitioned into a single positive puncture $\Gamma_+ = \{ + \infty
\}$ and negative punctures $\Gamma_- = \Gamma \cup \{ -\infty \}$. Near each
puncture, we fix cylindrical coordinates with the sign corresponding to the
sign of the puncture. 
For $\delta > 0$, we 
denote by $W^{1,p,\delta}(\dot S, \C)$
the space of sections that decay exponentially like $\e^{-\delta |s|}$ near the
punctures (in the cylindrical coordinates already chosen near the punctures).
In certain circumstances, we will find it useful to consider sections with 
exponential growth, and denote these by taking $\delta < 0$.

Let 
\begin{align*}
 D &\colon W^{1,p,\delta}(\dot S,\C) \to L^{p,\delta}(\dot S,\Lambda^{0,1}T^*\dot S)
\end{align*}
be a Cauchy--Riemann operator. Denote by $\mathbf{A}_z\colon W^{1,p}(S^1,\C) \to L^p(S^1,\C)$ 
the asymptotic operator at the puncture $z\in \Gamma \cup \{ \pm \infty \}$. We do not assume 
that it is non-degenerate, since we want to study Morse--Bott moduli spaces. 
Let $\mathbf V$ be a collection of vector spaces, associating to each
puncture $z$ a vector subspace $V_z$ of $\ker (\mathbf{A}_z)$.
For $\delta >0$, we then consider the space $W^{1,p,\delta}_{\mathbf V}(\dot S, \C)$ 
of sections that converge exponentially at each puncture $z$ to a vector
in the corresponding vector space $V_z$. 

Denote the spectrum of an asymptotic operator $\mathbf{A}_z$ as
above by $\sigma(\mathbf{A}_z)$. The operator $\mathbf{A}_z$ is of
the form $-i\frac{d}{dt} - A_z(t)$, for some path $A_z(t)$ of
symmetric matrices. If $\delta\in \R$ is such that $\mathbf A_z +
\delta$ is a non-degenerate operator, then it has an associated
path of symplectic matrices, given by the fundamental matrix of the
ODE $\frac{d}{dt}x = i (A_z(t)-\delta) x$.  This path has a
Conley--Zehnder index, that we denote by $CZ(\mathbf{A}_z + \delta)$
(see also \cite{DiogoLisiSplit}*{Section 5.2.1}).

\begin{theorem} \label{T:RiemannRochMB}
Let $\delta > 0$ be sufficiently small that 
for each puncture $z \in \Gamma \cup \{ \pm \infty \}$,
$[-\delta, \delta]\cap\sigma(\mathbf{A}_z) \subset \{ 0 \}$.
Let $\mathbf{V}$ be a collection of vector spaces, associating to each 
$z \in \Gamma \cup \{ \pm \infty \}$ the vector subspace $V_z \subset \ker \mathbf{A}_z$. 

Then, 
\[
D \colon W^{1,p, {\delta}}_{\mathbf V}(\dot S, \C) \to L^{p, \mathbf{\delta}}(\dot S, \Lambda^{0,1} T^*\dot S).
\]
is Fredholm, and its Fredholm index is given by
\[
\Ind(D) = \chi_{\dot S} + \sum_{z \in \Gamma_+} \big( \CZ( \mathbf{A}_z + \delta) + \dim(V_z)\big)
- \sum_{z \in \Gamma_-} \big(\CZ( \mathbf{A}_z + \delta ) + \codim(V_z)\big).
\]
\end{theorem}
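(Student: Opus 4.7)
The strategy is to reduce to the classical punctured Riemann--Roch theorem for non-degenerate asymptotic operators (as established by Schwarz, Hofer--Wysocki--Zehnder, or Wendl) and to realize the spaces $W^{1,p,\delta}_{\mathbf V}$ as finite-dimensional extensions of standard weighted Sobolev domains.

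The first step is to handle the non-degenerate case. By hypothesis $\delta$ lies in a spectral gap of each $\mathbf{A}_z$, so the shifted operators $\mathbf{A}_z\pm\delta$ are invertible. After conjugating by a weight function equal to $e^{\delta s}$ near positive punctures and $e^{-\delta s}$ near negative punctures, $D$ on $W^{1,p,\delta}(\dot S,\C)$ becomes a Cauchy--Riemann operator with non-degenerate asymptotic data on the unweighted space $W^{1,p}$, where the classical punctured Riemann--Roch formula applies. Matching conventions so that all Conley--Zehnder indices are written in terms of $\mathbf{A}_z+\delta$, I would obtain
\[
\Ind\bigl(D;\,W^{1,p,\delta}(\dot S,\C)\bigr) \;=\; \chi_{\dot S} + \sum_{z\in\Gamma_+}\CZ(\mathbf{A}_z+\delta) \;-\; \sum_{z\in\Gamma_-}\bigl(\CZ(\mathbf{A}_z+\delta) + \dim\ker\mathbf{A}_z\bigr),
\]
the asymmetric $\dim\ker$ correction at negative punctures arising because the natural shift at a negative end is $\mathbf{A}_z-\delta$, and $\CZ(\mathbf{A}_z-\delta) - \CZ(\mathbf{A}_z+\delta) = \dim\ker\mathbf{A}_z$ as the zero eigenspace is crossed.

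The second step is to incorporate the subspace conditions $\mathbf{V}$. For each puncture $z$, fix a cutoff function $\chi_z$ on $\dot S$ equal to $1$ near $z$ and supported in a cylindrical neighborhood depending only on the cylindrical coordinate. The map
\[
W^{1,p,\delta}(\dot S,\C) \oplus \bigoplus_{z}V_z \;\longrightarrow\; W^{1,p,\delta}_{\mathbf V}(\dot S,\C),\qquad \bigl(\xi,(v_z)_z\bigr) \;\longmapsto\; \xi + \sum_z \chi_z\cdot v_z,
\]
is a topological isomorphism. Because $v_z\in V_z\subset \ker\mathbf{A}_z$ is an exact solution of the asymptotic equation, $D(\chi_z v_z)$ is supported on the compact set where $\chi_z'\neq 0$ and therefore lies in $L^{p,\delta}$. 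Hence $D$ extends continuously to the enlarged domain, the Fredholm property is preserved, and the Fredholm index increases by the total added dimension $\sum_{z}\dim V_z$.

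Combining the two steps and rewriting $\dim\ker\mathbf{A}_z - \dim V_z = \codim V_z$ at each negative puncture yields the claimed formula. The main technical obstacle I anticipate is the careful bookkeeping of Conley--Zehnder conventions in the first step, in particular verifying that after weighted conjugation the relevant asymptotic operators truly are $\mathbf{A}_z+\delta$ on both ends, so that the $\dim\ker$ shift appears symmetrically and recombines cleanly with the extension dimensions from the second step into the $(\dim V_z,\,\codim V_z)$ pair stated in the theorem.
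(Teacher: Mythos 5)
Your overall strategy — weighted conjugation to reduce to the non-degenerate punctured Riemann–Roch theorem, followed by a finite-dimensional extension by the asymptotic vector spaces — is correct and is the standard route to this result (it appears in Bourgeois's thesis and in Wendl's lecture notes, among other places). Your sign bookkeeping in Step 1 is right: after conjugating by the weight $e^{\delta|s|}$, the asymptotic operator at the positive puncture becomes $\mathbf{A}_z + \delta$ and at each negative puncture becomes $\mathbf{A}_z - \delta$, and the identity $\CZ(\mathbf{A}_z-\delta) - \CZ(\mathbf{A}_z+\delta) = \dim\ker\mathbf{A}_z$ (one can check it against Corollary \ref{C:CZ computation}) produces exactly the $\dim\ker$ correction at negative ends you wrote down. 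Your finite-rank perturbation argument in Step 2 is also sound: $\tilde D(\xi,v)=D\xi + Kv$ differs from $(\xi,v)\mapsto D\xi$ by a finite-rank operator, so it remains Fredholm with index $\Ind D + \sum_z \dim V_z$, and recombining with the $\dim\ker$ terms gives $\dim V_z$ at positive punctures and $\codim V_z$ at negative ones.

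There is, however, one claim in Step 2 that is not literally correct and needs repair. You assert that since $v_z\in\ker\mathbf{A}_z$, the section $D(\chi_z v_z)$ is supported on the compact set where $\chi_z'\neq 0$. But $D$ is not the asymptotic operator: near $z$ it has the form $\partial_s - \mathbf{A}(s)$ with $\mathbf{A}(s)$ merely converging to $\mathbf{A}_z$, so on the region where $\chi_z\equiv 1$ one has $D(\chi_z v_z) = -(\mathbf{A}(s)-\mathbf{A}_z)v_z$, which is generally nonzero. For the CR operators appearing in this paper (e.g.\ with zeroth-order coefficients like $h''(\e^{f_1})\e^{f_1}$), the coefficients are genuinely $s$-dependent near the punctures. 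What saves the argument is that this term decays exponentially at a rate governed by the spectral gap of $\mathbf{A}_z$, which exceeds $\delta$ by the choice of $\delta$, so $D(\chi_z v_z)\in L^{p,\delta}$; alternatively, one may first replace $D$ by a compactly-supported (hence compact, hence index-preserving) perturbation whose coefficients are identically equal to the asymptotic data near each puncture, after which your support claim becomes literally true. Either of these fixes is one or two sentences, but as written the justification has a gap.
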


The following will be useful in determining the relevant Conley--Zehnder indices in our setting. 

\begin{lemma}\label{L:spectrum}
Given a constant $C\geq 0$, the spectrum $\sigma(A_C)$ of the operator
$$
A_C:= - i \frac{d}{dt} - %
\begin{pmatrix} C & 0 \\
0 & 0  
\end{pmatrix} %
\colon W^{1,p}(S^1,\C) \to L^p(S^1,\C)
$$
is the set
 $$\left \{\frac{1}{2}\left(- C - \sqrt{ C^2 + 16 \pi^2 k^2}\right) \, | \, k\in \Z \right\} \cup \left \{\frac{1}{2}\left(- C + \sqrt{ C^2 + 16 \pi^2 k^2}\right) \, | \, k\in \Z \right\}.$$
If $\lambda$ is an eigenvalue associated to $k\in \Z$, then the winding number of the corresponding eigenvector is $|k|$ if $\lambda\geq 0$ and $-|k|$ if $\lambda\leq 0$. If $C=0$, then all eigenvalues have multiplicity 2. 
If $C>0$, then the same is true except for the eigenvalues $-C$ and $0$, corresponding to $k=0$ above, both of which have multiplicity 1.  
 
In particular, the $\sigma(A_0) = 2\pi\Z$ and the winding number of $2\pi k$ is $k$. 
\end{lemma}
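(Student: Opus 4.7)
The plan is to solve the eigenvalue equation $A_C x = \lambda x$ directly by ODE methods. Viewing $\C$ as $\R^2$ with complex multiplication by $i$ given by $J_0 = \bigl(\begin{smallmatrix} 0 & -1 \\ 1 & 0\end{smallmatrix}\bigr)$, and writing $x = (x_1, x_2)^T$, the equation $-i\dot x - \bigl(\begin{smallmatrix} C & 0 \\ 0 & 0\end{smallmatrix}\bigr)x = \lambda x$ is equivalent to the first-order system
\[
  \dot x_1 = -\lambda x_2, \qquad \dot x_2 = (C+\lambda) x_1,
\]
since multiplying by $J_0$ yields $\dot x = J_0\bigl((S+\lambda I)x\bigr)$ with $S = \bigl(\begin{smallmatrix} C & 0 \\ 0 & 0\end{smallmatrix}\bigr)$. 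Differentiating once more gives $\ddot x_j = -\lambda(C+\lambda)x_j$ for $j=1,2$.

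The next step is to impose $1$-periodicity, i.e.\ work on $S^1 = \R/\Z$. Non-constant periodic solutions of $\ddot x = -\omega^2 x$ require $\omega^2 = \lambda(C+\lambda) \geq 0$ with $\omega = 2\pi k$ for some $k\in \Z\setminus\{0\}$, leading to the quadratic equation
\[
  \lambda^2 + C\lambda - 4\pi^2 k^2 = 0, \qquad \lambda = \tfrac{1}{2}\bigl(-C \pm \sqrt{C^2+16\pi^2 k^2}\bigr).
\]
This produces the two families in the statement. Constant solutions ($k=0$) require $\lambda(C+\lambda)=0$ and the additional constraints $\lambda x_2 = 0$, $(C+\lambda)x_1 = 0$, giving $\lambda=0$ or $\lambda=-C$. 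When $C>0$ these two roots are distinct and each only admits a one-parameter family of constant eigenvectors, whence multiplicity $1$; when $C=0$ they collapse to a single eigenvalue $0$ whose eigenspace consists of all constants, giving multiplicity $2$. For $k\ne 0$, the general periodic solution is a two-dimensional space spanned by the $\cos(2\pi k t)$ and $\sin(2\pi k t)$ branches, so each of the two roots has multiplicity $2$.

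For the winding numbers, I would simply exhibit the eigenvectors. Taking $k>0$ and $\omega = 2\pi k$, the eigenfunctions corresponding to $\lambda \in \{\lambda_+, \lambda_-\}$ can be written (for $\lambda \ne 0$) as
\[
  x(t) = \bigl(\cos(\omega t),\; \tfrac{\omega}{\lambda}\sin(\omega t)\bigr), \quad \bigl(\sin(\omega t),\; -\tfrac{\omega}{\lambda}\cos(\omega t)\bigr),
\]
parametrising an ellipse in $\R^2$. The sign of $\omega/\lambda$ determines the direction of rotation: positive $\lambda$ gives counterclockwise motion, so the winding number with respect to the origin is $+|k|$, while negative $\lambda$ reverses orientation giving $-|k|$. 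The extreme cases $\lambda = 0$ and $\lambda = -C$ are handled by inspection (constant eigenvectors, winding number $0$, which is consistent with both $|k|$ and $-|k|$ for $k=0$). Specialising to $C=0$ gives $\lambda_\pm(k)=\pm 2\pi|k|$, so $\sigma(A_0)=2\pi\Z$, and the eigenvalue $2\pi k$ has winding $k$ by the sign rule just established.

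The main (minor) obstacle is bookkeeping of sign conventions: keeping track that $-i$ corresponds to $J_0^{-1}$ on $\R^2$ and that the rotation sense of the eigenvector-ellipse is controlled by $\mathrm{sgn}(\omega/\lambda)$. Everything else is linear algebra and Fourier analysis on $S^1$.
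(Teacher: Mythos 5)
Your computation is correct. The paper states Lemma \ref{L:spectrum} without proof, so there is no argument to compare against, but your direct approach — rewriting $A_C x = \lambda x$ as the real first-order system $\dot x = J_0(S+\lambda I)x$, reducing to $\ddot x_j = -\lambda(C+\lambda)x_j$, imposing $1$-periodicity to force $\lambda(C+\lambda) = 4\pi^2 k^2$, and then reading the winding number off the explicit eigenvector ellipse $(\cos(2\pi k t),\, \tfrac{2\pi k}{\lambda}\sin(2\pi k t))$ via $\mathrm{sgn}(\lambda)$ — is the standard and cleanest way to establish this. The multiplicity count at $k=0$ (where for $C>0$ the constraint $Cx_1 = 0$ at $\lambda=0$, resp.\ $Cx_2=0$ at $\lambda=-C$, cuts the constant eigenspace to one real dimension) and the specialization to $C=0$ both check out. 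One cosmetic point: the sign computation $-J_0\dot x = (S+\lambda I)x \Rightarrow \dot x = J_0(S+\lambda I)x$ uses $J_0^{-1} = -J_0$, which you implicitly invoke when you "multiply by $J_0$"; spelling this out would make the derivation of $\dot x_1 = -\lambda x_2$, $\dot x_2 = (C+\lambda)x_1$ airtight, since sign errors there would flip all the winding numbers.
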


\begin{corollary} \label{C:CZ computation}
Take $C\geq 0$ and $\delta>0$ such that $[-\delta,\delta] \cap \sigma(A_C) = \{0\}$. Then
$$
\CZ(A_C + \delta) = \begin{cases}
                     0 & \text{ if } C > 0 \\
                     -1 & \text{ if } C = 0 
                    \end{cases}
\qquad \text{and} \qquad
\CZ(A_C - \delta) = 1. 
$$

\end{corollary}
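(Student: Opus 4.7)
The plan is to derive the corollary directly from Lemma~\ref{L:spectrum} by invoking the standard winding-number formula for the Conley--Zehnder index of an asymptotic Cauchy--Riemann operator on the trivial $\C$-bundle over $S^1$. Recall that if $\mathbf{A}$ is such an operator and $\lambda \notin \sigma(\mathbf{A})$, then
\[
\CZ(\mathbf{A} - \lambda) = 2\alpha_\leq(\lambda) + p(\lambda),
\]
where $\alpha_\leq(\lambda)$ is the winding number of the eigenvectors for the largest eigenvalue $\mu \leq \lambda$, the quantity $\alpha_>(\lambda)$ is defined analogously for the smallest eigenvalue $\mu > \lambda$, and $p(\lambda) = \alpha_>(\lambda) - \alpha_\leq(\lambda) \in \{0,1\}$. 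In our setup, $\CZ(A_C + \delta)$ corresponds to the cutoff $\lambda = -\delta$ and $\CZ(A_C - \delta)$ to the cutoff $\lambda = +\delta$.

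For $C = 0$, Lemma~\ref{L:spectrum} gives $\sigma(A_0) = 2\pi\Z$ with winding $k$ attached to $2\pi k$. The cutoff $-\delta$ lies in the gap $(-2\pi, 0)$, so $\alpha_\leq = -1$ and $\alpha_> = 0$, yielding $\CZ(A_0+\delta) = 2(-1)+1 = -1$; the cutoff $+\delta$ lies in the gap $(0,2\pi)$, giving $\alpha_\leq = 0$, $\alpha_> = 1$, and $\CZ(A_0-\delta) = 1$. For $C > 0$, the crucial new input from Lemma~\ref{L:spectrum} is that the two $k = 0$ eigenvalues $-C$ and $0$ \emph{both} have winding zero. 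For $A_C + \delta$ the cutoff $-\delta$ falls in the gap $(-C, 0)$ whose bounding eigenvalues both have winding $0$, so $\alpha_\leq = \alpha_> = 0$, $p = 0$, and $\CZ(A_C + \delta) = 0$. For $A_C - \delta$ the cutoff $+\delta$ falls just above $0$ (winding $0$) and just below the first $k = \pm 1$ positive eigenvalue (winding $1$), so $\alpha_\leq = 0$, $\alpha_> = 1$, and $\CZ(A_C - \delta) = 1$.

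The main obstacle is a clean justification of the winding-number formula itself in the Morse--Bott framework we are working in; I would appeal to the discussion in \cite{DiogoLisiSplit}*{Section 5.2.1}. If a self-contained argument is preferred, one can instead solve the ODE $\dot x = i(A(t) \mp \delta)x$ explicitly: the coefficient matrix is constant, so the fundamental matrix at $t=1$ is the exponential of a matrix whose determinant has sign $\pm \delta(C \mp \delta)$. Depending on the sign, this exponential is a small clockwise or counterclockwise rotation (the $C=0$ subcases), a positive hyperbolic matrix (for $A_C + \delta$ with $C > 0$), or an elliptic matrix with small counterclockwise rotation angle $\sqrt{\delta(C+\delta)}$ (for $A_C - \delta$ with $C > 0$); the classification of short symplectic paths then gives the stated indices directly. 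For the elliptic case with $C > 0$, one may alternatively homotope the parameter $sC$ from $s=1$ to $s=0$, checking that the endpoint of the symplectic path stays nondegenerate throughout, to reduce the computation to the $C = 0$ case by homotopy invariance.
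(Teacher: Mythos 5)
Your proof is correct, and the winding-number formula you invoke is the natural (and presumably intended) way to deduce the corollary from Lemma~\ref{L:spectrum}; the paper states the corollary without proof. Two small remarks. First, your worry about justifying the winding-number formula in a Morse--Bott setting is misplaced: while $A_C$ itself has $0$ in its spectrum, the operators $A_C\pm\delta$ to which you apply the formula are nondegenerate by hypothesis, so this is the standard Hofer--Wysocki--Zehnder statement $\CZ(\mathbf{A})=2\alpha_-+p$ for a nondegenerate asymptotic operator, and no Morse--Bott refinement is needed. Second, in the alternative ODE argument there is a sign slip: with the paper's convention $\frac{d}{dt}x = i(A_z(t)-\delta)x$ for $\mathbf{A}_z+\delta$ and $i=J_0=\left(\begin{smallmatrix}0&-1\\1&0\end{smallmatrix}\right)$, the coefficient matrix for $A_C+\delta$ is $J_0\left(\begin{smallmatrix}C-\delta&0\\0&-\delta\end{smallmatrix}\right)=\left(\begin{smallmatrix}0&\delta\\C-\delta&0\end{smallmatrix}\right)$ with determinant $-\delta(C-\delta)$, and likewise $A_C-\delta$ gives determinant $+\delta(C+\delta)$ --- the opposite of the $\pm\delta(C\mp\delta)$ you wrote. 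Nonetheless your classification into short clockwise and counterclockwise rotations, a positive hyperbolic matrix, and a short elliptic rotation, and the resulting indices $-1,\,1,\,0,\,1$, are all correct, as is the homotopy reduction of the elliptic $C>0$ case to $C=0$.
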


\subsection{From Floer cylinders to pseudoholomorphic curves} \label{S:ansatz_Floer_to_holo}

\label{Floer_to_holo}

We show that every Floer cylinder as in \eqref{Floer eq Y} 
can be obtained from a $J_Y$-holomorphic curve and a solution
to an auxiliary equation.

It is useful to first recall that for every integer $T \geq 1$,
there is a $Y$-family of Reeb orbits of period $T$, and a bijective
correspondence to the $Y$-family of 1-periodic Hamiltonian orbits
contained in $\{b\}\times Y \subset \R \times Y$, where $h'(\e^b)
= T$.  
A Reeb orbit $\gamma$ \defin{corresponds} to the Hamiltonian
orbit $t \mapsto (b,\gamma(Tt))$.  We say that two Reeb orbits of
the same period are the {\em same as unparametrized orbits} if they
differ by a constant translation on the domain.

\begin{proposition}
\label{Ansatz1}
Let $\Gamma\subset \R\times S^1$ be either the empty set or the singleton $\{P\}$. Suppose $\tilde v = (b, v)\colon \R\times S^1 \setminus \Gamma \rightarrow \R \times Y$
is the upper level of a split Floer cylinder as in Cases (1) and (2) of Proposition \ref{P:split SH cases}.
Then, there exists a pair $(\tilde u, {\bf e})$ consisting of a map $\tilde u = (a, u)\colon \R\times S^1 \setminus \Gamma \rightarrow
\R \times Y$ and of a map ${\bf e} = (e_1, e_2)\colon \R\times S^1\setminus \Gamma \rightarrow \R \times S^1$, with the
following properties:
\begin{enumerate}
\item The map $\tilde u$ is a finite Hofer energy $J_Y$-holomorphic curve.
\item If $\tilde v$ converges to a Hamiltonian orbit at $\pm \infty$, then $u$ converges to the 
	corresponding parametrized Reeb orbit $\gamma_\pm$ at $\pm\infty$, i.e.~$v(\pm \infty, t) = u(\pm \infty, t)$.
\item If $\Gamma = \{P\}$, then $u$ is asymptotic to the same unparameterized Reeb orbit that $v$ converges to at $P$, i.e.~the asymptotic limit is the same orbit up to a phase shift.
\item The map ${\bf e} \colon \R\times S^1\setminus \Gamma \to \R \times S^1$ satisfies the equation   
	\begin{equation} \label{E:cylinder0}
	de_1 -de_2 \circ i +  h'(\e^{b}) ds = 0 .
	\end{equation}
\item The original Floer solution is given by 
\[
(b, v) = (a + e_1, \phi_R^{e_2} \circ u )
\] 
	where $\phi_R^t \colon Y\to Y$ denotes Reeb flow for time $t\in S^1$. 
\end{enumerate}

\noindent 
The pair $(\tilde u,{\bf e})$ is unique, up to replacing $\tilde u = (a,u)$ with $(a+c_1,u)$ and replacing ${\bf e}= (e_1,e_2)$ with $(e_1 - c_1, e_2)$, for some constant $c_1\in \R$.

Suppose now that $\Gamma = \emptyset$ and that $\tilde v$ is the upper level of
a split Floer cylinder as in Case (3) of Proposition \ref{P:split SH cases}.
Then, there exists a pair $(\tilde u, \bf e)$ as above, with the following difference:
\begin{enumerate}
 \item[(2')] If $\tilde v$ converges to a Hamiltonian orbit at $+\infty$ and to a Reeb orbit at $-\infty$, then $u$ converges to the corresponding Reeb orbit $\gamma_+$ at $+\infty$ and to the same Reeb orbit $\gamma_-$ at $-\infty$. 
\end{enumerate}

\noindent
The pair $(\tilde u,{\bf e})$ is again unique up to an $\R$-action.

\end{proposition}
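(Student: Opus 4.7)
My strategy would be to verify the forward direction (ansatz yields a Floer solution) by direct computation, and then invert the process by solving a scalar Cauchy--Riemann-type PDE for $\mathbf{e}$, treating $\tilde v$ as data. For the forward direction, the key ingredients are $J_Y\partial_r = R$ and $J_YR = -\partial_r$, the $\R$- and Reeb-invariance of $J_Y$, and the fact that the Reeb flow $\phi_R^{e_2}$ preserves $\alpha$, $\xi$, and commutes with $J_\xi := J_Y|_\xi$. Differentiating $\tilde v = (a+e_1,\phi_R^{e_2}\circ u)$ and decomposing the Floer equation \eqref{Floer eq Y} into its contact, Reeb, and $\partial_r$ components, the contact component becomes exactly the contact component of $\bar\partial_{J_Y}\tilde u = 0$, while the $\partial_r$ and Reeb components separate cleanly into the $\R$-part of $J_Y$-holomorphicity of $\tilde u$ plus precisely the two real scalar equations comprising \eqref{E:cylinder0}.

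For the converse direction, I start from a finite hybrid-energy Floer cylinder $\tilde v = (b,v)$ and set $w := e_1 + ie_2$. Equation \eqref{E:cylinder0} then takes the compact form
\[
\bar\partial w = -\tfrac{1}{2}\,h'\!\big(e^{b(s,t)}\big),
\]
a scalar equation whose right-hand side is fully determined by $\tilde v$. Once a solution $w$ with the correct asymptotic behavior is produced, defining $\tilde u := (b - e_1,\ \phi_R^{-e_2}\circ v)$ would automatically give a $J_Y$-holomorphic map, by the forward computation run in reverse. The finite Hofer energy of $\tilde u$ and its convergence to the claimed Reeb asymptotes would then follow from the finiteness of the hybrid energy of $\tilde v$ combined with the boundedness and asymptotic control of $\mathbf{e}$.

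The main obstacle, and the heart of the argument, is the existence of $w$ with the correct asymptotic windings. Since $e_2$ is $S^1$-valued, $w$ is properly viewed as a section of a complex line bundle on $\dot S := \R\times S^1\setminus \Gamma$ whose winding numbers at the punctures are forced by the Floer data (essentially by the asymptotic values $h'(e^{b_\pm}) = k_\pm K$ of the coefficient of the Hamiltonian vector field, and by the value $0$ at an augmentation puncture where $b\to -\infty$). Because $\tilde v$ converges exponentially to its Morse--Bott asymptotes (Lemma \ref{L:convergence to orbit}), and $h$ is flat near $0$, the right-hand side decays exponentially to its constant asymptotic values. After choosing an explicit model section $w_0$ carrying the prescribed winding at each puncture and with $\bar\partial w_0$ matching those asymptotic constants, the problem reduces to solving $\bar\partial w_1 = g$ with exponentially decaying $g$, for $w_1$ in a weighted Sobolev space $W^{1,p,\delta}_{\mathbf V}(\dot S,\C)$ whose subspaces $V_z\subset \ker(\mathbf A_z)$ record the constant-mode kernels of the asymptotic operators $A_0 = -i\,d/dt$ described in Lemma \ref{L:spectrum}. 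Corollary \ref{C:CZ computation} supplies the shifted Conley--Zehnder indices, and Theorem \ref{T:RiemannRochMB} then yields a Fredholm Cauchy--Riemann operator whose index matches the expected dimension of the $\R$-shift ambiguity; the sole potential cokernel element is dual to this $\R$-shift and, by an integration-by-parts argument using the form of $g$ and the exponential asymptotics, pairs to zero with $g$, producing the desired $w$.

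Uniqueness up to the stated $\R$-action would then be a direct consequence. Two solutions $w$, $w'$ producing the same $\tilde v$ have a difference that solves the homogeneous $\bar\partial$ equation on $\dot S$ with all windings trivial and bounded decay at every puncture; by removal of singularities, $w - w'$ extends to a bounded holomorphic function on $S^2$, hence is constant, and the $S^1$-part of this constant is forced to zero by the requirement that $u$ and $u'$ agree with $v$ along the asymptotic orbits, leaving only the $\R$-shift parameter $c_1$. In Case (3) the only modifications occur at the negative puncture: $\tilde v$ converges there to a Reeb orbit in $\{-\infty\}\times Y$ rather than to a Hamiltonian orbit, so $h'(e^b)\to 0$ at $-\infty$, the winding of $e_2$ at $-\infty$ changes accordingly, and the matching condition becomes $u(-\infty,\cdot) = \gamma_-$ as Reeb orbits. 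All steps above would carry over verbatim with these bookkeeping changes.
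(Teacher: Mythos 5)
Your overall strategy is the same as the paper's: rewrite Equation~\eqref{E:cylinder0} as a scalar inhomogeneous Cauchy--Riemann equation $\bar\partial w=-\tfrac12 h'(e^b)$, whose right-hand side is determined by the given Floer cylinder and decays exponentially to its asymptotic constants (by Lemma~\ref{L:convergence to orbit} and the flatness of $h$ near $0$), solve it in weighted Sobolev spaces with Morse--Bott asymptotics using Theorem~\ref{T:RiemannRochMB} and Lemma~\ref{L:spectrum}/Corollary~\ref{C:CZ computation}, get uniqueness from the kernel being constants, and verify algebraically that $\tilde u:=(b-e_1,\,\phi_R^{-e_2}\circ v)$ is $J_Y$-holomorphic. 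The skeleton is correct.

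There are, however, two genuine gaps. First, the ``winding number'' framing is wrong: the $S^1$-valued component $e_2$ has trivial winding at all punctures (this is built into the spaces $\X^j$, which is why the paper lifts $\mathbf e$ to a $\C$-valued $\tilde{\mathbf e}$). The asymptotic data that is actually forced by the Floer solution is the \emph{linear growth in $s$ of the real part $e_1$}, roughly $e_1\sim -T_\pm s$ as $s\to\pm\infty$, which the paper subtracts off via the cutoff function $\nu_j$. Viewing $w$ as a section of a nontrivial line bundle and speaking of the winding of $e_2$ changing at $-\infty$ in Case~3 are both artifacts of this misidentification; in Case~3 what changes is that $e_1$ converges to a free constant at $-\infty$ because $h'(e^b)\to 0$ there. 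Second, the index and cokernel bookkeeping is inconsistent. The linearized operator in the paper's spaces has Fredholm index $2$ (not $1$), its kernel is the $2$-dimensional space of $\C$-constants, and it is \emph{surjective}; the paper gets surjectivity from Wendl's automatic transversality (Lemma~\ref{L:cylTransversality1}), not by showing the inhomogeneity pairs to zero with a putative $1$-dimensional cokernel. Your assertion that the index matches the $\R$-shift ambiguity conflates the $2$-dimensional kernel of the operator with the $1$-dimensional ambiguity in the final statement; that reduction comes from a separate step you omit, namely Lemma~\ref{L:aprioriMarker1}, a Stokes'-theorem argument showing the asymptotic constants of $e_2$ at $\pm\infty$ agree and can be normalized to $0$, which kills one of the two kernel parameters.
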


In the following, we will construct the function $\mathbf{e}$ in
Proposition \ref{Ansatz1} by studying the differential equation
\eqref{E:cylinder0}.

To this end, we need to construct spaces of maps $\mathbf{e} \colon \R \times S^1 \setminus \Gamma \to \R \times S^1$.
These will be subspaces of  $W^{1,p}_{\text{loc}}(\R\times S^1\setminus \Gamma, \R\times S^1)$ satisfying different types of asymptotic conditions.
The maps $\ee$ we consider will induce trivial maps of fundamental groups.
It will be more convenient therefore to consider the lift to the universal cover
$\tilde \ee \colon \R \times S^1 \setminus \Gamma \to \C$, and study Equation \eqref{E:cylinder0} on this space. This lift is well-defined up to an additive constant.

As we will see below, all the punctures have the same asymptotic operator, namely
\begin{equation} \label{asymptotic operator A}
 A := - i \frac{d}{dt} \colon W^{1,2}(S^1,\C) \to L^2(S^1,\C).
\end{equation}
Note that $A$ is self-adjoint as a partially defined operator $L^2(S^1,\C) \to L^2(S^1,\C)$.
Lemma \ref{L:spectrum} describes the spectrum of $A$. For the remainder of this section, we will fix $0 < \delta < 2\pi$, which is smaller than the absolute value of every non-zero eigenvalue of $A$. 
We will use this $\delta$ below when defining weighted Sobolev spaces.

We introduce some notation first.  For a given $z \in \R \times S^1$, or $z =
\pm \infty$, we fix $\mu_z$ to be a bump function supported near $z$ and
identically 1 near $z$.  
Recall that the space of functions $W^{1,p,\delta}_{(0,\C)}( \R \times S^1, \C)$ consists of complex-valued functions
exponentially decaying to $0$ at $-\infty$ and exponentially decaying
to an unspecified constant at $+\infty$.  In the case of a punctured
cylinder, $W^{1,p,\delta}_{(0, \C; \C)}(\R \times S^1 \setminus \{ P \}, \C)$ 
denotes the space of functions exponentially decaying to $0$
at $-\infty$, decaying to a free constant at $P$ and decaying to a free
constant at $+\infty$.

In the case with $\Gamma= \{P\}$, we fix a parametrization 
\begin{align*}
\varphi_P \colon (-\infty, -1]\times S^1 &\to \R\times S^1 \setminus \Gamma \\
(\rho,\eta) &\mapsto P + \e^{2\pi(\rho + i \eta)}
\end{align*}
of a neighborhood of $P$, under the identification of $\R\times S^1$ with $\C/(z\sim z + i)$. 
Recall that the measure on $\R \times S^1 \setminus \Gamma$ and the norm on derivatives 
used for defining $W^{1,p}$ and $L^p$
comes from these cylindrical coordinates near $P$.
In particular, we make the following observation that will be used multiple times,
\begin{equation} \label{E:ds_is_exponential}
\varphi_P^* ds = 2\pi \e^{2\pi \rho} \cos(2\pi \theta) d\rho - 2\pi \e^{2\pi \rho} \sin(2\pi\theta) d\theta
\end{equation}
In particular then, $| \varphi_P^*ds | = C \e^{2\pi \rho}$.

We will consider solutions to equation \eqref{E:cylinder0} in three different spaces, corresponding to 
Cases 1, 2 and 3 appearing in Proposition \ref{P:split SH cases} (recall Figures \ref{case_1_fig}, \ref{case_2_fig} and \ref{case_3_fig}). 

    If $\tilde v = (b, v)$ is the upper level of a split Floer cylinder as in Cases
    1,2 or 3 of Proposition \ref{P:split SH cases}, it converges at
    $+\infty$ to a Hamiltonian orbit of the form $(b_+, \gamma(T_+ t))$, where
    $\gamma$ is a parametrization of a simple (embedded) Reeb orbit in $Y$
    (and thus satisfying $h'(\e^{b_+}) = T_+$). Similarly, at $-\infty$,
    $\tilde v$ converges either to a Hamiltonian trajectory of the form $(b_-,
    \gamma(T_- t))$ or $v$ converges to $\gamma(T_- t)$ at $-\infty$. In the
    following, these periods $T_\pm$ will be used to define the relevant
    function spaces.

\begin{itemize}
\item[Case 1:] $\Gamma = \emptyset$ and both asymptotic limits of $\tilde v$ are Hamiltonian orbits. 

We say $\tilde \ee \in \tilde \X^1$ if
\[
\tilde \ee +  (T_+ s + i 0) \, \mu_{+\infty} +  (T_- s + i 0) \, \mu_{-\infty} \in W^{1,p, \delta}_{(\C; \C)} (\R \times S^1, \C).
\]

\item[Case 2:] $\Gamma = \{ P\}$ and both asymptotic limits of $\tilde v$ are Hamiltonian orbits. 
We say $\tilde \ee \in \tilde \X^2$ if
\[
\tilde \ee +  (T_+ s + i 0) \, \mu_{+\infty} +  (T_- s + i 0) \, \mu_{-\infty} \in W^{1,p,\delta}_{(\C, \C; \C)}( \R \times S^1 \setminus \{ P \}, \C )
\]

\item[Case 3:] $\Gamma = \emptyset$ and the $+\infty$ asymptotic limit of $\tilde v$ is a Hamiltonian orbit,
whereas at $-\infty$, it converges to an orbit cylinder over a Reeb orbit in $\{ -\infty \} \times Y$ (this is (2') in Proposition \ref{Ansatz1}). 
We say $\tilde \ee \in \tilde \X^3$ if 
\[
\tilde \ee +  (T_+ s + i 0) \, \mu_{+\infty} \in W^{1,p\delta}_{(\C; \C)}( \R \times S^1, \C).
\]
\end{itemize}
We then define $\X^1, \X^2, \X^3$ to be the set of functions obtained as compositions of functions in $\tilde \X^1, \tilde \X^2, \tilde \X^3$ with the projection map $\C \to \R \times S^1$.

    Notice that $\tilde \X^1, \tilde \X^2, \tilde \X^3$ are described as 
affine spaces modelled on $W^{1,p,\delta} \oplus \C^k$, where $k=2$ in Cases 1 and 3, and $k=3$ in Case 2, so $\X^i$ are Banach manifolds.

\begin{remark}
The summands $(T_\pm s + i0) \, \mu_{\pm\infty}$ keep track of the fact that the Floer trajectory $\tilde v=(b,v)$ and the pseudoholomorphic curve $\tilde u=(a,u)$ in Proposition \ref{Ansatz1} have different asymptotic behavior at $\pm\infty$, and that $e_1 = b-a$. 

Observe that in all three cases, we impose $\delta$-decay to unspecified constants at $\pm\infty$ (and possibly $P$). 

\end{remark}

Denote the spaces of solutions to \eqref{E:cylinder0} in $\tilde \X^j$
by $\tilde {\mathcal{C}}^{j}(\tilde v)$, where $j\in \{1,2,3\}$, and let
$\mathcal C^j(\tilde v)$ be the corresponding space of maps to $\R \times S^1$. We can
write $\tilde {\mathcal C}^j(\tilde v)$ as the preimage of zero under
the operator
\begin{align*}
 \tilde \X^j &\to L^{p,\delta}(T^*( \R\times S^1 \setminus \Gamma)) \\
\tilde \ee = (e_1,e_2) &\mapsto d e_1 - de_2 \circ i + h'(\e^b) ds.
\end{align*}
Observe that this map descends to a map $\X^j \to L^{p,\delta}(T^*(\R \times S^1
\setminus \Gamma))$. We then have that $\mathcal C^j(\tilde v)$ is the preimage
of zero under this quotient operator.

It will be useful to consider the corresponding linearized operators. 
Let $\ee \in \mathcal C^j(\tilde v)$. The corresponding linearized operator is 
\begin{equation} \label{E:cylLinear0}
\begin{aligned} 
D^j \colon T_{\bf e} \X^j &\to L^{p,\delta}(T^*( \R \times S^1 \setminus \Gamma) ) \\
E = E_1 + i E_2 &\mapsto dE_1 - d E_2 \circ i
\end{aligned}
 \end{equation}
where 
in Cases 1 and 3, we have $T_{\bf e} \X^j = W^{1,p,\delta}_{(\C; \C)}(\R\times S^1, \C)$ and 
and in Case 2, we have $T_{\bf e} \X^2 = W^{1,p,\delta}_{(\C, \C; \C)}(\R\times S^1 \setminus \Gamma, \C)$.

Evaluating \eqref{E:cylLinear0} at $\partial_s$ and $\partial_t$ and rearranging terms, we get operators with values in $L^{p,\delta}(\R\times S^1 \setminus \Gamma, \C)$ given by 
\begin{equation} \label{E:cylLinear0 s}
\frac{\partial E}{\partial s} + i \frac{\partial E}{\partial t}.
\end{equation} 
By an abuse of notation, denote these operators also by $D^j$. Observe that the
asymptotic operators to $D^j$ at all punctures are given by 
$-i\frac{d}{dt}$.

\begin{lemma} \label{L:cylTransversality1}
Fix $\e \in \mathcal C^{j}(\tilde v)$, for $j\in \{1,2,3 \}$. Then, the linearized operator $D^{j}$ has index 2, is surjective and its kernel includes the span of the generator of the $S^1$-action on the target $\R\times S^1$.
As a consequence, the space $\mathcal C^j(\tilde v)$ has virtual dimension 2. 
\end{lemma}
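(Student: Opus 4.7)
The plan is to treat $D^j$ as a weighted $\bar\partial$ operator on a punctured sphere, compute its Fredholm index via Theorem \ref{T:RiemannRochMB}, and then identify its kernel explicitly; surjectivity will follow from matching the index with the kernel dimension, and the $S^1$-generator will sit visibly in the kernel.

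To set things up, the expression \eqref{E:cylLinear0 s} exhibits $D^j$ as (a positive multiple of) the standard Cauchy--Riemann operator on $\C$-valued functions $E = E_1 + iE_2$. At every puncture of $\dot S$ --- the pair $\pm\infty$ in all three cases, together with $P$ in Case 2 with the cylindrical coordinates induced by $\varphi_P$ --- the asymptotic operator is (up to a positive rescaling) the degenerate operator $A_0 = -i\,d/dt$ of Lemma \ref{L:spectrum}, whose kernel is $\C$. The tangent spaces $T_\ee \X^j$ allow $\ee$ to converge to a free constant at each puncture with weight $\delta$, so the asymptotic subspace at each puncture is $V_z = \ker A_0 = \C$, with $\dim_\R V_z = 2$ and $\codim V_z = 0$.

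With $\CZ(A_0 + \delta) = -1$ from Corollary \ref{C:CZ computation}, applying Theorem \ref{T:RiemannRochMB} becomes routine. In Cases 1 and 3 one has $\chi(\R\times S^1)=0$ with one puncture of each sign, giving $\Ind D^j = 0 + (-1+2) - (-1+0) = 2$; in Case 2 one has $\chi = -1$ with one positive and two negative punctures, giving $-1 + (-1+2) - 2(-1+0) = 2$.

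The main step --- which simultaneously yields surjectivity and identifies the kernel --- is the observation that any $E \in \ker D^j$ is a holomorphic $\C$-valued function on $\dot S$ with a finite limit at each puncture. Compactifying $\R\times S^1$ to $\CP^1$ by adjoining $\pm\infty$, the removable singularity theorem extends $E$ holomorphically across every puncture (including $P$ in Case 2), so $E$ is a constant. Thus $\dim_\R \ker D^j = 2$, matching the index and forcing $\coker D^j = 0$. Finally, the infinitesimal generator of the $S^1$-action on the target $\R \times S^1$ is the constant function $E = i$, which trivially satisfies $D^j(i) = 0$. The only real bookkeeping point I anticipate is verifying that under the parametrization $\varphi_P$ of \eqref{E:ds_is_exponential} the asymptotic operator at $P$ genuinely takes the form $A_0$ up to a positive factor, so that Corollary \ref{C:CZ computation} applies as stated; this is in the spirit of \cite{DiogoLisiSplit}*{Section 5.2}.
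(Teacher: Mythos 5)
Your index computation is identical to the paper's, but the surjectivity/kernel step takes a genuinely different — and more elementary — route. The paper deduces surjectivity from Wendl's automatic transversality criterion (\cite{WendlAutomatic}*{Proposition 2.2}), comparing $c_1(E,l,\mathbf{A}_\Gamma)$ against the index, and extracts the kernel description from the companion paper's Lemma 5.20. You instead argue directly: a kernel element is a holomorphic $\C$-valued function converging exponentially to constants at all punctures, hence bounded, hence it extends across the punctures to a holomorphic map $\CP^1 \to \C$ and is constant by Liouville; since $\dim_\R \ker = 2$ equals the index, the cokernel vanishes. Both are correct, and in fact the paper explicitly flags that ``a more elementary proof of this result is possible'' but deliberately chooses the automatic-transversality machinery because the same argument is needed verbatim in the harder Lemma \ref{L:cylTransversality}, where the asymptotic operators $A_C$ with $C>0$ destroy the clean reduction to constants. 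Your approach buys transparency and self-containment for this specific lemma at the cost of not generalizing to Section \ref{S:ansatz_holo_to_Floer}. The one bookkeeping point you flag — that the asymptotic operator at $P$ is $A_0$ up to positive rescaling in the $\varphi_P$ coordinates — is needed by both proofs and is uncontroversial, since the linearized operator \eqref{E:cylLinear0 s} is the constant-coefficient $\bar\partial$ operator and the conformal change of coordinates near $P$ only rescales it.
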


\begin{proof}
Since the operator $D^j$ in \eqref{E:cylLinear0} is just the standard Cauchy--Riemann operator, a more elementary proof of this result is possible, but we illustrate here the methods also used in the more technical setting of the next section. 

We start with the Cases $j=1$ or $3$. Fix ${\bf e}$ as in the statement. Recall that the asymptotic operator is $A = - i\frac{d}{dt}$ at all punctures. 
We need to compute the Conley--Zehnder index of the perturbed asymptotic operator $A+\delta$, which is $-1$ by Corollary \ref{C:CZ computation}. 
Therefore using the fact that all vector spaces in $\mathbf{V}$ are the kernels of the corresponding asymptotic operators, which we identified with $\C$, Theorem \ref{T:RiemannRochMB} implies that 
$$
\Ind D^{j} = (-1+2)-(-1) = 2.
$$
Following the notation of Wendl \cite{WendlAutomatic}*{Equation (2.5)}, we have 
$$
c_1(E,l,\textbf{A}_\Gamma) = \frac{1}{2}(2-2) = 0 < 2 = \Ind \widehat D^{j}
$$
and surjectivity and the statement about the kernel follow from \cite{WendlAutomatic}*{Proposition 2.2}
together with \cite{DiogoLisiSplit}*{Lemma 5.20}.

Case 2 is slightly different, since there is an additional puncture at $P$. Its asymptotic operator is again $A$. The Euler characteristic of the domain is now $-1$. 
Therefore, 
$$
\Ind D^{2} = -1 + \big((-1+2)-(-1)-(-1)\big) = 2,
$$
$$
c_1(E,l,\textbf{A}_\Gamma) = \frac{1}{2}(2-2) = 0 < 2 = \Ind \widehat D^{2}
$$
and we can once more apply \cite{WendlAutomatic}*{Proposition 2.2} together with
\cite{DiogoLisiSplit}*{Lemma 5.20}. 
\end{proof}

\begin{lemma} \label{smooth_solutions_Ansatz0}
    Let $\ee \in \CC^j$, $j=1, 2, 3$. Then, $\ee$ admits a smooth extension 
    $\R \times S^1 \to \R \times S^1$ that satisfies 
    Equation \eqref{E:cylinder0}.
\end{lemma}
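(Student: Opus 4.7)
The plan is to interpret \eqref{E:cylinder0} as an inhomogeneous Cauchy--Riemann equation, apply elliptic regularity away from the punctures, and, in Case 2, remove the singularity at $P$ using the fact that the Hamiltonian is supported away from the Reeb end to which $\tilde v$ converges there.

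First I would lift $\ee = (e_1, e_2)$ to a map $\tilde \ee = e_1 + i e_2 \colon \R \times S^1 \setminus \Gamma \to \C$, which is possible by the definition of $\X^j$ as the image of $\tilde \X^j$ under the projection $\C \to \R \times S^1$. Evaluating \eqref{E:cylinder0} on the basis $\partial_s, \partial_t$ and using $i \partial_s = \partial_t$ on the cylinder, a direct computation shows that \eqref{E:cylinder0} is equivalent to
\[
    \bar\partial \tilde \ee \;=\; -\tfrac{1}{2}\, h'(\e^{b}).
\]
Because $b$ is the smooth first coordinate of $\tilde v$, the right-hand side is a smooth function on $\R \times S^1 \setminus \Gamma$. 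Standard elliptic regularity applied to this inhomogeneous Cauchy--Riemann equation then upgrades the a priori $W^{1,p,\delta}_{\mathrm{loc}}$ regularity of $\tilde \ee$ to smoothness on $\R \times S^1 \setminus \Gamma$, and projecting back to $\R \times S^1$ yields smoothness of $\ee$. In Cases 1 and 3, where $\Gamma = \emptyset$, this finishes the proof.

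The main, and essentially only, non-trivial point is the removable singularity at $P$ in Case 2. The key observation is that in Case 2 the Floer cylinder $\tilde v = (b, v)$ has a negative puncture at $P$ whose asymptote lies in $\{-\infty\} \times Y$, so $b(z) \to -\infty$ as $z \to P$. By Definition \ref{def:J_shaped}, the admissible function $h$ vanishes on $(0, 2]$, hence $h'(\e^{b}) \equiv 0$ in a punctured neighbourhood of $P$. In particular, $\tilde \ee$ is genuinely holomorphic near $P$.

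To conclude, I would combine this with the asymptotic behaviour built into $\tilde \X^2$: the definition forces $\tilde \ee$ to converge exponentially, at rate $\delta > 0$, to some complex constant $c_P$ at $P$, and in particular $\tilde \ee$ is bounded there. Riemann's classical removable singularity theorem extends $\tilde \ee - c_P$ holomorphically across $P$, and projecting back to $\R \times S^1$ produces the desired smooth extension of $\ee$, which satisfies \eqref{E:cylinder0} on all of $\R \times S^1$ by continuity. I do not expect any genuine obstacle: the equation is linear in $\ee$ with smooth inhomogeneity, and the support condition on $h$ trivialises the equation in a neighbourhood of the only puncture, so the removable singularity step is elementary.
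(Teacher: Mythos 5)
Your proposal is correct and follows essentially the same route as the paper's proof: elliptic regularity with the smooth inhomogeneity $h'(\e^{b})$ in Cases 1 and 3, and in Case 2 the observation that $h'(\e^{b})$ vanishes near $P$ (since $b \to -\infty$ there and $h$ is supported in $\{\rho > 2\}$), so $\tilde\ee$ is holomorphic near $P$ and the exponential decay to a constant built into $\tilde\X^2$ removes the singularity.
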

\begin{proof}
    Let $\ee \in \CC^j$ be a solution. Then, in Cases 1 and 3, the result
    follows immediately from elliptic regularity and the fact that $h'(\e^b)$
    is a smooth function on the cylinder.

    For Case 2, observe that $h'(\e^r)$ vanishes for $r << 0$. 
    Therefore, $h'(\e^b)$ vanishes in a neighbourhood of the puncture $P$,
    and thus extends as a smooth function on the cylinder. Furthermore, 
    $\ee$ is holomorphic in this same punctured neighbourhood, and the
    requirement it be in $W^{1,p, \delta}_\C( \R^- \times S^1)$ forces it to
    have a continuous and thus smooth extension across the puncture.

    It follows therefore that $\ee$ is smooth in Case 2 as well.
\end{proof}

\begin{lemma} \label{L:aprioriMarker1}
Let $\tilde \ee = (e_1, \tilde e_2) \colon \R\times S^1 \to \C$ 
be a lift of a solution $\e =(e_1, e_2) \in
\CC^j$. 
Then, for every $t\in S^1$
\[
\lim_{s\to -\infty } \tilde e_2(s,t) = \lim_{s\to \infty } \tilde e_2(s,t).
\]
\end{lemma}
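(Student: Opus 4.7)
The plan is to extract from \eqref{E:cylinder0} a source-free scalar equation and then apply Stokes' theorem. Decomposing \eqref{E:cylinder0} using $i\partial_s = \partial_t$ yields the two real equations
\begin{align*}
\partial_s e_1 - \partial_t \tilde e_2 &= -h'(\e^b), \\
\partial_t e_1 + \partial_s \tilde e_2 &= 0,
\end{align*}
and the second is precisely the condition that the $1$-form $\alpha := \tilde e_2\, dt - e_1\, ds$ be closed on $\R\times S^1 \setminus \Gamma$, as a direct computation of $d\alpha = (\partial_s \tilde e_2 + \partial_t e_1)\, ds \wedge dt$ confirms. I would then apply Stokes to $\alpha$ on the truncated region $[-N, N]\times S^1$ (with a small disk $D_\varepsilon(P)$ removed, in Case 2). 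Since $ds$ vanishes on the vertical circles $\{\pm N\}\times S^1$, the boundary integrals there collapse to $\pm\int_{S^1} \tilde e_2(\pm N, t)\, dt$, giving
$$
\int_{S^1} \tilde e_2(N, t)\, dt \;-\; \int_{S^1} \tilde e_2(-N, t)\, dt \;=\; \oint_{\partial D_\varepsilon(P)} \alpha,
$$
with the right-hand side absent in Cases 1 and 3.

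In Case 2, the definition of $\tilde\X^2$ forces $\tilde\ee$ to converge exponentially to a finite constant at $P$, so $\tilde e_2$ and $e_1$ are uniformly bounded on a neighbourhood of $P$. Combined with the fact (cf.~\eqref{E:ds_is_exponential} and its analogue for $dt$) that both $|\varphi_P^* ds|$ and $|\varphi_P^* dt|$ are $O(\varepsilon)$ on the circle of Euclidean radius $\varepsilon$ around $P$, this makes the puncture integral $O(\varepsilon)$, so it vanishes as $\varepsilon \to 0$ and the right-hand side of the displayed identity drops out.

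Finally, the defining asymptotic conditions for $\tilde\X^j$ guarantee that for each fixed $t$, $\lim_{s\to\pm\infty}\tilde e_2(s, t)$ exists, is $t$-independent (call it $\beta_\pm$), and is approached exponentially in $s$; in particular $\int_{S^1} \tilde e_2(\pm N, t)\, dt \to \beta_\pm$ as $N \to \infty$. Passing to the limit in the displayed identity therefore yields $\beta_+ = \beta_-$, which is exactly the claim. The only place any real care is needed is the puncture estimate in Case 2, and this is essentially automatic from the exponential coordinates $\varphi_P$ together with the boundedness of $\tilde\ee$ near $P$, so I do not anticipate any serious obstacle.
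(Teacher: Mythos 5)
Your proof is correct and rests on the same basic mechanism as the paper's: apply Stokes' theorem to show the circle averages of $\tilde e_2$ at the two ends coincide, then use the asymptotic conditions (which force $\tilde e_2$ to converge to $t$-independent constants) to upgrade this to a pointwise statement. Identifying the closed $1$-form $\alpha = \tilde e_2\, dt - e_1\, ds$ is just a repackaging of the paper's chain of identities, which wedges the defining equation with $ds$ after rotating by $i$; the two computations are the same. The one genuine point of departure is the handling of the puncture in Case~2: the paper invokes Lemma~\ref{smooth_solutions_Ansatz0} to extend $\tilde\ee$ smoothly across $P$ and then works on the honest cylinder $[-c,c]\times S^1$, whereas you excise a small disk $D_\varepsilon(P)$ and bound the extra boundary contribution. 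Your estimate is correct --- the asymptotic condition in $\tilde\X^2$ makes $\tilde\ee$ bounded near $P$, and (in the cylindrical coordinates $(\rho,\theta)$) $\varphi_P^*ds$ and $\varphi_P^*dt$ have coefficients of size $O(\e^{2\pi\rho}) = O(\varepsilon)$, so the boundary integral vanishes in the limit. Your version is more self-contained since it does not depend on the extension lemma, at the cost of one extra estimate; the paper's is shorter because it reuses an already-established smoothness result. One thing you leave implicit: the application of Stokes needs $\tilde\ee$ to be $C^1$ away from the puncture, which holds by elliptic regularity and is exactly the content of Lemma~\ref{smooth_solutions_Ansatz0} on the complement of $P$.
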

\begin{proof}
    By Lemma \ref{smooth_solutions_Ansatz0}, it follows that $\tilde \ee$ is
    smooth and satisfies the equation
    \[
        de_1 - d \tilde{e_2} \circ i + h'(\e^b) ds =0.
    \]

For each $c >> 1$, let $S_c := [-c,c] \times S^1 \subset \R \times S^1$. 
Then, 
\begin{align*}
\int_{\partial S_c} \tilde e_2 \, dt 
&= \int_{S_c} d\tilde e_2 \wedge dt = \int_{S_c} de_2 \wedge dt \\
&= \int_{S_c} de_2 \circ i \wedge dt\circ i \\
&= \int_{S_c} ( de_1 + h'(\e^{b})ds ) \wedge ds \\
&= \int_{S_c} de_1 \wedge ds \\
&= \int_{\partial S_c} e_1 \, ds =0
\end{align*}
since $ds|_{\partial S_c} = 0$. Hence, we obtain 
\[
\int_{\{c\}\times S^1} \tilde e_2(c, t) dt 
	 = \int_{\{-c\}\times S^1} \tilde e_2(-c, t) dt.
\]
The result now follows from taking a limit as $c \to +\infty$.
\end{proof}

\begin{lemma} \label{L:inhomoCRexistence}
Equation \eqref{E:cylinder0} has a solution in $\X^j$ for $j\in \{1,2,3\}$, which is unique up to adding a constant in the target $\R\times S^1$.
\end{lemma}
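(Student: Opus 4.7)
The plan is to recast \eqref{E:cylinder0} as an affine-linear equation on the Banach manifold $\tilde \X^j$, reducing existence and uniqueness to the Fredholm properties of the Cauchy--Riemann operator $D^j$ established in Lemma \ref{L:cylTransversality1}. Fix a convenient reference point $\tilde \ee_0 \in \tilde \X^j$ absorbing the prescribed non-$L^{p,\delta}$ asymptotic behavior: take $\tilde \ee_0 = (-T_+\,s\,\mu_{+\infty} - T_-\,s\,\mu_{-\infty},\, 0)$ in Cases 1 and 2, and $\tilde \ee_0 = (-T_+\,s\,\mu_{+\infty},\, 0)$ in Case 3. Writing any $\tilde \ee \in \tilde \X^j$ as $\tilde \ee = \tilde \ee_0 + w$ with $w \in T\tilde \X^j$, equation \eqref{E:cylinder0} becomes the inhomogeneous linear problem $D^j w = \eta$, where $\eta := -D^j \tilde \ee_0 - h'(\e^{b})\,ds$.

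The main technical step is to verify $\eta \in L^{p,\delta}(T^*(\R\times S^1\setminus \Gamma))$. At a puncture where $\tilde v = (b,v)$ is asymptotic to a Hamiltonian orbit, Morse--Bott non-degeneracy together with Lemma \ref{L:convergence to orbit} yields exponential convergence $b \to b_\pm$, hence $h'(\e^b) \to T_\pm$ exponentially; the corresponding term $\mp T_\pm\,ds$ in $D^j \tilde \ee_0$ exactly cancels the non-decaying leading part of $h'(\e^b)\,ds$, leaving an $O(\e^{-\delta|s|})$ remainder. At a puncture asymptotic to a Reeb orbit in $\{-\infty\}\times Y$ (the augmentation puncture $P$ in Case 2 and $-\infty$ in Case 3), we have $b \to -\infty$, so $\e^b < 2$ in a neighborhood of the puncture; since $h$ is identically zero on $[0,2]$, $h'(\e^b)$ vanishes there, and the cutoffs in $\tilde \ee_0$ contribute nothing either, so $\eta \equiv 0$ near these punctures. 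This asymptotic bookkeeping is the only delicate point in the argument.

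Existence is then immediate from the surjectivity of $D^j \colon T\tilde \X^j \to L^{p,\delta}$ proved in Lemma \ref{L:cylTransversality1}: pick any $w$ with $D^j w = \eta$ and set $\tilde \ee := \tilde \ee_0 + w$, whose projection to $\R \times S^1$ lies in $\CC^j(\tilde v)$. For uniqueness, any two lifted solutions differ by an element of $\ker D^j \subset T\tilde \X^j$; such an element is a complex-valued holomorphic function on $\R\times S^1 \setminus \Gamma$ which, in Case 2, extends across $P$ by the removable singularity theorem, and which converges to constants at $\pm\infty$. A Fourier expansion in $t$ forces any such function to be constant, matching the two-dimensional kernel predicted by Lemma \ref{L:cylTransversality1}. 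Projecting to $\X^j$, two solutions thus differ by an arbitrary element of the target $\R \times S^1$, as claimed.
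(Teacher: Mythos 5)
Your proof is correct and follows essentially the same route as the paper: both reduce \eqref{E:cylinder0} to an inhomogeneous linear problem $D^j w = \eta$ with $\eta \in L^{p,\delta}$ (the paper substitutes a globally defined interpolating function $\nu_j(s)$ where you use the cut-offs $\mu_{\pm\infty}$, a cosmetic difference that even slightly simplifies the check near $P$), and then invoke the surjectivity and constant two-dimensional kernel of $D^j$ from Lemma \ref{L:cylTransversality1}. The only point the paper makes explicit that you gloss over is why the remainder at the Hamiltonian asymptotes decays at a rate at least $\delta$: it cites Siefring's asymptotic results to say the exponential rate of convergence of $b$ to $b_\pm$ is given by the spectral gap of the asymptotic operator, which exceeds $\delta$ by the choice of weight.
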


\begin{proof}
We will instead prove the result for the lifts $\tilde \ee \in \tilde \X^j$, solving the same differential equation.
Notice that any solution $\ee$ admits a $\Z$ family of lifts. It suffices then to show that the solutions
$\tilde \ee = (e_1, \tilde e_2)$ are unique up to adding a constant in $\C$.

Write \eqref{E:cylinder0} as an inhomogeneous Cauchy--Riemann equation on $\R\times S^1 \setminus \Gamma$:
\begin{equation}\label{E:inhomoCR}
d{e}_1 - d\tilde{e}_2 \circ i = -h'(\e^b)ds.
\end{equation}

We will consider Cases 1, 2 and 3. Let $\nu_j: \R \to \R$ be smooth functions such that 
\begin{equation*}
\begin{cases}
 \nu_j(s) = T_- s \text{ for } s<<0 \text{ and } \nu_j(s) = T_+ s \text{ for } s>>0 & j = 1, 2 \\
 \nu_j(s) = 0 \text{ for } s<<0 \text{ and } \nu_j(s) = T_+ s \text{ for } s>>0 & j = 3 \\
\end{cases}
\end{equation*}
  
Instead of solving equation \eqref{E:inhomoCR}, we will consider the equivalent problem of finding ${\bf g} = (g_1, g_2 ) = (e_1 + \nu_j, \tilde{e}_2)$ such that%
\[
{\bf g} \in 
\begin{cases}
 W^{1,p,\delta}_{(\C; \C)}(\R\times S^1, \C), \quad &\text{in cases }  j = 1, 3 \\
 W^{1,p,\delta}_{(\C, \C; \C)}(\R\times S^1 \setminus \Gamma, \C),  &\text{in case } j = 2
\end{cases}
\]
solves the equation
\begin{equation} \label{E:inhomoCRg}
dg_1 - dg_2 \circ i =-h'(\e^b) ds + d\nu_j. 
\end{equation}
Notice that $\mathbf{g}$ is taken in the function space $T_{\ee}\X^j$.

Let $K = -h'(\e^b) ds + d\nu_j $. 
First, recall that $b$ converges exponentially fast to $b_\pm$. 
In our setting of Morse--Bott degeneracy, exponential convergence follows from 
\cite{BourgeoisThesis} and \cite{SFTcompactness}*{Appendix A}. Then, by
\cite{SiefringAsymptotics}, the exponential rate of convergence is given by
the spectral gap, which is larger than $\delta$ by construction.

In Cases 1 and 3, the fact that $K \in L^{p,\delta}(T^*(\R\times S^1))$ follows immediately from the exponential rate of convergence of $b$ at $\pm \infty$ to $b_\pm$, and $T_\pm = h'(\e^{b_\pm})$. 

In Case 2, the fact $K \in L^{p, \delta}(T^*(\R \times S^1 \setminus \{ P \})$ follows from this and also from Equation \eqref{E:ds_is_exponential} so that $\nu'_2(s)ds$ has exponential decay near $P$.

According to Lemma \ref{L:cylTransversality1}, the linearized operators \eqref{E:cylLinear0 s} have index 2 and are surjective.
Their kernels have dimension 2 and consist of constant functions. 
Let now ${\bf g} \in T_{\bf e} \X^j$, with $T_{\bf e} \X^j$ defined as in \eqref{E:cylLinear0}, be such that 
$$
dg_1 - dg_2 \circ i = K.
$$

Any two solutions differ then by an element of the kernel of this linear operator, 
and thus differ by a constant.
From this, we obtain the same statement for $\tilde \ee$ and hence for $\ee$.
 \end{proof}

\begin{proof}[Proof of Proposition \ref{Ansatz1}]
 Using the fact that $X_H = h'(\e^r) R$, the Floer equation \eqref{Floer eq Y} satisfied by $\tilde v = (b,v)$ is equivalent to:
\begin{equation}
\begin{cases}
db - v \pb \alpha \circ i + h'(\e^b) ds = 0 \\
d\pi_Y \circ dv + J \circ d\pi_Y \circ dv \circ i = 0
\end{cases}
\end{equation}
By Lemma \ref{L:inhomoCRexistence}, we have a solution 
${\bf e}$ to \eqref{E:cylinder0}. Now, $a := b - e_1$ and $u := \phi_R^{-e_2} \circ v$ are such that
\begin{align*}
da - u \pb \alpha \circ i 
	&= db - de_1 - v \pb \alpha \circ i - de_2 \circ i\\
	&=db - v \pb \alpha \circ i + h'(\e^b) ds\\
	& = 0,\\
d\pi_Y \circ du + J \circ d\pi_Y \circ du \circ i 
	&= d\pi_Y \circ dv + J \circ d\pi_Y \circ dv \circ  i \\
	& = 0.
\end{align*}
So, $\tilde u:=(a,u)$ is $J_Y$-holomorphic. The asymptotic constraints on the spaces $\X^i$ are such that $\tilde u$ converges to Reeb orbits at $\pm\infty$ (and at $P$, if $\Gamma = \{P \}$).
The uniqueness statement in Lemma \ref{L:inhomoCRexistence}, together with Lemma \ref{L:aprioriMarker1}, imply that there is an $\R$-family of solutions ${\bf e} = (e_1,e_2)\in\X^j$, such that 
$$
\lim_{s\to \pm\infty} e_2(s,t) = 0\in S^1,
$$
where distinct solutions differ by adding a constant to $e_1$.
This implies the existence statements in Proposition \ref{Ansatz1}, and uniqueness up to an $\R$-action.
\end{proof}

\subsection{From pseudoholomorphic curves to Floer cylinders} \label{S:ansatz_holo_to_Floer}

\label{holo_to_Floer}

Proposition \ref{Ansatz1} implies that every split Floer cylinder $\tilde v \colon \R\times S^1\setminus \Gamma \to \R\times Y$ has an underlying $J_Y$-holomorphic curve $\tilde u \colon \R\times S^1\setminus \Gamma \to \R\times Y$. 
We will show that every $J_Y$-holomorphic curve $\tilde u \colon \R\times S^1\setminus \Gamma \to \R\times Y$ underlies a split Floer cylinder.

\begin{proposition}\label{Ansatz2}
 Let $\Gamma\subset \R\times S^1$ be either the empty set or the singleton $\{P\}$. Suppose $\tilde u = (a, u)\colon \R\times S^1 \setminus \Gamma \rightarrow \R \times Y$
is a non-constant, finite Hofer energy $J_Y$-holomorphic curve, with a negative
puncture at $\Gamma$ if $\Gamma \ne \emptyset$. Let $\gamma_\pm$ denote the
Reeb orbits to which $u$ is asymptotic at $\pm \infty$.
Then, there exists a pair $(\tilde v, {\bf e})$ consisting of a map $\tilde v = (b, v)\colon \R\times S^1 \setminus \Gamma \rightarrow
\R \times Y$ and of a map ${\bf e} = (e_1, e_2)\colon \R\times S^1 \rightarrow \R \times S^1$, with the
following properties:
\begin{enumerate}
\item The map $\tilde v$ solves the Floer equation \eqref{Floer eq Y}.

\item At each of $\pm \infty$, the map $\tilde v$ converges to the Hamiltonian orbit to which 
    $\gamma_\pm$ corresponds.
\item If $\Gamma = \{P\}$, then $v$ is asymptotic to the same unparameterized Reeb orbit that $u$ converges to at $P$.
\item The map ${\bf e} \colon \R\times S^1\setminus \{P\} \to \R \times S^1$ satisfies the equation
	\begin{equation} \label{E:cylinder1}
	de_1 -de_2 \circ i +  h'(\e^{a+e_1}) ds = 0.
	\end{equation}
\item The original $J_Y$-holomorphic curve is given by 
\[
(a, u) = (b - e_1, \phi_R^{-e_2} \circ v).
\] 
\end{enumerate}

\noindent 
The pair $(\tilde v,{\bf e})$ with these properties is unique.

In the case that $\Gamma = \emptyset$ and $\tilde u(s,t)=(Ts + c,\gamma(Tt))$ is a trivial cylinder, for some Reeb orbit $\gamma$ of period $T$, there also exists a pair $(\tilde v, \bf e)$ as above, except replacing property (2) by
\begin{enumerate}
 \item[(2')] $\tilde v$ converges to a Hamiltonian orbit whose underlying Reeb orbit is $\gamma$ at $+\infty$, and $v$ converges to $\gamma$ at $-\infty$. 
\end{enumerate}
The pair $(\tilde v,{\bf e})$ is now unique only up to replacing $b(s,t)$ with
$\overline b(s,t)=b(s+s_0,t)$, for some constant $s_0\in \R$, and replacing $e_1$
with $\overline{e}_1 = \overline{b}-a$.

\end{proposition}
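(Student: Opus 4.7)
The plan is to parallel Proposition \ref{Ansatz1} throughout, the essential new difficulty being that equation \eqref{E:cylinder1} is now nonlinear in $e_1$. Once a solution $\ee=(e_1,e_2)$ with the correct asymptotic behavior is produced, the pair $\tilde v := (a+e_1,\phi_R^{e_2}\circ u)$ satisfies Floer's equation by the computation from the proof of Proposition \ref{Ansatz1} run in reverse, so the real content is existence and uniqueness of $\ee$.

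Following the strategy of Lemma \ref{L:inhomoCRexistence}, introduce shift functions $\nu_j(s)$ with $\nu_j(s)=T_\pm s + C_\pm$ for $\pm s$ large (and $\nu_3\equiv 0$ for $s\ll 0$ in the trivial cylinder case, where $h$ vanishes); the constants $C_\pm$ are uniquely determined by the asymptotics of $a$ together with the requirement that $a+e_1\to b_\pm$, where $b_\pm$ is the unique solution of $h'(e^{b_\pm})=T_\pm$. Writing $\mathbf{g}=(g_1,g_2)=(e_1+\nu_j,\tilde e_2)$, equation \eqref{E:cylinder1} becomes $F(\mathbf{g})=0$, where
\[
F(\mathbf{g}) := dg_1 - dg_2\circ i + \bigl(h'(e^{a+g_1-\nu_j}) - \nu_j'\bigr)\,ds
\]
defines a smooth map $\tilde\Y^j\to L^{p,\delta}(T^*\dot S)$ on an affine weighted Sobolev manifold, with asymptotic vector space $\mathbf{V}$ chosen to allow free $\R$-constants in $\tilde e_2$ at the ends with Hamiltonian asymptotics (reflecting the $\R$-freedom in choosing a lift of $e_2$) and to force $g_1\to 0$ except at ends where $h$ vanishes, where both components of $\mathbf{g}$ are permitted to run over $\ker A_0$. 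The linearization
\[
DF_{\mathbf{g}}(G_1+iG_2) = dG_1 - dG_2\circ i + K(s,t)\,G_1\,ds, \qquad K := h''(e^{a+g_1-\nu_j})\,e^{a+g_1-\nu_j}\ge 0,
\]
has asymptotic operators $A_{C_\pm}$ at $\pm\infty$ with $C_\pm = h''(e^{b_\pm})\,e^{b_\pm} > 0$, and $A_0$ at punctures where $h$ vanishes. Using Corollary \ref{C:CZ computation} and Theorem \ref{T:RiemannRochMB}, a direct count gives Fredholm index equal to the dimension of the stated symmetry group, namely $1$ in Cases 1 and 2 (coming from the $\R$-shift of the lift $\tilde e_2$) and $2$ in the trivial cylinder case (picking up an additional $\R$ from $s$-translation). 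The same automatic transversality argument as in Lemma \ref{L:cylTransversality1}, via \cite{WendlAutomatic}*{Proposition 2.2} and \cite{DiogoLisiSplit}*{Lemma 5.20}, establishes surjectivity of $DF_{\mathbf{g}}$ with kernel precisely the infinitesimal symmetries, since the non-negative zeroth-order term $K$ cannot contribute negative-winding zeros to kernel elements.

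For existence I would argue by continuity from an explicit starting point. When $\tilde u(s,t)=(Ts+c,\gamma(Tt))$ is trivial, the $t$-independent pair $e_1(s)=b(s)-Ts-c$, $\tilde e_2\equiv 0$ solves \eqref{E:cylinder1} explicitly, where $b(s)$ satisfies the ODE $b'(s)=T-h'(e^{b(s)})$; the $\R$-family of ODE solutions corresponds exactly to the $s$-translation freedom asserted in property (2'). For general $\tilde u$ I would join it to a trivial cylinder via a smooth family of $J_Y$-holomorphic curves sharing the asymptotic data of $\tilde u$ at $\pm\infty$, and run the implicit function theorem along the family: openness of the parameter set on which a solution exists is immediate from surjectivity of $DF$ modulo symmetries, while closedness reduces to an a priori $C^0$-bound on $g_1$ that is uniform along the family. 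Uniqueness of $(\tilde v,\ee)$ modulo the stated symmetries then follows. The main technical obstacle I anticipate is establishing the $C^0$-bound on $g_1$; I expect this to come from a maximum principle argument exploiting the exponential nonlinearity of $h'(e^{a+g_1-\nu_j})$ and the convexity of $h$, in the spirit of Lemma \ref{L:ASestimate}: a large excursion of $g_1$ would force the forcing term to drift far from $\nu_j'$, which is incompatible with the energy identity.
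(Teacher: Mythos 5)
Your reduction to solving \eqref{E:cylinder1} and the explicit ODE solution in the trivial-cylinder case are both correct and match the paper, but the core of your existence-and-uniqueness argument has a gap that would make it fail as written. You propose to reach a general $\tilde u$ by continuation along ``a smooth family of $J_Y$-holomorphic curves sharing the asymptotic data of $\tilde u$ at $\pm\infty$'' starting from a trivial cylinder. Such a family does not exist in the cases that matter: in Case 1 the curve has ends of different multiplicities $k_-\neq k_+$ and projects to a nonconstant sphere in $\Sigma$, while a trivial cylinder has equal ends and constant projection, so no path of holomorphic curves with fixed asymptotic data can join them (and in Case 2 there is an extra negative puncture). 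The paper's deformation is of a different nature: $\tilde u$ is held fixed and the \emph{equation} is deformed, interpolating the inhomogeneous term from $da$ to a $t$-independent $d\nu$ as in \eqref{E:parametric cylinder1} and \eqref{E:parametric cylinder2}; holomorphicity of any intermediate object is irrelevant because only $da$ enters \eqref{E:cylinder}. Moreover, even granting a continuation path, the implicit function theorem gives only openness and local uniqueness; your claim that uniqueness ``then follows'' is unsupported. The paper's global input is Lemma \ref{L:countingSolutions1}: the model $t$-independent equation \eqref{E:deformed cylinder1} is solved by an explicit ODE and, crucially, shown to have \emph{no other} solutions via Gromov's trick and positivity of intersections of pseudoholomorphic curves in dimension four, and the trivial-cylinder Case 3 of the proposition is handled by that intersection argument directly rather than by continuation. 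Nothing in your proposal replaces this step, and your conjectured maximum-principle $C^0$ bound would at best supply compactness, not uniqueness.

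The functional-analytic bookkeeping is also off in a way that conflicts with the statement you are proving. In Cases 1 and 2 the pair $(\tilde v,\mathbf{e})$ is claimed to be literally unique: condition (2) fixes the parametrization of the limiting Hamiltonian orbits, which forces $e_2\to 0$ at $-\infty$ (and then at $+\infty$ by the argument of Lemma \ref{L:aprioriMarker}). Accordingly the paper's spaces $\Y^1,\hat\Y^2$ fix the constant at $-\infty$ and free only an imaginary constant at $+\infty$, so the linearization has index $0$ and is an isomorphism (Lemma \ref{L:cylTransversality}). Your choice of freeing an $\R$-constant in $\tilde e_2$ at both Hamiltonian ends yields index $1$ with kernel generated by the constant shift $e_2\mapsto e_2+c$; but this is not a symmetry of the problem, since it replaces $v$ by $\phi_R^{c}\circ v$ and destroys the asymptotic condition (2), and the ``freedom in choosing a lift of $e_2$'' is only a $\Z$-ambiguity, which acts trivially on $\mathbf{e}$. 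With your setup you would produce a one-parameter family of solutions and would still need to impose the asymptotic marker normalization to recover the uniqueness asserted in the proposition; the extra $\R$ of symmetry belongs only to Case 3, where it is the $s$-translation made explicit in (2').
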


Equation \eqref{E:cylinder1} has a solution ${\bf e}= (e_1,e_2)$ iff there is a solution ${\bf f} = (f_1, f_2) = (e_1 + a,e_2)$ to the equation 
\begin{equation} \label{E:cylinder}
\begin{aligned}
	{\bf f}=&(f_1,f_2) \colon \R \times S^1 \setminus \Gamma  \rightarrow \R \times S^1 \\
	&df_1 - df_2 \circ i + h'(\e^{f_1}) ds - da = 0,
\end{aligned}
\end{equation}
so we will study this equation instead. As we will see below equation \eqref{E:cylLinear2 s}, the linearized operator associated to \eqref{E:cylinder} has asymptotic operators of the form 
$$
A_C = -i\frac{d}{dt} - \begin{pmatrix}
                       C & 0 \\
                       0 & 0
                      \end{pmatrix} \colon W^{1,p}(S^1,\C) \to L^{p}(S^1,\C)
$$
for some $C\geq 0$. Lemma \ref{L:spectrum} implies that the kernel of $A_C$ has real dimension 2 if $C = 0$ (given by constant functions $c_1 + i c_2 \in \C$) and 1 if $C\neq 0$ (given by constant functions $i c_2 \in i\R$). 

Recall that we write 
$W^{1,p,\delta}_{(0, \C; i\R)}(\R \times S^1 \setminus \{ P \}, \C)$ to denote
maps converging exponentially fast to $0$ at $-\infty$, to an arbitrary complex constant at $P$ and to an imaginary constant at $+\infty$, and write $W^{1,p, \delta}_{(0, i\R)}(\R \times S^1, \C)$ to denote maps converging exponentially fast to $0$ at $-\infty$ and to an arbitrary imaginary constant at $+\infty$.
Recall that we have chosen cylindrical coordinates near $P$, $\varphi \colon
(-\infty, -1] \times S^1 \to \R \times S^1$ by $\varphi(\rho, \theta) = P +
\e^{2\pi(\rho + i \theta)}$, and also a bump function $\mu_P$ that is 
identically $1$ near $P$ and supported in the image of $\varphi$.

Similarly to the previous section, we will study solutions to (\ref{E:cylinder}) in $W^{1,p}_{\text{loc}}(\R\times S^1\setminus \Gamma, \R\times S^1)$ satisfying three types of asymptotic conditions.%

Notice that if $\tilde u$ is a $J_Y$-holomorphic [punctured] cylinder as in
Proposition \ref{Ansatz2}, it converges at $\pm \infty$ 
to closed Reeb orbits in $\R \times Y$, of period $T_\pm$. 
Let $b_\pm \in \R$ be the unique solutions to $T_\pm =
h'(\e^{b_\pm})$. (Recall that such solutions exist and are unique by the
admissibility conditions imposed on the Hamiltonian in Definition \ref{def:J_shaped}. Furthermore, we then have $b_\pm > \log 2$.)

\begin{itemize}
\item[Case 1:] $\Gamma = \emptyset$. Define the space $\Y^1$ by the condition $\ff \in \Y^1$ if it admits a lift $\tilde \ff \in W^{1,p}_\loc(\R \times S^1, \C)$ with 
\[
\tilde{\ff} - (b_+ + i 0) \, \mu_{+\infty} -  (b_- + i 0) \, \mu_{-\infty} \in W^{1,p,\delta}_{(0, i\R)}(\R \times S^1, \C)
\]

\item[Case 2:]
 $\Gamma = \{ P\}$. Define the space $\Y^2$ by the condition that $\ff \in \Y^2$ if it admits a lift $\tilde \ff \in W^{1,p}_\loc(\R \times S^1, \C)$ with 
 \[
 \tilde \ff -  (b_+ + i 0) \, \mu_{+\infty} -  (b_- + i 0) \, \mu_{-\infty} - \mu_P \, a \in W^{1, p, \delta}_{(0, \C; i\R)}( \R \times S^1 \setminus \{ P \}, \C).
 \]

\item[Case 3:] $\Gamma = \emptyset$. Define the space $\Y^3$ by the condition that $\ff$ admits a lift $\tilde \ff$ with
\[
\tilde\ff -  (b_+ + i 0) \, \mu_{+\infty} \in W^{1,p, \delta}_{(\C; i\R)}( \R \times S^1, \C)
\]
(this corresponds to (2') in Proposition \ref{Ansatz2}).

\end{itemize}
As before, we notice that the spaces $\Y^i$ are Banach manifolds modelled
    on $W^{1,p,\delta} \oplus \R^k$, where $k$ depends on which of the 3 cases
we are considering.

\begin{remark}
 As in the previous section, the first part of Proposition \ref{Ansatz2}
 is related to Cases 1 and 2 (see Figures \ref{case_1_fig} and
 \ref{case_2_fig}, respectively).  The last part of the Proposition,
 where $\tilde u$ is assumed to be a trivial orbit cylinder, is
 related to Case 3 (see Figure \ref{case_3_fig}).

 It is important to observe that the asymptotic conditions imposed in this section are different from the ones that were imposed in the previous section. This will have an effect on the indices of the relevant Fredholm problems. 
Just to give an example, in Case 1 we are considering functions that converge at
$+\infty$ to an unspecified constant in $\{b_+\}\times S^1 \subset \R\times S^1$, 
and that converge at $-\infty$ to $(b_-,0)\in \R\times S^1$. 
 \end{remark}

We will consider auxiliary parametric versions of Equation
\eqref{E:cylinder} in order to deform the solutions to \eqref{E:cylinder}
to solutions of a PDE for which we can explicitly describe the
solution space.  The deformation will give an identification of the
solution spaces.

For Case 1, we consider the map
\begin{equation} \label{E:parametric operator1}
\begin{aligned}
\mathcal F \colon \mathcal Y^1 \times [0,1] &\to L^{p,\delta}(\R\times S^1,(\R^2)^*) \\
({\bf f},\tau) = ((f_1,f_2),\tau) &\mapsto df_1 - df_2 \circ i + h'(\e^{f_1}) ds - \tau da - (1-\tau) d\nu
\end{aligned}
\end{equation}
where $\nu \colon \R\to \R$ is a smooth function such that $\nu(s) = T_- s$ for $s <<0$ and $\nu(s) = T_+ s$ for $s >>0$ (just like $\nu_1$ in the proof of Lemma  \ref{L:inhomoCRexistence}). We will study solutions to equation
\begin{equation} \label{E:parametric cylinder1}
\begin{aligned}
\mathcal F({\bf f},\tau) = 0,
\end{aligned}
\end{equation}
which interpolates between (\ref{E:cylinder}) and the equation
\begin{equation} \label{E:deformed cylinder1}
df_1 - df_2 \circ i + h'(\e^{f_1}) ds - d\nu= 0.
\end{equation}
The latter has the advantage that $d\nu$ is independent of $t$. Denote the space of solutions to (\ref{E:parametric cylinder1}) in $\mathcal Y^1$ for fixed $\tau$ by $\mathcal C^{1,\tau}(\tilde u)$.

In Case 2, it will be more convenient to study functions $\gg=(g_1,g_2)=(f_1 - \mu_P \, a, f_2)$.
We think of ${\bf g}$ as an element of the subspace $\hat{ \Y^2}
\subset W^{1,p}_\text{loc}(\R\times S^1\setminus \Gamma,\R\times S^1)$, with
$\gg \in \hat \Y^2$ if it has a lift $\tilde \gg$ with
\[
\tilde \gg -  (b_+ + i 0) \, \mu_{+\infty} -  (b_- + i 0) \, \mu_{-\infty} \in W^{1,p,\delta}_{(0, \C; i\R)}( \R \times S^1 \setminus \{ P \}, \C).
\]
We will consider the map 
\begin{equation}  \label{E:parametric operator2}
\begin{aligned}
\mathcal G \colon \hat{\Y^2} \times [0,1] &\to L^{p,\delta}(\R\times S^1 \setminus \Gamma,(\R^2)^*) \\
({\bf g},\tau) = ((g_1,g_2),\tau) &\mapsto dg_1 - dg_2 \circ i + h'(\e^{g_1+\tau
\mu_P a}) ds - \tau d((1-\mu_P)a) - (1-\tau) d\nu
\end{aligned}
\end{equation}
where $\nu \colon \R\to \R$ is as above. 

We will study solutions to 
\begin{equation} \label{E:parametric cylinder2}
\begin{aligned}
\mathcal G({\bf g},\tau) = 0.
\end{aligned}
\end{equation}
This equation interpolates between the equivalent of equation \eqref{E:cylinder} for $\bf g$:
$$
dg_1 - dg_2 \circ i + h'(\e^{g_1+ \mu_P a}) ds - d((1-\mu_P)a) = 0
$$
and equation \eqref{E:deformed cylinder1} applied to $\bf g$:
$$
dg_1 - dg_2 \circ i + h'(\e^{g_1}) ds - d\nu= 0,
$$
The latter has again the advantage that $d\nu$ is independent of $t$. Denote the space of solutions to (\ref{E:parametric cylinder2}) in $\hat{\mathcal Y^2}$ for fixed $\tau$ by $\mathcal C^{2,\tau}(\tilde u)$. 

Also, denote the space of solutions to (\ref{E:cylinder}) in $\mathcal Y^3$ by $\mathcal C^{3}(\tilde u)$.  In Case 3, we will study solutions directly and do not require any deformation argument.

We need the following smoothness in order to apply the implicit function
theorem, which we will need in the proof of Proposition \ref{Ansatz2}.
\begin{lemma} \label{L:smooth parameter}
 The nonlinear operators $\mathcal F$ and $\mathcal G$ are $C^1$ in 
 $\mathcal Y^1 \times [0,1] \to L^{p,\delta}(\R\times S^1,(\R^2)^*)$ 
 and 
 $\hat{\mathcal Y^2} \times [0,1] 
 \to  L^{p,\delta}(\R\times S^1\setminus \{ P \},(\R^2)^*)
 $, respectively. 
\end{lemma}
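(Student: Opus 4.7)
The plan is to decompose each operator into an affine-linear part, jointly smooth in its arguments, and a Nemytskii-type nonlinearity whose $C^1$-regularity is the real content. For $\mathcal F$ the operations $df_1$, $df_2 \circ i$ and multiplication of $da$ and $d\nu$ by $\tau$ and $1-\tau$ are continuous and affine in $(\ff,\tau)$, so the issue reduces to showing that the combined term $\ff \mapsto h'(\e^{f_1})\,ds - \tau\,da - (1-\tau)\,d\nu$ is $C^1$ into $L^{p,\delta}$. The analogous reduction for $\mathcal G$ isolates the combined term $(\gg, \tau) \mapsto h'(\e^{g_1 + \tau \mu_P a})\,ds - \tau\,d((1-\mu_P)a) - (1-\tau)\,d\nu$.

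First I would verify that these combined terms actually land in $L^{p,\delta}$. Write $f_1 = f_{1,\mathrm{ref}} + \phi_1$ with $\phi_1 \in W^{1,p,\delta}$ and $f_{1,\mathrm{ref}}$ smooth, equal to $b_\pm$ near $\pm\infty$. For $p>2$, Sobolev embedding places $\phi_1$ in $L^\infty$ with exponential decay at each puncture. At $\pm\infty$, the identities $h'(\e^{b_\pm}) = T_\pm$ together with the exponential convergences $\nu'(s) \to T_\pm$ and $a_s \to T_\pm$ (the latter from the Morse--Bott asymptotic analysis of $\tilde u$, cf.~\cite{SiefringAsymptotics} and \cite{DiogoLisiSplit}) force the combined expression to decay exponentially. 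In Case 2 near $P$, the factor $\e^{g_1 + \tau \mu_P a}$ tends to $0$ because $a \to -\infty$ at this negative puncture of $\tilde u$; since $h$ vanishes on $[0,2]$ the entire $h'(\e^{\cdot})$ term is identically zero on a neighbourhood of $P$ (possibly after shrinking $\supp \mu_P$), the piece $d((1-\mu_P)a)$ also vanishes there, and the contribution of $\nu'\,ds$ lies in the weighted space by \eqref{E:ds_is_exponential} together with the hypothesis $\delta < 2\pi$.

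The Fréchet derivative then comes from pointwise Taylor expansion. The linearization of the nonlinearity in $\mathcal F$ is $\delta \ff \mapsto h''(\e^{f_1})\,\e^{f_1}\,\delta f_1\,ds$ plus the affine contributions, and for $\mathcal G$ it additionally contains the $\tau$-derivative $\delta \tau \mapsto h''(\e^{g_1+\tau\mu_P a})\,\e^{g_1+\tau\mu_P a}\,\mu_P a\,ds$. Since $h'' \circ \exp$ is smooth and evaluated on functions uniformly bounded on neighbourhoods of any fixed point, the associated Nemytskii operators are $C^1$ from $L^\infty$ to $L^\infty$; combined with the $W^{1,p,\delta} \hookrightarrow L^\infty$ embedding and the asymptotic cancellations of the previous paragraph, this yields both existence of the Fréchet derivative and its continuity in $(\ff,\tau)$ or $(\gg,\tau)$. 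The main delicate point is joint regularity at $P$ for $\mathcal G$: formally the $\tau$-derivative contains the unbounded factor $\mu_P a$, but this is multiplied by $h''(\e^{g_1+\tau\mu_P a})\,\e^{g_1+\tau\mu_P a}$, which vanishes identically in a neighbourhood of $P$ for every $\tau \in [0,1]$ because $h'' = 0$ on $[0,2]$. Hence the $\tau$-derivative is effectively supported away from $P$, where the standard Nemytskii argument applies, completing the proof.
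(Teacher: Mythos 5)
Your reduction to Nemytskii-type regularity and your handling of $\mathcal F$ and of $\pm\infty$ in $\mathcal G$ are sound and match the paper's strategy. But your treatment of the puncture $P$ contains a genuine gap. You claim, both when verifying membership in $L^{p,\delta}$ and when handling the $\tau$-derivative, that $h'(\e^{g_1 + \tau\mu_P a})$ (respectively $h''(\e^{g_1+\tau\mu_P a})\e^{g_1+\tau\mu_P a}$) vanishes identically in a neighbourhood of $P$ ``for every $\tau \in [0,1]$'' because $a \to -\infty$ at $P$. This fails at $\tau = 0$: the space $\hat{\Y^2}$ only requires $g_1$ to converge at $P$ to an \emph{arbitrary} real constant $c_P$, which need not be less than $\log 2$, so $h'(\e^{g_1})$ and $h''(\e^{g_1})\e^{g_1}$ need not vanish near $P$. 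Shrinking $\supp\mu_P$ does not help, since at $\tau = 0$ the term $\mu_P a$ drops out altogether. Even restricting to $\tau > 0$, the neighbourhood on which $g_1 + \tau\mu_P a < \log 2$ shrinks as $\tau \to 0^+$, so this mechanism cannot produce the uniform estimates required for $C^1$-ness at $\tau = 0$.

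The actual mechanism, which the paper uses, is different in kind: on a $W^{1,p,\delta}$-ball, $g_1$ is uniformly $C^0$-bounded and $\mu_P a$ is \emph{bounded above} (though not below), so $g_1 + \tau\mu_P a$ is bounded above. This gives uniform bounds on $h'(\e^{(\cdot)})$, $h''(\e^{(\cdot)})\e^{(\cdot)}$, and $h'''(\e^{(\cdot)})\e^{2(\cdot)}$ --- none of them need to vanish. The decay that puts everything in $L^{p,\delta}$ near $P$ then comes entirely from the factor $ds$, which in cylindrical coordinates behaves like $\e^{2\pi\rho}$ (Equation \eqref{E:ds_is_exponential}), and this exponential decay dominates the linear growth $\mu_P a \sim T_P\rho$ appearing in the $\tau$-derivative, since $\delta < 2\pi$. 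Your proof needs to replace the vanishing claim with this boundedness-plus-$ds$-decay argument near $P$; the rest of your reduction then goes through.
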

\begin{proof}
    Notice that $\mathcal F( \mathbf{f}, \tau)$ is a Floer-type differential
    operator with an inhomogeneous term depending on $\tau$. 
    The claimed result holds for $\mathcal F$ since $-da + \nu'(s) \, ds$ is in
    $L^{p, \delta}$, which follows from the fact that $a(s,t)$ converges 
    exponentially fast, together with its derivatives, to $T_\pm s + c_\pm$ at $\pm \infty$. 

    In order to show the result for $\mathcal G( \mathbf{g}, \tau)$, it suffices
    to show that the following map is $C^1$, since the remaining terms in $\mathcal G$ are $C^1$ by standard arguments.
    \begin{equation} \label{E:smoothness}
    \begin{aligned}
       \mathcal H \colon W^{1,p, \delta}_{(0, \C; i\R)}( \R \times S^1 \setminus \Gamma, \C) 
                    &\to L^{p, \delta}( T^*(\R \times S^1 \setminus \Gamma))  \\
        (g_1, g_2, \tau) &\mapsto h'( \e^{g_1 + \tau \mu_P a}) \, ds.
    \end{aligned}
    \end{equation}
    First, we restrict our attention to a $W^{1,p,\delta}_{(0, \C;
    i\R)}$-ball of radius $R$. We will show the map is $C^1$ smooth
    on each such ball by showing its second derivative is bounded
    by a constant that depends on $R$. Exhausting the total space
    by such balls will prove the result.

    Notice that the $W^{1,p,\delta}$ bound implies a uniform $C^0$
    bound on $g_1$. Also notice that $\mu_P a$ is bounded above.
    Thus, $g_1 + \tau \mu_Pa$ is bounded above (where the bound
    depends on $R$). 
    Hence, there exists a $C$ (depending on $R$)
    so that 
    \begin{align*}
        &0 \le h'(\e^{g_1+ \tau \mu_P a}) \le C,\\
        &0 \le h''(\e^{g_1+ \tau \mu_P a})\e^{g_1+ \tau \mu_P a} \le C\\
        &| h'''(\e^{g_1+ \tau \mu_P a})\e^{2(g_1+ \tau \mu_P a)}|\le C.  
    \end{align*}

    The weak derivative of $\mathcal H$ is given by 
    \[
    d{\mathcal H}(\mathbf{g}, \tau)(\mathbf G, \tau') 
    	= G_1 h''(\e^{g_1+ \tau \mu_P a})\e^{g_1+ \tau \mu_P a} \, ds 
			+ \tau' h''(\e^{g_1+ \tau \mu_P a})\e^{g_1+ \tau \mu_P a} \mu_P a \, ds
    \]
    where $\mathbf{G} = (G_1, G_2)$ is a variation of $\mathbf{g} = (g_1,
    g_2)$ and $\tau'$ is a variation of $\tau$.
Notice that this expression is in $W^{1,p,\delta}$ away from
    $P$. Near $P$, in the cylindrical coordinates $(\rho, \theta)$,
    we note that this is bounded by 
    \[
    C (|G_1| + |\tau'| (T_P|\rho| + D)) \e^{2\pi \rho} (|d\rho| + |d\theta|) \]
    for suitable constants $C, D$ and where $T_P$ is the period of the Reeb orbit to 
    which $(a, u)$ converges at $P$. 
    From this, we obtain that $d{\mathcal H}$ is uniformly bounded on each ball of radius $R$, 
    showing $\mathcal H$ is Lipschitz on each ball of radius $R$, and hence $\mathcal H$ is continuous.
    
    By a similar argument, we estimate the second derivative to show $C^1$ smoothness of $\mathcal H$. 
    The $C^1$-smoothness of $\mathcal G$ then follows.    
\end{proof}

We now discuss the linearized operators associated to $\mathcal F$ and $\mathcal G$, for fixed values of $\tau$. In Case 1, writing a solution to \eqref{E:parametric cylinder1} as ${\bf f} = f_1 + i f_2$, the linearization of the operator $\mathcal F^\tau$ is
\begin{equation} \label{E:cylLinear1}
\begin{aligned}
D^{1,\tau} \colon T_{\bf f}\mathcal Y^1 &\to L^{p,\delta}(\R\times S^1, (\R^2)^*) \\
F = F_1 + iF_2 &\mapsto dF_1 - dF_2 \circ i + h''(\e^{f_1}) \e^{f_1} F_1 ds.
\end{aligned}
\end{equation}
where $T_{\bf f} \mathcal Y^1 = W^{1,p,\delta}_{\mathbf{V}}(\R \times S^1, \C)$ with $ V_- = 0$ and $V_+ = i\R$. 

In Case 2, writing ${\bf g} = g_1 + i g_2$ and again fixing $\tau$, the linearized operator is 
\begin{equation} \label{E:cylLinear2} 
\begin{aligned}
D^{2,\tau} \colon T_{\bf g} \hat{\mathcal Y^2} &\to L^{p,\delta}(\R\times S^1 \setminus \Gamma, (\R^2)^*) \\
G = G_1 + iG_2 &\mapsto dG_1 - dG_2 \circ i + h''(\e^{g_1+ \tau \mu a}) \e^{g_1+ \tau \mu a} G_1 ds.
\end{aligned}
\end{equation}
where $T_{\bf g}\hat{\mathcal Y^2} = W^{1,p,\delta}_{\mathbf{V}}(\R \times S^1 \setminus \Gamma, \C)$ with $V_- = 0$, $V_+ = i\R$ and $V_P = \C$.

In Case 3, the linearized operator
$$
D^3\colon T_{\bf f} \mathcal Y^3 \to L^{p,\delta}(\R\times S^1, (\R^2)^*), 
$$
where $T_{\bf f} \mathcal Y^3 = W^{1,p,\delta}_{\mathbf{V}}(\R \times S^1, \C)$ with $V_- = \C$ and $V_+ = i\R$, is also given by \eqref{E:cylLinear1} 

Evaluating \eqref{E:cylLinear1} and \eqref{E:cylLinear2} at $\partial_s$ and $\partial_t$ and rearranging, we get operators with values in $L^{p,\delta}(\R\times S^1, \C)$, given by 
\begin{equation} \label{E:cylLinear1 s}
\frac{\partial F}{\partial s} + i \frac{\partial F}{\partial t} + h''(\e^{f_1}) \e^{f_1} F_1
\end{equation} 
and 
\begin{equation} \label{E:cylLinear2 s}
\frac{\partial G}{\partial s} + i \frac{\partial G}{\partial t} + h''(\e^{g_1 + \tau \mu a}) \e^{g_1 + \tau \mu a} G_1,
\end{equation} 
respectively.

The asymptotic operators at $\pm\infty$ in Cases 1 and 2, and at $+\infty$ in Case 3, are 
\[
A_\pm := -i \frac{d}{dt} 
- 
\begin{pmatrix} h''(\e^{b_\pm}) \e^{b_\pm} 
& 0 \\
0 &
0  
\end{pmatrix} : W^{1,p}(S^1,\C)\to L^{p}(S^1,\C)
\]

In Case 2, there is also an asymptotic operator at the augmentation puncture:
\[
A_P := -i \frac{d}{dt} : W^{1,p}(S^1,\C)\to L^{p}(S^1,\C)
\]

In Case 3, we have the asymptotic operator at $-\infty$ given by $A_-=-i\frac{d}{dt}$.

All these asymptotic operators are self-adjoint, as partially defined operators $L^{2}(S^1,\C)\to L^{2}(S^1,\C)$. 
Their eigenvalues can be computed using Lemma \ref{L:spectrum}. Note that they all have eigenvalue 0. For the 
remainder of this section, we will {fix $0 < \delta < 2\pi$} such that $\delta < C_\pm = h''(\e^{b_\pm}) \e^{b_\pm} > 0$ 
and $\delta < \frac{1}{2}\left(- C_\pm + \sqrt{ C_\pm^2 + 16 \pi^2}\right)$.
So, $\delta$ is smaller than the absolute value of every non-zero eigenvalue of $A_\pm$ and $A_P$. This $\delta$ will 
be used in the definitions of the relevant weighted Sobolev spaces.

\begin{lemma} \label{L:cylTransversality}
Fix $\tau\in [0,1]$ and ${\bf f} \in \mathcal C^{1,\tau}(\tilde u)$. Then, 
the linearized operator $D^{1,\tau}$ is Fredholm of index 0 and is an isomorphism. 

Fix $\tau\in [0,1]$ and ${\bf g} \in \mathcal C^{2,\tau}(\tilde
u)$. Then, the linearized operator $D^{2,\tau}$ is Fredholm of index 0
and is an isomorphism.

For fixed ${\bf f} \in \mathcal C^{3}(\tilde u)$, $D^3$ is Fredholm of index
2, is surjective and its kernel includes the span of the generator
of the $S^1$-action on the target $\R\times S^1$.  
\end{lemma}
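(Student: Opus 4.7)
The plan is to compute the Fredholm indices via the Morse--Bott Riemann--Roch theorem (Theorem \ref{T:RiemannRochMB}) and then to establish surjectivity through an application of \cite{WendlAutomatic}*{Proposition 2.2} combined with \cite{DiogoLisiSplit}*{Lemma 5.20}, in close parallel with the proof of Lemma \ref{L:cylTransversality1}. The operators \eqref{E:cylLinear1 s} and \eqref{E:cylLinear2 s} are real-linear Cauchy--Riemann operators on the trivial complex line bundle over $\R\times S^1\setminus\Gamma$ with perturbation $F\mapsto cF_1\,ds$, where $c=h''(\e^{f_1})\e^{f_1}$. At each puncture $z$ the asymptotic operator is an instance of $A_C$ from Lemma \ref{L:spectrum}: it equals $A_{C_\pm}$ with $C_\pm=h''(\e^{b_\pm})\e^{b_\pm}>0$ wherever the background $\tilde v$ converges to a non-constant Hamiltonian orbit, and it equals $A_0=-i\,d/dt$ at the augmentation puncture $P$ in Case~2 and at $-\infty$ in Case~3, where $c\to0$. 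Corollary \ref{C:CZ computation} then yields $\CZ(A_{C_\pm}+\delta)=0$ and $\CZ(A_0+\delta)=-1$.

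Reading off the constraint data from the definitions of $\Y^j$ and $\hat{\Y^2}$, I would record: $V_+=i\R=\ker A_{C_+}$ of dimension $1$; in Cases~1 and~2, $V_-=0$ has codimension $1$ in the $1$-dimensional $\ker A_{C_-}$; in Case~2, $V_P=\C=\ker A_0$ has codimension $0$; and in Case~3, $V_-=\C=\ker A_0$ has codimension $0$. Plugging into Theorem \ref{T:RiemannRochMB} gives $\Ind D^{1,\tau}=0+(0+1)-(0+1)=0$, $\Ind D^{2,\tau}=-1+(0+1)-(0+1)-(-1+0)=0$, and $\Ind D^3=0+(0+1)-(-1+0)=2$.

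For Case~3, the argument mirrors the proof of Lemma \ref{L:cylTransversality1}: the adjusted Chern number of \cite{WendlAutomatic} satisfies $c_1(E,l,\mathbf A_\Gamma)=0<2=\Ind D^3$, so \cite{WendlAutomatic}*{Proposition 2.2} together with \cite{DiogoLisiSplit}*{Lemma 5.20} gives surjectivity. The kernel contains the generator of the target $S^1$-action because post-composing a solution $\mathbf f\in\mathcal C^3(\tilde u)$ with the Reeb flow produces an $S^1$-family of solutions to \eqref{E:cylinder}, whose derivative at the identity is a nontrivial kernel element.

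For Cases~1 and~2, where the index is $0$, invertibility reduces to injectivity. Given $F=F_1+iF_2\in\ker D^{j,\tau}$, the constraints $V_+=i\R$ and $V_-=0$ force $F_1\to0$ exponentially at $\pm\infty$, and in Case~2 the codimension-$0$ constraint $V_P=\C$ lets $F$ extend continuously across $P$ (where the perturbation $c$ already vanishes). The plan is to invoke the Carleman similarity principle: zeros of $F$ would be isolated with strictly positive local winding, and their algebraic sum is bounded above by the corresponding adjusted Chern number, which using Lemma \ref{L:spectrum} to identify the relevant extremal-eigenfunction windings ($+1$ at the Morse--Bott end with $V_-=0$ and $0$ at the end with $V_+=i\R$) works out to be strictly negative; this excludes zeros altogether and forces $F\equiv 0$. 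The main obstacle will be the careful bookkeeping of these constrained winding numbers, particularly in Case~2 where $P$ is a codimension-$0$ constrained puncture in a $2$-dimensional Morse--Bott kernel, and verifying that the strict positivity $h''(\e^{b_\pm})\e^{b_\pm}>0$ at the Hamiltonian-orbit ends is precisely what makes this Wendl count strictly negative.
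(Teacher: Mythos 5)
Your proposal is correct and follows essentially the same route as the paper: the indices are computed exactly as in the paper's proof via Theorem \ref{T:RiemannRochMB} and Corollary \ref{C:CZ computation} with the same constraint spaces $\mathbf V$, and Case 3 is handled by the same appeal to \cite{WendlAutomatic}*{Proposition 2.2} (together with \cite{DiogoLisiSplit}*{Lemma 5.20} for the kernel statement). The only deviation is in Cases 1 and 2, where the paper simply records $c_1(E,l,\mathbf{A}_\Gamma)=\tfrac{1}{2}(0-2)=-1<0=\Ind$ and cites \cite{WendlAutomatic}*{Proposition 2.2} to conclude the operator is an isomorphism, whereas you propose to re-derive injectivity by hand via the similarity principle and constrained asymptotic winding counts --- this is exactly the mechanism underlying Wendl's criterion, so it is a valid (if redundant) substitute for the citation.
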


\begin{proof}
The proof has the same structure as that of Lemma \ref{L:cylTransversality1}. 
We start with Case 1. Fix $\tau$ and ${\bf f}$ as in the statement. 
Corollary \ref{C:CZ computation} implies that the Conley--Zehnder indices of the perturbed asymptotic operators $A_\pm +\delta$ are $0$. 
Recall that the kernels of the asymptotic operators $A$ are identified with $i\R$ and that the vector spaces in $\mathbf V$ are $V_- =0$ and $V_+=i\R$. 
Theorem \ref{T:RiemannRochMB} now implies that
$$
\Ind \widehat D^{1,\tau} = (0 + 1) - (0 + 1) = 0.
$$
Following the notation of Wendl \cite{WendlAutomatic}*{Equation (2.5)}, we have 
$$
c_1(E,l,\textbf{A}_\Gamma) = \frac{1}{2}(0-2) = -1 < 0 = \Ind \widehat D^{1,\tau}
$$
so $\widehat D^{1,\tau}$ is an isomorphism by \cite{WendlAutomatic}*{Proposition 2.2}. 

In Case 3, recall that the asymptotic operator at $-\infty$ is now $A_-=-i\frac{d}{dt}$. By Corollary \ref{C:CZ computation}, the Conley--Zehnder index of the perturbed asymptotic operator $A_- +\delta$ is $-1$. 
The kernel of the asymptotic operator $A_-$ can be identified with $\C$, and the vector spaces in $\mathbf V$ are now $V_- =\C$ and $V_+=i\R$. 
By Theorem \ref{T:RiemannRochMB},
$$
\Ind \widehat D^3 = (0+1)-(-1+0) = 2,
$$
$$
c_1(E,l,\textbf{A}_\Gamma) = \frac{1}{2}(2-2) = 0 < 2 = \Ind \widehat D^3
$$
and $\widehat D^{3}$ is surjective by \cite{WendlAutomatic}*{Proposition 2.2}. 

In Case 2, we have the additional puncture $P$, with asymptotic operator $A_P=-i\frac{d}{dt}$, and the Euler charateristic of the domain is $-1$. 
We now have $V_-=0$, $V_+=i\R$ and $V_P =\C$, so Theorem \ref{T:RiemannRochMB} implies
$$
\Ind \widehat D^{2,\tau} = -1 + (0+1) - (0+1)- (-1 + 0) = 0,
$$
$$
c_1(E,l,\textbf{A}_\Gamma) = \frac{1}{2}(0-2) = -1 < 0 = \Ind \widehat D^{2,\tau}
$$
and we can apply once more \cite{WendlAutomatic}*{Proposition 2.2}, to obtain 
that $D^{2, \tau}$ is an isomorphism. 
\end{proof}

We now prove a result analogous to Lemma \ref{L:aprioriMarker}.
\begin{lemma} \label{L:aprioriMarker}

Let $\tau \in [0,1]$.

Suppose that 
$\ff \in \Y^1$, $\gg \in \hat \Y^2$ or $\ff \in \Y^3$ satisfy
\[ \F(\ff, \tau) = 0, \quad \mathcal {G}(\gg, \tau) = 0, \text{ or } df_1 - df_2 \circ i +h'(\e^{f_1}) -da = 0, \]
respectively.

Then, the asymptotic constants satisfy
$f_2(+\infty, \cdot) = f_2(-\infty, \cdot)$, $g_2(+\infty, \cdot) = g_2(-\infty, \cdot)$ 
and $f_2(+\infty, \cdot) = f_2(-\infty, \cdot)$.

In particular, in Cases 1 and 2, these asymptotic constants are $0$.

\end{lemma}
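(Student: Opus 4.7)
The plan is to closely follow the proof of Lemma \ref{L:aprioriMarker1}: apply Stokes's theorem to a real-valued lift of the $S^1$-component and use the defining PDE to reduce the boundary term to one that vanishes on the slices $\{s=\pm c\}\times S^1$. In each case, I lift $f_2$ or $g_2$ to a real-valued function ($\tilde f_2$, respectively $\tilde g_2$) on the universal cover, and work with the exhaustion $S_c=[-c,c]\times S^1$ (or $S_c\setminus D_\epsilon(P)$ in Case 2). Since the asymptotic limits of $\tilde f_2$ and $\tilde g_2$ are constants in $t$, it suffices to compare the averages $\int_{S^1}\tilde f_2(\pm c,t)\,dt$ (respectively $\int_{S^1}\tilde g_2(\pm c,t)\,dt$) as $c\to\infty$.

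For Cases 1 and 3, the key computation is
\[
\int_{S_c} d\tilde f_2\wedge dt \;=\; \int_{S_c}(d\tilde f_2\circ i)\wedge ds,
\]
via the identity $\alpha\wedge\beta=(\alpha\circ i)\wedge(\beta\circ i)$ for 1-forms on a Riemann surface (exactly as in Lemma \ref{L:aprioriMarker1}, using $dt\circ i = ds$). Substituting $d\tilde f_2\circ i = df_1 + h'(\e^{f_1})\,ds - \tau\,da - (1-\tau)\,d\nu$ from the PDE, the $h'(\e^{f_1})\,ds\wedge ds$ and $d\nu\wedge ds$ terms vanish, and Stokes reduces the rest to $\int_{\partial S_c}(f_1-\tau a)\,ds$, which is zero since $ds$ restricts trivially to $\{s=\pm c\}\times S^1$. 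Applying Stokes to the left side yields $\int_{S^1}[\tilde f_2(c,t)-\tilde f_2(-c,t)]\,dt=0$, and exponential convergence to the asymptotic constants as $c\to\infty$ gives $\tilde f_2(+\infty)=\tilde f_2(-\infty)$. The case $\ff\in\Y^3$ is identical (it is the $\tau=1$ specialization).

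For Case 2, the same argument on $S_c\setminus D_\epsilon(P)$ produces additional boundary terms from $\partial D_\epsilon(P)$. The contribution from the $(1-\mu_P)\,a$ term vanishes automatically for small $\epsilon$ because $\mu_P\equiv 1$ near $P$. The step requiring the most care is showing that the remaining puncture contributions $\int_{\partial D_\epsilon(P)}\tilde g_2\,dt$ and $\int_{\partial D_\epsilon(P)}g_1\,ds$ go to zero as $\epsilon\to 0$: in the cylindrical coordinates $(\rho,\theta)$ near $P$, both $|ds|$ and $|dt|$ are of size $O(\e^{2\pi\rho})$ by \eqref{E:ds_is_exponential}, while $\tilde g_2$ and $g_1$ are bounded near $P$ (by the definition of $\hat\Y^2$, which forces $\gg$ to converge to a finite complex constant at $P$). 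Sending $\epsilon\to 0$ first and then $c\to\infty$ gives $\tilde g_2(+\infty)=\tilde g_2(-\infty)$.

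Finally, the ``in particular'' statement follows because in Cases 1 and 2 the function spaces $\Y^1$ and $\hat\Y^2$ both impose $V_-=\{0\}$ at $-\infty$, forcing $\tilde f_2(-\infty)=0$ (respectively $\tilde g_2(-\infty)=0$); the equality just established then forces the vanishing at $+\infty$ as well.
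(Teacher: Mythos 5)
Your proof is correct and uses the same argument as the paper: lift the $S^1$-component, apply Stokes on an exhaustion $S_c$, substitute the PDE via $(d\tilde f_2\circ i)\wedge(dt\circ i)$, and observe that the boundary terms on $\{s=\pm c\}\times S^1$ vanish because $ds$ restricts trivially there, with the puncture terms in Case 2 decaying because $\varphi_P^*ds$ and $\varphi_P^*dt$ are $O(\e^{2\pi\rho})$. You are in fact slightly more explicit than the paper in addressing the $\int_{\partial D_\epsilon(P)}\tilde g_2\,dt$ boundary contribution, which the paper's computation handles implicitly.
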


\begin{proof}
	The proof is very similar to the proof of Lemma \ref{L:aprioriMarker1}. 

	Notice that if we have such a solution $\ff$, $\gg$, the resulting function 
	is smooth on the punctured cylinder, by standard elliptic regularity arguments.

	We will prove the result for functions that solve the equation
	\[
		df_1 - df_2 \circ i + h'(\e^{f_1+x})\, ds - \tau \, dy - (1-\tau)\, d\nu = 0,
	\]
	on $\R \times S^1 \setminus \Gamma$, 
	where $x, y \colon \R \times S^1 \setminus \Gamma \to \R$ are 
        functions satisfying the following:
	\begin{itemize}
		\item[Case 1:] $x=0, y=a$, $\Gamma = \emptyset$;
		\item[Case 2:] $x= \tau \mu_P a$, $y = (1-\mu_P) a$, $\Gamma = \{ P \}$;
		\item[Case 3:] $\tau =1$, $x=0$, $y = a$, $\Gamma = \emptyset$.
	\end{itemize}
	We assume furthermore that $f_2$ converges exponentially fast to a 
	constant at $\pm \infty$, and that $f_1$ converges exponentially fast to 
	a constant at $P$ in Case 2.

	Notice in particular that if the cylinder has a puncture, 
	$y$ vanishes in a neighbourhood of the puncture.

	For each $c >> 1$, let $S_c := [-c,c] \times S^1 \setminus D_P(1/c) 
	\subset \R \times S^1 \setminus \Gamma$, 
	where $D_P(\epsilon)$ denotes a disk of radius $\epsilon$ centered at $P$. 

	We then obtain (noticing that $d\nu = \nu'(s) \, ds$)
 \begin{align*}
 \int_{\partial S_c} \tilde f_2 dt 
 &= \int_{S_c} d\tilde f_2 \wedge dt = \int_{S_c} df_2 \wedge dt \\
 &= \int_{S_c} df_2 \circ i \wedge dt\circ i \\
 &= \int_{S_c} ( df_1 + h'(\e^{f_1+x}) ds -\tau dy - (1-\tau) d\nu) \wedge ds \\
 &= \int_{\partial S_c} ( f_1 -\tau y ) \wedge ds
 \intertext{
 This vanishes in Cases 1 and 3, and we are done. 
 In Case 2, $\partial S_c$ also contains a 
 small loop around $P$. In that case, for $c$ sufficiently large,  we obtain:}
 &= \int_{\partial D_P(1/c)} (f_1- \tau y) \, ds \\
 &= \int_{\partial D_P(1/c)} f_1 \, ds. 
 \end{align*}
 Notice that $\int_{\partial D_P(1/c)} f_1 \, ds$ decays like $\frac{1}{c^{2\pi\delta+1}}$.

 The result now follows by taking the limit as $c \to +\infty$.
\end{proof}

We are now ready for the key techincal result of this section, which guarantees existence and uniqueness of solutions to our auxiliary equations. Existence of solutions will be obtained from reducing the problem to an ODE. 
Uniqueness will follow from positivity of intersections of pseudoholomorphic curves in symplectic 4-manifolds.

\begin{lemma} \label{L:countingSolutions1}
There is a unique solution to \eqref{E:deformed cylinder1} in $\mathcal Y^1$ and in $\hat{\mathcal Y^2}$.

If $a = Ts + c$, then there is an 2-parameter family of solutions to \eqref{E:cylinder} in $\mathcal Y^3$, parametrized by $\R\times S^1$.

\end{lemma}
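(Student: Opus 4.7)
The plan is to produce explicit solutions via a $t$-independent ansatz that reduces the PDE to an autonomous or asymptotically autonomous ODE, and then to use the Fredholm results of Lemma~\ref{L:cylTransversality} together with positivity of intersections to obtain the uniqueness statements.

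First, I would seek solutions of the form $\ff(s,t) = (f_1(s), f_2(s))$ depending only on $s$. Both \eqref{E:deformed cylinder1} and \eqref{E:cylinder} (with $a = Ts+c$ in Case 3) are compatible with this ansatz, since $\nu$ and $a$ are $t$-independent. The $\partial_t$-component of the equation forces $f_2'(s)=0$, so $f_2\equiv \theta\in S^1$ is constant. The $\partial_s$-component becomes the scalar ODE
\[
f_1'(s) = \nu'(s) - h'(\e^{f_1(s)}) \qquad (\text{Cases 1, 2}),\qquad f_1'(s) = T - h'(\e^{f_1(s)}) \qquad (\text{Case 3}).
\]
In Cases 1 and 2 the ODE is autonomous for $|s|$ large, with attracting fixed points $b_\pm$ (since $h''(\e^{b_\pm})\e^{b_\pm}>0$). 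As $b_-$ is backward-repelling, the unique bounded solution on $(-\infty,s_0]$ is $f_1\equiv b_-$; flowing forward through the transition region of $\nu$ (and using $h'$ monotone and $\nu'$ interpolating between $T_-$ and $T_+$) one checks that $f_1$ remains in the range $f_1>\log 2$ and converges exponentially to $b_+$ at $+\infty$. Lemma~\ref{L:aprioriMarker} pins $\theta=0$. In Case 3 the autonomous ODE has $b_+$ as its unique attracting fixed point; the stable manifold is a 1-parameter family of orbits (translates of a single one), and since $h'(\e^{f_1})\equiv 0$ for $f_1\le \log 2$, every non-constant orbit satisfies $f_1(s)\sim Ts + c_-$ as $s\to -\infty$. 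Combining with the $S^1$-choice of $\theta$ yields the desired $\R\times S^1$-family.

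Next, for uniqueness in $\Y^1$ and $\hat\Y^2$, I would fix the constructed $t$-independent solution $\ff_0$ and let $\ff$ be any other solution. The difference $F=\ff-\ff_0$ satisfies the linear equation
\[
dF_1 - dF_2\circ i + C(s,t)\, F_1\, ds = 0,
\]
where $C(s,t) = \int_0^1 h''(\e^{f_1^\tau})\e^{f_1^\tau}\, d\tau$ with $f_1^\tau = \tau f_1 + (1-\tau)f_{1,0}$. Since both $f_1, f_{1,0}\to b_\pm$ with $b_\pm>\log 2$, $C$ is positive and has the same asymptotic limits $h''(\e^{b_\pm})\e^{b_\pm}$ as the coefficient in $D^{1,0}$ (resp.\ $D^{2,0}$). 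Lemma~\ref{L:aprioriMarker} gives that both $\ff$ and $\ff_0$ have vanishing asymptotic constants for $f_2$, so $F$ lies in the domain with $V_\pm = 0$ used in Lemma~\ref{L:cylTransversality}. The automatic transversality argument from that lemma (via \cite{WendlAutomatic}*{Proposition 2.2}) applies verbatim with the coefficient $C$ in place of $h''(\e^{f_{1,0}})\e^{f_{1,0}}$, since the asymptotic operators, index, and $c_1$-count coincide. Hence the linear operator is an isomorphism, $F=0$, and $\ff=\ff_0$.

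Finally, for Case 3 the $\R\times S^1$-family arises as the orbit of $\ff_0$ under the action $(s_0,\theta)\cdot\ff(s,t) = (f_1(s+s_0), f_2(s+s_0)+\theta)$, which preserves \eqref{E:cylinder} because $a = Ts+c$ shifts only by a constant under $s$-translation and $df_2$ is rotation-invariant. By Lemma~\ref{L:cylTransversality} the linearization $D^3$ is surjective of index $2$, so this orbit is a smooth open subset of the solution space, and the asymptotic data map $\ff\mapsto (c_-,\theta_+)\in \R\times S^1$ (with $c_-$ determined by the $-\infty$-level of $f_1$ and $\theta_+$ by the $+\infty$-limit of $f_2$, using Lemma~\ref{L:aprioriMarker} to identify $\theta_+=\theta_-$) is a local diffeomorphism on the orbit, hitting every value exactly once. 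For the globally matching statement, I would view the graphs of two solutions sharing asymptotic data as $J$-holomorphic surfaces in the 4-manifold $\R\times S^1\times \R\times S^1$, equipped with the tautological almost complex structure for which $J\partial_s = \partial_t + X_H$ with $X_H = (h'(\e^{f_1})-T)\partial_{f_2}$ and $J$ standard on the target. Positivity of intersections in 4-manifolds, together with the matching asymptotic Reeb-type data at both punctures, forces any two such graphs to coincide, so every solution lies in the $\R\times S^1$-orbit of $\ff_0$.

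The main obstacle is the global uniqueness in Case~3, where the Fredholm argument only gives local uniqueness (the index is $2$, not $0$). Closing the gap requires the intersection-theoretic argument; verifying that the graph construction produces an honest almost complex 4-manifold and that Siefring-type asymptotic intersection counts vanish for graphs sharing asymptotic data is the technical heart of this step.
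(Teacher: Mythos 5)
Your proof follows the paper's ansatz for existence (seeking $t$-independent solutions, reducing to a scalar ODE, and using the asymptotics of $h'$ and $\nu'$ to find the heteroclinic from $b_-$ to $b_+$), but for the uniqueness statements you take a genuinely different route from the paper.

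For Cases 1 and 2, the paper proves uniqueness by Gromov's trick: it reinterprets \eqref{E:system} as the Floer equation for a time-dependent Hamiltonian on $(\R\times S^1, dx\wedge dy, i)$, views solutions as pseudoholomorphic curves in the 4-manifold $(\R\times S^1)\times(\R\times S^1)$, and applies positivity of intersections to show a putative second solution $\mathbf{f}^1$ with non-constant $f_2$ would intersect a suitably chosen $t$-independent solution $\mathbf{f}^2$, contradicting the homotopy-invariant total intersection count of $0$. You instead take the difference $F = \ff - \ff_0$ of two solutions, show it solves the real-linear Cauchy--Riemann-type equation $dF_1 - dF_2\circ i + C(s,t)F_1\,ds = 0$ with a mean-value coefficient $C$, and invoke Wendl-style automatic transversality for $D_C$ (same asymptotic operators, index, and $c_1$-count as $D^{1,0}$) to conclude the linear operator is an isomorphism. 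This argument is sound and in fact shorter than the paper's, but there is a small slip in how you phrase it: Lemma~\ref{L:cylTransversality} uses the domain with $V_- = 0$, $V_+ = i\R$, not $V_\pm = 0$. With $V_\pm = 0$ the index is $-1$, so the operator cannot be an isomorphism; the correct statement is that Lemma~\ref{L:aprioriMarker} puts $F$ in the \emph{subspace} $W^{1,p,\delta}_{(0,0)}\subset W^{1,p,\delta}_{(0,i\R)}$, and the index-$0$ isomorphism $D_C$ on the larger space is in particular injective there. The trade-off versus the paper's approach: the intersection argument uses the common global structure (Gromov's trick) uniformly across all three cases, whereas your linear argument is tailored to the index-$0$ cases and does not close Case 3.

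For Case 3 you correctly observe that the Fredholm calculus only gives local 2-dimensional smoothness of the solution space, and you acknowledge that ruling out additional components requires the intersection-theoretic argument. Your sketch is in the right spirit but is not complete as stated: positivity of intersections does not by itself force two graphs ``sharing asymptotic data'' to coincide. The paper's completion is cleaner --- the $\R\times S^1$-family of explicit solutions has graphs that foliate a region of $(\R\times S^1)\times(\R\times S^1)$, so any hypothetical extra solution must intersect a leaf transversally at a point, contradicting the vanishing total intersection number with that leaf (obtained by homotoping via a solution that misses the leaf). If you do want to carry out your Case 3 argument in full you will need this foliation step (or an equivalent argument via asymptotic intersection numbers), not just the observation that the graph construction produces $J$-holomorphic curves.
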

\begin{proof}
We begin with Case 1. Equation \eqref{E:deformed cylinder1} can be rewritten in coordinates as
\begin{equation}\label{E:system}
\begin{cases}
\partial_s f_1 - \partial_t f_2 + h'(\e^{f_1}) - \nu' = 0 \\
\partial_t f_1 + \partial_s f_2 = 0
\end{cases}
\end{equation}

Let us start by considering solutions where $f_2\equiv k$ is constant, and $f_1(s,t)=f_1(s)$ is independent of $t$. Then, we get the $s$-dependent ODE
$$
\frac{d f_1}{d s} + h'(\e^{f_1})- \nu' = 0. 
$$
Write it as
$$
\frac{d f_1}{d s} = F(f_1,s)
$$
where $F(x,s) = - h'(\e^x) + \nu'(s)$. 
\begin{claim}
For every $s_0\in \R$ and every $c_1\in \R$, there is a unique solution $f_1:\R\to \R$ such that $f_1(s_0) = c_1$. 
\end{claim}
To prove the claim, recall the assumptions that $h'(\rho)\geq 0$ and $h'(\rho) = 0$ for $\rho\leq 2$. These and the asymptotic behavior of $\nu$ imply that there are constants $C$ and $\tilde C$ such that:
\begin{itemize}
 \item $F(x,s)\leq C$ for all $(x,s)$;
 \item $F(x,s) \geq \tilde C$ for $x\leq \ln 2$, uniformly in $s$. 
\end{itemize}
The claim now follows from standard existence and uniqueness theory for solutions to ODE's.

The facts that for $s<<0$ we have that $b_-$ is the only zero of the function $x \mapsto F(x,s)$ and that $\partial_x F(x,s)\leq 0$ (a consequence of $h''(x) \geq 0$) imply that there is a unique solution $\tilde f_1$ of the ODE such that $\lim_{s\to -\infty} \tilde f_1(s) = b_-$, see Figure \ref{F:tilde f_1}.

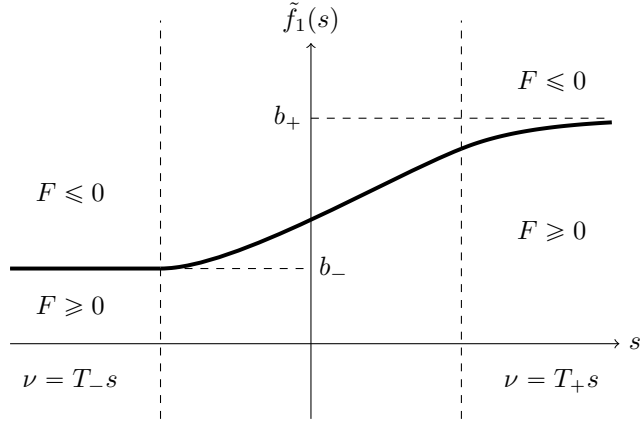
\begin{figure} 
\begin{tikzpicture}[domain=0:6]
    \draw[->] (-4,0) -- (4.1,0) node[right] {$s$};
    \draw[->] (0,-1) -- (0,4) node[above] {$\tilde f_1(s)$};
    \draw [dashed, color=black] (0,3) -- (4,3);
    \draw [dashed, color=black] (-4,1) -- (0,1);
    \draw [dashed, color=black] (-2,-1) -- (-2,4.3);
    \draw [dashed, color=black] (2,-1) -- (2,4.3);
    \draw[line width=1.5pt, color=black] (-4,1) -- (-2,1);
    \draw[line width=1.5pt, color=black] (-2,1) ..  controls(-1,1) and (1,2.2) .. (2,2.594);
    \draw[line width=1.5pt,domain=2:4,smooth,variable=\x,black] plot ({\x},{3-3*exp(-\x)});
    \node at (-.3,3) {$b_+$};
    \node at (.3,1) {$b_-$};
    \node at (-3.2,0.5) {$F\geq 0$};
    \node at (-3.2,2) {$F\leq 0$};
    \node at (-3.2,-0.5) {$\nu = T_- s$};
    \node at (3.2,1.5) {$F\geq 0$};
    \node at (3.2,3.5) {$F\leq 0$};
    \node at (3.2,-0.5) {$\nu = T_+ s$};
\end{tikzpicture}
\caption{The function $\tilde f_1$}
\label{F:tilde f_1}
\end{figure}

Now, ${\bf f}^0(s,t)=(\tilde f_1(s),0)$ is a solution to \eqref{E:system} in $\mathcal Y^1$. The fact that it has the correct exponential convergence properties at $\pm \infty$ follow from $\tilde f_1$ solving the ODE above. We want to show that there are no other solutions to \eqref{E:system} in $\mathcal Y^1$. The proof will be an application of positivity of intersections of pseudoholomorphic curves in dimension 4.

Suppose that there is a solution ${\bf f}^1=(f_1^1,f_2^1)$ of \eqref{E:system} in $\mathcal Y^1$ such that $f_2^1$ is not constant. By the definition of $\mathcal Y^1$ and by Lemma \ref{L:aprioriMarker}, $\lim_{s\to \pm\infty} f_2^1(s,t) = 0$. Since $f_2^1$ is not constant, there is $(s_0,t_0)\in \R\times S^1$ such that $f_2^1(s_0,t_0)\neq 0$. Let $f_1^2:\R\to\R$ be the unique solution to the ODE above, for which $f_1^2(s_0) = f_1^1(s_0,t_0)$. Then, ${\bf f}^2(s,t)=(f_1^2(s),f_2^1(s_0,t_0))$ is another solution to \eqref{E:system} (although $(f^2_1,0)$ does not have the correct asymptotic behavior to belong to $\mathcal Y^1$). 

Now, observe that \eqref{E:system} is the Floer equation on the K\"ahler manifold $(\R\times S^1,dx\wedge dy,i)$, for the time-dependent Hamiltonian 
\begin{align*}
H\colon \R \times (\R\times S^1) &\to \R \\
(s,x,y) &\mapsto \int_0^x h'(\e^z) dz - \nu'(s)x
\end{align*}
By Gromov's trick \cite{Gromov85}, solutions to the Floer equation can be thought of as pseudoholomorphic curves $\R \times S^1 \to M:= (\R\times S^1)\times (\R\times S^1)$, for some twisted almost complex structure on $M$. The projection of these curves to the first cylinder factor is the identity. 

Since $\lim_{s\to -\infty} f_2^1 = \lim_{s\to -\infty} f_2^0$, Lemma \ref{L:aprioriMarker} implies that they have lifts $\tilde f_2^1$ and $\tilde f_2^0$ to $\R$, with the same limits as $s\to \pm \infty$. 
By assumption, ${\bf f}^1$ and ${\bf f}^0$ are both in $\mathcal Y^1$, which implies that $\lim_{s\to \pm \infty} f_1^1(s,t)=\lim_{s\to \pm \infty} f_1^0(s,t)=b_\pm$. Therefore, there is a homotopy from ${\bf f}^1$ to ${\bf f}^0$ that is $C^0$-small on neighborhoods of $\pm\infty\times S^1$ in $\R \times S^1$. This gives a homotopy from the graph of ${\bf f}^1$ to the graph of ${\bf f}^0$ in $M$, and the intersections with the graph of ${\bf f}^2$ during the homotopy will remain in a compact region of $M$. So, we have an equality of signed intersection numbers
$$
\#(Graph({\bf f}^1)\cap Graph({\bf f}^2))=\#(Graph({\bf f}^0)\cap Graph({\bf f}^2))=0,
$$
since the second components of ${\bf f}^0$ and ${\bf f}^2$ are different constants. But then positivity of intersections of pseudoholomorphic curves in dimension 4 \cite{McDuffSalamon}*{Exercise 2.6.1} implies that the graphs of ${\bf f}^1$ and ${\bf f}^2$ do not intersect, which, by the construction of ${\bf f}^2$, is a contradiction. 
Therefore, there is no such ${\bf f}^1$ to start with, which proves the first statement in the Lemma in Case 1.

Case 2 can be dealt with by the same argument, applied to ${\bf g} \in \hat{\mathcal Y^2}$. 
We again apply Lemma \ref{L:aprioriMarker}.
The argument for Case 1 above produces a solution ${\bf g} = (g_1(s), 0) \in
\Y^1$. We need to argue that such a solution is indeed in $\hat \Y^2$. 
This means that $({\bf g} \circ \varphi_P) \in W^{1,p,\delta}_{\C_-}((-\infty,-1]\times S^1,\R\times S^1)$. This will be a consequence of the fact that $g_1$ is $C^1$, and of its asymptotics. Write $P = (P_s,P_t)$. Then,
$$
({\bf g} \circ \varphi_P)(\rho,\theta) = (g_1(\e^{2\pi \rho} \cos(2\pi\theta) + P_s),0)
$$
and for $\rho << -1$ the Mean Value Theorem gives
$$
|g_1(\e^{2\pi \rho} \cos(2\pi\theta) + P_s) - g_1(P_s)| \leq ||g_1'||_{L^\infty}\,\e^{2\pi \rho} 
$$
 where the sup norm is finite by the exponential convergence of $g_1$ at $\pm \infty$. Since $\delta < 2\pi$, we get the desired exponential convergence at $P$.

Let us now consider the case when $\tilde u = (a,u)$ is such that $a = Ts + c$, and look in $\Y^3$ for solutions to \eqref{E:cylinder}. 
This is equivalent to the system of equations   

\begin{equation}%
\begin{cases}
\partial_s f_1 - \partial_t f_2 + h'(\e^{f_1}) - T = 0 \\
\partial_t f_1 + \partial_s f_2 = 0
\end{cases}
\end{equation}
We again have solutions of the form ${\bf f}(s,t)=(f_1(s),f_2)$ with $f_2$ constant. There is an $\R\times S^1$ family of such solutions, parametrized by $\lim_{s\to -\infty} (f_1(s)-Ts, f_2)\in \R\times S^1$. The union of the graphs of these solutions foliates the region $(\R\times S^1)\times ((-\infty,b_+)\times S^1) \subset (\R\times S^1)\times (\R\times S^1)$. 
We can apply Lemma \ref{L:aprioriMarker} to equation \eqref{E:cylinder}, since it is a special case of \eqref{E:parametric cylinder1} when $\tau = 1$. The argument above using positivity of intersections can then be adapted to prove that there are no other solutions with the same asymptotics. 
To get different solutions, we can replace $(f_1(s),f_2)$ with $(\overline f_1(s),\overline f_2)=(f_1(s+c_1),f_2+c_2)$, for a constant $(c_1,c_2)\in \R\times S^1$. 
\end{proof}

\begin{proof}[Proof of Proposition \ref{Ansatz2}]
 Most of the claims made in the statement follow from the existence of a solution $\bf e$ to \eqref{E:cylinder1} with the appropriate asymptotics. Indeed, given such ${\bf e} = (e_1,e_2)$, one can construct $\tilde v = (b,v)$ by taking 
 $$
 (b,v) = (a + e_1, \phi_R^{e_2} \circ u).
 $$
 As we pointed out, the existence of such $\bf e$ is equivalent to the existence of a solution $\bf f$ to \eqref{E:cylinder}. Let us consider different cases separately. 
 
 In Case 1, note that Lemmas \ref{L:cylTransversality} and \ref{L:smooth parameter} and the Implicit Function Theorem imply that solutions to \eqref{E:parametric cylinder1} vary smoothly in $\tau$. Lemma \ref{L:countingSolutions1} guarantees the existence and uniqueness of solutions to \eqref{E:deformed cylinder1} in $\Y^1$ (which corresponds to $\tau = 0$). 
 Therefore, we conclude that \eqref{E:cylinder} (which corresponds to $\tau = 1$) also has a unique solution in $\Y^1$, which finishes the proof. 

 Case 2 works the same way, with solution $\bf g$ to \eqref{E:parametric cylinder2} instead of solutions $\bf f$ to \eqref{E:parametric cylinder1}.
 
 Case 3 corresponds to the final part of Proposition \ref{Ansatz2}, when $\tilde u$ is assumed to be a trivial cylinder. Lemma \ref{L:countingSolutions1} implies that there is an $\R\times S^1$-family of solutions to \eqref{E:cylinder} in $\Y^3$. By Lemma \ref{L:aprioriMarker}, there is an $\R$-family of such solutions with $\lim_{s\to \pm \infty} f_2(s,t) = 0 \in S^1$. 
 This finishes the proof.
 \end{proof}

 \begin{remark}
     Proposition \ref{Ansatz2} relates Floer cylinders in $\R \times Y$ 
     with pseudoholomorphic cylinders and cylinders satisfying 
     equation \eqref{E:cylinder1}. 
     We will now discuss the relationship between the Fredholm indices of these problems.
     Indeed, suppose that $\tilde u$ is a holomorphic curve, and $\mathbf{e}$
     is a solution to Equation \eqref{E:cylinder1}. This then describes a
     Floer solution $\tilde v$.
     We will now see that the Fredholm index describing deformations of
     $\tilde v$ is one less than the sum of the Fredholm index for $\tilde u$
     and of the index of the problem given by \eqref{E:cylinder1} (the latter 
     of which is give in Lemma \ref{L:cylTransversality}). This
     difference is explained by the need to take a quotient by an $\R$-action,
     though this action is to be understood differently in the 3 cases we
     consider.
     
     First, in Case 1 and in Case 2, we notice that 
     the index of the Floer
     solution is one less than the index of the corresponding holomorphic
     [punctured] cylinder in $\R \times Y$. This index difference comes from
     the fact that the Floer cylinder corresponds to an equivalence class of
     pairs $(\tilde u, \mathbf{e})$, where the equivalence comes from
     translation of $\tilde u$ in the symplectization and a corresponding
     translation of $\mathbf{e}$.

     In Case 3, the Floer solution is contained in a fibre of $\R \times Y \to
     \Sigma$. In this case, for any point $p \in \Sigma$, there is a $2$
     parameter family of Floer solutions in the fibre over $p$, with positive
     end at a closed Hamiltonian orbit and negative end at a Reeb orbit in
     $\{-\infty \} \times Y$. 
     These 2-parameters are given by domain automorphisms. Notice that the
     domain automorphisms are generated by rotations and translations. The
     domain rotation and the Reeb flow are conjugate in this case. 
     Similarly, the trivial
     orbit cylinders in $\R \times Y$ come in a 2-parameter family, which can
     also be described by domain automorphism (or by the $\C^*$-action on $\R
     \times Y$, which is the same thing for these curves). 
     In particular then, we again obtain the claimed equality. 
\end{remark}

\section{Pseudoholomorphic spheres in \texorpdfstring{$\Sigma$}{Sigma} and meromorphic sections}

\label{S:meromorphic}

The previous sections explain that split Floer cylinders are in
some sense equivalent to $J_Y$-holomorphic curves. 
We will now describe $J_Y$-holomorphic curves in a manner that is more suitable
for computing the Floer differential.

Let $\tilde u = (a,u): \R\times S^1\setminus \Gamma \to \R \times Y$ be a $J_Y$-holomorphic curve, where $\Gamma$ is either empty or the singleton $\{P\}$. Since $J_Y$ is $\R\times S^1$-equivariant, it is a lift of an almost complex structure $J_\Sigma$ in $\Sigma$, and one can project $\tilde u$ to obtain a $J_\Sigma$-holomorphic map $w: \R\times S^1\setminus \Gamma \to\Sigma$. 
Since punctures of finite energy pseudoholomorphic curves in $\R\times Y$ are asymptotic to Reeb orbits in $Y$, and these are 
multiple covers of the fibres of $Y\to \Sigma$, $w$ extends to a $J_\Sigma$-holomorphic map $\CP^1 \to \Sigma$ 
\cite{McDuffSalamon}*{Theorem 4.1.2}. We use the standard identification of $\R\times S^1$ with $\C^* \subset \CP^1$. 

The symplectization $\R \times Y$ can be identified with the complement of the zero section in a complex line bundle $E \to \Sigma$ (the dual to the normal bundle to $\Sigma$ in $X$). 
We have the following commutative diagram

\tikzset{node distance=2cm, auto}

\begin{center}
\begin{tikzpicture}
\node (P) {$w \pb (E\setminus 0)$};
  \node (B) [right of=P] {$\R\times Y$}; 
  \node (B') [right of=B] {\hspace{-2cm} $= E \setminus 0$};
  \node (A) [below of=P] {$\CP^1$};
  \node (C) [below of=B] {$\Sigma$};
  \node (P1) [node distance=2cm, left of=A] {$\R\times S^1\setminus \Gamma$};
  \draw[->] (P) to node {} (B);
  \draw[->] (P) to node {} (A);
  \draw[->] (A) to node {$w$} (C);
  \draw[->] (B) to node {} (C);
  \draw[right hook->] (P1) -- (A) [anchor=west]{};
  \draw[->, bend left=60] (P1) to node {$(a,u)$} (B);
  \draw[->, dashed, bend left] (A) to node {$s$} (P);
\end{tikzpicture}
\end{center}
where $s$ is a section of the pullback bundle $w^*E \to \CP^1$ with a zero at $0$ (and at $P$, if $\Gamma = \{P\}$) and a pole at $\infty$. 
The line bundle $w^*E$ has an induced complex linear Cauchy--Riemann operator, in the sense of \cite{McDuffSalamon}*{Section C.1}. Complex linearity is a consequence of the Reeb-invariance of $J_Y$. Such a Cauchy--Riemann operator corresponds to a unique holomorphic structure on $w^*E$ \cite{Kobayashi}*{Proposition 1.3.7}. The section $s$ is in the kernel of this operator and is therefore meromorphic. This proves the following result.

\begin{lemma} \label{L:cylinders to spheres}
Every $J_Y$-holomorphic curve $\tilde u = (a,u): \R\times S^1\setminus \Gamma \to \R \times Y$ defines a $J_\Sigma$-holomorphic map $w: \CP^1 \to \Sigma$ and a meromorphic section of $w^*E \to \CP^1$, with zero at  $0$ (and at $P$, if $\Gamma = \{P\}$) and pole at $\infty$.  
\end{lemma}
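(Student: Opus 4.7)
My plan is to work through the three claims in sequence: first produce the map $w \colon \CP^1 \to \Sigma$, then produce the section $s$, and finally verify that $s$ is meromorphic with the prescribed zeros and poles.

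For the first step, I would use that $J_Y$ is both $\R$-translation invariant and Reeb-invariant, so the bundle projection $\pi_\Sigma \colon \R \times Y \to \Sigma$ intertwines $J_Y$ with $J_\Sigma$. Composing, $w_0 := \pi_\Sigma \circ \tilde u \colon \R \times S^1 \setminus \Gamma \to \Sigma$ is $J_\Sigma$-holomorphic. Near each puncture in $\Gamma \cup \{\pm\infty\}$, finite Hofer energy forces $\tilde u$ to converge to a (possibly multiply-covered) Reeb orbit, which is a fibre of $Y \to \Sigma$ over a single point of $\Sigma$. Hence $w_0$ has locally bounded image near each puncture, and Gromov's removal of singularities (\cite{McDuffSalamon}*{Theorem 4.1.2}) supplies the extension $w \colon \CP^1 \to \Sigma$ after identifying $\R \times S^1 \cong \C^* \subset \CP^1$.

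For the second step, identify $\R \times Y$ with $E \setminus 0$ via the diffeomorphism constructed in Section \ref{sec:almost complex structures} (the map $\psi_2$). Under this identification, $\tilde u$ becomes a map to $E$ covering $w$ on $\R \times S^1 \setminus \Gamma$, i.e.\ a section $s_0$ of $w^*E \to \CP^1$ defined on $\CP^1 \setminus (\Gamma \cup \{\pm\infty\})$ and not vanishing there (since the image lies in $E \setminus 0$). By inspection of the asymptotics, the $\R$-coordinate $a$ tends to $-\infty$ at $-\infty$ and (if nonempty) at $P$, and tends to $+\infty$ at $+\infty$; under $\psi_2^{-1}$ this translates to $s_0$ having removable zeros at $-\infty$ and $P$ and a removable pole at $+\infty$.

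For the third (and main) step, I need to exhibit a holomorphic structure on $w^*E$ for which $s_0$ is meromorphic. The pulled-back bundle $w^*E$ inherits a Cauchy--Riemann operator from $J_Y$ via the formula for sections in terms of maps to $E$. The key point is that because $J_Y$ preserves the vertical subspaces and acts there as multiplication by $i$ (and is Reeb-invariant, so this structure descends nicely under the pullback), this operator is complex linear. By \cite{Kobayashi}*{Proposition 1.3.7}, a complex linear Cauchy--Riemann operator on a complex line bundle over a Riemann surface determines a unique holomorphic structure. With respect to this holomorphic structure, the section $s_0$, being the lift of a $J_Y$-holomorphic map, lies in the kernel of the operator on its domain of definition, hence is holomorphic on $\CP^1 \setminus (\Gamma \cup \{\pm\infty\})$ and bounded near $-\infty$ and $P$; standard removal of singularities for holomorphic sections extends $s_0$ to a meromorphic section of $w^*E$ with zeros at $0$ and at $P$ and a pole at $\infty$.

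The main obstacle I anticipate is the verification that the induced Cauchy--Riemann operator on $w^*E$ is genuinely complex linear. This hinges on the precise behaviour of $J_Y$ with respect to the horizontal/vertical splitting coming from the connection $\Theta$: one must check that the horizontal lift from $(\Sigma, J_\Sigma)$ and the fibrewise $i$-action assemble so that the $(0,1)$-part of $d\tilde u$, read as a section of $\Lambda^{0,1}T^*\dot S \otimes w^*E$, depends $\C$-linearly on $s_0$ with no antilinear correction term. Once this is in place, the rest of the argument is a routine application of the holomorphic structure theorem and classical removal of singularities.
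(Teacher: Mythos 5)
Your proposal follows essentially the same route as the paper: project $\tilde u$ to $\Sigma$ using that $J_Y$ is a lift of $J_\Sigma$ and extend $w$ over the punctures by removal of singularities; view $\tilde u$ as a section of a pulled-back line bundle; observe that Reeb-invariance of $J_Y$ makes the induced Cauchy--Riemann operator on that bundle complex linear, invoke \cite{Kobayashi}*{Proposition 1.3.7} to get a holomorphic structure, and conclude that the section is meromorphic with the prescribed zeros and pole. All of the main ingredients in the paper's argument are present, including the one you flag as the anticipated obstacle (complex linearity), which the paper also attributes exactly to Reeb-invariance.

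There is, however, one concrete bookkeeping error in your second step. The map $\psi_2$ of Section \ref{sec:almost complex structures} is defined on the bundle $E$ with $c_1(E)=[K\omega_\Sigma]$ (the normal bundle model), and sends $v$ to $\bigl(-\tfrac{1}{2\pi K}\ln|v|,\tfrac{\rho_0 v}{|v|}\bigr)$, so $|v|=\e^{-2\pi K a}$. Hence under $\psi_2^{-1}$ the negative punctures, where $a\to-\infty$, are the points where the section blows up, and the positive puncture, where $a\to+\infty$, is where it approaches the zero section: with your identification you would get poles at $0$ and $P$ and a zero at $\infty$, the opposite of what the lemma asserts. The $E$ appearing in the lemma is the \emph{dual} of the normal bundle (as stated in the paper immediately before the lemma), and it is precisely this dualization that swaps zeros and poles and produces the stated distribution; it is also what makes $\deg(w^*E)=-k_++k_-+\sum_{P\in\Gamma}k_P$ negative, as used later in Lemma \ref{number of lifts}. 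So you should either build the identification of $\R\times Y$ with the complement of the zero section in the dual bundle from the start, or dualize the section at the end; with that fix the rest of your argument goes through as in the paper.
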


The converse to the Lemma (the statement that a meromorphic section induces a $J_Y$-holomorphic curve) is immediate. Therefore, 
we can reduce the question of counting punctured $J_Y$-holomorphic maps $\tilde u$ to that of counting $J_\Sigma$-holomorphic curves 
$w: \CP^1 \to \Sigma$, together with meromorphic sections $s$ of $w^*E$, with prescribed zeros and poles. The count of maps $w$ is 
related with the computation of genus zero Gromov--Witten numbers of $\Sigma$ \cite{McDuffSalamon}. More about this will be said below. 
We can also give a complete description of the relevant meromorphic sections $s$. Given a divisor of points $D$ in $\CP^1$ and a holomorphic line bundle $L$ over $\CP^1$, where both $D$ and $L$ have degree $d$, there is a $\C^*$-family of meromorphic sections of $L$, such that the divisor associated with each section is $D$. 
One can justify this fact by reducing it to the simplest case of trivial $L$: use a trivialization of $L$ over $\C \subset \CP^1$ to identify meromorphic sections of $L$ with meromorphic functions on $\CP^1$ 
\cite{Miranda}*{pp 342--345}.

Fix a $J_\Sigma$-holomorphic map $w: \CP^1 \to \Sigma$ and let $\tilde u = (a,u) \colon \R \times S^1\setminus \Gamma \to \R\times Y$ be a $J_Y$-holomorphic lift. Given $(\theta_1,\theta_2) \in S^1\times S^1$, we can produce a new $J_Y$-holomorphic curve 
$$
(\theta_1,\theta_2) . \tilde u = \tilde u_{(\theta_1, \theta_2)}
$$
by rotating in the domain and in the target: 
$$
 \tilde u_{(\theta_1, \theta_2)}(s,t) = (a(s,t+\theta_1), \phi_R^{\theta_2}\circ u(s,t+\theta_1)).
$$

\begin{lemma} \label{number of lifts}
The number of such lifts that have prescribed asymptotic markers at $\pm\infty$ is 
$k_+ - k_- = - \langle c_1(E \to \Sigma), w_*[\CP^1] \rangle + \sum_{P\in \Gamma} k_P > 0$. 
\end{lemma}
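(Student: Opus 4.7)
The plan is to combine two independent ingredients: a degree computation on the holomorphic line bundle $w^*E \to \CP^1$ to establish the identity for $k_+ - k_-$, and an analysis of how the $(S^1)^2$-action on $\tilde u$ translates to the pair of asymptotic markers to produce the count.

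First, I would invoke Lemma \ref{L:cylinders to spheres} to identify the lift $\tilde u$ with a meromorphic section of $w^*E$ whose divisor consists of a zero of order $k_-$ at $0$, zeros of order $k_P$ at each $P \in \Gamma$, and a pole of order $k_+$ at $\infty$. The standard degree formula for line bundles over $\CP^1$ then gives
\[
k_- + \sum_{P \in \Gamma} k_P - k_+ = \deg(w^*E) = \langle c_1(E), w_*[\CP^1]\rangle,
\]
which after rearranging produces the claimed expression for $k_+ - k_-$. Strict positivity holds because $E$ is dual to the normal bundle to $\Sigma$, so $c_1(E) = -[K\omega_\Sigma]$, and hence the pairing with the $J_\Sigma$-holomorphic class $w_*[\CP^1]$ is non-positive, and strictly negative when $w$ is non-constant. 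Together with $\sum_{P} k_P \ge 0$, this gives $k_+ - k_- > 0$ in each of Cases 1 and 2 of Proposition \ref{P:split SH cases}.

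Next, for the count itself, I would use formula \eqref{rot v}: the $(S^1)^2$-action on $\tilde u$ induces a group homomorphism $\rot_{\tilde u}\colon (S^1)^2 \to (S^1)^2$ on the pair of asymptotic markers, given by an integer matrix of determinant $k_- - k_+$. Any homomorphism between two-tori given by an integer matrix of nonzero determinant $d$ is a surjective $|d|$-to-one covering. Consequently, for any prescribed pair of markers there are precisely $|k_- - k_+| = k_+ - k_-$ group elements $(\theta_1, \theta_2) \in (S^1)^2$ carrying $\tilde u$ to a lift with those markers.

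The main obstacle is showing that these $k_+ - k_-$ group elements give rise to $k_+ - k_-$ genuinely distinct lifts, that is, that the $(S^1)^2$-action on $\tilde u$ is free. Any stabilizer must lie inside the kernel of $\rot_{\tilde u}$, which has order $k_+ - k_-$, so the stabilizer is a finite subgroup of that order. For somewhere injective $w$---the regime in which transversality is established in \cite{DiogoLisiSplit}*{Section 5}---a non-trivial stabilizer would give rise to a non-trivial discrete symmetry of $w$ together with its marked-point data $\{0, \infty\} \cup \Gamma$, which is impossible. Hence the stabilizer is trivial and the prescribed pair of markers is realized by exactly $k_+ - k_-$ distinct lifts.
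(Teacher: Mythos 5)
Your proof follows essentially the same route as the paper's: the identity for $k_+ - k_-$ is the degree of $w^*E$ (you derive it via the meromorphic-section picture of Lemma~\ref{L:cylinders to spheres}; the paper simply states the degree identity), and the count is the absolute value of the determinant of the $T^2 \to T^2$ map coming from the matrix in Equation~\eqref{rot v}. The genuine addition in your proposal is the explicit verification that the $(S^1)^2$-action is free, which the paper's proof elides (deferring to its companion paper). That check is not superfluous: without freeness, distinct elements of the preimage coset of $\rot_{\tilde u}$ could produce the same curve, and the count would overstate the number of lifts. Two small remarks. First, $\rot_{\tilde u}$ is an affine, not a linear, self-map of $T^2$ (the constant term records the markers of the base lift $\tilde u$); only its linear part is the matrix, but this does not affect the degree argument. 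Second, you frame the freeness argument as applying ``for somewhere injective $w$,'' whereas Lemma~\ref{number of lifts} is also invoked in Case~2, where $w$ is \emph{constant} and $\Gamma=\{P\}$; there the somewhere-injective reasoning does not apply, but the ``marked-point data'' half of your own sentence does the job -- a nontrivial domain rotation moves $P$, and once $\theta_1=0$ a nontrivial Reeb rotation has no fixed points on the fibre -- so your argument covers both cases, but the wording undersells its scope.
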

\begin{proof}
    This result is similar to \cite{DiogoLisiSplit}*{Lemma 5.43}. 
    We identify the simple Reeb orbits to which $u$ converges at $\pm \infty$
    with $S^1 \times S^1$. Then, the action of the torus of rotation in the
    domain and in the fibre can be
    seen as a linear map $\T^2 \to \T^2$ 
    represented by the matrix $\begin{pmatrix} k_- &
    1\\ k_+ & 1 \end{pmatrix}. $ See Equation \eqref{rot v} for the analogous discussion 
    in the case of lifts to solutions of Floer's equation in $\R \times Y$, instead of pseudoholomorphic curves.

The number of lifts we want is the absolute 
value of the degree of this map $T^2 \to T^2$, 
which is the absolute value of the determinant 
of the matrix, i.e.~$k_+ - k_-$.

Notice now that $-k_+ + k_- + \sum_{P \in \Gamma} k_P
= \langle c_1(E \to \Sigma), w_*[\CP^1] \rangle$ is the degree of the
bundle $E$, which is negative by construction.
\end{proof}

\begin{remark}
We have related a Floer cylinder $\tilde v$ in $\R\times Y$ with a pseudoholomorphic sphere $w$ in $\Sigma$, by relating $\tilde v$ with a pseudoholomorphic curve $\tilde u$ in $\R\times Y$ and then projecting $\tilde u$ to $w$. The difference between $\tilde v$ and $\tilde u$ is given by ${\bf e} \colon \R \times S^1 \setminus \Gamma \to \R\times S^1$ solving \eqref{E:cylinder0}, and the information lost when we project $\tilde u$ to $w$ is a meromorphic section of a line bundle $L$ over $\CP^1$. Alternatively, we could have projected $\tilde v$ to $w$ directly. The information lost in this process would now be a section of $L$ solving an equation analogous to \eqref{E:cylinder0}. We could have written analogues of Sections \ref{S:ansatz_Floer_to_holo} and \ref{S:ansatz_holo_to_Floer} to analyse such Hamiltonian-perturbed sections of $L$, but decided to separate the analytical and topological aspects of the problem by factoring through the pseudoholomorphic map $\tilde u$.   
\end{remark}

\section{Orientations}

\label{compute signs}

We want to determine the signs of the contributions to the split symplectic homology differential. The orientation conventions 
for Cases 0 through 3 are explained in \cite{DiogoLisiSplit}*{Section 7}. As we saw before, Cases 1 through 3 can be related to counts 
of pseudoholomorphic spheres in $\Sigma$ and $X$. 
We wish to compare the signs that appear in the differential to the signs involved in counts of pseudoholomorphic spheres (as in Gromov--Witten theory).

Recall that Floer cascades are oriented using the following fibre sum convention
(see also \cite{DiogoLisiSplit}*{Definition 7.4}). 

\begin{definition} \label{fibre sum orient} 
Given linear maps between oriented vector spaces $f_i\colon V_i \to W$, $i = 1,2$, such that $f_1 - f_2 \colon V_1\oplus V_2 \to W$ is surjective, the {\em fibre sum orientation} on $V_1 \oplus_{f_i} V_2 = \ker (f_1 - f_2)$ is such that 
\begin{enumerate}
 \item $f_1 - f_2$ induces an isomorphism $(V_1 \oplus V_2) / \ker (f_1 - f_2) \to W$ which changes orientations by $(-1)^{\dim V_2 . \dim W}$,
 \item where a quotient $U/V$ of oriented vector spaces is oriented in such a way that the isomorphism $V \oplus (U/V) \to U$ (associated to a section of the quotient short exact sequence) preserves orientations. 
\end{enumerate}
\end{definition}

It is useful to begin with some general properties of the fibre sum orientation. %
The first result is Proposition 7.5.(a) in \cite{JoyceCorners} (which uses the same fibre sum orientation convention as Definition 
\ref{fibre sum orient}). We include a proof here because the structure of its argument will serve as a model for other proofs in this section. 

\begin{lemma} \label{fibre sum commute} 
 Let $f_i\colon V_1 \to W$ be as in Definition \ref{fibre sum orient}. Consider the map $r\colon V_1 \oplus V_2 \to V_2 \oplus V_1$, such that $r(v_1,v_2) = (v_2,v_1)$. Then, $r$ induces an identification 
 $$
 \ker\big((f_1 - f_ 2)\colon V_1 \oplus V_2 \to W \big) \to  \ker\big((f_2 - f_ 1)\colon V_2 \oplus V_1 \to W \big),
 $$
 which changes the fibre sum orientations by $(-1)^{(\dim V_1 +\dim W)(\dim V_2 + \dim W)}$. 
\end{lemma}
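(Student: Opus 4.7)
My plan is to compute the sign by writing out the fibre sum orientation in terms of an explicit section of $f_1 - f_2$, and then tracking how each ingredient transforms under $r$.

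First, $r$ does restrict to a linear isomorphism $K_{12} := \ker(f_1 - f_2) \to K_{21} := \ker(f_2 - f_1)$, because $r(v_1,v_2) = (v_2,v_1)$ and the relation $f_1(v_1) = f_2(v_2)$ is symmetric in this sense. The real work is the sign.

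Fix an ordered basis $g_1, \dots, g_k$ of $K_{12}$ representing $\kappa_{12}$, an ordered basis $w_1, \dots, w_m$ of $W$ representing $\omega_W$, and a section $s \colon W \to V_1 \oplus V_2$ of $f_1 - f_2$. Unpacking Definition \ref{fibre sum orient} (the quotient orientation convention, combined with the twist by $(-1)^{\dim V_2 \cdot \dim W}$), the wedge
$$
g_1 \wedge \cdots \wedge g_k \wedge s(w_1) \wedge \cdots \wedge s(w_m)
$$
represents $(-1)^{\dim V_2 \cdot \dim W}\, \omega_{V_1 \oplus V_2}$. I would verify this on a one-dimensional example (as a sanity check) before proceeding.

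Next, I apply $r_*$ to this identity. Two clean observations organise the calculation. (i) The swap map $r$ on top exterior powers satisfies $r_* \omega_{V_1 \oplus V_2} = (-1)^{\dim V_1 \cdot \dim V_2}\, \omega_{V_2 \oplus V_1}$. (ii) The composition $r \circ s$ is a section of $-(f_2 - f_1)$, not of $f_2 - f_1$, since $(f_2 - f_1)(r(s(w))) = -(f_1 - f_2)(s(w)) = -w$. Setting $s_{21}(w) := -r(s(w))$ gives a bona fide section of $f_2 - f_1$, but comparing wedge products of the two then introduces a factor of $(-1)^{\dim W}$. Writing $r_*\kappa_{12} = c\, \kappa_{21}$ and substituting both observations into $r_*$ of the boxed identity above yields
$$
c \cdot (-1)^{\dim V_1 \cdot \dim W}\, \omega_{V_2 \oplus V_1}
= (-1)^{\dim W + \dim V_2 \cdot \dim W + \dim V_1 \cdot \dim V_2}\, \omega_{V_2 \oplus V_1},
$$
using the analogue of the fibre-sum identity for $K_{21}$ (with twist $(-1)^{\dim V_1 \cdot \dim W}$) on the left-hand side.

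Finally, solving for $c$ and using $\dim W \equiv (\dim W)^2 \pmod 2$ to rewrite the stray $\dim W$ as $\dim W \cdot \dim W$, I get
$$
c = (-1)^{\dim V_1 \cdot \dim V_2 + \dim V_1 \cdot \dim W + \dim V_2 \cdot \dim W + \dim W}
= (-1)^{(\dim V_1 + \dim W)(\dim V_2 + \dim W)},
$$
which is the claim. I expect the main obstacle to be disciplined sign bookkeeping; the step most easily overlooked is the $(-1)^{\dim W}$ arising from converting $r \circ s$ (a section of $-(f_2 - f_1)$) into a section of $f_2 - f_1$, since without it the exponent does not collapse to the symmetric form $(\dim V_1 + \dim W)(\dim V_2 + \dim W)$.
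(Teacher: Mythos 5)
Your proof is correct, and it is essentially the same argument as the paper's, presented in a more hands-on way. The paper organizes the computation via the commutative diagram $\ker f \oplus (V_1\oplus V_2)/\ker f \to V_1 \oplus V_2$ and its analogue for $K_{21}$, isolating the three signs $(-1)^{\dim V_1\dim V_2}$ (permuting the factors), $(-1)^{(\dim V_1+\dim V_2)\dim W}$ (the twists from Part (1) on each quotient), and $(-1)^{\dim W}$ (because the induced map $(V_1\oplus V_2)/\ker f \to (V_2\oplus V_1)/\ker g$ followed by $g$ equals $-f$). You perform exactly the same sign accounting, but by writing out the fibre sum orientation as an explicit wedge of a basis of the kernel with images of a section, and applying $r_*$ to that top exterior form. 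Your observation (ii) — that $r\circ s$ is a section of $-g$ rather than of $g$, forcing the correction $s_{21}=-r\circ s$ and hence the factor $(-1)^{\dim W}$ — is precisely the paper's statement that $g\circ r\circ f^{-1}=-\Id$ on $W$. You correctly flagged this as the easiest sign to miss.
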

\begin{proof}
 Call $f\coloneqq(f_1-f_2)\colon V_1 \oplus V_2 \to W$ and $g\coloneqq(f_2-f_1)\colon V_2 \oplus V_1 \to W$. Denoting also by $f$ and $g$ the appropriate isomorphisms in Part (1) of Definition \ref{fibre sum orient}, we have an isomorphism 
 $$
 g \circ r \circ f^{-1} \colon W \to W,
 $$
which is $-\Id$, hence it changes orientation by $(-1)^{\dim W}$. 

Denote also by $r$ the induced map $(V_1 \oplus V_2)/\ker f \to (V_2 \oplus V_1)/\ker g$. A simple calculation yields the following.

\begin{claim}
 Let $s_{12} \colon (V_1 \oplus V_2)/\ker f \to V_1 \oplus V_2$ be a right-inverse for the projection $\pi_{12} \colon V_1 \oplus V_2 \to (V_1 \oplus V_2)/\ker f$. Then, 
 $s_{21}\coloneqq r \circ s_{12} \circ r^{-1} \colon (V_2 \oplus V_1)/\ker g \to V_2 \oplus V_1$ is a right-inverse for $\pi_{21} \colon V_2 \oplus V_1 \to (V_2 \oplus V_1)/\ker g$. 
\end{claim}
We can now justify the sign in the statement of the Lemma. By the Claim, the following diagram commutes: 
\begin{diagram}
\ker f \bigoplus \, \big((V_1\oplus V_2)/\ker f\big) & \rTo^{i_{12}+s_{12}\,} & V_1\oplus V_2 \\
\dTo_{r\oplus r}                                & & \dTo_{r}       \\
\ker g \bigoplus \, \big((V_2\oplus V_1)/\ker g\big) & \rTo^{i_{21}+s_{21}\,} & V_2\oplus V_1
\end{diagram}
where $i_{12}$ and $i_{21}$ are inclusion maps. 

By Part (1) in Definition \ref{fibre sum orient} and the argument above the Claim, $r\colon (V_1\oplus V_2)/\ker f \to (V_2\oplus V_1)/\ker g$ changes orientations by $(-1)^{(\dim V_1 + \dim V_2 + 1) \dim W}$. By Part (2) in Definition \ref{fibre sum orient} and the fact that $r\colon V_1\oplus V_2 \to V_2\oplus V_1$ changes orientations by $(-1)^{\dim V_1 \dim V_2}$, we get the remaining contribution to the sign in the Lemma. 
\end{proof}

\begin{definition} \label{preimage orient} 
Let $f_1,f_2$ be as in Definition \ref{fibre sum orient}. Denote by $\Delta \subset W\oplus W$ the diagonal, and orient it as the image of the map $w\mapsto (w,w)$. The {\em preimage orientation} on the fibre sum $V_1 \prescript{}{f_1\,}\oplus\prescript{}{f_2}{} V_2  = (f_1,f_2)^{-1}(\Delta) \subset V_1 \oplus V_2$ is so that, if $H\subset V_1\oplus V_2$ is a complementary subspace to $(f_1,f_2)^{-1}(\Delta)$, the following isomorphisms preserve orientations
\begin{enumerate}
\item $H \oplus  (f_1,f_2)^{-1}(\Delta) \to (-1)^{\dim V_2(\dim V_1 + \dim W)}V_1\oplus V_2$;
\item $(f_1,f_2)(H) \oplus \Delta \to W\oplus W$.
\end{enumerate}
\end{definition}

\begin{remark} \label{GuilleminPollack} 
If $g_1\colon X \to Y$ and $g_2\colon Z \to Y$ are transverse maps of smooth oriented manifolds, then the previous definition can be applied to maps of tangent spaces to orient the fibre product $X \prescript{}{g_1\,}\times\prescript{}{g_2}{} Z$. In the special case when $g_2$ is the inclusion of a submanifold, this agrees with the orientation on the preimage, as defined in \cite{GuilleminPollack}.
This is the reason for the name used for this orientation convention, and for the sign in Part (1) above.
\end{remark}

\begin{lemma} \label{fibre sum vs preimage}
The identification of $\ker f$ with the fibre sum orientation (as in Definition \ref{fibre sum orient}) and $(f_1,f_2)^{-1}(\Delta)$ with preimage orientation (as in Definition \ref{preimage orient}) changes orientations by $(-1)^{(\dim V_1 + \dim W)(\dim V_2 +\dim W)}$. 
\end{lemma}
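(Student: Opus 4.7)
The strategy is to compute both orientations on $K := \ker(f_1-f_2) = (f_1, f_2)^{-1}(\Delta)$ in terms of a common auxiliary datum, namely a chosen linear complement $H \subset V_1 \oplus V_2$ to $K$. Each convention will equip $H$ with an orientation and tie it to $[K]$ by a specific ordering of $V_1 \oplus V_2 = K \oplus H$, so the claimed sign will fall out by combining these two pieces.

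The fibre sum convention (Definition \ref{fibre sum orient}) identifies $H$ with $(V_1 \oplus V_2)/K$ and thereby equips $H$ with the orientation $[H]^{\mathrm{fs}}$ for which the isomorphism $(f_1-f_2)|_H \colon H \to W$ changes orientations by $(-1)^{\dim V_2 \dim W}$; then $[K]^{\mathrm{fs}}$ is characterized by $[K]^{\mathrm{fs}} \oplus [H]^{\mathrm{fs}} = [V_1 \oplus V_2]$.

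The main step is to understand the preimage convention via the following linear algebra lemma: for any complement $L$ of $\Delta$ in $W \oplus W$, the condition $[L] \oplus [\Delta] = [W \oplus W]$ is equivalent to the difference map $p \colon W \oplus W \to W$, $p(w_1,w_2)=w_1-w_2$, restricting to an orientation-preserving isomorphism $p|_L \colon L \to W$. This reduces to a model case $L_0 = W \oplus 0$ oriented via the first projection, where a direct block-determinant computation on the change of basis from $\{(e_i,e_i)\} \cup \{(e_i,0)\}$ to $\{(e_i,0)\} \cup \{(0,e_i)\}$ gives the determinant $(-1)^{\dim W}$, which after the swap-sign $(-1)^{(\dim W)^2}$ and the congruence $(\dim W)^2 \equiv \dim W \pmod 2$ produces the required $+1$. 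Applied to $L = (f_1,f_2)(H)$ with the orientation transported from $H$ by $(f_1,f_2)|_H$, and using $p|_L \circ (f_1,f_2)|_H = (f_1-f_2)|_H$, this shows $[H]^{\mathrm{pre}}$ is precisely the orientation making $(f_1-f_2)|_H \colon H \to W$ orientation-preserving. Hence
\[
[H]^{\mathrm{fs}} = (-1)^{\dim V_2 \dim W}\,[H]^{\mathrm{pre}}.
\]

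To conclude, I would combine the defining relation $[H]^{\mathrm{pre}} \oplus [K]^{\mathrm{pre}} = (-1)^{\dim V_2(\dim V_1 + \dim W)}[V_1 \oplus V_2]$ with the fibre sum relation by swapping the order to $[K]^{\mathrm{pre}} \oplus [H]^{\mathrm{pre}}$ (which costs $(-1)^{\dim K \dim H}$) and substituting the relation between $[H]^{\mathrm{fs}}$ and $[H]^{\mathrm{pre}}$. After cancellation of the two factors of $(-1)^{\dim V_2 \dim W}$, the total sign between $[K]^{\mathrm{fs}}$ and $[K]^{\mathrm{pre}}$ reduces modulo $2$ to
\[
\dim V_1 \dim V_2 + \dim H \dim K,
\]
and using $\dim H = \dim W$, $\dim K = \dim V_1 + \dim V_2 - \dim W$ together with $(\dim W)^2 \equiv \dim W \pmod 2$, this expands to $(\dim V_1 + \dim W)(\dim V_2 + \dim W) \pmod 2$, as required. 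The main conceptual obstacle is the first step, namely identifying $[L]^{\mathrm{pre}}$ via the difference map $p$; once that is pinned down, the rest is sign bookkeeping in the spirit of Lemma \ref{fibre sum commute}.
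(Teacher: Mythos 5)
Your argument is correct and takes a genuinely cleaner route than the paper's. The paper proves the lemma by writing a long chain of oriented isomorphisms beginning from $W\oplus W\oplus\ker f$, applying (F1), (F2), (P1), (P2) in sequence, and then inserting an auxiliary map $\varphi = \begin{pmatrix}1&1\\1&-1\end{pmatrix}\oplus\mathrm{id}$ on $W\oplus W\oplus(f_1,f_2)^{-1}(\Delta)$; the final step is the observation that the total composition has block form $\begin{pmatrix}2&*\\0&1\end{pmatrix}\oplus\mathrm{id}$, so it is orientation-preserving precisely when the identity $\ker f\to(f_1,f_2)^{-1}(\Delta)$ is, and the accumulated sign is then read off. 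Your approach instead isolates the real content in a single auxiliary lemma: for a complement $L$ of $\Delta$ in $W\oplus W$, the condition $[L]\oplus[\Delta]=[W\oplus W]$ is equivalent to the difference map $p(w_1,w_2)=w_1-w_2$ restricting to an orientation-preserving isomorphism $L\to W$. This is morally the same observation that the paper's $\varphi$ encodes, but your packaging makes it transparent why the preimage orientation on $H$ is the one making $(f_1-f_2)|_H$ orientation-preserving, so that $[H]^{\mathrm{fs}}=(-1)^{\dim V_2\dim W}[H]^{\mathrm{pre}}$ and the rest is routine bookkeeping against $[K]^{\mathrm{fs}}\oplus[H]^{\mathrm{fs}}=[V_1\oplus V_2]$ and (P1). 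What you gain is a conceptual explanation of where the sign comes from rather than a determinant-level verification; what the paper's chain buys is that it reuses the exact bookkeeping pattern it already set up in Lemma~\ref{fibre sum commute}, so the two proofs read uniformly. For your difference-map lemma you could tighten the reduction to the model case $L_0 = W\oplus 0$ by noting directly that for any $L$ with basis $v_i = (a_i,b_i)$ satisfying $a_i - b_i = e_i$, the change-of-basis determinant from $\{(e_i,0),(0,e_i)\}$ to $\{v_i\}\cup\{(e_i,e_i)\}$ is $\det(A-B)=1$, so the iff is immediate without a separate translation argument — but the statement and the rest of your sign chase are right as written.
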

\begin{proof}
For convenience, denote Parts (1) and (2) in Definition \ref{preimage orient} by (P1) and (P2), respectively, and similarly denote by (F1) and (F2) the Parts in Definition \ref{fibre sum orient}. Note that to write (F2) in this case, we need a section $s\colon \frac{V_1\oplus V_2}{\ker f} \to V_1 \oplus V_2$. We will take $H = s(\frac{V_1\oplus V_2}{\ker f})$ in (P1). 

The result follows from a sequence of oriented isomorphisms:
\begin{align*}
W \oplus W &\oplus\ker f \cong  (-1)^{(\dim V_1 + \dim V_2 - \dim W)\dim W} W \oplus \ker f \oplus W \cong \\
&\stackrel{(F1)}{\cong} (-1)^{\dim V_1 \dim W + \dim W} W \oplus \ker f \oplus \frac{V_1 \oplus V_2}{\ker f } \cong\\ 
& \stackrel{(F2)}{\cong} (-1)^{\dim V_1 \dim W + \dim W} W\oplus V_1 \oplus V_2 \cong \\
&\stackrel{(P1)}{\cong} (-1)^{\dim V_1 \dim W + \dim V_1 \dim V_2 + \dim V_2 \dim W + \dim W}. \\
&\hspace{5cm} .W \oplus H \oplus (f_1,f_2)^{-1}(\Delta) \cong \\
&\cong (-1)^{\dim V_1 \dim W + \dim V_1 \dim V_2 + \dim V_2 \dim W + \dim W}. \\
&\hspace{5cm} .\Delta \oplus (f_1,f_2)(H) \oplus (f_1,f_2)^{-1}(\Delta) \cong \\
&\cong (-1)^{\dim V_1 \dim W + \dim V_1 \dim V_2 + \dim V_2 \dim W}. \\
&\hspace{5cm} .(f_1,f_2)(H) \oplus \Delta \oplus  (f_1,f_2)^{-1}(\Delta) \cong \\
&\stackrel{(P2)}{\cong} (-1)^{\dim V_1 \dim W + \dim V_1 \dim V_2 + \dim V_2 \dim W} W \oplus W \oplus (f_1,f_2)^{-1}(\Delta) \cong\\
&\stackrel{\varphi}{\cong} (-1)^{\dim V_1 \dim W + \dim V_1 \dim V_2 + \dim V_2 \dim W + \dim W} W \oplus W \oplus (f_1,f_2)^{-1}(\Delta) 
\end{align*}
where $\varphi = \begin{pmatrix} 1 & 1 \\
                                                1 & -1 
                          \end{pmatrix} \oplus \id \colon W \oplus W \oplus (f_1,f_2)^{-1}(\Delta) \to W \oplus W \oplus (f_1,f_2)^{-1}(\Delta)$. 
Note that $\varphi$ changes orientations by $(-1)^{\dim W}$. The composition of the chain of isomorphisms above is of the form 
$$
\begin{pmatrix} 2 & * \\
                        0 & 1 
                          \end{pmatrix} \oplus \id \colon W \oplus W \oplus \ker f \to W \oplus W \oplus (f_1,f_2)^{-1}(\Delta)
$$
so it preserves orientations iff the identity $\ker f \to (f_1,f_2)^{-1}(\Delta)$ does. The result now follows. 
\end{proof}

\begin{remark}
Since the signs appearing in Lemmas \ref{fibre sum commute} and \ref{fibre sum vs preimage} are the same, the order-reversing 
map $r$ in Lemma \ref{fibre sum commute} induces an orientation-preserving isomorphism between $\ker(f_1-f_2)$ with the fibre 
sum orientation and $(f_2,f_1)^{-1}(\Delta)$ with the preimage orientation (note the orders of the $f_i$).
\end{remark}

Let us now study the signs that appear in the split symplectic homology differential. We start with Case 1 configurations. 
Let $p,q \in \Crit(f_\Sigma)$ and consider a Floer cylinder with one cascade giving a contribution of $\wh{q}_{k_-}$ to the 
differential of $\wc{p}_{k_+}$.
As we saw in \eqref{fib prod1}, these cascades form spaces 
\begin{equation}
 \M(\wh q_{k_-},\wc p_{k_+};A)\coloneqq W^s_{Y}(\wh q) \times_{\ev} \M^*_{H,0,\R\times Y;k_-,k_+}(A;J_Y) \times_{\ev} W_{Y}^u(\wc p)
 \label{fib prod 1}
\end{equation}
for suitable $A\in H_2(\Sigma;\Z)$. In \cite{DiogoLisiSplit}*{Section 7}, these spaces are oriented using the fibre sum rule. 
the spaces of cylinders are given a coherent orientation and the critical manifolds are oriented as follows. 

For any oriented manifold $S$ with a Morse--Smale pair $(f_S,Z_S)$, fixing an orientation on a critical manifold of a critical point $p$ 
also fixes an orientation on the other critical manifold, using the convention
\begin{equation}
 T_p W^s_S(p) \oplus T_p W^u_S(p) \cong T_p S.
 \label{orient stable and unstable}
\end{equation}

Pick orientations on all unstable manifolds of $\Sigma$ and $W$. Given $p\in \Crit(f_\Sigma)$, assume that 
the restrictions of $\pi_\Sigma\colon Y \to \Sigma$ to
\begin{equation}
 W^u_Y(\widecheck p) \to W^u_\Sigma(p) \qquad \text{and} \qquad W^s_Y(\widehat p) \to W^s_\Sigma(p)
 \label{orient point lift}
\end{equation}
are orientation-preserving diffeomorphisms. 

It will also be useful to assume that, with respect to the connection induced by the contact form of $Y$ on the $S^1$-bundles 
$\pi_\Sigma^{-1}\left( W^{u/s}_\Sigma(p) \right) \to W^{u/s}_\Sigma(p)$, the splittings 
\begin{equation}
 T_{\tilde x} \, \pi_\Sigma^{-1}\left( W^u_\Sigma(p) \right) \cong T_x W^u_\Sigma(p) \oplus \R \qquad \text{and} \qquad 
 T_{\tilde x} \, \pi_\Sigma^{-1}\left( W^s_\Sigma(p) \right) \cong T_x W^s_\Sigma(p) \oplus \R
 \label{orient full lift}
\end{equation}
are orientation-preserving (where $\pi_\Sigma(\tilde x) = x$). In particular, when $p$ is a maximum or minimum, we get the correct relation between the orientations of $Y$ 
and $\Sigma$. 

\begin{remark}
Note that we have codimension zero inclusions of oriented manifolds 
$W^u_Y(\widehat p) \subset \pi_\Sigma^{-1}\left( W^u_\Sigma(p) \right)$ and 
$W^s_Y(\widecheck p) \subset \pi_\Sigma^{-1}\left( W^s_\Sigma(p) \right)$. Under our conventions, these maps 
are always orientation-preserving for $W^u_Y(\widehat p)$, and preserve orientations for $W^s_Y(\widecheck p)$ iff $M(p)$ is even.
\end{remark}

Each cascade in \eqref{fib prod 1} projects to a chain of pearls in $\Sigma$ from $q$ to $p$, with exactly one sphere, contained in
\begin{equation}
 \M(q,p;A)\coloneqq W^s_{\Sigma}(q) \times_{\ev} \M^*_{0,\Sigma}(A;J_\Sigma) \times_{\ev} W_{\Sigma}^u(p),
 \label{fib prod 2}
\end{equation}
where $\M^*_{0,\Sigma}(A;J_\Sigma)$ is the space of simple $J_\Sigma$-holomorphic spheres in class $A$, with no marked points. 
Note that $\M(q,p;A)$ can be identified with $\ev_\Sigma^{-1}\left( W^s_{\Sigma}(q) \times W_{\Sigma}^u(p)\right)$, for
\begin{align*}
\ev_\Sigma \colon \M^*_{0,\Sigma}(A;J_\Sigma) &\to \Sigma \times \Sigma \\
 w &\mapsto (w(0),w(\infty))
\end{align*}

\begin{definition} \label{GW orient case 1} 
 The {\em Gromov--Witten orientation} on $\M(q,p;A)$ is the one induced by the identification with 
 $\ev_\Sigma^{-1}\left( W^s_{\Sigma}(q) \times W_{\Sigma}^u(p)\right)$, where the latter is equipped 
 with the preimage orientation. The {\em Gromov--Witten sign} of an element in $\M(q,p;A)$ is given 
 by the corresponding orientation on the zero-dimensional $\M(q,p;A)/\C^*$. 
\end{definition}

\begin{remark} \label{GW signs} 
To understand the reason for this terminology, suppose that these stable and unstable manifolds defined cycles in $H_*(\Sigma)$. An element in $\M(q,p;A)$ would then contribute to a two-point Gromov--Witten 
invariant of $\Sigma$, with insertions given by the stable and unstable manifolds. The sign of this contribution would be the the one prescribed by the Gromov--Witten orientation (see \cite{McDuffSalamon}*{Exercise 7.1.2}).
\end{remark}

Our goal is to compare 
\begin{itemize}
 \item the sign of the contribution of a Floer cylinder with cascades in \eqref{fib prod 1} to the symplectic homology differential
 \item to the sign induced on the projected chain of pearls in \eqref{fib prod 2} by the Gromov--Witten orientation.
\end{itemize}
This will be the content of Proposition \ref{case 1 and GW}. To compare the two signs, it will be useful to also consider the fibre product
\begin{equation}
 \overline{\M(\wc q_{k_-},\wh p_{k_+};A)}\coloneqq \pi_\Sigma^{-1}\left(W^s_{\Sigma}(q) \right) \times_{\ev} \M^*_{H,0,\R\times Y;k_-,k_+}(A;J_Y) \times_{\ev} \pi_\Sigma^{-1}\left(W^u_{\Sigma}(p) \right).
\label{fib prod 3}
\end{equation}
The notation is justified by the fact that this space contains 
\begin{equation*}
 \M(\wc q_{k_-},\wh p_{k_+};A)\coloneqq W^s_{Y}(\wc q) \times_{\ev} \M^*_{H,0,\R\times Y;k_-,k_+}(A;J_Y) \times_{\ev} W_{Y}^u(\wh p)
\end{equation*}
as a dense subspace. 

Recall that $\pi_\Sigma^{-1}\left(W^{s/u}_{\Sigma}(q) \right)$
is an $S^1$-bundle over $W^{s/u}_{\Sigma}(q)$, and that these were given orientations in \eqref{orient full lift}.
The submanifolds $W^s_{Y}(\wh q)$ and $W^u_{Y}(\wc q)$ specify sections of these bundles, which we use to trivialize 
$\pi_\Sigma^{-1}\left(W^{s/u}_{\Sigma}(q) \right)\cong W^{s/u}_{\Sigma}(q) \times S^1$ (identifying $W^s_{Y}(\wh q)$ 
with $W^s_{\Sigma}(q) \times \{1\}$ and $W^u_{Y}(\wc q)$ with $W^u_{\Sigma}(q) \times \{1\}$). 
We can now write 
\begin{equation}
W^s_{Y}(\wh q)= \pi_\Sigma^{-1}\left(W^s_{\Sigma}(q) \right) \prescript{}{\wc s\,}\times\prescript{}{i} \{1\} \text{ and }
W^u_{Y}(\wc p)= \pi_\Sigma^{-1}\left(W^u_{\Sigma}(p) \right) \prescript{}{\wh u\,}\times\prescript{}{i} \{1\},
\label{fib prod crit}
\end{equation}
where the maps in the fibre products are $\wc s\colon \pi_\Sigma^{-1}\left(W^s_{\Sigma}(q)\right) \to S^1$, the analogous 
$\wh u \colon \pi_\Sigma^{-1}\left(W^u_{\Sigma}(p) \right) \to S^1$ and $i\colon \{1\} \hookrightarrow S^1$. 
Recall that the left sides in \eqref{fib prod crit} were given orientations in \eqref{orient point lift}, which agree with 
the fibre sum orientations on the right sides.

Note that $\M(\wh q_{k_-},\wc p_{k_+};A)$ is a codimension 2 submanifold of $\overline{\M(\wc q_{k_-},\wh p_{k_+};A)}$. 
Given an element $(x,u,y) \in \M(\wh q_{k_-},\wc p_{k_+};A)$, there is a splitting
\begin{equation}
T_{(x,u,y)}\overline{\M(\wc q_{k_-},\wh p_{k_+};A)} \cong \left( T_{(x,u,y)}\M(\wh q_{k_-},\wc p_{k_+};A) \right) \oplus \R_{\domain} \oplus \R_{\target}
\label{orient splitting}
\end{equation}
where $\R_{\domain}$ is the direction spanned by an infinitesimal rotation of the Floer cylinder $u$ on the domain (in the $t$-direction) and $\R_{\target}$ is spanned by the infinitesimal rotation of $u$ on the target (in the Reeb direction).

\begin{lemma} \label{orient lifts free vs fixed} 
If we take the fibre sum orientation on \eqref{fib prod 1} and \eqref{fib prod 3}, then the splitting \eqref{orient splitting} preserves orientations iff $M(q)$ is even. 
\end{lemma}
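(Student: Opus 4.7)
The plan is to realize $\M(\wh q_{k_-},\wc p_{k_+};A)$ as a codimension-2 transverse preimage inside $\overline{\M(\wc q_{k_-},\wh p_{k_+};A)}$ under the natural map
$$
\Phi\colon \overline{\M(\wc q_{k_-},\wh p_{k_+};A)} \to S^1 \times S^1, \quad \Phi(l, u, r) = (\wc s(l), \wh u(r)),
$$
and then to compare the fibre sum orientations of \eqref{fib prod 1} and \eqref{fib prod 3} through this description. Recall that by \eqref{fib prod crit}, both $W^s_Y(\wh q)$ and $W^u_Y(\wc p)$ are themselves fibre sums against the inclusions $\{1\} \hookrightarrow S^1$, with the fibre sum orientation agreeing with the one from \eqref{orient point lift}.

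First, substituting \eqref{fib prod crit} into \eqref{fib prod 1} exhibits $\M(\wh q_{k_-},\wc p_{k_+};A)$ as an iterated fibre sum with targets $S^1, Y, Y, S^1$. Applying associativity of fibre sums together with Lemma \ref{fibre sum commute}, one reorders the factors to rewrite this as
$$
\M(\wh q_{k_-},\wc p_{k_+};A) = \overline{\M(\wc q_{k_-},\wh p_{k_+};A)} \times_{\Phi,\, i \times i} (\{1\} \times \{1\}).
$$
Each commutation step contributes a sign governed by Lemma \ref{fibre sum commute}, depending on $\dim \overline L = 2n-1-M(q)$, $\dim \overline R = M(p) + 1$, $\dim Y = 2n-1$, and the virtual dimension of the cylinder moduli space. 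Denote the total sign accumulated in this reordering by $(-1)^{\epsilon_1}$. Since $\dim(\{1\} \times \{1\}) = 0$, Lemma \ref{fibre sum vs preimage} then converts this fibre sum to the preimage orientation of $\M$ inside $\overline{\M}$ with no further sign contribution (the relevant sign exponent $(\dim \overline{\M} + 2) \cdot 2$ being even).

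The preimage orientation is described by Definition \ref{preimage orient}: one chooses a complementary subspace $H \subset T\overline{\M}$ with $d\Phi|_H\colon H \to T(S^1 \times S^1)$ preserving orientation, after which $T\M \oplus H \cong T\overline{\M}$ preserves orientation (part (1) of Definition \ref{preimage orient} gives trivial sign when $\dim V_2 = 0$). Taking $H = \R_{\domain} \oplus \R_{\target}$ with its natural orientation, equation \eqref{rot v} identifies $d\Phi|_H$ with the matrix
$$
\begin{pmatrix} k_- & 1 \\ k_+ & 1 \end{pmatrix}
$$
of determinant $k_- - k_+ < 0$. Hence the preimage orientation on $H$ is the reverse of its natural orientation, contributing an additional sign of $-1$ to the comparison with \eqref{orient splitting}.

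Combining these two contributions, the splitting \eqref{orient splitting} preserves orientations if and only if $(-1)^{\epsilon_1 + 1} = +1$. A direct parity computation of $\epsilon_1$ using the dimensions above and the fact that $\dim Y$ is odd shows that the sign contributions from $\dim \overline R$, the moduli-space dimension, and the two evaluations into $Y$ cancel in pairs modulo 2, while the single contribution from $\dim \overline L \equiv M(q)+1 \pmod 2$ survives; hence $\epsilon_1 \equiv M(q)+1 \pmod 2$, which proves the lemma. The main obstacle is precisely this careful enumeration of signs through the rearrangement step: it requires several successive applications of Lemma \ref{fibre sum commute} with a consistently fixed ordering convention on the factors and close tracking of the relevant dimensions.
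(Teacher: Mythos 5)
Your overall strategy shares the key observation with the paper's proof, namely that the discrepancy between the two orientations is ultimately tracked by the matrix $\begin{pmatrix} k_- & 1 \\ k_+ & 1 \end{pmatrix}$ from \eqref{rot v} having negative determinant. But the route you take, packaging the evaluations into a single map $\Phi \colon \overline{\M} \to S^1 \times S^1$ and viewing $\M$ as a preimage of $\{(1,1)\}$, introduces a restructuring step that your argument does not actually justify. Neither Lemma \ref{fibre sum commute} nor the stated associativity cover passing from the iterated fibre product $(\overline{\M} \times_{\wc s, i} \{1\}) \times_{\wh u, i} \{1\}$ to a single fibre product $\overline{\M} \times_{\Phi, i\times i} (\{1\}\times\{1\})$ over the product target $S^1 \times S^1$; that is a separate identity (of the type found in Joyce's Proposition 7.5), and its sign is not automatically trivial. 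The paper's proof deliberately avoids this by staying with the iterated fibre-sum structure throughout.

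More seriously, the parity computation that is the crux of the whole argument is asserted rather than verified. You declare that several sign contributions ``cancel in pairs modulo 2'' and that only the $\dim \overline{L}$ contribution survives, yielding $\epsilon_1 \equiv M(q)+1 \pmod 2$, but no accounting is given. In the paper's proof there is in fact exactly \emph{one} application of Lemma \ref{fibre sum commute} (commuting $\{1\}$ past $\pi_\Sigma^{-1}(W^s_\Sigma(q))$), producing the sign $(-1)^{\dim \pi_\Sigma^{-1}(W^s_\Sigma(q)) + 1} = (-1)^{2n - M(q)} = (-1)^{M(q)}$; there is no cascade of contributions that needs to cancel. Your description of ``several successive applications'' suggests a different and heavier bookkeeping, and without carrying it out one cannot be confident the stated parity is correct, particularly once the unjustified collapse to $S^1 \times S^1$ is added. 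The paper's argument then handles the $\R_{\domain}\oplus\R_{\target}$ directions by a separate, explicit calculation of the zero-dimensional fibre sum $0 \prescript{}{di}{\oplus}\prescript{}{d\wc s}{} \R^2 \prescript{}{d\wh u}{\oplus}\prescript{}{di}{} 0$, which sidesteps the preimage-orientation discussion entirely. In short: correct key insight, plausible alternative packaging, but the two steps that carry the actual sign content are left unverified, and you acknowledge as much in the final sentence. As written this is an outline, not a proof.
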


\begin{proof}
Using the associativity of the fibre sum orientation and Lemma \ref{fibre sum commute} on the third identity, we have the oriented diffeomorphisms (dropping 0, $\R\times Y$ and $J_Y$ from the notation)
\begin{align*}
 \M(&\wh q_{k_-},\wc p_{k_+};A) = W^s_{Y}(\wh q) \times_{\ev}  \M^*_{H;k_-,k_+}(A) \times_{\ev} W_{Y}^u(\wc p) = \\
 &= \left(\pi_\Sigma^{-1}\left(W^s_{\Sigma}(q) \right) \prescript{}{\wc s\,}\times\prescript{}{i} \{1\}\right) \times_{\ev} \M^*_{H;k_-,k_+}(A) \times_{\ev} \left( \pi_\Sigma^{-1}\left(W^u_{\Sigma}(p) \right) \prescript{}{\wh u\,}\times\prescript{}{i} \{1\}\right) = \\
 &= (-1)^{\dim \pi_\Sigma^{-1}\left(W^s_{\Sigma}(q)\right) +1}. \\
 &\qquad .\{1\} \prescript{}{i\,}\times\prescript{}{\wc s}{} \big( \pi_\Sigma^{-1}\left(W^s_{\Sigma}(q) \right) \times_{\ev}  \M^*_{H;k_-,k_+}(A) \times_{\ev} \pi_\Sigma^{-1}\left(W^u_{\Sigma}(p) \right) \big) \prescript{}{\wh u\,}\times\prescript{}{i} \{1\} = \\
 &= (-1)^{M(q)} \{1\} \prescript{}{i\,}\times\prescript{}{\wc s}{} \, \overline{\M(\wc q_{k_-},\wh p_{k_+})} \, \prescript{}{\wh u\,}\times\prescript{}{i} \{1\} 
\end{align*}

We get an oriented isomorphism of vector spaces
$$
 T_{(x,u,y)} \M(\wh q_{k_-},\wc p_{k_+};A) \cong (-1)^{M(q)} 0 \prescript{}{di\,}\oplus\prescript{}{d\wc s}{} \big(T_{(x,u,y)} \overline{\M(\wc q_{k_-},\wh p_{k_+};A)} \big) \prescript{}{d\wh u\,}\oplus\prescript{}{di} \,0
$$
(where we wrote $0$ instead of $T_{\{1\}}{\{1\}}$).

On the other hand, abbreviating $\R^2\coloneqq \R_{\domain} \oplus \R_{\target}$ and thinking of the direct sum as a fibre sum (of maps to the zero vector space), we get
\begin{align*}
 0 \prescript{}{di\,}\oplus\prescript{}{d\wc s}{} &\big( \left( T_{(x,u,y)}\M(\wh q_{k_-},\wc p_{k_+};A) \right) \oplus \R^2 \big) \prescript{}{d\wh u\,}\oplus\prescript{}{di} \,0  = \\
 &= 0 \prescript{}{di\,}\oplus\prescript{}{d\wc s}{} \big( T_{(x,u,y)}\M(\wh q_{k_-},\wc p_{k_+};A) \oplus (\R^2 \prescript{}{d\wh u\,}\oplus\prescript{}{di} \,0 )\big) = \\
 &= (-1)^{\dim \M(\wh q_{k_-},\wc p_{k_+};A)} 0 \prescript{}{di\,}\oplus\prescript{}{d\wc s}{} \big( (\R^2 \prescript{}{d\wh u\,}\oplus\prescript{}{di} \,0 ) \oplus T_{(x,u,y)}\M(\wh q_{k_-},\wc p_{k_+};A)\big) = \\
 &= - \big( 0 \prescript{}{di\,}\oplus\prescript{}{d\wc s}{} \, \R^2 \prescript{}{d\wh u\,}\oplus\prescript{}{di} \,0 \big) \oplus T_{(x,u,y)}\M(\wh q_{k_-},\wc p_{k_+};A) = \\
 &= T_{(x,u,y)}\M(\wh q_{k_-},\wc p_{k_+};A) .
\end{align*}
The first identity above uses the associativity of the fibre sum orientation, the second uses Lemma \ref{fibre sum commute}, 
the third uses the fact that $\dim \M(\wh q_{k_-},\wc p_{k_+};A) = 1$ (since we are studying contributions to the 
differential) and again the associativity property. The last identity follows from the fact that the fibre sum 
orientation on the zero dimensional vector space 
$0 \prescript{}{di\,}\oplus\prescript{}{d\wc s}{} \, \R^2 \prescript{}{d\wh u\,}\oplus\prescript{}{di} \,0$ 
is negative, which is a simple calculation (the reason ultimately being the fact that the matrix in \eqref{rot v} has 
negative determinant). 

The Lemma follows from combining the previous two computations. 
\end{proof}

In what follows, it is useful to work from a slightly more abstract point of view. 
 Let $\pi_A\colon \tilde A \to A$ and $\pi_B\colon \tilde B \to B$ be $S^1$-bundles. Suppose that there are $S^1$-equivariant maps $\tilde f_1\colon \tilde A \to Y$ and $\tilde f_2\colon \tilde B \to Y$, and
 denote their projections by $f_1\colon A \to \Sigma$ and $f_2\colon B \to \Sigma$. Then, the fibre product $\tilde A \prescript{}{\tilde f_1\,}\times\prescript{}{\tilde f_2}{} \tilde B$ is an $S^1$-bundle over $A \prescript{}{f_1\,}\times\prescript{}{f_2}B$ (with `diagonal' $S^1$-action on $\tilde A \prescript{}{\tilde f_1\,}\times\prescript{}{\tilde f_2}{} \tilde B$). 
 
Recall that the contact form on $Y$ induces an Ehresmann connection on $\pi_\Sigma\colon Y \to \Sigma$. The $S^1$-equivariance of $\tilde f_1$ implies that there is a unique Ehresmann connection on $\tilde A$ inducing the top arrow in the commutative diagram
\begin{equation}
\begin{diagram} 
T_a A\oplus \R & \rTo^{\,} & T_{\tilde a}\tilde A \\
\dTo^{d f_1 \oplus \id}   & & \dTo_{d\tilde f_1}       \\
T_{f_1(a)}\Sigma \oplus \R & \rTo^{\,} & T_{\tilde f_1(\tilde a)}Y
\end{diagram} 
\label{connection A}
\end{equation}
where $\tilde a\in \tilde A$ and $a = \pi_A(\tilde a)$. Do the same for $\tilde B$.

 These connections induce a connection on the $S^1$-bundle $\tilde A \prescript{}{\tilde f_1\,}\times\prescript{}{\tilde f_2}{} \tilde B$ as follows. 
 Given $(\tilde a, \tilde b) \in \tilde A \prescript{}{\tilde f_1\,}\times\prescript{}{\tilde f_2}{} \tilde B$, we can take the horizontal subspace $H_{\tilde a} \prescript{}{d(\pi_\Sigma\circ \tilde f_1)\,}\oplus\prescript{}{d(\pi_\Sigma\circ \tilde f_2)}{} H_{\tilde b}$. 
This can be identified with $T_{(a,b)} (A \prescript{}{f_1\,}\times\prescript{}{f_2}B)$, via the restriction of $(\pi_A,\pi_B)$ to the fibre sum. 
This gives an identification 
\begin{equation} 
T_{(\tilde a, \tilde b)} (\tilde A \prescript{}{\tilde f_1\,}\times\prescript{}{\tilde f_2}{} \tilde B) \cong \left( T_{(a,b)} (A \prescript{}{f_1\,}\times\prescript{}{f_2}B) \right) \oplus \R,
\label{split lift}
\end{equation}
where $\R$ stands for the vector space generated by the infinitesimal generator of the $S^1$-action on $\tilde A \prescript{}{\tilde f_1\,}\times\prescript{}{\tilde f_2}{} \tilde B$. 
We rewrite \eqref{split lift} as an isomorphism
$$
\chi\colon \ker (d f_1 - d f_2) _{(a,b)}\oplus \R \to \ker (d\tilde f_1 - d\tilde f_2)_{(\tilde a,\tilde b)}.
$$

Pick an orientation on $A$ and orient $\tilde A$ so that 
the splitting $T_{\tilde a}\tilde A \cong T_a A \oplus \R$ induced by the connection on $\tilde A$ preserves orientations. 
Pick an orientation on $B$ and orient $\tilde B$ in an analogous manner. 

 \begin{lemma} \label{fib prod lift}
The identification \eqref{split lift} preserves orientations, if we use the fibre sum orientation on both fibre products, and 
orient $\R$ in the direction of the infinitesimal $S^1$-action on $\tilde A \prescript{}{\tilde f_1\,}\times\prescript{}{\tilde f_2}{} \tilde B$.
 \end{lemma}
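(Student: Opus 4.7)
The plan is to reduce the claim to an explicit linear-algebra computation at a single point. Write $V_1 = T_a A$, $V_2 = T_b B$, $W = T_{f_1(a)}\Sigma$, and use the Ehresmann connections on $\tilde A$, $\tilde B$, and $Y \to \Sigma$ (together with the induced diagonal connection on $\tilde A \times_Y \tilde B \to A \times_\Sigma B$) to obtain oriented identifications $T_{\tilde a} \tilde A \cong V_1 \oplus \R$, $T_{\tilde b} \tilde B \cong V_2 \oplus \R$, and $T_{\tilde f_1(\tilde a)} Y \cong W \oplus \R$. Diagram \eqref{connection A} makes $d\tilde f_1 - d\tilde f_2$ block-diagonal with respect to these splittings: on the horizontal part it is $df_1 - df_2$, and on the vertical $\R$-factors it is the subtraction map $\R \oplus \R \to \R$. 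Hence, writing $K = \ker(df_1 - df_2)$, we get $\ker(d\tilde f_1 - d\tilde f_2) = K \oplus \Delta_\R$, where $\Delta_\R \subset \R \oplus \R$ is the diagonal. Moreover the infinitesimal $S^1$-action on $\tilde A \times_Y \tilde B$ generates precisely this $\Delta_\R$, so \eqref{split lift} becomes the explicit map $\phi((v_1, v_2), r) = ((v_1, r), (v_2, r))$, and the problem reduces to showing that $\phi$ preserves the fibre sum orientations.

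To check this, I would fix a splitting $\sigma = (\sigma_1, \sigma_2)\colon W \to V_1 \oplus V_2$ of $df_1 - df_2$ and lift it to the splitting $\tilde \sigma(w, s) = ((\sigma_1(w), s), (\sigma_2(w), 0))$ of $d\tilde f_1 - d\tilde f_2$. By Definition \ref{fibre sum orient}, the inclusions $\iota\colon K \oplus W \hookrightarrow V_1 \oplus V_2$ and $\tilde \iota\colon \tilde K \oplus \tilde W \hookrightarrow \tilde V_1 \oplus \tilde V_2$ determined by $\sigma$ and $\tilde\sigma$ change orientation by $(-1)^{mk}$ and $(-1)^{(m+1)(k+1)}$ respectively, where $m = \dim V_2$ and $k = \dim W$. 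Moreover, $\tilde \iota$ and $\iota$ are related by the commutative identity $\tilde \iota \circ (\phi \oplus \id_{\tilde W}) = M$, where $M$ is a concrete linear map from $(K \oplus \R) \oplus (W \oplus \R)$ to $(V_1 \oplus \R) \oplus (V_2 \oplus \R)$ whose matrix in the natural bases can be written down directly from the formulas for $\iota$ and $\tilde\sigma$.

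The final step is a sign computation: permuting the two vertical $\R$-factors of the source and target of $M$ all the way to the right (contributing signs $(-1)^k$ in the source and $(-1)^m$ in the target) makes $M$ block-diagonal of the form $\iota \oplus N$, where $N\colon \R \oplus \R \to \R \oplus \R$ is $(r, s) \mapsto (r + s, r)$, represented by the matrix $\left(\begin{smallmatrix} 1 & 1 \\ 1 & 0 \end{smallmatrix}\right)$ of determinant $-1$. Collecting all signs gives $\text{sign}(M) = (-1)^{k + m + mk + 1} = (-1)^{(m+1)(k+1)} = \text{sign}(\tilde\iota)$, which forces $\text{sign}(\phi) = +1$. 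The main obstacle is the careful bookkeeping of permutation and orientation conventions at the two levels; once the vertical $\R$-factors are decoupled from the horizontal directions, the decisive ingredient is the determinant $-1$ of the $2 \times 2$ block governing the $\Delta_\R$ direction, which exactly cancels the extra $(-1)^{k + m + 1}$ coming from comparing the upstairs and downstairs fibre sum signs.
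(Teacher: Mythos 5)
Your proof is correct and reaches the same conclusion, but the organization differs in a useful way. The paper's proof mirrors the diagrammatic strategy of Lemma \ref{fibre sum commute}: it introduces auxiliary isomorphisms $\varphi$ (comparing the quotients $(TA\oplus TB)/\ker(df)\oplus\R$ with $(T\tilde A\oplus T\tilde B)/\ker(d\tilde f)$ via $[df]$, $[d\tilde f]$ and the contact splitting $\xi$) and $\psi$ (built from the connection splittings plus a permutation), then verifies a \emph{Claim} that a certain formula $\tilde s = \psi\circ(s\oplus(1,0))\circ\varphi^{-1}$ produces a right-inverse upstairs, and finally chases a commutative diagram to read off the sign of $\chi$ from the signs of $\varphi$, $\psi$, and the $2\times 2$ block $\alpha\oplus\beta$. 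Your argument instead makes the block-diagonal structure of $d\tilde f_1 - d\tilde f_2$ fully explicit from the outset, identifies $\ker(d\tilde f_1-d\tilde f_2) = K\oplus\Delta_\R$ directly, fixes a concrete lifted section $\tilde\sigma(w,s) = ((\sigma_1(w),s),(\sigma_2(w),0))$, and computes the sign of $M = \tilde\iota\circ(\phi\oplus\id)$ by permuting the vertical $\R$-factors to decouple them from the horizontal part. Both routes isolate the same crucial $2\times 2$ block $\left(\begin{smallmatrix}1&1\\1&0\end{smallmatrix}\right)$ of determinant $-1$ as the source of the compensating sign; your bookkeeping just inlines the roles of $\varphi$ and $\psi$. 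One minor difference in generality: the paper explicitly invokes $\dim\Sigma$ even to kill a permutation sign, while your $(-1)^{k}\cdot(-1)^{m}\cdot(-1)^{mk+1} = (-1)^{(m+1)(k+1)}$ computation holds for arbitrary $m,k$ and recovers the evenness-based simplification as a special case. This makes your version slightly more robust, though not essentially different.

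One small point worth double-checking in your write-up: you need that $\tilde\sigma$ really is a right-inverse of $d\tilde f_1 - d\tilde f_2$, which depends on the block structure coming from diagram \eqref{connection A} and its analogue for $\tilde B$; this is a one-line verification — $(d\tilde f_1 - d\tilde f_2)\tilde\sigma(w,s) = ((f_1-f_2)\sigma(w), s) = (w,s)$ — but it is the step where the specific form of the connections enters, so it deserves to be stated.
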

\begin{proof}
The structure of the argument is parallel to that of Lemma \ref{fibre sum commute}.  

Fix $(\tilde a,\tilde b)\in \tilde A \prescript{}{\tilde f_1\,}\times\prescript{}{\tilde f_2}{} \tilde B$, so that 
$\pi_A(\tilde a)= a$ and $\pi_B(\tilde b)=b$. Write $d \tilde f\coloneqq(d \tilde f_1- d\tilde f_2)\colon T\tilde A \oplus T\tilde B \to T Y$ 
and $d f\coloneqq(d f_1- d f_2)\colon T A \oplus T B \to T \Sigma$, where we omited subscripts $a,b,\tilde a,\tilde b$ to make 
the notation lighter. We will do this throughout the proof. Denote by $\xi\colon T\Sigma \oplus \R \to TY$ the isomorphism induced 
by the contact structure on $Y$. The commutative diagram
\begin{diagram}
\frac{T A\oplus T B}{\ker (d f)} \oplus \R & \rTo^{[df]\oplus \id \,} & T \Sigma \oplus \R \\
\dTo_{\varphi}                                & & \dTo_{\xi}       \\
\frac{T\tilde A\oplus T\tilde B}{\ker (d \tilde f)} & \rTo^{[d\tilde f] \,} & T Y
\end{diagram} 
defines the isomorphism $\varphi$. 

Denote by $\psi$ the composition $T A \oplus T B \oplus \R \oplus \R \to T A \oplus \R \oplus T B \oplus \R \to T \tilde A \oplus T\tilde B$, where the first isomorphism permutes the second and third factors and the second isomorphism is induced by the connections on $\tilde A$ and $\tilde B$. %

\begin{claim}
 Let $s \colon \frac{T A\oplus T B}{\ker (d f)} \to TA \oplus TB$ be a right-inverse for the projection $\pi \colon TA \oplus TB \to \frac{T A\oplus T B}{\ker (d f)}$. Then, 
 $\tilde s\coloneqq \psi\circ (s\oplus (1,0)) \circ \varphi^{-1} \colon \frac{T\tilde A\oplus T\tilde B}{\ker (d \tilde f)} \to T\tilde A \oplus T\tilde B$ is a right-inverse for $\tilde \pi \colon T\tilde A \oplus T\tilde B \to \frac{T\tilde A\oplus T\tilde B}{\ker (d \tilde f)}$. 
\end{claim}

The claim follows from the fact that the following diagram commutes, which is a simple calculation (using \eqref{connection A} and its analogue for $\tilde B$).
\begin{diagram}
T A \oplus T B \oplus \R \oplus \R & \rTo^{\pi\oplus \begin{pmatrix} 1 & -1 \end{pmatrix} \,} & \frac{T A\oplus T B}{\ker (d f)} \oplus \R \\
\dTo_{\psi}                                & & \dTo_{\varphi}       \\
T\tilde A\oplus T\tilde B & \rTo^{\tilde \pi \,} & \frac{T\tilde A\oplus T\tilde B}{\ker (d \tilde f)}
\end{diagram}

We can now justify the sign in the statement of the Lemma. The Claim can be used to show that the following diagram commutes: 
\begin{diagram}
\big(\ker (d f)\oplus \R\big) \bigoplus \, \left(\frac{T A\oplus T B}{\ker (d f)} \oplus \R\right) & \rTo^{i+\alpha+s+\beta\,} & T  A \oplus T B \oplus \R \oplus \R \\
\dTo_{\chi \oplus \varphi}                                & & \dTo_{\psi}       \\
\ker (d \tilde f) \bigoplus \, \frac{T\tilde A\oplus T\tilde B}{\ker (d \tilde f)} & \rTo^{\tilde i + \tilde s\,} & T \tilde A \oplus T\tilde B 
\end{diagram}
where $\alpha \oplus \beta \colon \R\oplus \R \to \R^2$ is the linear map represented by 
$\begin{pmatrix}
1 & 1 \\
1 & 0
\end{pmatrix}$.

By Part (1) in Definition \ref{fibre sum orient}, $\varphi$ changes orientations by $(-1)^{\dim B \dim \Sigma + \dim \tilde B \dim Y} = (-1)^{\dim B + 1}$. The map $\psi$ changes orientations by $(-1)^{\dim B}$ (the sign comes from the permutation, since the orientations on $\tilde A$ and $\tilde B$ behave well with the connections). 
Note that to get the map $\alpha \oplus \beta$ (which has negative determinant), we need to permute $\R$ with $\frac{T A\oplus T B}{\ker (d f)}$ at the top of the diagram. The permutation picks up no sign, since $\dim \frac{T A\oplus T B}{\ker (d f)} = \dim \Sigma$ is even. 
By Part (2) in Definition \ref{fibre sum orient} and the commutativity of the diagram, we conclude that the isomorphism $\chi$ changes orientations by $(-1)^{\dim B + 1 + \dim B + 1} = 1$, as wanted. 
\end{proof}

The space \eqref{fib prod 3} is an $S^1$-bundle over \eqref{fib prod 2}. With respect to the connections discussed above (see \eqref{connection A}), we have a decomposition
\begin{equation}
T_{(x,u,y)}\overline{\M(\wc q_{k_-},\wh p_{k_+};A)} \cong \left( T_{(\pi_\Sigma(x),\pi_\Sigma\circ u,\pi_\Sigma(y))}\M(q,p;A) \right) \oplus \R
\label{orient splitting 2}
\end{equation}

\begin{lemma} \label{orient spheres and lifts} 
The isomorphism \eqref{orient splitting 2} preserves orientations, if the fibre products on the left and right are given their fibre sum orientations.
\end{lemma}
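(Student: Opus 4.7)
The strategy is to interpret the $S^1$-bundle structure $\overline{\M(\wc q_{k_-},\wh p_{k_+};A)} \to \M(q,p;A)$ as coming from the diagonal Reeb action on the three factors of the triple fibre product, and then to apply Lemma \ref{fib prod lift} (in its iterated form) to match the fibre sum orientations. The Reeb flow on $Y$ acts by rotation in the fibres on $\pi_\Sigma^{-1}(W^s_\Sigma(q))$ and $\pi_\Sigma^{-1}(W^u_\Sigma(p))$, and by target post-composition on $\M^*_{H,0,\R\times Y;k_-,k_+}(A;J_Y)$. Since $J_Y$ and $H$ are Reeb-invariant, the evaluation maps to $Y$ at both $\pm\infty$ are equivariant, so the diagonal $S^1$-action descends to the triple fibre product with quotient exactly $\M(q,p;A)$; the $\R$-summand in \eqref{orient splitting 2} is the span of its infinitesimal generator.

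Next, I would decompose
\[
\overline{\M(\wc q_{k_-},\wh p_{k_+};A)} = \bigl(\pi_\Sigma^{-1}(W^s_\Sigma(q)) \times_Y \M^*_{H,0,\R\times Y;k_-,k_+}(A;J_Y)\bigr) \times_Y \pi_\Sigma^{-1}(W^u_\Sigma(p)),
\]
and apply Lemma \ref{fib prod lift} in sequence: once at the $-\infty$ end, using $\tilde A = \pi_\Sigma^{-1}(W^s_\Sigma(q))$ over $A = W^s_\Sigma(q)$ together with the Reeb $S^1$-bundle structure on $\M^*_{H,0,\R\times Y;k_-,k_+}(A;J_Y) \to \M^*_{0,\Sigma}(A;J_\Sigma)$ induced by target Reeb translation; and then once at the $+\infty$ end, treating the result of the first step as the total space of an $S^1$-bundle and pairing it with $\pi_\Sigma^{-1}(W^u_\Sigma(p)) \to W^u_\Sigma(p)$. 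At each application, one has to check that the Ehresmann connections lifted from the contact form on $Y$ (as in \eqref{connection A}) agree with those prescribed on the factors by the conventions of Lemma \ref{fib prod lift}, so that the tangent space decomposition \eqref{split lift} matches the one coming from the geometry of our setting.

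Combining the two steps and using associativity of the fibre sum orientation convention, one obtains that the diagonal-Reeb $S^1$-bundle $\overline{\M(\wc q_{k_-},\wh p_{k_+};A)} \to \M(q,p;A)$ preserves the fibre sum orientations, which is exactly the statement of the lemma.

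The main obstacle will be the setup of the $S^1$-bundle structure on the Floer moduli space $\M^*_{H,0,\R\times Y;k_-,k_+}(A;J_Y) \to \M^*_{0,\Sigma}(A;J_\Sigma)$: one needs to check that target Reeb acts freely on simple Floer cylinders, that the quotient map is indeed the projection sending $\tilde v$ to its underlying $J_\Sigma$-sphere $\pi_\Sigma\circ v$, and that the horizontal subspaces defined by the contact form on $Y$ descend (via the evaluation maps) to the horizontal distribution used in \eqref{connection A}. Once this compatibility of connections is established, the remainder of the argument is a careful but routine bookkeeping of the fibre sum orientation signs, which happen to cancel because the circle fibres of the $\pi_\Sigma^{-1}$-factors and of the moduli factor combine into the single diagonal Reeb circle.
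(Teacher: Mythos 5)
Your proposal takes essentially the same route as the paper: apply Lemma \ref{fib prod lift} twice, iterating over the triple fibre product, with the Reeb $S^1$-bundle structures on each factor supplying the data of that lemma. The only additional content in the paper's proof is the explicit verification of the orientation hypothesis of Lemma \ref{fib prod lift} (that the connection-induced splittings $T_{\tilde a}\tilde A \cong T_a A \oplus \R$ and $T_{\tilde b}\tilde B \cong T_b B \oplus \R$ are orientation-preserving) --- this is checked via \eqref{orient full lift} for the critical-manifold factors and by citing the companion paper for the factor $\M^*_{H,0,\R\times Y;k_-,k_+}(A;J_Y) \to \M^*_{0,\Sigma}(A;J_\Sigma)$, a step you correctly flag as the main remaining obstacle without fully carrying it out.
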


\begin{proof}
 This follows from applying Lemma \ref{fib prod lift} twice, to the $S^1$-bundles 
 $\pi_\Sigma^{-1}\left(W^s_{\Sigma}(q) \right) \to W^s_{\Sigma}(q)$, 
 ${\M}^*_{H,0,\R\times Y;k_-,k_+}(A;J_Y) \to \M^*_{0,\Sigma}(A;J_\Sigma)$ and 
 $\pi_\Sigma^{-1}\left(W^u_{\Sigma}(p) \right) \to W^u_{\Sigma}(p)$. 

 Note that to apply Lemma \ref{fib prod lift}, we need the identifications $T_{\tilde a}\tilde A\cong T_a A\oplus \R$ and 
 $T_{\tilde b}\tilde B\cong T_b B\oplus \R$ (introduced above the statement) to be orientation-preserving.
 This holds when $A$ or $B$ are critical manifolds of $(f_\Sigma,Z_\Sigma)$ and when $\tilde A$ and $\tilde B$ are the 
corresponding preimages under $\pi_\Sigma$, by \eqref{orient full lift}. 
On the other hand, from \cite{DiogoLisiSplit}*{Lemma 5.22 and discussion after Lemma 7.3} we get this for the coherent orientation on 
$\tilde B = {\M}^*_{H,0,\R\times Y;k_-,k_+}(A;J_Y)$ and the usual complex orientation on $B=\M^*_{0,\Sigma}(A;J_\Sigma)$.
\end{proof}

\begin{lemma} \label{fib prod vs preim in Sigma}
The fibre sum orientation on \eqref{fib prod 2} differs from the preimage orientation on \eqref{fib prod 2} by $(-1)^{\dim W^s_{\Sigma}(q) \dim W_{\Sigma}^u(p)}$. 
\end{lemma}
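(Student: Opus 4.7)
My strategy is to realize the triple fibre product as the bivariate fibre product
\[
\M(q,p;A) \;=\; (W^s_\Sigma(q) \times W^u_\Sigma(p)) \times_{\Sigma \times \Sigma} \M^*_{0,\Sigma}(A;J_\Sigma),
\]
using the packaged evaluation $\ev_\Sigma \colon \M^*_{0,\Sigma}(A;J_\Sigma) \to \Sigma \times \Sigma$ and the product inclusion on the left. This identification requires reordering the factors in the ambient direct sum (moving $W^u_\Sigma(p)$ past $\M^*_{0,\Sigma}(A;J_\Sigma)$ to pair it with $W^s_\Sigma(q)$), and the sign of this rearrangement is governed by Lemma \ref{fibre sum commute}; the evenness of $\dim \Sigma$ will already eliminate some potential signs here.

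Next, I would apply Lemma \ref{fibre sum vs preimage} to compare the bivariate fibre sum orientation with the preimage orientation of Definition \ref{preimage orient}, setting $V_1 = W^s_\Sigma(q) \times W^u_\Sigma(p)$, $V_2 = \M^*_{0,\Sigma}(A;J_\Sigma)$, and $W = \Sigma \times \Sigma$; the dimensions of $V_1$ and $W$ are $\dim W^s_\Sigma(q) + \dim W^u_\Sigma(p)$ and $2 \dim \Sigma$, respectively. Then Remark \ref{GuilleminPollack} identifies this bivariate preimage orientation (as a subset of $V_1 \oplus V_2$) with the Guillemin--Pollack preimage orientation on $\ev_\Sigma^{-1}(W^s_\Sigma(q) \times W^u_\Sigma(p)) \subset \M^*_{0,\Sigma}(A;J_\Sigma)$, which by Definition \ref{GW orient case 1} is the preimage orientation featured in the statement.

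Combining the sign contributions --- from the reordering, from Lemma \ref{fibre sum vs preimage}, and from the dimensional constraint that $\M(q,p;A)/\C^*$ is zero-dimensional (which forces $\dim \M^*_{0,\Sigma}(A;J_\Sigma) \equiv \dim W^s_\Sigma(q) + \dim W^u_\Sigma(p) \pmod 2$), together with the evenness of $\dim \Sigma$ --- the total sign should simplify to $(-1)^{\dim W^s_\Sigma(q) \, \dim W^u_\Sigma(p)}$. The main obstacle will be the careful bookkeeping of sign conventions across the multiple identifications: in particular, the product orientation on $W^s_\Sigma(q) \times W^u_\Sigma(p)$ (taken as the orientation of $W^s_\Sigma(q)$ followed by that of $W^u_\Sigma(p)$, per the conventions in Section \ref{compute signs}), the asymmetry between $V_1$ and $V_2$ in Definition \ref{preimage orient}, and the consistency between the iterated fibre sum interpretation of the triple product suggested by the notation and the single bivariate fibre sum over $\Sigma \times \Sigma$ used in the argument.
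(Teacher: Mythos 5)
There is a genuine gap: you have changed the target of the comparison. The lemma compares two orientations on the \emph{same} iterated fibre product \eqref{fib prod 2}, taken in the order in which it is written: the fibre sum orientation (Definition \ref{fibre sum orient}, applied twice) versus the iterated preimage orientation (Definition \ref{preimage orient}, applied twice). Your route instead terminates at the Guillemin--Pollack orientation on $\ev_\Sigma^{-1}\left(W^s_{\Sigma}(q)\times W_{\Sigma}^u(p)\right)$, i.e.\ the preimage orientation on \eqref{fib prod 22}, which you then declare to be ``the preimage orientation featured in the statement.'' It is not: that orientation is the Gromov--Witten orientation of Definition \ref{GW orient case 1}, and it differs from the preimage orientation on \eqref{fib prod 2} by exactly the sign $(-1)^{\dim W^s_{\Sigma}(q)\dim W_{\Sigma}^u(p)}$ that is at stake --- this is precisely the content of the Claim in the proof of Proposition \ref{case 1 and GW}, where the fibre sum orientation on \eqref{fib prod 2} is shown to \emph{agree} with the preimage orientation on \eqref{fib prod 22}. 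Consequently, if your chain of identifications were carried out correctly, the comparison you actually set up should come out to $+1$, not to the stated sign; asserting that the total ``should simplify'' to $(-1)^{\dim W^s_{\Sigma}(q)\dim W_{\Sigma}^u(p)}$ indicates the bookkeeping was not in fact done.

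A secondary problem is the reordering/repackaging step itself. Lemma \ref{fibre sum commute} only swaps the two factors of a single fibre sum over one target $W$; it does not provide the sign for converting the twice-iterated fibre product over $\Sigma$ into a single fibre product over $\Sigma\times\Sigma$ with the constraints grouped as $W^s_{\Sigma}(q)\times W_{\Sigma}^u(p)$. In the paper, identifications of that kind are handled by Joyce's Proposition 7.5(c) (invoked in the proofs of Propositions \ref{case 1 and GW} and \ref{case 2 and gw}), and the resulting sign is not automatically trivial. The paper's own proof of this lemma avoids all of this by never leaving \eqref{fib prod 2}: it applies Lemma \ref{fibre sum vs preimage} twice, first to $W^s_{\Sigma}(q)\hookrightarrow\Sigma$ and $\ev^1_\Sigma$ on $\M^*_{0,\Sigma}(A;J_\Sigma)$, where the sign is $+1$ because $\M^*_{0,\Sigma}(A;J_\Sigma)$ and $\Sigma$ are even dimensional, and then to $\ev^2$ on $W^s_{\Sigma}(q)\times_{\ev}\M^*_{0,\Sigma}(A;J_\Sigma)$ together with $W_{\Sigma}^u(p)\hookrightarrow\Sigma$, which produces exactly $(-1)^{\dim W^s_{\Sigma}(q)\dim W_{\Sigma}^u(p)}$. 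If you want to salvage your bivariate approach, what it naturally proves is the Claim in Proposition \ref{case 1 and GW}; to recover the present lemma you would still have to compare the iterated preimage orientation on \eqref{fib prod 2} with the preimage orientation on \eqref{fib prod 22}, which is the very computation you were trying to bypass.
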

\begin{proof}
 This follows from applying Lemma \ref{fibre sum vs preimage} twice. Writing \eqref{fib prod 2} as 
 $$
(W^s_{\Sigma}(q) \times_{\ev} \M^*_{0,\Sigma}(A;J_\Sigma)) \times_{\ev} W_{\Sigma}^u(p)
 $$
we apply Lemma \ref{fibre sum vs preimage} to $W^s_{\Sigma}(q) \hookrightarrow \Sigma$ and $\ev_\Sigma^1\colon \M^*_{0,\Sigma}(A;J_\Sigma)\to \Sigma$ to get a sign 
$$(-1)^{(\dim W^s_{\Sigma}(q) + \dim \Sigma)(\dim \M^*_{0,\Sigma}(A;J_\Sigma) + \dim \Sigma)} = 1$$
since $\M^*_{0,\Sigma}(A;J_\Sigma)$ and $\Sigma$ are even dimensional. 
We then apply Lemma \ref{fibre sum vs preimage} to $\ev^2\colon W^s_{\Sigma}(q) \times_{\ev} \M^*_{0,\Sigma}(A;J_\Sigma) \to \Sigma$ and $W_{\Sigma}^u(p) \hookrightarrow \Sigma$ to get the sign 
\begin{align*}
(-1&)^{\dim (W^s_{\Sigma}(q) \times_{\ev} \M^*_{0,\Sigma}(A;J_\Sigma) + \dim \Sigma) (\dim W_{\Sigma}^u(p) + \dim \Sigma)} \\
&= (-1)^{\dim W^s_{\Sigma}(q) \dim W_{\Sigma}^u(p)}.
\end{align*}
\end{proof}

Combining these results, we get the following.
\begin{proposition} \label{case 1 and GW}
The sign with which an element of \eqref{fib prod 1} contributes to the symplectic homology differential agrees with 
the Gromov--Witten sign of the projection of this element to \eqref{fib prod 2}  
iff $M(q)$ is even.
\end{proposition}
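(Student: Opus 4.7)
The strategy is a direct sign chase combining the three preceding lemmas. The symplectic homology sign is determined by the fibre sum orientation on the one-dimensional $\M(\wh q_{k_-}, \wc p_{k_+}; A)$ after quotienting by the $\R$-translation action on Floer cylinders, while the Gromov--Witten sign is determined by the preimage orientation on the two-dimensional $\M(q, p; A)$ after quotienting by the $\C^*$-reparametrization action on the parametrized sphere. I will transform one orientation into the other through a chain of three identifications, tracking signs at each step.

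First, I will apply Lemma \ref{orient lifts free vs fixed} to view $\M(\wh q_{k_-}, \wc p_{k_+}; A)$ as a codimension-$2$ submanifold of the ambient space $\overline{\M(\wc q_{k_-}, \wh p_{k_+}; A)}$ through the splitting $T\overline{\M} \cong T\M \oplus \R_{\domain} \oplus \R_{\target}$; this comparison of fibre sum orientations introduces a factor of $(-1)^{M(q)}$. Second, I will apply Lemma \ref{orient spheres and lifts} to identify the fibre sum orientation on $\overline{\M}$ with that on $\M(q, p; A)$ extended by the $S^1$-fibre direction of the bundle $\overline{\M} \to \M(q, p; A)$; this step is orientation-preserving. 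Third, I will apply Lemma \ref{fib prod vs preim in Sigma} to convert from the fibre sum to the preimage orientation on $\M(q, p; A)$, contributing $(-1)^{\dim W^s_\Sigma(q) \cdot \dim W^u_\Sigma(p)}$.

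To conclude the comparison I must match the symmetry generators on the two sides: the cascade translation $\R_s$ with the $\R_+$-scaling factor of $\C^* \subset \CP^1$ acting on the projected sphere $w$; the domain rotation $\R_{\domain}$ with the $S^1$-rotation factor of $\C^*$; and the Reeb direction $\R_{\target}$ with the $S^1$-fibre of $\overline{\M} \to \M(q, p; A)$. These identifications are justified by the Ansatz of Section \ref{Ansatz} together with the behaviour of the projection $\pi_\Sigma$ under domain and target rotations in cylindrical coordinates on $\C^* \subset \CP^1$. After permuting these factors in the oriented direct sum decompositions and cancelling the common symmetry directions, the net sign relating the SH and GW orientations on the $0$-dimensional quotients simplifies; here one uses the dimension/index constraint $M(p) \equiv M(q) \pmod 2$, forced by requiring $\dim \M(q, p; A) = 2$ (so that the cascade projects to a rigid configuration modulo $\C^*$), to reduce the resulting sign to $(-1)^{M(q)}$.

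The main obstacle will be the careful bookkeeping of permutation signs when interchanging oriented factors to align the decompositions on the two sides, particularly because the symmetry directions appear in different natural orders: the cascade side has $\R_s$ first with $\R_{\domain}, \R_{\target}$ tagged on from Lemma \ref{orient lifts free vs fixed}, while on the Gromov--Witten side the two $\C^*$-directions are bundled together before the bundle fibre. Once this bookkeeping is done and the parity relation $M(p) \equiv M(q) \pmod 2$ is invoked, the stated iff follows.
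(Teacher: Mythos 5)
Your first two steps coincide with the paper's proof: Lemma \ref{orient lifts free vs fixed} gives the factor $(-1)^{M(q)}$, Lemma \ref{orient spheres and lifts} is orientation-preserving, and the cancellation of $\R_{\target}$ against the $S^1$-fibre direction together with the identification of $s$-translation plus $\R_{\domain}$ with the infinitesimal $\C^*$-action is exactly how the paper passes to the rigid quotients. The gap is in your third step and the final ``simplification''. Lemma \ref{fib prod vs preim in Sigma} converts the fibre sum orientation into the \emph{preimage orientation on the presentation \eqref{fib prod 2}}, i.e.\ on $W^s_\Sigma(q)\times_{\ev}\M^*_{0,\Sigma}(A;J_\Sigma)\times_{\ev}W^u_\Sigma(p)$. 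But the Gromov--Witten sign of Definition \ref{GW orient case 1} is defined via the preimage orientation on $\ev_\Sigma^{-1}\big(W^s_\Sigma(q)\times W^u_\Sigma(p)\big)$, i.e.\ on the reordered fibre product \eqref{fib prod 22} with the sphere factor first and the two constraints bundled. Passing between these two preimage orientations costs a second factor $(-1)^{\dim W^s_\Sigma(q)\dim W^u_\Sigma(p)}$, which cancels the one from Lemma \ref{fib prod vs preim in Sigma} \emph{identically}; this is precisely the Claim in the paper's proof, and it is the ingredient your argument never identifies or proves.

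Moreover, the way you propose to close the computation is not just incomplete but would produce the wrong answer: with only one factor of $(-1)^{\dim W^s_\Sigma(q)\dim W^u_\Sigma(p)}$ present, your invocation of the parity constraint $M(p)\equiv M(q)\pmod 2$ gives $\dim W^s_\Sigma(q)\equiv M(q)$ and $\dim W^u_\Sigma(p)=M(p)\equiv M(q)$, hence $(-1)^{M(q)}\cdot(-1)^{\dim W^s_\Sigma(q)\dim W^u_\Sigma(p)}=(-1)^{M(q)}\cdot(-1)^{M(q)}=+1$, i.e.\ the SH and GW signs would always agree, contradicting the statement you are proving. The unspecified ``permutation signs'' in your last paragraph are exactly where the compensating $(-1)^{\dim W^s_\Sigma(q)\dim W^u_\Sigma(p)}$ must come from, and once it is included no parity argument is needed at all: the two auxiliary signs cancel on the nose and the net comparison is $(-1)^{M(q)}$, as in the paper. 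To repair your write-up, replace the appeal to $M(p)\equiv M(q)$ by the explicit comparison of the preimage orientations on \eqref{fib prod 2} and \eqref{fib prod 22} (or, equivalently, quote \cite{JoyceCorners}*{Proposition 7.5.(c)} together with Lemma \ref{fibre sum vs preimage}, as in the remark following the paper's proof).
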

\begin{proof}
Combining Lemmas \ref{orient lifts free vs fixed} and \ref{orient spheres and lifts}, we can relate the fibre sum orientations on \eqref{fib prod 1} and \eqref{fib prod 2}:
\begin{align*}
\left( T_{(x,u,y)}\M(\wh q_{k_-},\wc p_{k_+};A) \right) \oplus &\R_{\domain} \oplus \R_{\target} 0\cong (-1)^{M(q)} T_{(x,u,y)}\overline{\M(\wc q_{k_-},\wh p_{k_+};A)} \cong \\
&\cong (-1)^{M(q)} \left( T_{(\pi_\Sigma(x),\pi_\Sigma\circ u,\pi_\Sigma(y))}\M(q,p;A) \right) \oplus \R
\end{align*}
The factors $\R_{target}$ on the left and $\R$ on the right can be identified, yielding an orientation-preserving isomorphism 
\begin{align*}
\left( T_{(x,u,y)}\M(\wh q_{k_-},\wc p_{k_+};A) \right) \oplus \R_{\domain} \cong (-1)^{M(q)} T_{(\pi_\Sigma(x),\pi_\Sigma\circ u,\pi_\Sigma(y))}\M(q,p;A).
\end{align*}

Observe that $\M(q,p;A)$ in \eqref{fib prod 2} can be identified with 
 \begin{equation}
 \M^*_{0,\Sigma}(A;J_\Sigma) \times_{\ev_\Sigma} \big(W^s_{\Sigma}(q) \times W_{\Sigma}^u(p)\big).
 \label{fib prod 22}
 \end{equation}
 
The sign with which an element $(x,u,y)\in\M(\wh q_{k_-},\wc p_{k_+};A)$ contributes to the symplectic 
homology differential is the sign of the zero-dimensional vector space obtained by taking the quotient 
of $T_{(x,u,y)}\M(\wh q_{k_-},\wc p_{k_+};A)$ by translation in the $s$-variable in the domain of the 
Floer cylinder $u$. This oriented zero-dimensional vector space coincides with the quotient of 
$\left( T_{(x,u,y)}\M(\wh q_{k_-},\wc p_{k_+};A) \right) \oplus \R_{\domain}$ by the infinitesimal 
$\C^*$-action on the domain of $u$. 
On the other hand, by Definition \ref{GW orient case 1}, 
the Gromov--witten sign of $(\pi_\Sigma(x),\pi_\Sigma\circ u,\pi_\Sigma(y))\in\M(q,p;A)$ 
is the sign of the zero-dimensional vector space obtained as the quotient of 
$T_{(\pi_\Sigma(x),\pi_\Sigma\circ u,\pi_\Sigma(y))}\M(q,p;A)$ by the infinitesimal $\C^*$-action,
where $\M(q,p;A)$ is identified with \eqref{fib prod 22} with the preimage orientation. 
The result now follows from 
\begin{claim}
 If we equip \eqref{fib prod 2} with the fibre sum orientation and \eqref{fib prod 22} with the preimage orientation, then the natural identification of the two spaces is orientation-preserving.
\end{claim}

By Lemma \ref{fib prod vs preim in Sigma}, the fibre sum and preimage orientations on \eqref{fib prod 2} differ by $(-1)^{\dim W^s_{\Sigma}(q) \dim W_{\Sigma}^u(p)}$. 
But this is also the difference between the preimage orientations on \eqref{fib prod 2} and on \eqref{fib prod 22}. The Claim now follows.  
\end{proof}

\begin{remark}
 The Claim in the previous proof is also a consequence of the following argument. If we equip \eqref{fib prod 2} and 
 \eqref{fib prod 22} with their fibre sum orientations, then their natural identification preserves orientations, by 
 \cite{JoyceCorners}*{Proposition 7.5.(c)}. 
 On the other hand, the fibre sum and preimage orientations on \eqref{fib prod 22} are the same, by Lemma \ref{fibre sum vs preimage}.  
\end{remark}

\

There is an analogous argument for Case 2 contributions to the symplectic homology differential. As we saw in \eqref{fib prod2},
they come from elements of fibre products
\begin{equation}
 W^s_{Y}(\wh p) \times_{\ev} \big( \M_X^*(B;J_W) \times_{\tilde \ev} \M^*_{H,1,\R\times Y;k_-,k_+}(0;J_Y) \big) \times_{\ev} W_{Y}^u(\wc p),
\label{fib prod case 2}
\end{equation}
which are given the fibre sum orientation. An element in this fibre product has an underlying unparameterized plane in $\M_X^*(B;J_W)$ 
(that can also be thought of as a space of spheres in $X$), which is asymptotic to a Reeb orbit over $p\in \Sigma$. Denote by 
$\ev\colon \M_X^*(B;J_W) \to \Sigma$ the evaluation map.

\begin{definition} \label{GW orient case 2}
 Given a generic point $pt \in \Sigma$, the preimage orientation on $\ev^{-1}(pt)$ is called its 
 {\em Gromov--Witten orientation}. The corresponding sign associated to an element in the zero-dimensional
 $\ev^{-1}(pt)$ is its {\em Gromov--Witten sign}.
\end{definition}

Observe that this is oriented diffeomeorphic to $\M_X^*(B;J_W) \times_{\ev} \{pt\}$,
by Lemma \ref{fibre sum vs preimage}. Analogously to what was pointed out in Remark \ref{GW signs}, 
this orientation determines the sign with which an element in $\ev^{-1}(pt)$ contributes to a 
relative Gromov--Witten invariant of the pair $(X,\Sigma,\omega)$, with a single insertion of the point 
$pt$ (chosen to be generic) in $\Sigma$, with order of contact $B\bullet \Sigma$.

\begin{proposition} \label{case 2 and gw}
The sign with which an element of \eqref{fib prod case 2} contributes to the symplectic homology 
differential agrees with the Gromov--Witten sign of the underlying $J_W$-holomorphic plane in $W$ 
(thought of as a sphere in $X$ with one point constraint in $\Sigma$ of order of contact 
$k_+-k_-=B\bullet \Sigma$ at a generic $p\in \Sigma$).
\end{proposition}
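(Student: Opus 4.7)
The proof strategy parallels that of Proposition \ref{case 1 and GW}, adapted to the structural peculiarities of Case 2. The essential new feature is that the upper-level Floer cylinder $\tilde v \in \M^*_{H,1,\R\times Y;k_-,k_+}(0;J_Y)$ projects to a constant in $\Sigma$, so all the nontrivial spherical topology is carried by the augmentation plane $U \in \M^*_X(B;J_W)$; furthermore, both asymptotic Morse critical points lie in the fibre of $Y \to \Sigma$ over the same point $p\in\Crit(f_\Sigma)$.

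First, I would replace $W^s_Y(\wh p)$ and $W^u_Y(\wc p)$ in \eqref{fib prod case 2} by their full $\pi_\Sigma$-preimages in $Y$, using the fibre sum description \eqref{fib prod crit} at each end. As in the proof of Lemma \ref{orient lifts free vs fixed}, two applications of Lemma \ref{fibre sum commute} realize the original fibre product as a codimension-$2$ subspace of an enlarged one, with a tractable sign change; the two extra dimensions correspond to infinitesimal rotations of $\tilde v$ on the domain and in the target Reeb direction, analogous to \eqref{orient splitting}.

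Next, the $\R\times S^1$-invariance of $H$ and of $J_Y$ causes $\M^*_{H,1,\R\times Y;k_-,k_+}(0;J_Y)$ and the enlarged $\pi_\Sigma$-preimages to be $S^1$-bundles over their projections to $\Sigma$. Applying Lemma \ref{fib prod lift} iteratively (as in the proof of Lemma \ref{orient spheres and lifts}), I would descend the enlarged fibre product to
\[
W^s_\Sigma(p) \times_{\ev} \M^*_X(B;J_W) \times_{\ev} W^u_\Sigma(p)
\]
in $\Sigma$, where $\ev\colon \M^*_X(B;J_W) \to \Sigma$ is the evaluation at the asymptotic marker. By Morse--Smale transversality, $W^s_\Sigma(p) \cap W^u_\Sigma(p) = \{p\}$ (which forces the projections of both ends of $\tilde v$ to agree with $\ev(U)$), so this fibre product is canonically identified with $\ev^{-1}(p)$; converting fibre sum to preimage orientations via Lemma \ref{fibre sum vs preimage} (as in Lemma \ref{fib prod vs preim in Sigma}) yields precisely the Gromov--Witten orientation of Definition \ref{GW orient case 2}.

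The main obstacle will be the combinatorial bookkeeping of signs throughout this chain of identifications. Each of the two end-lifts contributes a factor of $(-1)^{M(p)}$ by the mechanism of Lemma \ref{orient lifts free vs fixed}, and the passage between fibre sum and preimage orientations contributes additional factors depending on the parities of $M(p)$, $\dim\Sigma$, and the virtual dimensions of the moduli spaces involved. Because the critical point $p$ appears symmetrically at both ends -- in contrast to Case 1 where the two Morse critical points $q$ and $p$ are distinct -- these sign contributions pair up and cancel, yielding plain agreement of signs with no residual parity condition. Verifying the cancellation precisely, through explicit chains of oriented isomorphisms as in the proof of Lemma \ref{orient lifts free vs fixed}, is the principal technical task.
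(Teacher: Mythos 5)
Your overall strategy---enlarge the two end constraints to their full $\pi_\Sigma$-preimages, descend along $S^1$-bundle structures, and then compare fibre-sum with preimage orientations---is the natural adaptation of the Case~1 machinery, and the paper's proof is indeed ``analogous'' to Proposition \ref{case 1 and GW} in that broad sense. But the substance of the proposition is exactly the sign bookkeeping, and your proposal both defers it (``the principal technical task'') and sketches a cancellation mechanism that is not the one that actually operates. In the actual computation the total sign comes from three distinct sources: a reordering/associativity sign $(-1)^{1+M(p)}$ (via Lemma \ref{fibre sum commute}, fibre-sum associativity and \cite{JoyceCorners}*{Proposition 7.5.(c)}); a factor $-1$ because the map sending domain and target rotations of the punctured cylinder to the asymptotic $S^1$-fibres at $\pm\infty$ has negative determinant (the matrix \eqref{rot v}); and a factor $(-1)^{M(p)}$ from comparing the preimage orientation of $\Delta_\Sigma \cap \bigl(W^s_\Sigma(p)\times W^u_\Sigma(p)\bigr)$ with the intersection $W^s_\Sigma(p)\cap W^u_\Sigma(p)$ (Remark \ref{GuilleminPollack} and \eqref{orient stable and unstable}). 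These combine to $+1$. Your claim that ``each of the two end-lifts contributes $(-1)^{M(p)}$ and these pair up and cancel'' does not describe any of these contributions correctly---in Case~1 the analogous enlargement produces a single factor $(-1)^{M(q)}$, not one per end---so as written the cancellation is asserted rather than proved.

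There is also a structural gap in the proposed descent. In Case~2 the upper-level cylinder projects to a \emph{constant} in $\Sigma$, so there is no nontrivial sphere moduli space to descend to; what replaces Lemma \ref{orient spheres and lifts} is an explicit identification of $\M^*_{H,1,\R\times Y;k_-,k_+}(0;J_Y)$ (branched covers of trivial cylinders) with, roughly, $\R\times Y$, which after imposing the end constraints leaves exactly the translation line $\R_p$ together with the rotation torus whose degree matrix \eqref{rot v} produces the $-1$ above. Moreover, the plane is attached at the augmentation puncture by matching Reeb orbits (the fibre product $\M_X^*(B;J_W)\times_{\tilde\ev}\M^*_{H,1,\R\times Y;k_-,k_+}(0;J_Y)$), not by an evaluation map to $\Sigma$, so the $S^1$-bundle descent of Lemma \ref{fib prod lift} does not apply directly to that factor. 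Handling these two points---the explicit structure of the constant-projection cylinder space and the augmentation matching---is what makes the Case~2 computation close, and both are missing from your outline.
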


\begin{proof}
The proof is analogous to that of Proposition \ref{case 1 and GW}. We have oriented isomorphisms
\begin{align*}
 \big[&W^s_{Y}(\wh p) \times_{\ev} \big( \M_X^*(B;J_W) \times_{\tilde \ev} \M^*_{H,1,\R\times Y;k_-,k_+}(0;J_Y) \big) \big]\times_{\ev} W_{Y}^u(\wc p) \\
 & \cong \big[\big(\M_X^*(B;J_W) \times_{\tilde \ev} \M^*_{H,1,\R\times Y;k_-,k_+}(0;J_Y) \big)\times_{\ev} W^s_{Y}(\wh p) \big] \times_{\ev} W_{Y}^u(\wc p) \\ 
 & \cong \big[\M_X^*(B;J_W) \times_{\ev} \big(\M^*_{H,1,\R\times Y;k_-,k_+}(0;J_Y) \times_{\ev} W^s_{Y}(\wh p) \big)\big] \times_{\ev} W_{Y}^u(\wc p)  \\
 & \cong \M_X^*(B;J_W) \times_{\ev} \big[\big(\M^*_{H,1,\R\times Y;k_-,k_+}(0;J_Y) \times_{\ev} W^s_{Y}(\wh p) \big) \times_{\ev} W_{Y}^u(\wc p)\big]  \\
 & \cong (-1)^{1 + M(p)} \M_X^*(B;J_W) \times_{\ev} \big[\M^*_{H,1,\R\times Y;k_-,k_+}(0;J_Y) \times_{\ev} \big(W^s_{Y}(\wh p)  \times W_{Y}^u(\wc p)\big)\big]  \\
 & \cong (-1)^{1 + M(p)}  \M_X^*(B;J_W) \times_{\ev} \big[\text{sign}(S^1_d\times S^1_t \to S^1_- \times S^1_+) \\
 & \hspace{4.5cm}  (\M^*_{1,\Sigma}(0;J_\Sigma)/\C^*) \times_{\ev} \big(W^s_{\Sigma}(p) \times W_{\Sigma}^u(p)\big) \R_p \big]  \\
 & \cong (-1)^{M(p)}\big(\Delta_\Sigma \cap (W^s_{\Sigma}(p)\times W^u_{\Sigma}(p))\big) \M_X^*(B;J_W) \times_{\ev} \R_p \\
 & \cong \big(\M_X^*(B;J_W) \times_{\ev} \{p\}\big) \R_p %
\end{align*}
where we identified the space of Floer cylinders from $\wh p$ to $\wc p$ with the real line $\R_p$ in the last three lines, with the orientation induced by $s$-translation on the domain (any two such cylinders differ by translation by a constant $s_0\in \R$). 
The first isomorphism preserves orientations, by Lemma \ref{fibre sum commute} ($\dim \big( \M_X^*(B;J_W) \times_{\tilde \ev} \M^*_{H,1,\R\times Y;k_-,k_+}(0;J_Y) \big) + \dim Y$ is even). 
The second and third isomorphisms are orientation-preserving, by the associativity of the fibre sum orientation. The sign in the fourth isomorphism comes from \cite{JoyceCorners}*{Proposition 7.5.(c)}. 
Note that by Lemma \ref{fibre sum vs preimage}, the fibre sum and preimage orientations on $\big[\M^*_{H,1,\R\times Y;k_-,k_+}(0;J_Y) \times_{ev} \big(W^s_{Y}(\wh p)  \times W_{Y}^u(\wc p)\big)\big]$ agree. 
In the fifth isomorphism, $\text{sign}(S^1_d\times S^1_t \to S^1_- \times S^1_+)=-1$ means that the evaluation map
taking domain and target rotations to the $S^1$-fibres at the orbits at $-\infty$ and $+\infty$ reverses orientations 
(again by the fact that the matrix in \eqref{rot v} has negative determinant). 
In the penultimate line, $\big(\Delta_\Sigma \cap (W^s_{\Sigma}(p)\times W^u_{\Sigma}(p))\big)$ stands for 
the sign of the intersection, which is the same as the sign of $(-1)^{M(p)} (W^s_{\Sigma}(p)\cap W^u_{\Sigma}(p))$, 
by Remark \ref{GuilleminPollack}. The sign of $W^s_{\Sigma}(p)\cap W^u_{\Sigma}(p)$ is 1, by \eqref{orient stable and unstable}.
\end{proof}

Finally, Case 3 contributions to the symplectic homology differential come from
\begin{equation}
 W^u_{W}(x) \times_{\ev^1_-} \left(\M^*_{H}(B;J_W)  \prescript{}{\ev^1_+\,}\times\prescript{}{\ev^2_-}{}  \M^*_{H,k_+}(0;J_Y) \right) \times_{\ev^2_+} W_{Y}^u(\wc p)
 \label{fib prod case 3}
\end{equation}
with the fibre sum orientation, as in \eqref{fib prod3} (fix orientations on unstable manifolds of $(-f_W,-Z_W)$, which induces 
orientations on the stable manifolds via \eqref{orient stable and unstable}). 
Despite the subscript (which reminds us that we haven't quotiented by domain automorphisms), the space 
$\M^*_{H}(B;J_W)$ consists of pseudoholomorphic cylinders in $W$ (recall that $H=0$ in $W$ after stretching the neck),
which by Lemma \ref{planes = spheres} can be identified with a space of pseudoholomorphic spheres in $X$.
Recall from Lemma \ref{planes = spheres} that the map $\psi \colon W \to X$ is a diffeomorphism onto its image 
$X\setminus \Sigma$. Denote by $\ev\colon \M^*_{H}(B;J_W)/\C^* \to X\times \Sigma$ the evaluation map at $(0,\infty)$
(where we quotiented by the space $\C^*$ of domain automorphisms). Write $W^u_{X}(x)$ for $\psi\big(W^u_{W}(x)\big)$.

\begin{definition}
 The preimage orientation on $\ev^{-1}\left(W^u_{X}(x) \times W_{\Sigma}^u(p)\right)$ is called 
 its {\em Gromov--Witten orientation}. The corresponding sign associated to an element in this 
 zero-dimensional manifold is its {\em Gromov--Witten sign}.
\label{GW orient case 3}
\end{definition}

If $W^u_{X}(x) \subset X$ and $W_{\Sigma}^u(p)\subset \Sigma$ represent homology classes, then the Gromov--Witten orientation determines the signs with which elements in $\ev^{-1}\left(W^u_{X}(x) \times W_{\Sigma}^u(p)\right)$ contribute to a relative Gromov--Witten invariant. 

\begin{proposition}
The sign with which an element of \eqref{fib prod case 3} contributes to the symplectic homology differential agrees with 
the Gromov--Witten sign of the underlying $J_W$-holomorphic plane in $W$ (thought of as a sphere in $X$ 
with one point constraint at $W^u_{X}(x)\subset X$ and one point constraint in $\Sigma$ of order of contact 
$k=B\bullet \Sigma$ at $W_{\Sigma}^u(p)\subset \Sigma$).
\label{case 3 and gw}
\end{proposition}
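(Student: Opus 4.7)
The strategy is to mimic the proof of Proposition \ref{case 2 and gw}, reducing the fibre product \eqref{fib prod case 3} to a fibre product that exhibits the Gromov--Witten orientation of Definition \ref{GW orient case 3}, and carefully tracking signs. The plan is to produce a chain of oriented isomorphisms in three stages: (a) rearrange the factors using Lemma \ref{fibre sum commute}, (b) contract the middle piece $\M^*_{H,k_+}(0;J_Y)$ using the Ansatz of Proposition \ref{Ansatz2} Case 3, and (c) pass from fibre sum to preimage orientation via Lemmas \ref{fibre sum vs preimage} and \ref{fib prod lift}.

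For stage (a), I would first use Lemma \ref{fibre sum commute} to commute $W^u_W(x)$ past $\M^*_{H}(B;J_W) \times_{\ev^1_+, \ev^2_-} \M^*_{H,k_+}(0;J_Y)$ and regroup via the associativity of the fibre sum orientation (as in the first three lines of the chain of oriented isomorphisms in the proof of Proposition \ref{case 2 and gw}), producing a space of the form
\[
\bigl[\M^*_{H}(B;J_W) \times_{\ev^1_-} W^u_W(x)\bigr] \times_{\ev^1_+} \bigl[\M^*_{H,k_+}(0;J_Y) \times_{\ev^2_+} W^u_Y(\wc p)\bigr]
\]
up to a computable sign depending only on the parity of the dimensions involved (all of $\dim Y$ and the dimensions of the Floer moduli spaces appearing, modulo $2$). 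For stage (b), I would analyse the right-hand factor: by Proposition \ref{Ansatz2} Case 3 (and the discussion in Lemma \ref{orient spheres and lifts} applied to the $S^1$-bundle $\M^*_{H,k_+}(0;J_Y) \to \Sigma$), the fibre product $\M^*_{H,k_+}(0;J_Y) \times_{\ev^2_+} W^u_Y(\wc p)$ is oriented-diffeomorphic to $W^u_\Sigma(p) \times \R$, where the $\R$-factor records the overall $\R$-translation of the cascade (to be quotiented out at the end), and the projection of the asymptotic orbit matches the gradient-flow endpoint in $\Sigma$. This is the analogue of the collapse that occurred in the proof of Proposition \ref{case 2 and gw} when the Floer cylinder with puncture projected to a point in $\Sigma$.

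For stage (c), I would use Lemma \ref{planes = spheres} to identify $\M^*_H(B;J_W)$ with the corresponding space of pseudoholomorphic spheres in $X$, and $W^u_W(x)$ with $W^u_X(x)$, so that $\M^*_{H}(B;J_W) \times_{\ev^1_-} W^u_W(x)$ becomes $\M^*_H(B;J_W) \times_{\ev} W^u_X(x)$ (the ``$0$-evaluation'' factor in the definition of the map $\ev$ of Definition \ref{GW orient case 3}). Combining with stage (b) and then quotienting by the $\C^*$-action on $\M^*_H(B;J_W)$ gives an oriented identification, up to a sign, between the quotient of \eqref{fib prod case 3} by $\R$ (the translation symmetry of the cascade) and
\[
\bigl(\M^*_H(B;J_W)/\C^*\bigr) \times_{\ev} \bigl(W^u_X(x) \times W^u_\Sigma(p)\bigr),
\]
the latter equipped with the fibre sum orientation. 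An application of Lemma \ref{fibre sum vs preimage} then converts this to the preimage orientation defining the Gromov--Witten sign, introducing a further parity factor that I expect to combine with the signs collected in stages (a) and (b) to yield $+1$ unconditionally (this contrasts with Case 1, where an $(-1)^{M(q)}$ remained).

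The main obstacle will be the bookkeeping of the various parity signs, particularly the sign produced when commuting $W^u_W(x)$ past the two-curve factor in stage (a) (whose dimension is controlled by the index relations derived in Proposition \ref{P:split SH cases} Case 3), and the sign contributed in stage (b) by the negative determinant of the rotation matrix $\begin{pmatrix} k_+ & 1 \\ k_+ & 1 \end{pmatrix}$ of \eqref{rot v0}, which reflects how the domain rotation of $\tilde v_1$ and the Reeb rotation at its asymptotes interact. As in the proof of Proposition \ref{case 2 and gw}, I expect these to cancel exactly with the sign produced by the conversion between fibre sum and preimage orientations in stage (c), so no residual dependence on $M(x)$ or $M(p)$ appears. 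Unlike Case 1, here the critical point $x$ on $W$ is asymptotic to the ``minus'' end, and the sign $(-1)^{M(q)}$ that obstructed equality in Proposition \ref{case 1 and GW} does not arise; this is consistent with the fact that Case 3 involves no non-trivial $S^1$-bundle lifting of a critical manifold of $f_\Sigma$ on the side of $x$, since $W^u_W(x)$ already lives in $W$.
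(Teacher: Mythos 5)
Your overall route is the same as the paper's: rearrange the fibre product \eqref{fib prod case 3} using commutativity/associativity of the fibre sum orientation, collapse the trivial-cylinder factor $\M^*_{H,k_+}(0;J_Y)$, identify the plane moduli space with spheres in $X$ via Lemma \ref{planes = spheres}, and finish by comparing fibre sum and preimage orientations with Lemma \ref{fibre sum vs preimage}; the expected conclusion (overall sign $+1$, no residual dependence on $M(x)$ or $M(p)$) is also correct. The one place the paper differs in mechanics is your stage (b): rather than adapting Lemma \ref{fib prod lift}/Lemma \ref{orient spheres and lifts}, it uses the explicit oriented identification $\M^*_{H,k_+}(0;J_Y)\cong \R\times Y$ (\cite{DiogoLisiSplit}*{Lemma 5.22}) to absorb that factor, and the passage from the $Y$-constraint $W^u_Y(\wc p)$ to the $\Sigma$-constraint $W^u_\Sigma(p)$ happens only at the step where the $\C^*$-quotient of $\M^*_H(B;J_W)$ is taken, with the $S^1$-phase direction absorbed there.

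The concrete flaw is in your anticipated sign ledger. There is no sign in Case 3 coming from a ``negative determinant of the rotation matrix $\begin{pmatrix} k_+ & 1\\ k_+ & 1\end{pmatrix}$'': that matrix is singular (both rows are equal), and \eqref{rot v0} is not a $2\times 2$ torus map at all but multiplication by $k_+$ on $S^1$, which is orientation-preserving and contributes only the multiplicity factor $k_+$ to the count, never a sign. The negative-determinant mechanism of \eqref{rot v} is exactly what distinguishes Cases 1 and 2 ($k_+\neq k_-$) from Case 3, where both ends of $\tilde v_1$ have the same multiplicity. Moreover, the Lemma \ref{fibre sum vs preimage} conversion in your stage (c) contributes \emph{no} parity factor here, because $\M^*_H(B;J_W)/\C^*$ and $X\times\Sigma$ are both even-dimensional; so the cancellation you are counting on (rotation sign against fibre-sum-vs-preimage sign) involves two signs neither of which exists. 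The actual cancellation is between the two factor-commutation signs: one contributes $(-1)^{\dim W^u_Y(\wc p)+\dim W^u_W(x)}$ and the other $(-1)^{\dim W^u_W(x)+\dim W^u_\Sigma(p)}$, and these multiply to $+1$ because the projection $W^u_Y(\wc p)\to W^u_\Sigma(p)$ is an orientation-preserving diffeomorphism by \eqref{orient point lift}, so $\dim W^u_Y(\wc p)=\dim W^u_\Sigma(p)$. If you carried out your plan inserting the spurious $-1$, you would arrive at the opposite of the stated sign, so this piece of the bookkeeping must be corrected before the argument closes.
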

\begin{proof}
Denoting all evaluation maps by $\ev$, we have the oriented diffeomorphisms
\begin{align*}
W^u_{W}&(x) \times_{\ev} \left(\M^*_{H}(B;J_W) \times_{\ev} \M^*_{H,k_+}(0;J_Y) \right) \times_{\ev} W_{Y}^u(\wc p) \\
&\cong W^u_{W}(x) \times_{\ev} \M^*_{H}(B;J_W) \times_{\ev} (\M^*_{H,k_+}(0;J_Y) \times_{\ev} W_{Y}^u(\wc p))) \\
&\cong (-1)^{\dim W_{Y}^u(\wc p)} \big(W^u_{W}(x) \times_{\ev}  \M^*_{H}(B;J_W) \times_{\ev} W_{Y}^u(\wc p)\big) \times \R_2\\
&\cong (-1)^{\dim W_{Y}^u(\wc p) + \dim W^u_{W}(x)} \big(\M^*_{H}(B;J_W) \times_{\ev} \left(W^u_{W}(x) \times W_{Y}^u(\wc p)\right)\big)\times \R_2
\end{align*}
where we denote by $\R_2$ the domain of the $s$-coordinate of a Floer cylinder in $\M^*_{H,k_+}(0;J_Y)$. We used 
the associativity of the fibre sum orientation, the oriented diffeomorphism $\M^*_{H,k_+}(0;J_Y)\cong \R_2 \times Y$ 
(see \cite{DiogoLisiSplit}*{Lemma 5.22}), the fact that $W$ and $\M^*_{H}(B;J_W)$ are 
even dimensional and \cite{JoyceCorners}*{Proposition 7.5.(a)\&(c)}. 
Now, if we think of $\M^*_{H}(B;J_W)$ 
as a space of pseudoholomorphic spheres in $X$, then using the fibre sum orientation convention, we also have the oriented diffeomorphisms
\begin{align*}
&\M^*_{H}(B;J_W) \times_{\ev} \left(W^u_{W}(x) \times W_{Y}^u(\wc p)\right) \\
 &\quad \cong (-1)^{\dim W^u_{W}(x) + \dim W_{\Sigma}^u(p)} \left((\M^*_{H}(B;J_W)/\C^*) \times_{\ev} \left(W^u_{X}(x) \times W_{\Sigma}^u(p)\right)\right)\times \R_1
\end{align*}
where $\R_1$ is the domain of the $s$-coordinate of a Floer cylinder in $\M^*_{H}(B;J_W)$. By Lemma 
\ref{fibre sum vs preimage}, the fibre sum and preimage orientations agree on 
$(\M^*_{H}(B;J_W)/\C^*) \times_{\ev} \left(W^u_{X}(x) \times W_{\Sigma}^u(p)\right)$. Combining the two 
calculations, we get an oriented diffeomorphism between 
$$
 W^u_{W}(x) \times_{\ev}  \M^*_{H}(B;J_W) \times_{\ev} \M^*_{H,k_+}(0;J_Y) \times_{\ev} W_{Y}^u(\wc p)
$$
with the fibre sum orientation (which gives rigid contributions to the symplectic homology differential when we quotient by $\R_1\times \R_2$) and 
$$
\left((\M^*_{H}(B;J_W)/\C^*) \times_{\ev} \left(W^u_{X}(x) \times W_{\Sigma}^u(p)\right)\right )\times \R_1 \times \R_2
$$
with the preimage orientation 
(which gives the Gromov--Witten sign when we quotient by $\R_1\times \R_2$, according to Definition \ref{GW orient case 3}). 
The result now follows. 
\end{proof}

We now define some curve counts that will be useful when we write
a formula for the symplectic homology differential.  Each of the
Cases 1 through 3 motivates one of the following definitions.

\begin{definition} \label{D:n_A}
 Let $q,p\in \Crit(f_\Sigma)$ and $A \in H_2(\Sigma;\Z)$. Define
 $$
 n_A(q,p) := \# \M_A(q,p),
 $$
 where (using the notation of \eqref{fib prod 2})
 $$
 \M_A(q,p) := \M(q,p;A) / \C^*.
 $$
Here, $\M(q,p;A)$ is given the Gromov--Witten orientation (see Definition \ref{GW orient case 1}). 
\end{definition} 
 
This makes sense when $ \dim \M_A(q,p) = 2\, \langle c_1(T\Sigma),A\rangle + M(p) - M(q)-2 = 0$. 

\begin{definition} \label{D:n_B}
Let $B\in H_2(X;\Z)$ and define
$$
n_B := \# \M_B,
$$
where
 \begin{align*}
 \M_B :=\ev^{-1}(pt).
 \end{align*}
Here, $\ev^{-1}(pt)$ is as in Definition \ref{GW orient case 2}, and it is given its Gromov--Witten orientation. 
\footnote{We thank Viktor Fromm for observing that the correct constraint is a generic point in $\Sigma$, rather than a submanifold of possibly higher dimension.}
\end{definition}

This makes sense when $ \dim \M_B = 2\, \langle c_1(TX),B\rangle - 2 (B\bullet\Sigma) - 2 = 0$. 

\begin{definition} \label{D:n_B(x,p)}
 Let $p\in \Crit(f_\Sigma)$, $x\in \Crit(f_W)$ and $B \in H_2(X;\Z)$. Define
 $$
 n_B(x,p) := \# \M_B(x,p)
 $$
 where, using the notation from Definition \ref{GW orient case 3},
 \begin{align*}
 \M_B(x,p) := \ev^{-1}\left(W^u_{X}(x) \times W_{\Sigma}^u(p)\right).
 \end{align*}
\end{definition}

The formula makes sense when $ \dim \M_B(x,p) = 2\, \langle c_1(TX),B\rangle - 2(B\bullet\Sigma) + M(p) + M(x) -2n = 0$ (see the index formula
for relative Gromov--Witten invariants, for instance in \cite{IonelParkerRelative}*{Lemma 4.2}).

\begin{remark} \label{index bounds}
The coefficients $n_A(q,p)$ above can only be non-trivial if 
$$
\langle c_1(T\Sigma),A\rangle = (\tau_X - K) \, \omega(A) \leq n.
$$

by the dimension formula for $\M_A(q,p)$. Similarly, the coefficients $n_B$ can 
only be non-trivial if 
$$
\, \langle c_1(TX),B\rangle - B\bullet\Sigma = (1 - K/\tau_X) \, \langle c_1(TX),B\rangle = (\tau_X - K) \, \omega(B) = 1.
$$
The coefficients $n_B(x,p)$ can only be non-zero if
$$
\, \langle c_1(TX),B\rangle - B\bullet\Sigma = (1 - K/\tau_X) \, \langle c_1(TX),B\rangle = (\tau_X - K) \, \omega(B) \leq n.
$$
These inequalities are useful when computing the symplectic homology differential, as we will see in an example below. 
\end{remark}

\section{A formula for the symplectic homology differential}

Recall that the symplectic chain complex \eqref{eqn:chain complex} is 
$$
SC_*(W,H) = \left(\bigoplus_{k>0} \bigoplus_{p_k\in \text{Crit}(f_\Sigma)}\Z\langle \widecheck p_k, \widehat p_k \rangle\right) \oplus \left(\bigoplus_{x\in \text{Crit}(f_W)}\Z\langle x \rangle\right)
$$
as an abelian group, where $f_\Sigma: \Sigma \to \R$ and $f_W : W \to \R$ are Morse functions, with associated 
Morse--Smale gradient-like vector fields $Z_\Sigma$ and $Z_W$, respectively. 
We have assigned orientations to all critical manifolds in $\Sigma$, $Y$ and $W$, in Section  \ref{compute signs}.
The orientation conventions for the Morse differential were specified in \cite{DiogoLisiSplit}*{Section 7}. 

We are now ready to write the differential on $SC_*(W,H)$.
\begin{theorem} \label{T:differential}
 The differential on \eqref{eqn:chain complex}, denoted $\partial$, is as follows. Given $p\neq q \in \Crit(f_\Sigma)$, the coefficient of $\widehat q_{k_-}$ in $\partial \widecheck p_{k_+}$ is 
 \begin{align*}
 \langle \partial (\widecheck p_{k_+}) , \widehat q_{k_-} \rangle = (-1)^{M(q)} (k_+-k_-) &\sum_{A \in H_2(\Sigma;\Z)} \delta_{k_+-k_-,\langle c_1(N\Sigma),A \rangle} \,\, n_A(q,p) + \\
 & + \delta_{k_+,k_-} \, \langle  \partial_{f_Y}(\widecheck p_{k_+}),  \widehat q_{k_-} \rangle
 \end{align*}
 where $2\langle c_1(T\Sigma),A \rangle + M(p) - M(q) -2 = 0$, $\partial_{f_Y}$ is the Morse differential in $Y$ and the $\delta_{*,*}$ are Kronecker deltas. 
 
 If $p\in \Crit (f_\Sigma)$, then 
 $$
 \langle \partial (\widecheck p_{k_+}) , \widehat p_{k_-} \rangle =  (k_+-k_-) \, \sum_{B \in H_2(X;\Z)} \delta_{k_+-k_-,(B\bullet\Sigma)} \, n_B
 $$
 where $(1-K/\tau_X) \, \langle c_1(T X),B \rangle - 1 = 0$. If $W$ is Weinstein, this term is trivial if $n\geq 3$ and the minimal Chern number of spheres in $\Sigma$ is at least 2.

 Given $p\in \Crit(f_\Sigma)$ and $x\in \Crit(f_W)$, 
 $$
 \langle \partial (\widecheck p_{k_+}) , x \rangle = k_+ \sum_{B \in H_2(X;\Z)} \delta_{k_+,(B\bullet\Sigma)} \, n_B(x,p)
 $$
 where $2(1-K/\tau_X) \, \langle c_1(TX),B\rangle + M(p) + M(x) -2n = 0$. 
 
 Given $x\in \Crit(f_W)$,
 $$
 \partial(x) = \partial_{-f_W}(x),
 $$
 where $\partial_{-f_W}$ is the Morse differential in $W$ with respect to the function $-f_W$.
\end{theorem}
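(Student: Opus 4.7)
The strategy is to match each term in the claimed formula against the four types of configurations enumerated in Proposition \ref{P:split SH cases}, exploiting the results of Part \ref{Ansatz} to translate Floer counts into counts of pseudoholomorphic spheres, and the orientation propositions of Section \ref{compute signs} to match signs. Having identified $SC_*(W,H)$ with the split chain complex (via the compactness/gluing discussion in Section \ref{S:compactness}), the differential decomposes as a sum indexed by Cases 0, 1, 2, 3, and one computes each summand separately. The Case 0 contributions yield precisely the Morse differentials: the Morse differential $\partial_{f_Y}$ fiber-by-fiber on each $Y_k$ (giving the $\delta_{k_+,k_-}\langle \partial_{f_Y}(\widecheck p_{k_+}),\widehat q_{k_-}\rangle$ term) and $\partial_{-f_W}$ on $W$ (giving $\partial(x)=\partial_{-f_W}(x)$, as no other case can end on a critical point of $f_W$ by Remark \ref{0 cascades W}).

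For the Case 1 contribution to $\langle \partial(\widecheck p_{k_+}),\widehat q_{k_-}\rangle$ with $p\neq q$, the plan is to start from the fibre product \eqref{fib prod1}, apply Proposition \ref{P:summary ansatz} to replace the Floer cylinder $\tilde v$ by a pair $(\tilde u,\mathbf{e})$, and then use Lemma \ref{L:cylinders to spheres} to project $\tilde u$ to a $J_\Sigma$-holomorphic sphere $w\colon \CP^1\to\Sigma$ in class $A$. Lemma \ref{number of lifts} accounts for a factor of $k_+-k_-$ coming from the number of asymptotic markers on $\tilde u$ once $w$ and the meromorphic section are fixed (the section itself being unique up to the $\C^*$-action we are dividing by). The index condition $2\langle c_1(T\Sigma),A\rangle+M(p)-M(q)-2=0$ and the constraint $k_+-k_-=\langle c_1(N\Sigma),A\rangle=K\omega(A)$ are built into the setup, and Proposition \ref{case 1 and GW} supplies the sign $(-1)^{M(q)}$.

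The Case 2 contribution to $\langle \partial(\widecheck p_{k_+}),\widehat p_{k_-}\rangle$ proceeds analogously: starting from \eqref{fib prod2} and applying Proposition \ref{P:summary ansatz} to the upper level, the projection to $\Sigma$ is constant (equal to $p$) and the $J_W$-plane $U$ corresponds by Lemma \ref{planes = spheres} to a $J_X$-sphere with a single tangency constraint of order $k_+-k_-$ at $p$. Proposition \ref{case 2 and gw} identifies the sign with the Gromov--Witten sign of $n_B$, and the factor $k_+-k_-$ again comes from the lifting count of Lemma \ref{number of lifts} applied to the punctured cylinder with a negative puncture of multiplicity $k_+-k_-$ at $P$. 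For the Case 3 contribution to $\langle \partial(\widecheck p_{k_+}),x\rangle$, the upper level projects to a point (by Proposition \ref{P:split SH cases}), the lower $J_W$-plane corresponds to a $J_X$-sphere with tangency of order $k_+=B\bullet\Sigma$ at a point on $W^u_\Sigma(p)$, and with a second constraint on $W^u_X(x)$; Proposition \ref{case 3 and gw} matches orientations and the factor $k_+$ arises as the number of unparameterized Reeb orbits of multiplicity $k_+$ over a single point, i.e. the degree of the asymptotic evaluation.

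The main obstacle will be verifying the Weinstein vanishing claim for the middle term when $n\geq 3$ and the minimal Chern number of $\Sigma$ is at least $2$. This requires a dimension/index bound: the constraint $(1-K/\tau_X)\langle c_1(TX),B\rangle=1$ together with the relation $c_1(TX)|_\Sigma=c_1(T\Sigma)+c_1(N\Sigma)$ forces $\langle c_1(T\Sigma),A\rangle$ (for $A=\iota_*B$) to be incompatible with the Chern bounds, so that after applying Lemma \ref{L:WisLiouville} and the index inequalities of Remark \ref{index bounds} one can conclude there are no such rigid curves; for this step one invokes the Weinstein hypothesis to rule out $J_W$-holomorphic planes of low index via McDuff's theorem on the absence of low-dimensional pseudoholomorphic curves in subcritical and index-bounded settings. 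The rest of the proof is an assembly: well-definedness and $\partial^2=0$ are subsumed in the results of Section \ref{S:compactness} and \cite{DiogoLisiSplit}, and the four case computations combine additively to give the stated formula.
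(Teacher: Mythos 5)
Your proposal follows the paper's own proof almost step for step: the decomposition into Cases 0 through 3 of Proposition \ref{P:split SH cases}, the use of Propositions \ref{Ansatz1}/\ref{Ansatz2} (summarized as \ref{P:summary ansatz}) to pass from Floer cylinders in $\R\times Y$ to $J_Y$-holomorphic cylinders, the projection to $\Sigma$ via Lemma \ref{L:cylinders to spheres}, the multiplicative factor from Lemma \ref{number of lifts}, the identification of augmentation planes with spheres in $X$ via Lemma \ref{planes = spheres}, and the sign matching via Propositions \ref{case 1 and GW}, \ref{case 2 and gw}, \ref{case 3 and gw}. These are exactly the ingredients the paper assembles.

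The one place you depart from the paper is the Weinstein vanishing claim for the $n_B$ term. The paper discharges this by a single citation to \cite{DiogoLisiSplit}*{Lemma 6.6}, which carries out an index computation for augmentation planes in $W$. Your reconstruction is not a correct substitute: Lemma \ref{L:WisLiouville} only establishes that $X\setminus\Sigma$ is Liouville and does not see the Weinstein structure at all, and your appeal to ``McDuff's theorem on the absence of low-dimensional pseudoholomorphic curves in subcritical and index-bounded settings'' does not correspond to a theorem stated in this paper or a recognizable named result; the mechanism in the cited lemma is an index/grading inequality using the Weinstein hypothesis (handles of index $\leq n$), the monotonicity constants, and the Chern number bound on $\Sigma$, not a subcriticality argument. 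Either cite the relevant lemma as the paper does, or supply the index computation directly.

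A more minor point: in Case 3 your first description of the factor $k_+$ — ``the number of unparameterized Reeb orbits of multiplicity $k_+$ over a single point'' — does not give $k_+$ (there is only one such unparameterized orbit); the corrective clause about the degree of the asymptotic evaluation is what is actually meant, and matches the paper's explanation via precomposition of $\tilde v_0$ with domain rotations and the discussion around Equation \eqref{rot v0}.
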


\begin{proof}
The split Floer differential on \eqref{eqn:chain complex} was introduced at the end of Section \ref{cascades after splitting}. Proposition \ref{P:split SH cases} describe those split Floer cylinders with cascades that can contribute to the differential.  

The two types of contributions in $\langle \partial (\widecheck p_{k_+}) , \widehat q_{k_-} \rangle$ are (1) and (0), respectively, in Proposition \ref{P:split SH cases}. The former are given by Floer cylinders in $\R \times Y$ and correspond to Case 1 in Sections \ref{Floer_to_holo} and \ref{holo_to_Floer}. According to Propositions \ref{Ansatz1} and \ref{Ansatz2}, counts of Floer cylinders in $\R\times Y$ are equivalent to counts of pseudoholomorphic cylinders in $\R \times Y$. By Lemma \ref{L:cylinders to spheres} and the discussion that follows, the latter are equivalent to counts of rigid pseudoholomorphic spheres in $\Sigma$ (given by the $n_A(q,p)$), with additional factors $k_+-k_-$ in accordance with Lemma \ref{number of lifts}. 
The sign comes from Proposition \ref{case 1 and GW}. 

The contributions in $\langle \partial (\widecheck p_{k_+}) , \widehat p_{k_-} \rangle$ correspond to (2) in Proposition \ref{P:split SH cases}, namely split Floer cylinders with one augmentation. They correspond to Case 2 in Sections \ref{Floer_to_holo} and \ref{holo_to_Floer}. We can appeal again to Propositions \ref{Ansatz1} and \ref{Ansatz2}, to relate punctured Floer cylinders in $\R\times Y$ to counts of punctured pseudoholomorphic cylinders in $\R \times Y$. The latter are branched covers of trivial cylinders in this case, and project to constant spheres in $\Sigma$. 
The number of lifts of these constant spheres is $k_+-k_-$, again by Lemma \ref{number of lifts} (in the notation of that Lemma, $k^- + k^-_P = k^+$ in this case, since the spheres in $\Sigma$ are constant). 
The counts of augmentation planes in $W$, capping the punctures in these cylinders, are given by the numbers $n_B$, by Lemma \ref{planes = spheres}. 
The comment about $W$ Weinstein follows from Lemma \cite{DiogoLisiSplit}*{Lemma 6.6} %
(and, by Lemma \ref{augmentation rel GW} below, it can be interpreted as saying that a certain relative Gromov--Witten invariant of the triple $(X,\Sigma,\omega)$ vanishes).  

The contributions in $\langle \partial (\widecheck p_{k_+}) , x \rangle$ are the ones in Proposition \ref{P:split SH cases}, 
and consist of a split Floer cylinder in $\R\times Y$ with one end asymptotic to a Reeb orbit in $Y$, matching the asymptotics 
of a pseudohomorphic plane in $W$, which is in turn connected to a critical point of $f_W$ by a flow line of $Z_W$. They correspond 
to Case 3 in Sections \ref{Floer_to_holo} and \ref{holo_to_Floer}. We can appeal to Propositions \ref{Ansatz1} and \ref{Ansatz2}, 
to relate the Floer cylinders in $\R\times Y$ to trivial pseudoholomorphic cylinders in $\R \times Y$. 
The component in $W$ is a pseudoholomorphic cylinder $\tilde v_0 \colon \R\times S^1 \to W$ with a removable singularity at 
$-\infty$. At $+\infty$, the cylinder converges to a Reeb orbit of multiplicity $k$. Therefore, precomposing $\tilde v_0$ with 
constant rotations on the domain (as in the discussion around \eqref{rot v0}), we get $k_+$ many maps 
with the correct asymptotic marker condition at $+\infty$. By Lemma \ref{planes = spheres}, such $\tilde v_0$ correspond to 
pseudoholomorphic spheres in $X$ with appropriate tangency condition to $\Sigma$. The contribution of these configurations to 
the differential will then be $k_+ \, n_B(x,p)$, as wanted.

The only contributions to the differential of $x\in \Crit(f_W)$ are from the Morse differential, because $W$ is exact and hence 
has no non-constant pseudoholomorphic spheres. They correspond to the second part of Case 0 in Proposition \ref{P:split SH cases}. 
\end{proof}

\begin{remark}
For every $p\in \Crit(f_\Sigma)$ and $k_+>0$, we have $\partial_{f_Y}(\widehat{p}_{k_+}) = 0$. There are two flow lines of $-Z_Y$ from $\widehat{p}_{k_+}$, which cancel one another.  
\end{remark}

\begin{remark}
At least if there were no contributions $\langle \partial (\widecheck p_{k_+}) , x \rangle$, it would be immediate from Theorem \ref{T:differential} that $\partial^2 = 0$.
\end{remark}

\subsection{Relation to Gromov--Witten invariants}

All the coefficients contributing to the differential in Theorem \ref{T:differential} are either combinatorial or topological, except for the  $n_A(q,p)$, the $n_B$ and the $n_B(x,p)$. These are harder to determine, but can sometimes be related to Gromov--Witten invariants, absolute or relative. 

We denote by $\gw^\Sigma_{0,k,A}(C_1,\ldots,C_k)$ the genus 0 Gromov--Witten invariant of $(\Sigma,\omega_\Sigma)$, counting pseudohomolorphic spheres in $\Sigma$ in class $A\in H_2(\Sigma;\Z)$ with $k$ marked points constrained to go through representatives of the classes $C_i\in H_*(\Sigma;\Z)$. To see a definition of these invariants when $(\Sigma,\omega_\Sigma)$ is monotone, see \cite{McDuffSalamon}.  
We also denote denote by $\gw^{(X,\Sigma)}_{0,k,(s_1,\ldots,s_l),B}(C_1,\ldots,C_k;D_1,\ldots,D_l)$ the genus 0 relative Gromov--Witten invariant of $(X,\Sigma,\omega)$, counting pseudohomolorphic spheres in $X$ class $B\in H_2(X;\Z)$ with $k$ marked points constrained to go through representatives of the classes $C_i\in H_*(X;\Z)$, and $l$ additional marked points constrained to be tangent to $\Sigma$ with order of contact $s_j$ and go through representatives of the classes $D_j\in H_*(\Sigma;\Z)$. For details on how to define these invariants, see \cites{IonelParkerRelative,LiRuan,TehraniZingerSSF} (for an algebro-geometric approach, see \cite{LiRelative}). 
Note that our assumptions that $(X,\omega)$ is monotone with monotonicity constant $\tau_X$, that $\Sigma$ is Poincar\'e-dual to $K\omega$ and that $\tau_X - K > 0$ imply that the tuple $(X,\Sigma,\omega)$ is {\em strongly semi-positive}, as in \cite{TehraniZingerSSF}*{Definition 4.7}.

Recall that a Morse function is {\em perfect} if the corresponding Morse differential vanishes. Furthermore, it is {\em lacunary} if it does not have critical points of consecutive Morse index.
\begin{lemma}\label{lacunary GW}
 If the Morse function $f_\Sigma$ is perfect, then $W^s_\Sigma(q)$ and $W^u_\Sigma(p)$ represent classes in $H_*(\Sigma;\Z)$ and 
 $$
 n_A(q,p) = \gw^\Sigma_{0,2,A}\big([W^s_\Sigma(q)],[W^u_\Sigma(p)]\big).
 $$

 If the Morse function $f_W$ is also perfect, then $W^u_{X}(x)\coloneq\psi\big(W^u_W(x)\big)$ represents a class in the image of the map $H_*(X\setminus \Sigma;\Z) \to H_*(X;\Z)$ and 
 $$
 n_B(x,p) = \gw^{X,\Sigma}_{0,1,(B\bullet\Sigma),B}\big([W^u_{X}(x)];[W^u_\Sigma(p)]\big).
 $$

 If $f_\Sigma$ is lacunary, then 
 $$
 \langle \partial_{f_Y}(\widecheck p_{k_+}), \widehat q_{k_+}\rangle = \, \left\langle c_1(N\Sigma),\big(W^s_{\Sigma}(q) \cap W^u_{\Sigma}(p)\big)\right\rangle 
 $$
 where $N\Sigma$ is the normal bundle to $\Sigma$ in $X$. 
 
\end{lemma}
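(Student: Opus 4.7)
The three assertions all reinterpret the geometric counts $n_A(q,p)$, $n_B(x,p)$ and $\langle \partial_{f_Y}\widecheck p_{k_+},\widehat q_{k_+}\rangle$ in purely topological terms, and I would address the Gromov--Witten identifications first. Perfectness of $f_\Sigma$ forces the Morse chain boundary of every descending and ascending manifold to vanish, so $W^u_\Sigma(p)$ and $W^s_\Sigma(q)$ define classes in $H_*(\Sigma;\Z)$. Unpacking Definition \ref{D:n_A}, the moduli space $\M_A(q,p) = \M(q,p;A)/\C^*$ with its Gromov--Witten orientation from Definition \ref{GW orient case 1} is precisely the cut-out used to geometrically define the 2-point invariant $\gw^\Sigma_{0,2,A}([W^s_\Sigma(q)],[W^u_\Sigma(p)])$; see \cite{McDuffSalamon}*{Section 7.1}. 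The same strategy handles $n_B(x,p)$: perfectness of $f_W$ makes $W^u_W(x)$ a cycle in $W$, whose image $\psi(W^u_W(x))=W^u_X(x)$ is a cycle in $X\setminus\Sigma$ that pushes forward to a well-defined class in $H_*(X;\Z)$. Via Lemma \ref{planes = spheres} and Definition \ref{D:n_B(x,p)}, the moduli $\M_B(x,p)$ with its Gromov--Witten orientation is precisely the geometric count defining the relative invariant $\gw^{X,\Sigma}_{0,1,(B\bullet\Sigma),B}([W^u_X(x)];[W^u_\Sigma(p)])$ of the strongly semi-positive triple $(X,\Sigma,\omega)$ in the sense of \cite{TehraniZingerSSF}.

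For the third statement, first note that the grading \eqref{eqn:grading Y} forces any non-trivial $\langle \partial_{f_Y}\widecheck p_{k_+},\widehat q_{k_+}\rangle$ to satisfy $M(p)-M(q)=2$. A broken $-Z_\Sigma$ trajectory from $p$ to $q$ must pass through a critical point $r$ with $M(q)+1\leq M(r)\leq M(p)-1$, forcing $M(r)=M(p)-1$; the lacunary hypothesis rules this out, so the open $2$-manifold $W^s_\Sigma(q)\cap W^u_\Sigma(p)$ compactifies to a closed $2$-cycle $\Xi\subset\Sigma$ (the only additional strata being the points $p$ and $q$). By \eqref{orient point lift} the restrictions $\pi_\Sigma\colon W^u_Y(\widecheck p)\to W^u_\Sigma(p)$ and $\pi_\Sigma\colon W^s_Y(\widehat q)\to W^s_\Sigma(q)$ are orientation-preserving diffeomorphisms; restricted to $\Xi$ they produce two sections $\sigma_{\widecheck p},\sigma_{\widehat q}$ of the circle bundle $\pi_\Sigma^{-1}(\Xi)\to\Xi$. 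Because $Z_Y-\pi_\Sigma^*Z_\Sigma$ is vertical, any $-Z_Y$ trajectory from $\widecheck p$ to $\widehat q$ projects to a flow line contained in $\Xi$ and is uniquely determined by its projection together with its initial lift; the terminal condition that it arrive at $\widehat q$ translates exactly to the coincidence $\sigma_{\widecheck p}=\sigma_{\widehat q}$ over its projected image. The signed count of such intersections is the Euler number of $\pi_\Sigma^{-1}(\Xi)\to\Xi$, and since $Y\to\Sigma$ is the oriented unit circle bundle of $N\Sigma$ (via the orientations set up in \eqref{orient full lift}), this Euler number is $\langle c_1(N\Sigma),[\Xi]\rangle$.

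The main obstacle I expect is the sign bookkeeping in the third part: reconciling the orientation conventions of \eqref{orient full lift} and \eqref{orient point lift} with the Morse-theoretic sign rule for $\partial_{f_Y}$ so as to exactly reproduce the standard convention for counting intersections of two sections of an oriented $S^1$-bundle. A secondary delicate point is verifying that $\Xi$ functions as a genuine closed $2$-cycle when evaluating $c_1(N\Sigma)$, despite the potentially singular points at $p$ and $q$; this follows from the fact that the links of $p$ and $q$ in the compactification are spheres of positive dimension along which the restricted bundle trivializes canonically under the local trivialisations used to orient the stable and unstable manifolds. For Parts 1 and 2, by contrast, Propositions \ref{case 1 and GW} and \ref{case 3 and gw} have already reconciled the orientations with the Gromov--Witten sign, so the work there reduces to matching the definitions of the respective Gromov--Witten invariants.
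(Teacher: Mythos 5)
Your treatment of the first two statements is essentially the paper's own argument: perfectness of $f_\Sigma$ (resp.\ $f_W$) promotes the stable/unstable manifolds to cycles, so the chain-level counts $n_A(q,p)$ and $n_B(x,p)$ of Definitions \ref{D:n_A} and \ref{D:n_B(x,p)} are honest (relative) Gromov--Witten invariants, the orientations having already been reconciled in Propositions \ref{case 1 and GW} and \ref{case 3 and gw}. The only point the paper adds there that you omit is a technical one: to know that the \emph{same} almost complex structure computes the relative invariant, one uses that an admissible $J_X$ satisfies the vanishing normal Nijenhuis tensor condition \cite{TehraniZingerSSF}*{(4.6)} (strong semi-positivity alone addresses a different issue).

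For the third statement your route genuinely differs from the paper's. The paper observes that lacunarity closes up $W^s_\Sigma(q)\cap W^u_\Sigma(p)$ to a $2$-sphere $S$, restricts the circle bundle $Y\to\Sigma$ to $S$, and then simply quotes the Morse-homology Gysin sequence \cite{OanceaThesis}*{(3.26)} to identify $\langle\partial_{f_Y}\widecheck p,\widehat q\rangle$ with the Euler number of the restricted bundle. You instead reprove that Gysin statement by hand, using that (by \eqref{orient point lift}) $W^u_Y(\widecheck p)$ and $W^s_Y(\widehat q)$ are sections over $W^u_\Sigma(p)$ and $W^s_\Sigma(q)$ and identifying connecting flow lines with coincidences of these sections; this is more self-contained, and it is where the comparison with the obstruction-theoretic definition of the Euler class becomes visible, at the price of the sign bookkeeping you flag. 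One phrase does need repair: ``the signed count of such intersections is the Euler number'' cannot be read literally, since the two sections are $2$-dimensional in the $3$-dimensional total space over $\Xi$ (so transverse intersections are $1$-dimensional), and genuine global sections over a closed surface would force the Euler class to vanish; in fact the two sections are only defined over $\Xi\setminus\{q\}$ and $\Xi\setminus\{p\}$ respectively. The correct formulation, which your setup supports, is that the coincidence locus is flow-invariant, so one counts base flow lines over which the sections agree; equivalently, the count is the degree of the comparison map from the circle of flow lines (or a level circle separating $p$ from $q$) to $S^1$, and this relative winding number of the two partial sections is the Euler number of $\pi_\Sigma^{-1}(\Xi)\to\Xi$, i.e.\ $\langle c_1(N\Sigma),[\Xi]\rangle$. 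With that restatement your argument closes, and it recovers exactly what the paper imports from \cite{OanceaThesis}.
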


\begin{proof}
For the first statement, the fact that the stable and unstable submanifolds of $f_\Sigma$ define homology classes in $\Sigma$ implies that the $n_A(q,p)$ are the Gromov--Witten invariants in the statement, and not merely numbers defined at the (Morse) chain level. 
The orientation conventions in Definition \ref{D:n_A} are precisely the ones used in Gromov--Witten theory (recall Remark \ref{GW signs}). 
The second statement is proven along similar lines. Note that the $W^u_W(x)$ define homology classes in $W$, but that the $W^s_W(x)$ (which we do not consider) would define relative classes in $H_*(W,Y;\Z)$ instead. 
On a technical note, an admissible $J_X$ as in Definition \ref{admissible J} satisfies the vanishing normal Nijenhuis tensor condition \cite{TehraniZingerSSF}*{(4.6)}, and is hence suitable for defining relative Gromov--Witten invariants. 

We now discuss the last statement. For index reasons, the coefficients $\langle \partial_{f_Y}(\widecheck p_{k_+}),\widehat q_{k_+}\rangle$ 
can only be non-zero if $\ind_{f_\Sigma} (p) - \ind_{f_\Sigma}(q) = 2$. Since $f_\Sigma$ is assumed to be lacunary, 
$W^s_{\Sigma}(q) \cap W^u_{\Sigma}(p)$, together with $p$ and $q$, form a 2-sphere $S$. The restriction of $Y \to \Sigma$ to $S$ 
is an $S^1$-bundle $E$ over $S$, and $(f_Y,Z_Y)$ restricts to a Morse--Smale pair $(f_E,Z_E)$ on $E$. By the description of the 
Gysin sequence in Morse homology \cite{OanceaThesis}*{(3.26)}, $\langle \partial_{f_E}(\widecheck p), \widehat q\rangle$ is the 
Euler class of $E\to S$ integrated over $S$, which is precisely what we wanted to show. 
\end{proof}

The following result does not need any hypotheses on the auxiliary Morse functions. 
\begin{lemma} \label{augmentation rel GW}
The count of augmentation planes
 $$
 n_B = \gw^{X,\Sigma}_{0,0,(B\bullet\Sigma),B}(\emptyset;[pt])
 $$ 
is the genus 0 relative Gromov--Witten invariant of $(X,\Sigma,\omega)$, in class $B\in H_2(X,\Z)$, with no point constraints in $X$ and order of contact $B\bullet\Sigma$ to a fixed (generic) point in $\Sigma$.
\end{lemma}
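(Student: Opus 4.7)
The plan is to identify the moduli space $\M_B$ defining $n_B$ with the moduli space computing $\gw^{X,\Sigma}_{0,0,(k),B}(\emptyset;[pt])$, where $k = B\bullet\Sigma$, and to verify that both signed counts agree.

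First, I would apply Lemma \ref{planes = spheres} to identify each simple $J_W$-holomorphic plane in $\M_H^*(B;J_W)$ with a simple $J_X$-holomorphic sphere $\tilde u \colon \CP^1 \to X$ in class $B$ meeting $\Sigma$ at exactly one point with order of contact $k$. The $\C^*$-action on the cylindrical domain by which we quotient to form $\M_X^*(B;J_W)$ corresponds on the sphere side to the subgroup of $\Aut(\CP^1)$ fixing both the origin (the removable singularity) and the puncture $\infty$ (the tangency point with $\Sigma$); the evaluation map $\ev \colon \M_X^*(B;J_W) \to \Sigma$ is then evaluation at the tangency point. Consequently $\M_B = \ev^{-1}(pt)$ is naturally identified with simple $J_X$-holomorphic spheres with marked points $(0,\infty)$, tangency of order $k$ at $\infty$ to $\Sigma$, and $\tilde u(\infty) = pt$, modulo $\Aut(\CP^1, 0, \infty) = \C^*$.

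Second, I would match this with the moduli space defining $\gw^{X,\Sigma}_{0,0,(k),B}(\emptyset;[pt])$, which enumerates simple $J_X$-holomorphic spheres in class $B$ with a single distinguished tangency marked point $x_1 \in \CP^1$ of order $k$, constrained to $\tilde u(x_1) = pt$, modulo the full affine group $\Aut(\CP^1, x_1) = \C^* \ltimes \C$. The remaining $\C$-translation degree of freedom on the GW side kills exactly the ``origin'' marked point on our side, and the resulting quotients are naturally isomorphic. The index constraint $\langle c_1(TX), B\rangle - B\bullet\Sigma = 1$ from Remark \ref{index bounds} matches the GW dimension condition for this zero-dimensional enumerative problem.

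Third, I would argue that multiply covered curves do not contribute and that the relative invariant is well-defined. Our monotonicity assumptions with $\tau_X > K > 0$ imply that $(X, \Sigma, \omega)$ is strongly semi-positive in the sense of \cite{TehraniZingerSSF}*{Definition 4.7}, and the relative Gromov--Witten invariant then admits a geometric definition as a signed count of simple pseudoholomorphic spheres for generic admissible $J_X$, following \cites{IonelParkerRelative, LiRuan, TehraniZingerSSF}. The admissible almost complex structures of Definition \ref{admissible J} satisfy the normal Nijenhuis vanishing condition \cite{TehraniZingerSSF}*{(4.6)} by construction, since $J_X = \varphi_*J_E$ near $\Sigma$, and can be made generic by perturbation within $\V \subset X \setminus \Sigma$. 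A $d$-fold cover of a simple curve in class $B/d$ would meet $\Sigma$ in at most $k/d$ geometrically distinct points, incompatible with the single-point constraint $[pt]$ for $d > 1$.

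The main obstacle is matching the sign conventions: the orientation on $\M_B$ is the preimage orientation of Definition \ref{GW orient case 2}, while the sign conventions for the relative Gromov--Witten invariant are those of \cite{TehraniZingerSSF}. Both orientations descend from the complex orientation of the $J$-linear Cauchy--Riemann operator and from the fibre product of evaluation with a generic point in $\Sigma$, so they agree once the identification of moduli spaces is traced carefully through the definitions; this verification is essentially bookkeeping but requires patience to carry out in detail.
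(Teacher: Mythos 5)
Your proposal is correct and takes essentially the same route as the paper, whose entire proof is the sentence ``The proof is as in the second case of Lemma \ref{lacunary GW}'' --- i.e.\ it relies on Lemma \ref{planes = spheres}, the observation that the constraint is a genuine homology class, the orientation matching through Definition \ref{GW orient case 2}, and the fact that admissible $J_X$ satisfies the normal Nijenhuis condition \cite{TehraniZingerSSF}*{(4.6)} making the relative invariant well-defined in the strongly semi-positive setting. One small flaw in your write-up: the claim that a $d$-fold cover of a simple curve in class $B/d$ is ``incompatible with the single-point constraint'' because it meets $\Sigma$ in at most $k/d$ geometrically distinct points does not follow --- a cover of a simple curve with a single order-$(k/d)$ tangency at $pt$, ramified to order $d$ over that point, does satisfy the order-$k$ tangency constraint at $pt$. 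What actually rules out such contributions is the dimension count: the underlying simple curve would live in a moduli space of negative expected dimension (equivalently, Remark \ref{index bounds} gives $(\tau_X - K)\omega(B') = 1/d \notin \Z_{>0}$ for $B' = B/d$), so for generic $J$ there are no such simple curves to cover. The paper does not spell this out either, deferring instead to the cited references on relative invariants in the strongly semi-positive setting.
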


The proof is as in the second case of Lemma \ref{lacunary GW}.

\begin{remark}
When $f_\Sigma$ is not a perfect Morse function, the stable and unstable manifolds of its critical points may not represent classes in $H_*(\Sigma;\Z)$. In such cases, the $n_A(q,p)$ can still be thought of as {\em chain level Gromov--Witten numbers} of $\Sigma$. 
These are not invariants, and may thus be harder to compute. Similarly, if $f_W$ is not perfect, the $n_B(x,p)$ are only chain level relative Gromov--Witten numbers. 

We will see below an example in which we compute Gromov--Witten invariants of $(\CP^1\times \CP^1,\Delta,\omega_{FS}\oplus \omega_{FS})$ using the integral $J$, even though it is not `cylindrical near $\Sigma$', as we require of admissible $J_X$ in Definition \ref{admissible J}. That is not a problem, though, since the numbers we are computing are invariants. 

\end{remark}

\begin{remark} \label{R:computations}

Computing Gromov--Witten invariants can be a very hard problem, both in
the absolute and relative case, but it has been done for a large class
of important examples, especially using tools from algebraic geometry.
We implicitly assume then that the definitions of Gromov--Witten
invariants in algebraic geometry and in symplectic geometry agree in
settings where they are both defined.  For more details about the relation
between symplectic and algebraic absolute Gromov--Witten invariants,
see \cites{SiebertGW,LiTian}.

For computations of absolute invariants, see for instance
\cites{Beauville,ZingerCompleteIntersections}.  Gathmann wrote
a computer program called \texttt{GROWI} to compute absolute and
relative Gromov--Witten invariants of hypersurfaces in projective spaces
\cite{growi}.

Relative invariants are often harder to compute than absolute
ones. Nevertheless, under certain assumptions, a relative
invariant where all intersections with $\Sigma$ are transverse
(i.e.~have order of tangency 1) can be related to an absolute
invariant where  one forgets $\Sigma$ (in the spirit of
the divisor axiom for absolute Gromov--Witten invariants)
\cites{McDuffAbsoluteRelative,HuRuan,TehraniZingerAbsoluteRelative}.

It is also worth pointing out that the relative invariants
of a tuple $(X,\Sigma,\omega)$ can in principle be obtained
from the absolute invariants of $X$ and of $\Sigma$ (including
more complicated insertions like $\psi$-classes, in general)
\cite{MaulikPandharipande}. Nevertheless, it can be computationally
quite challenging to use this fact to compute relative invariants of
$(X,\Sigma,\omega)$.

Depending on what part of symplectic homology one wants to compute, it may also
be possible to ignore the contributions coming from relative Gromov--Witten
invariants. 
By Theorem \ref{T:differential}, if 
$X$ is at least 6 dimensional, the minimal Chern number of $\Sigma$ is at least 2 
and $W$ is Weinstein, then the symplectic homology differential does not have
any $n_A$ contributions (i.e.~corresponding to augmentation planes).
In this case then, the 
{\em positive} (or {\em large-action} or {\em high energy}) 
symplectic homology differential of $W$ \cite{BOExactSequence} then ignores
terms of the form $n_B(x,p)$ and thus 
only has
contributions from the (absolute) Gromov--Witten invariants.
\end{remark}

\begin{remark} \label{Cone}
The following was pointed out to us by D. Pomerleano. We also thank R. Leclercq and M. Sandon for helpful conversations about their related work in progress.
 Theorem \ref{T:differential} may seem unsatisfactory since it relates the differential $\partial$, which is a chain level object depending on the model we use to compute symplectic homology, to counts of pseudoholomorphic spheres which, at least under the assumption of Lemma \ref{lacunary GW}, are given by Gromov--Witten invariants, which are homological. 
 The chain complex $SC_*$ in \eqref{eqn:chain complex} has a quotient complex, referred to in \cite{BOExactSequence} as $SC_*^+$, obtained by modding out generators coming from critical points of $f_W$. 
 We can write $SC_*^+$ as a complex of the form 
 \begin{equation}
 C_*(\Sigma)[t]t \oplus \big(C_*(\Sigma)[t]t \big) [1]
\label{SC+}
 \end{equation}
 where $C_*(\Sigma)$ denotes the Morse complex of $f_\Sigma$ and $t^k$ encodes generators associated to Reeb orbits of multiplicity $k$. The variable $t$ has degree $2\frac{\tau_X - K}{K}$, by \eqref{eqn:grading Y} (which should also force us to take a global shift of $C_*(\Sigma)$ by $1-n$; we don't do this to avoid making the notation even heavier). The first summand contains the orbits decorated with $\widecheck{} \,$ and the second summand, with the degree shift up by 1, contains the orbits decorated with $\widehat{} \,$. 
 Theorem \ref{T:differential} implies that the differential in \eqref{SC+} is given by a matrix 
 $$
 \partial^+ = \begin{pmatrix}
  d_\Sigma & \Delta \\
  0 & d_\Sigma
 \end{pmatrix} 
 $$
 where $d_\Sigma$ is the Morse differential in $\Sigma$.
 The off-diagonal term $\Delta\colon C_*(\Sigma)[t]t \to C_*(\Sigma)[t]t$ is obtained by counting flow lines in $\Sigma$ connecting critical points of index difference 2, pseudohomolorphic spheres in $\Sigma$ (the terms involving $n_A(p,q)$), and 
 pseudoholomorphic spheres in $X$ (the terms involving $n_B$). 
This means that $SC_*^+$ is a cone complex on $\Delta$. There is an induced long exact sequence
$$
\ldots \to H_*(\Sigma)[t]t[1] \to SH_{*}^+(W,H) \to H_{*}(\Sigma)[t]t \stackrel{[\Delta]}{\to} H_{*}(\Sigma)[t]t[1] \to \ldots
$$
with connecting homomorphism $[\Delta]$ (we are not being careful keeping track of the degrees $*$ in the sequence). 
This can be thought of as the long exact sequence in \cite{BOExactSequence}, in the special case when the Liouville manifold is the completion of $X\setminus \Sigma$.  
It is also related to the spectral sequence in \cite{SeidelBiasedView}*{(3.2)}.

We can think of this exact sequence as a more invariant consequence of Theorem \ref{T:differential}. Note that if we were interested in computing $SH_*^+(W)$ with field coefficients, then the exact sequence suggests that it should be enough to compute Gromov--Witten invariants, instead of chain level counts of pseudoholomorphic spheres, even if one does not start with a perfect Morse function in $\Sigma$. 
Observe that $SC_*(W)$ is also a cone complex on a chain map $SC_*^+(W) \to C_*(W)$, counting pseudoholomorphic spheres in $X$ with tangency conditions in $\Sigma$ (the terms involving $n_B(p,x)$). The same argument as above should enable us to compute $SH_*(W)$ with the additional information of relative Gromov--Witten invariants of the triple $(X,\Sigma,\omega)$, instead of the non-invariant counts $n_B(p,x)$, at least over field coefficients.

\end{remark}
 
\part{Example: \texorpdfstring{$T^*S^2$}{T*S2}}

We now illustrate the results in this paper with the computation of the symplectic homology of the completion $W$ of $X\setminus \Sigma$, where $(X,\Sigma, \omega) = (\CP^1 \times \CP^1,\Delta,\omega_{FS}\oplus \omega_{FS})$, where $\Delta$ is the diagonal and $\omega$ restricts to the Fubini-Study form of area 1 in each factor. The manifold $W$ is symplectomorphic to $T^*S^2$ (see Exercise 6.20 in \cite{McDuffSalamonIntro}), so its symplectic homology is isomorphic to the homology of the based loop space of $S^2$. We will see that our computation recovers this result.  
$X$ and $\Sigma$ are both monotone, with $\tau_{X} = 2$ and $\tau_\Sigma = 1$. Also, $[\Sigma]$ is Poincar\'e-dual to $[\omega]$, so $K=1$. The $S^1$-bundle $Y\to \Sigma$ is $\R P^3$.

\section{The coefficients in the differential}
The manifold $\Sigma = \CP^1$ admits a lacunary Morse function $f_\Sigma: \Sigma
\to \R$, with one critical point of Morse index 0, denoted by $m$ (for {\em minimum}),
and one critical point of index 2, denoted by $M$ (for {\em maximum}).
Lemma \ref{lacunary GW} implies that all the numbers $n_A(q,p)$ (giving the Case 1 contributions to the differential) are absolute Gromov--Witten invariants of $\Sigma = \CP^1$. Definition \ref{D:n_A} and Remark \ref{index bounds} indicate that the only potentially non-trivial invariant is 
$$
n_L(M,m) = \gw_{L,2}^{\CP^1}\left([pt],[pt] \right) = 1
$$
where $L\in H_2(\CP^1;\Z)$ is the class of a line. This invariant counts the
number of lines in $\CP^1$ through two generic points, which is 1 (the integral
complex structure on $\CP^1$ is regular). 

The manifold $W = T^*S^2$ also admits a lacunary Morse function $f_W: W \to \R$
growing at infinity, with one critical point of index 0, denoted by $e$, and one
critical point of index 2, denoted by $c$. 
The manifold $W^u(c)$ represents the zero section in $T^*S^2$. We orient it as
$L_1 - L_2$, where $L_1,L_2 \in H_2(\CP^1 \times \CP^1;\Z)$ are homology classes
representing the factors $\CP^1 \times \{ pt \}$ and $\{ pt \} \times \CP^1$,
respectively. We will refer to a sphere representing the classes $L_1$ and $L_2$
as \textit{horizontal} and \textit{vertical}, respectively.

Definition \ref{D:n_A} and Remark \ref{index bounds} again indicate that the only potentially non-trivial coefficients in Cases 2 and 3 are 
$$
n_{L_i}, \quad n_{L_i}(e,M), \quad n_{L_i}(c,m), \quad n_{2L_i}(e,m) \quad \text{and} \quad n_{L_1+L_2}(e,m).
$$

Lemmas \ref{lacunary GW} and \ref{augmentation rel GW} imply that these numbers are relative Gromov--Witten invariants of $(\CP^1 \times \CP^1, \Delta,\omega)$. We will now compute these invariants.

\begin{proposition} 
 \begin{enumerate}
  \item $n_{L_i} = \gw_{L_i,0,(1)}^{\CP^1\times \CP^1,\Delta}\left(\emptyset; pt \right) = 1$,
  
\noindent 
for $i=1,2$ (the point constraint is in $\Delta$, not in $\CP^1 \times \CP^1$);
  
  \item $n_{L_i}(e,M) = \gw_{L_i,1,(1)}^{\CP^1 \times \CP^1, \Delta}\left([pt]; [\Delta] \right) = 1$;
  
  \item $n_{L_i}(c,m) = \gw_{L_i,1,(1)}^{\CP^1\times \CP^1, \Delta}([S^2];[pt]) = (-1)^i$
 
 \noindent 
  where $S^2 \subset T^* S^2$ is the zero section, oriented so as to represent the homology class $L_1 - L_2$; 
  
  \item $n_{2L_i}(e,m) = \gw_{2L_i,1,(2)}^{\CP^1 \times \CP^1, \Delta}\left([pt]; [pt] \right) = 0$;
  
  \item $n_{L_1+L_2}(e,m) = \gw_{L_1+L_2,1,(2)}^{\CP^1 \times \CP^1, \Delta}\left([pt]; [pt] \right) = 1$.
   \end{enumerate}
   \label{P:rel GW computations}
\end{proposition}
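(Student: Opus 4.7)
The plan is to compute each of these relative Gromov--Witten invariants by working directly with the integral complex structure $J_0$ on $\CP^1 \times \CP^1$. Although $J_0$ is not admissible in the sense of Definition \ref{admissible J} (it is not cylindrical in a neighbourhood of $\Delta$), the quantities being computed are honest Gromov--Witten invariants, as noted in the remark preceding the proposition, so the value obtained is independent of this choice. The key input is the explicit parametrization of $J_0$-holomorphic spheres in each class: those in class $L_i$ are fibres of the projection to the other factor, those in class $L_1 + L_2$ are the smooth $(1,1)$-curves cut out by bihomogeneous polynomials of bidegree $(1,1)$, and those in class $2L_i$ are double covers of such fibres.

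For (1) and (2) I would observe that a horizontal fibre $\CP^1 \times \{q\}$ meets $\Delta$ in the single point $(q,q)$, so the tangency condition of order $1$ at a generic $(c,c) \in \Delta$ forces $q=c$, and the point constraint in (2) at a generic $(a,b)\in X$ similarly forces $q=b$. This gives a unique curve in each case, contributing $+1$ by complex orientation. For (3) I would use linearity of the Gromov--Witten invariant in the cycle constraint, writing $[W^u_X(c)] = L_1-L_2$ in $H_2(X;\Z)$ so that
\[
  n_{L_i}(c,m) = \gw^{X,\Delta}_{0,1,(1),L_i}([L_1];[pt]) - \gw^{X,\Delta}_{0,1,(1),L_i}([L_2];[pt]),
\]
and evaluate each term by intersecting the unique fibre through $(c,c)$ with a generic representative of $L_1$ or $L_2$; the intersection pairing $L_i \cdot L_j = 1 - \delta_{ij}$ then yields the sign $(-1)^i$.

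For (4) I would argue that the image of any $J_0$-holomorphic sphere in class $2L_1$ lies in a single horizontal fibre $\CP^1 \times \{q\}$, so the point constraint at generic $(a,b)$ fixes $q=b$, while the tangency to $\Delta$ at generic $(c,c)$ forces the image to pass through $(c,c)$ and hence $q=c$. Since $b$ and $c$ are chosen independently and generically, $b \neq c$ and the moduli space is empty. For (5) I would parametrize $(1,1)$-divisors by $\{\alpha xy + \beta x + \gamma y + \delta = 0\}$ in affine coordinates (a $3$-dimensional linear system), impose the three linear conditions of passing through a generic $(a,b)\in X$ and being tangent to $\Delta$ at a generic $(c,c)$, and solve to obtain a unique solution up to scaling; I would then check that the resulting $(1,1)$-curve is irreducible for generic constraints, so that no reducible stable maps (which would factor as $L_1 + L_2$) contribute.

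The main technical point to verify is the regularity of $J_0$ in each of these moduli problems, so that the algebraic count agrees with the symplectic Gromov--Witten invariant; for the classes $L_i$ and $L_1+L_2$ this follows from standard automatic transversality on the Fano surface $\CP^1\times\CP^1$. For the class $2L_1$, regularity for multiply-covered spheres requires more care, but the non-existence argument in (4) is robust and needs only the geometric fact that images lie in a single fibre. The other delicate point is tracking orientations in (3), which I would treat using the conventions of Definition \ref{GW orient case 3} together with the chosen orientation of the zero section as $L_1-L_2$ fixed just before the statement of the proposition.
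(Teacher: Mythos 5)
Your proposal is correct and follows essentially the same route as the paper: both compute with the integrable product complex structure (regularity is the paper's Lemma \ref{L:integrable J}), handle (1)--(3) via the uniqueness of the fibre through a given point together with linearity in the constraint and the pairing $L_i\cdot L_j = 1-\delta_{i,j}$, and get (4) because every curve in class $2L_i$ covers a single fibre and so cannot meet both a generic point of $X$ and a generic point of $\Delta$. The only cosmetic difference is in (5), where you count $(1,1)$-divisors in the three-dimensional linear system cut out by three linear conditions, whereas the paper normalizes the two marked points and explicitly parametrizes the maps as $U(z)=\big(az, z/(z+d)\big)$, with tangency forcing $a=1/d$ and uniqueness modulo the $\C^*$ of domain automorphisms; these are equivalent elementary computations.
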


We need an auxiliary result. 

\begin{lemma} \label{L:integrable J}
 The product complex structure on $X = \CP^1 \times \CP^1$ is regular for all
 immersed holomorphic spheres $U \colon \CP^1 \to \CP^1 \times \CP^1$.
In particular, this is true when $U_*[\CP^1] \in \{L_1,L_2,L_1 + L_2\}$. 
\end{lemma}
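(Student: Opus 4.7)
The plan is to use the standard fact that for an integrable complex structure, the linearization of the Cauchy--Riemann operator at a holomorphic map $U\colon \CP^1 \to X$ is a complex-linear $\bar\partial$-operator on the pullback bundle $U^*TX$, and therefore endows $U^*TX$ with a holomorphic structure. Regularity of $U$ is equivalent to the surjectivity of this operator, which by standard Dolbeault theory on $\CP^1$ (or the Riemann--Roch formula combined with Grothendieck's splitting theorem) is equivalent to the vanishing of $H^1(\CP^1, U^*TX)$. So the problem reduces to computing the splitting type of $U^*TX$.

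For $X = \CP^1 \times \CP^1$, write $\pi_i \colon X \to \CP^1$ for the two projections and $U_i = \pi_i \circ U$. Since the product complex structure makes $TX$ split holomorphically as $\pi_1^* T\CP^1 \oplus \pi_2^* T\CP^1$, we have
\[
    U^*TX \;\cong\; U_1^* T\CP^1 \oplus U_2^* T\CP^1.
\]
Each $U_i\colon \CP^1 \to \CP^1$ is either constant, in which case $U_i^*T\CP^1 \cong \mathcal{O}$, or it is a non-constant holomorphic map of some degree $d_i \ge 1$, in which case $U_i^*T\CP^1 \cong U_i^* \mathcal{O}(2) \cong \mathcal{O}(2 d_i)$. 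In either case the summand is a line bundle of degree $\ge 0$ on $\CP^1$.

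I would then conclude using the fact that $H^1(\CP^1, \mathcal{O}(k)) = 0$ whenever $k \ge -1$. Since both summands of $U^*TX$ have degree $\ge 0 > -1$, we obtain $H^1(\CP^1, U^*TX) = 0$, as required. For the three classes appearing in Proposition \ref{P:rel GW computations}, the splittings are: $U^*TX \cong \mathcal{O}(2) \oplus \mathcal{O}$ for $U_*[\CP^1] = L_1$, $\mathcal{O} \oplus \mathcal{O}(2)$ for $L_2$, and $\mathcal{O}(2) \oplus \mathcal{O}(2)$ for $L_1 + L_2$.

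There is no real obstacle here: the only delicate point is to make sure that the assumption of integrability is what allows us to identify the linearized operator with a holomorphic $\bar\partial$-operator (so that the holomorphic splitting of $TX$ carries over to a holomorphic splitting of $U^*TX$). The immersion hypothesis is not actually needed for surjectivity of $D_U$; it is only relevant for the statement of regularity as transversality of the evaluation map (i.e.~for the moduli space being cut out transversely near $U$), and in the applications in Proposition \ref{P:rel GW computations} the immersed representatives are the generic ones.
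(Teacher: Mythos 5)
Your proof is correct, but it follows a different route from the paper. The paper's proof is a one-liner via automatic transversality in dimension four: since $\omega$ is integral, every holomorphic sphere $U$ has $\int_{\CP^1}U^*\omega\ge 1$, hence $\langle c_1(TX),U_*[\CP^1]\rangle=2\int_{\CP^1}U^*\omega\ge 2$, and then \cite{McDuffSalamon}*{Lemma 3.3.3} (regularity of immersed holomorphic spheres in almost complex $4$-manifolds with $c_1\ge 1$) gives surjectivity of $D_U$; this is where the immersion hypothesis in the statement comes from, and the criterion does not use integrability of $J$ at all. You instead exploit integrability directly: $D_U$ is the Dolbeault operator on $U^*TX$, the product structure gives the holomorphic splitting $U^*TX\cong U_1^*T\CP^1\oplus U_2^*T\CP^1$ into line bundles $\mathcal{O}(2d_i)$ with $d_i\ge 0$, and $H^1(\CP^1,\mathcal{O}(k))=0$ for $k\ge -1$ finishes it. Your argument is sound (this is essentially the standard regularity criterion for integrable $J$ with splitting type $\ge -1$, also in McDuff--Salamon Section 3.3), and as you note it buys a slightly stronger conclusion, since the immersion hypothesis is not needed; what it gives up is the robustness of the paper's route, which applies to any almost complex structure with the Chern number bound on immersed spheres rather than relying on the explicit holomorphic splitting of the product target. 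One small quibble: your closing remark that the immersion hypothesis is "relevant for regularity as transversality of the evaluation map" is not quite the right explanation — in the paper it is simply a hypothesis of the cited automatic-transversality lemma — but this does not affect your proof.
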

\begin{proof}
 Observe that the symplectic form is integral and so for any holomorphic sphere
 $U$, $\int_{\CP^1} U^*\omega \geq 1$. 
 By \cite{McDuffSalamon}*{Lemma 3.3.3}, we just need to make sure that $\langle c_1(TX),U_*[\CP^1]\rangle \geq 1$, but $\langle c_1(TX),U_*[\CP^1]\rangle = 2 \int_{\CP^1} U^*\omega \geq 2$.
\end{proof}

For the proof of cases (1) through (3) of Proposition \ref{P:rel GW
computations}, one could appeal to the relation between relative and
absolute Gromov--Witten invariants when all the tangencies to $\Sigma$
are of order 1, as mentioned in Remark \ref{R:computations}.  This is
not necessary, though, since Lemma \ref{L:integrable J} allows us to
do the computations explicitly using the product complex structure on
$\CP^1 \times \CP^1$. This satisfies the vanishing normal Nijenhuis
tensor condition \cite{TehraniZingerSSF}*{(4.6)}, and can thus be used
for computing relative Gromov--Witten invariants.

\begin{proof}[Proof of Proposition \ref{P:rel GW computations}]

  \begin{enumerate}
  \item Given $i=1,2$,
$$
\gw_{L_i,0,(1)}^{\CP^1\times \CP^1,\Delta}\left(\emptyset; [pt] \right) = 1
$$
is the fact that if one fixes any point $p\in\Delta$, then there is exactly one holomorphic sphere in class $L_i$ that goes through $p$.
  \item Given $i=1,2$, 
$$
\gw_{L_i,1,(1)}^{\CP^1 \times \CP^1, \Delta}\left([pt]; [\Delta] \right) = 1
$$
expresses the fact that, if we fix any point $p \in \CP^1 \times \CP^1 \setminus \Delta$, there is a unique holomorphic sphere in class $L_i$ that goes through $p$ and intersects $\Delta$. 
  \item Given $i,j=1,2$,
$$
\gw_{L_i,1,(1)}^{\CP^1\times \CP^1, \Delta}(L_j;[pt]) = 1-\delta_{i,j}.
$$
This means, for instance, that there is a unique vertical sphere in $\CP^1 \times \CP^1$ intersecting a horizontal sphere and a generic point $p \in\CP^1\times \CP^1$, but there is no horizontal sphere intersecting another horizontal sphere and a generic $p$. 
This is because two different horizontal spheres do not intersect. Since $S^2 = L_1 - L_2$,
$$
\gw_{L_i,1,(1)}^{\CP^1\times \CP^1, \Delta}([S^2];[pt]) = \gw_{L_i,1,(1)}^{\CP^1\times \CP^1, \Delta}(L_1;[pt]) - \gw_{L_i,1,(1)}^{\CP^1\times \CP^1, \Delta}(L_2;[pt]) = (-1)^i
$$
 \item Given $i = 1,2$,  
 $$
 \gw_{2L_i,1,(2)}^{\CP^1 \times \CP^1,\Delta}([pt];[pt]) = 0
 $$
encodes the fact that holomorphic curves in classes $2 L_i \in H_2(\CP^1 \times \CP^1; \Z)$ are covers of either vertical or horizontal spheres, and therefore cannot go through one generic point in $\CP^1 \times \CP^1$ and another generic point in $\Delta$.

 \item To prove that 
 $$
 \gw_{L_1+L_2,1,(2)}^{\CP^1 \times \CP^1, \Delta}\left(pt; pt \right) = 1
 $$
we show that, up to domain automorphism, there is a unique holomorphic map $U: \CP^1 \to \CP^1 \times \CP^1$, such that $U_*[\CP^1] = L_1+L_2 \in H_2(\CP^1 \times \CP^1;\Z)$, $U(\infty) = (\infty, 1)$, $U(0) = (0,0)$, and such that $U$ intersects the diagonal $\Delta \subset \CP^1 \times \CP^1$ in a non-transverse way. Since 
$$
U(\infty) = U \big([1;0]\big) = (\infty, 1) = \big([1;0],[1;1]\big),
$$
we can write in homogeneous coordinates 
$$
U\big([z;1]\big) = \big([a z + b; 1], [z + c; z + d]\big)
$$ 
which we will abbreviate as 
$$
U(z) = \left(a z + b, \frac{z + c}{z + d}\right).
$$
Since $U(0) = (0,0)$, we get $b=c=0$, and so $U(z) = \big(az, z/(z+d)\big)$. Now, for the tangency condition, note that
$$
U'(z) = \left(a, \frac{d}{(z + d)^2}\right)
$$
and so $U'(0) = \big(a, 1/d\big)$. So $U$ is tangent to the diagonal at $(0,0)$ precisely when $a = 1/d$. Therefore, the space of maps $U$ can be identified with the space of $a\in \C ^*$. Taking a quotient by the group $\C^*$ of automorphisms of the domain $(\CP^1, \{0,\infty\})$, we get the uniqueness of $U$.
\end{enumerate}
\end{proof}

\section{The group \texorpdfstring{$SH_*(T^*S^2)$}{SH*(T*S2)}}

We can now compute the symplectic homology chain complex for $T^*S^2$. Using the functions $f_\Sigma$ and $f_W$ introduced above, we have
$$
SC_*(T^*S^2) = \Z \left\langle e,c\right\rangle \oplus \bigoplus_{k>0} \Z\left\langle \wc m_k, \wh m_k, \wc M_k, \wh M_k \right\rangle
$$
We can use Theorem \ref{T:differential}, Lemma \ref{lacunary GW} and Proposition \ref{P:rel GW computations} to compute the differential. Given $k\geq 2$,
\begin{align*}
\partial \wc m_{k+1} &=  2 \, n_L(M,m) \, \wh M_{k-1} \,+ \,\big( n_{L_1} \, + \, n_{L_2} \big)\, \wh m_k = \\
&= 2 \times 1 \, \wh M_{k-1} \, + \, \big(1 + 1 \big)\, \wh m_k = 2 \, \wh M_{k-1} + 2 \, \wh m_k 
\end{align*}
and
\begin{align*}
\partial \wc M_{k} &= \big( n_{L_1} \, + \, n_{L_2} \big)\, \wh M_{k-1} \,+\langle c_1(\R P^3 \to \CP^1),\CP^1\rangle \, \wh m_{k} = 2 \, \wh M_{k-1} + 2 \, \wh m_k  
\end{align*}

The remaining differentials are
\begin{align*}
\partial \wc m_{2} &= \big(n_{L_1} \, + \, n_{L_2}\big)\, \wh m_1 + 2 \, n_{L_1 + L_2}(e,m) \, e = 2 \, \wh m_1 + 2 \, e ,
\end{align*}
\begin{align*}
\partial \wc M_{1} &= \big(n_{L_1}(e,M) + n_{L_2}(e,M) \big) \, e \,+\, \langle c_1(\R P^3 \to \CP^1),\CP^1\rangle \, \wh m_1 = \\
&= 2 \, e + 2 \, \wh m_1 
\end{align*}
and
\begin{align*}
\partial \wc m_{1} &= \,\big(n_{L_1}(c,m) + n_{L_2}(c,m) \big) \, c = (-1+1)\, c = 0. 
\end{align*}

Summing up, the non-trivial contributions to the differential are
\[\left\{
\begin{array}{l}
\partial \wc{m}_{k+1} = 2 \, \wh{m}_{k} + 2 \, \wh{M}_{k-1} \\
\partial \wc{M}_{k} = 2 \wh{m}_{k} + 2 \, \wh{M}_{k-1} \\
\partial \wc{m}_2 = 2 \, \wh{m}_1 + 2 \, e \\
\partial \wc{M}_1 = 2 \, \wh{m}_1 + 2 \, e
\end{array}
\right.
\]
for $k\geq 2$. We can conclude the following.
\begin{proposition}
\[
SH_*(T^*S^2 ; \Z) = \Z \left \langle c, \wc{m}_{1}, e, \wc{M}_{k} - \wc{m}_{k+1}, \wh{M}_{k} \right \rangle \oplus \Z / 2 \left \langle e + \wh{m}_{1}, \wh{M}_{k} + \wh{m}_{k+1} \right \rangle
\]
where we take all $k \geq 1$. 
\end{proposition}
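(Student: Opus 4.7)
The plan is a direct linear-algebra computation: identify the kernel and image of $\partial$ on $SC_*(T^*S^2;\Z)$ from the formulas listed just above the proposition, then choose a basis for $\ker \partial$ adapted to the image so that the quotient splits as a sum of $\Z$'s and $\Z/2$'s.

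\textbf{Step 1: Kernel of $\partial$.} From the list of non-trivial differentials, the only generators whose image is non-zero are $\wc{m}_{k}$ for $k\geq 2$ and $\wc{M}_k$ for $k\geq 1$. So $c$, $e$, $\wc{m}_1$, and all $\wh m_k, \wh M_k$ (for $k \geq 1$) lie in $\ker\partial$. Moreover, since $\partial \wc m_{k+1}=\partial \wc M_k$ for all $k\geq 1$ (comparing the formulas for $k=1$ and for $k\geq 2$), each difference $\wc M_k-\wc m_{k+1}$ also lies in $\ker\partial$. A short check shows these generators span $\ker\partial$: on the subcomplex spanned by the $\wc{}$-generators, the map $\partial$ has rank equal to the number of indices $\{(\wc m_{k+1},\wc M_k)\}_{k\geq 1}$, so its kernel in that subcomplex is exactly the span of the differences $\wc M_k - \wc m_{k+1}$.

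\textbf{Step 2: Image of $\partial$.} Reading off the formulas, $\operatorname{Im}\partial$ is generated by the elements
\[
2(e + \wh m_1),\qquad 2(\wh m_k + \wh M_{k-1}) \text{ for } k\geq 2,
\]
i.e.\ by $2\alpha_0$ and $2\alpha_k$ for $k\geq 1$, where we set
\[
\alpha_0:=e+\wh m_1,\qquad \alpha_k:=\wh M_k+\wh m_{k+1}\ (k\geq 1).
\]
These elements are $\Z$-linearly independent in $\ker\partial$ (they involve disjoint pairs of $\wh$-generators together with $e$), so $\operatorname{Im}\partial$ is a free subgroup with the displayed set as a basis.

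\textbf{Step 3: Change of basis and identification of the quotient.} Replace $\wh m_1$ by $\alpha_0=e+\wh m_1$, and for each $k\geq 1$ replace $\wh m_{k+1}$ by $\alpha_k=\wh M_k+\wh m_{k+1}$. Then
\[
\ker\partial \;\cong\; \Z\bigl\langle c,\ \wc m_1,\ e,\ \wh M_k\ (k\geq 1),\ \wc M_k-\wc m_{k+1}\ (k\geq 1)\bigr\rangle \;\oplus\; \Z\langle \alpha_0\rangle \;\oplus\;\bigoplus_{k\geq 1}\Z\langle \alpha_k\rangle,
\]
and $\operatorname{Im}\partial$ is precisely $2\Z\langle\alpha_0\rangle\oplus\bigoplus_{k\geq 1}2\Z\langle\alpha_k\rangle$ inside the second summand. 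Taking the quotient gives the stated decomposition of $SH_*(T^*S^2;\Z)$. The only non-routine step is Step 2: one must verify that $\{2\alpha_k\}_{k\geq 0}$ really is a basis of $\operatorname{Im}\partial$ rather than just a generating set, which follows from the fact that the $\wc{}$-generators $\wc m_{k+1}$ and $\wc M_k$ hit these elements with multiplicity exactly $2$ and that the $\alpha_k$ are $\Z$-linearly independent in $\ker\partial$. All gradings have been recorded at the chain level in \eqref{eqn:grading Y} and \eqref{eqn:grading W}; since the statement of the proposition does not keep track of degrees, we need not display them here.
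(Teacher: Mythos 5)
Your proof is correct, and it spells out what the paper leaves implicit: the paper records the explicit differentials and then simply states the proposition, so the remaining content is exactly the linear-algebra computation you give. A small cosmetic point: in Step~1, the phrase ``the map $\partial$ has rank equal to the number of indices $\{(\wc m_{k+1},\wc M_k)\}_{k\geq 1}$'' is a finite-dimensional figure of speech applied to a chain group of infinite rank; the actual justification is the one you essentially already use, namely that the elements $\alpha_0=e+\wh m_1$, $\alpha_k=\wh M_k+\wh m_{k+1}$ ($k\geq 1$) involve pairwise disjoint sets of basis vectors and are therefore $\Z$-linearly independent, so a combination $\sum a_k\wc m_{k+1}+\sum b_k\wc M_k$ (with $a_1$ standing in for the coefficient of $\wc m_2$, etc.) is killed by $\partial$ if and only if $a_k+b_k=0$ for every $k$, giving exactly the span of the differences $\wc M_k-\wc m_{k+1}$. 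Otherwise Steps~2 and~3 are clean: the image is $\bigoplus_{j\geq 0} 2\Z\langle\alpha_j\rangle$, the change of basis replacing $\wh m_1$ by $\alpha_0$ and $\wh m_{k+1}$ by $\alpha_k$ is unimodular, and the quotient of the kernel by the image is precisely the stated group. You also correctly implicitly rely on the facts (stated in the paper) that $\partial\wc m_1=0$, that $\partial_{f_Y}(\wh p_k)=0$ for all $p,k$, and that the Morse differential of $f_W$ vanishes.
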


We can compare these results with earlier computations of the symplectic homology of $T^*S^2$. By Viterbo's Theorem, it is isomorphic as a ring to the homology of the free loop space of $S^2$ \cite{AbbondandoloSchwarzRing}. As a graded abelian group, this was computed by McCleary \cite{McClearyGeodesics}. 
As a ring, it was computed in \cite{CohenJonesYan} to be 
\begin{equation} \label{CJY}
 H_*(LS^2;\Z) \cong ( \Lambda(b) \tensor \Z[a,v] ) / (a^2 , ab, 2av)
\end{equation}
for some $a\in H_0(LS^2;\Z)$, $b\in H_1(LS^2;\Z)$ and $v\in H_4(LS^2;\Z)$. The following tables show how our computations match these, at least as graded abelian groups. For the free part, we get
\[
\begin{array}{c|c|c|c|c|c}
SH_d(T^*S^2) & c & -\wc{m}_{1} & e & \wc{M}_{k} - \wc{m}_{k+1} & \wh{M}_{k} \\
\hline
H_d(L S^2) & a & b & 1 & bv^k & v^k \\
\hline
d & 0 & 1 & 2 & 2k+1 & 2k+2
\end{array}
\]
for $k\geq 1$. 
For the $\Z / 2$-torsion part:
\[
\begin{array}{c|c|c}
SH_d(T^*S^2) & e + \wh{m}_{1} & \wh{M}_{k} + \wh{m}_{k+1} \\
\hline
H_d(L S^2) & av & av^{k+1} \\
\hline
d & 2 & 2k+2
\end{array}
\]
for $k\geq 1$. One can also adapt the arguments presented above to describe the ring structure on symplectic homology and show that it matches \eqref{CJY}, but that is beyond the scope of this paper, and will be addressed in future work.

\def\cprime{$'$}
\begin{bibdiv}
\begin{biblist}

\bib{AbouzaidGeneration}{article}{
      author={Abouzaid, Mohammed},
       title={A geometric criterion for generating the {F}ukaya category},
        date={2010},
        ISSN={0073-8301},
     journal={Publ. Math. Inst. Hautes \'Etudes Sci.},
      number={112},
       pages={191\ndash 240},
         url={http://dx.doi.org.umiss.idm.oclc.org/10.1007/s10240-010-0028-5},
      review={\MR{2737980}},
}

\bib{AbouzaidViterbo}{incollection}{
      author={Abouzaid, Mohammed},
       title={Symplectic cohomology and {V}iterbo's theorem},
        date={2015},
   booktitle={Free loop spaces in geometry and topology},
      series={IRMA Lect. Math. Theor. Phys.},
      volume={24},
   publisher={Eur. Math. Soc., Z\"urich},
       pages={271\ndash 485},
      review={\MR{3444367}},
}

\bib{Albers_Frauenfelder_Negative_Line_Bundles}{article}{
      author={Albers, Peter},
      author={Frauenfelder, Urs},
       title={Floer homology for negative line bundles and {R}eeb chords in
  prequantization spaces},
        date={2009},
        ISSN={1930-5311},
     journal={J. Mod. Dyn.},
      volume={3},
      number={3},
       pages={407\ndash 456},
         url={https://doi-org.umiss.idm.oclc.org/10.3934/jmd.2009.3.407},
      review={\MR{2538475}},
}

\bib{AlbersGuttHein}{article}{
      author={Albers, Peter},
      author={Gutt, Jean},
      author={Hein, Doris},
       title={Periodic {R}eeb orbits on prequantization bundles},
        date={2018},
        ISSN={1930-5311},
     journal={J. Mod. Dyn.},
      volume={12},
       pages={123\ndash 150},
         url={https://doi.org/10.3934/jmd.2018005},
      review={\MR{3808210}},
}

\bib{AbouzaidSeidel}{article}{
      author={Abouzaid, Mohammed},
      author={Seidel, Paul},
       title={An open string analogue of {V}iterbo functoriality},
        date={2010},
        ISSN={1465-3060},
     journal={Geom. Topol.},
      volume={14},
      number={2},
       pages={627\ndash 718},
         url={http://dx.doi.org.umiss.idm.oclc.org/10.2140/gt.2010.14.627},
      review={\MR{2602848}},
}

\bib{AbbondandoloSchwarzRing}{incollection}{
      author={Abbondandolo, Alberto},
      author={Schwarz, Matthias},
       title={On product structures in {F}loer homology of cotangent bundles},
        date={2012},
   booktitle={Global differential geometry},
      series={Springer Proc. Math.},
      volume={17},
   publisher={Springer, Heidelberg},
       pages={491\ndash 521},
  url={http://dx.doi.org.umiss.idm.oclc.org/10.1007/978-3-642-22842-1_16},
      review={\MR{3289852}},
}

\bib{Beauville}{article}{
      author={Beauville, Arnaud},
       title={Quantum cohomology of complete intersections},
        date={1995},
     journal={Mat. Fiz. Anal. Geom.},
      volume={2},
      number={3-4},
       pages={384\ndash 398},
      review={\MR{MR1484335 (98i:14053)}},
}

\bib{BEEProduct}{article}{
      author={Bourgeois, Fr{\'e}d{\'e}ric},
      author={Ekholm, Tobias},
      author={Eliashberg, Yakov},
       title={Symplectic homology product via {L}egendrian surgery},
        date={2011},
        ISSN={1091-6490},
     journal={Proc. Natl. Acad. Sci. USA},
      volume={108},
      number={20},
       pages={8114\ndash 8121},
         url={http://dx.doi.org.umiss.idm.oclc.org/10.1073/pnas.1019102108},
      review={\MR{2806647}},
}

\bib{BEEsurgery}{article}{
      author={Bourgeois, Fr\'{e}d\'{e}ric},
      author={Ekholm, Tobias},
      author={Eliashberg, Yasha},
       title={Effect of {L}egendrian surgery},
        date={2012},
        ISSN={1465-3060},
     journal={Geom. Topol.},
      volume={16},
      number={1},
       pages={301\ndash 389},
         url={https://doi.org/10.2140/gt.2012.16.301},
        note={With an appendix by Sheel Ganatra and Maksim Maydanskiy},
      review={\MR{2916289}},
}

\bib{SFTcompactness}{article}{
      author={Bourgeois, Fr{\'e}d{\'e}ric},
      author={Eliashberg, Yakov},
      author={Hofer, Helmut},
      author={Wysocki, Kris},
      author={Zehnder, Eduard},
       title={Compactness results in symplectic field theory},
        date={2003},
        ISSN={1465-3060},
     journal={Geom. Topol.},
      volume={7},
       pages={799\ndash 888},
         url={https://doi.org/10.2140/gt.2003.7.799},
      review={\MR{2026549}},
}

\bib{BiranBarriers}{article}{
      author={Biran, Paul},
       title={Lagrangian barriers and symplectic embeddings},
        date={2001},
        ISSN={1016-443X},
     journal={Geom. Funct. Anal.},
      volume={11},
      number={3},
       pages={407\ndash 464},
         url={http://dx.doi.org/10.1007/PL00001678},
      review={\MR{1844078}},
}

\bib{BiranKhanevsky}{article}{
      author={Biran, Paul},
      author={Khanevsky, Michael},
       title={A {F}loer--{G}ysin exact sequence for {L}agrangian submanifolds},
        date={2013},
        ISSN={0010-2571},
     journal={Comment. Math. Helv.},
      volume={88},
      number={4},
       pages={899\ndash 952},
         url={http://dx.doi.org.umiss.idm.oclc.org/10.4171/CMH/307},
      review={\MR{3134415}},
}

\bib{BOExactSequence}{article}{
      author={Bourgeois, Fr{\'e}d{\'e}ric},
      author={Oancea, Alexandru},
       title={An exact sequence for contact- and symplectic homology},
        date={2009},
        ISSN={0020-9910},
     journal={Invent. Math.},
      volume={175},
      number={3},
       pages={611\ndash 680},
         url={http://dx.doi.org/10.1007/s00222-008-0159-1},
      review={\MR{2471597 (2010e:53149)}},
}

\bib{BOSymplecticHomology}{article}{
      author={Bourgeois, Fr{\'e}d{\'e}ric},
      author={Oancea, Alexandru},
       title={Symplectic homology, autonomous {H}amiltonians, and
  {M}orse--{B}ott moduli spaces},
        date={2009},
        ISSN={0012-7094},
     journal={Duke Math. J.},
      volume={146},
      number={1},
       pages={71\ndash 174},
         url={http://dx.doi.org/10.1215/00127094-2008-062},
      review={\MR{2475400 (2010e:53147)}},
}

\bib{BourgeoisThesis}{book}{
      author={Bourgeois, Fr{\'e}d{\'e}ric},
       title={A {M}orse--{B}ott approach to contact homology},
   publisher={ProQuest LLC, Ann Arbor, MI},
        date={2002},
        ISBN={978-0493-62828-8},
  url={http://gateway.proquest.com/openurl?url_ver=Z39.88-2004&rft_val_fmt=info:ofi/fmt:kev:mtx:dissertation&res_dat=xri:pqdiss&rft_dat=xri:pqdiss:3048497},
        note={Thesis (Ph.D.)--Stanford University},
      review={\MR{2703292}},
}

\bib{BormanSheridan}{misc}{
      author={Borman, Matthew~Strom},
      author={Sheridan, Nicholas},
        note={In preparation},
}

\bib{CieliebakFrauenfelderOancea}{article}{
      author={Cieliebak, Kai},
      author={Frauenfelder, Urs},
      author={Oancea, Alexandru},
       title={Rabinowitz {F}loer homology and symplectic homology},
        date={2010},
        ISSN={0012-9593},
     journal={Ann. Sci. \'Ec. Norm. Sup\'er. (4)},
      volume={43},
      number={6},
       pages={957\ndash 1015},
      review={\MR{2778453}},
}

\bib{CieliebakHandleAttaching}{article}{
      author={Cieliebak, Kai},
       title={Handle attaching in symplectic homology and the chord
  conjecture},
        date={2002},
        ISSN={1435-9855},
     journal={J. Eur. Math. Soc. (JEMS)},
      volume={4},
      number={2},
       pages={115\ndash 142},
         url={https://doi.org/10.1007/s100970100036},
      review={\MR{1911873}},
}

\bib{CohenJonesYan}{incollection}{
      author={Cohen, Ralph~L.},
      author={Jones, John D.~S.},
      author={Yan, Jun},
       title={The loop homology algebra of spheres and projective spaces},
        date={2004},
   booktitle={Categorical decomposition techniques in algebraic topology
  ({I}sle of {S}kye, 2001)},
      series={Progr. Math.},
      volume={215},
   publisher={Birkh\"auser, Basel},
       pages={77\ndash 92},
      review={\MR{2039760}},
}

\bib{DiogoLisiSplit}{unpublished}{
      author={Diogo, Lu\'is},
      author={Lisi, Samuel},
       title={{M}orse--{B}ott split symplectic homology},
        date={2018},
        note={arXiv:1804.08013},
}

\bib{EtguLekiliKoszul}{article}{
      author={Etg\"{u}, Tolga},
      author={Lekili, Yank\i},
       title={Koszul duality patterns in {F}loer theory},
        date={2017},
        ISSN={1465-3060},
     journal={Geom. Topol.},
      volume={21},
      number={6},
       pages={3313\ndash 3389},
         url={https://doi.org/10.2140/gt.2017.21.3313},
      review={\MR{3692968}},
}

\bib{EkholmNg}{article}{
      author={Ekholm, Tobias},
      author={Ng, Lenhard},
       title={Legendrian contact homology in the boundary of a subcritical
  {W}einstein 4-manifold},
        date={2015},
        ISSN={0022-040X},
     journal={J. Differential Geom.},
      volume={101},
      number={1},
       pages={67\ndash 157},
         url={http://projecteuclid.org/euclid.jdg/1433975484},
      review={\MR{3356070}},
}

\bib{EliashbergPolterovichQuasi}{unpublished}{
      author={{Eliashberg}, Yakov},
      author={{Polterovich}, Leonid},
       title={Symplectic quasi-states on the quadric surface and {L}agrangian
  submanifolds},
        date={2010},
        note={arXiv:1006.2501},
}

\bib{FabertII}{unpublished}{
      author={Fabert, Oliver},
       title={Higher algebraic structures in {H}amiltonian {F}loer theory
  {II}},
        date={2014},
        note={arXiv:1412.2682},
}

\bib{Fauck}{article}{
      author={Fauck, Alexander},
       title={Rabinowitz--{F}loer homology on {B}rieskorn spheres},
        date={2015},
        ISSN={1073-7928},
     journal={Int. Math. Res. Not. IMRN},
      number={14},
       pages={5874\ndash 5906},
         url={http://dx.doi.org/10.1093/imrn/rnu109},
      review={\MR{3384461}},
}

\bib{SymplecticHomology}{article}{
      author={Floer, Andreas},
      author={Hofer, Helmut},
       title={Symplectic homology. {I}. {O}pen sets in {${\bf C}^n$}},
        date={1994},
        ISSN={0025-5874},
     journal={Math. Z.},
      volume={215},
      number={1},
       pages={37\ndash 88},
         url={https://doi.org/10.1007/BF02571699},
      review={\MR{1254813}},
}

\bib{Floer88}{article}{
      author={Floer, Andreas},
       title={The unregularized gradient flow of the symplectic action},
        date={1988},
        ISSN={0010-3640},
     journal={Comm. Pure Appl. Math.},
      volume={41},
      number={6},
       pages={775\ndash 813},
         url={http://dx.doi.org.umiss.idm.oclc.org/10.1002/cpa.3160410603},
      review={\MR{948771}},
}

\bib{GanatraThesis}{book}{
      author={Ganatra, Sheel},
       title={Symplectic {C}ohomology and {D}uality for the {W}rapped {F}ukaya
  {C}ategory},
   publisher={ProQuest LLC, Ann Arbor, MI},
        date={2012},
  url={http://gateway.proquest.com.umiss.idm.oclc.org/openurl?url_ver=Z39.88-2004&rft_val_fmt=info:ofi/fmt:kev:mtx:dissertation&res_dat=xri:pqm&rft_dat=xri:pqdiss:0828785},
        note={Thesis (Ph.D.)--Massachusetts Institute of Technology},
      review={\MR{3121862}},
}

\bib{growi}{unpublished}{
      author={Gathmann, Andreas},
       title={{GROWI}},
        note={{(Version 1.0.3)},
  \url{http://www.mathematik.uni-kl.de/en/agag/members/professors/gathmann/software/growi/}},
}

\bib{GHK}{article}{
      author={Gross, Mark},
      author={Hacking, Paul},
      author={Keel, Sean},
       title={Mirror symmetry for log {C}alabi--{Y}au surfaces {I}},
        date={2015},
        ISSN={0073-8301},
     journal={Publ. Math. Inst. Hautes \'Etudes Sci.},
      volume={122},
       pages={65\ndash 168},
         url={http://dx.doi.org/10.1007/s10240-015-0073-1},
      review={\MR{3415066}},
}

\bib{GirouxOpenBook}{inproceedings}{
      author={Giroux, Emmanuel},
       title={G\'{e}om\'{e}trie de contact: de la dimension trois vers les
  dimensions sup\'{e}rieures},
        date={2002},
   booktitle={Proceedings of the {I}nternational {C}ongress of
  {M}athematicians, {V}ol. {II} ({B}eijing, 2002)},
   publisher={Higher Ed. Press, Beijing},
       pages={405\ndash 414},
      review={\MR{1957051}},
}

\bib{Giroux_remarks_Donaldson}{article}{
      author={Giroux, Emmanuel},
       title={Remarks on {D}onaldson's symplectic submanifolds},
        date={2017},
        ISSN={1558-8599},
     journal={Pure Appl. Math. Q.},
      volume={13},
      number={3},
       pages={369\ndash 388},
      review={\MR{3882202}},
}

\bib{GanatraPomerleano1}{unpublished}{
      author={Ganatra, Sheel},
      author={Pomerleano, Daniel},
       title={A {L}og {PSS} morphism with applications to {L}agrangian
  embeddings},
        date={2016},
        note={arXiv:1611.06849},
}

\bib{GanatraPomerleano2}{unpublished}{
      author={Ganatra, Sheel},
      author={Pomerleano, Daniel},
       title={Symplectic cohomology rings of affine varieties in the
  topological limit},
        date={2018},
        note={arXiv:1811.03609},
}

\bib{GuilleminPollack}{book}{
      author={Guillemin, Victor},
      author={Pollack, Alan},
       title={Differential topology},
   publisher={Prentice-Hall, Inc., Englewood Cliffs, N.J.},
        date={1974},
      review={\MR{0348781}},
}

\bib{GromanOpen}{unpublished}{
      author={Groman, Yoel},
       title={Floer theory on open manifolds},
        date={2015},
        note={arXiv:1510.04265},
}

\bib{Gromov85}{article}{
      author={Gromov, Mikhail},
       title={Pseudo holomorphic curves in symplectic manifolds},
        date={1985},
        ISSN={0020-9910},
     journal={Invent. Math.},
      volume={82},
      number={2},
       pages={307\ndash 347},
         url={https://doi.org/10.1007/BF01388806},
      review={\MR{809718}},
}

\bib{HuRuan}{article}{
      author={Hu, JianXun},
      author={Ruan, YongBin},
       title={Positive divisors in symplectic geometry},
        date={2013},
        ISSN={1674-7283},
     journal={Sci. China Math.},
      volume={56},
      number={6},
       pages={1129\ndash 1144},
         url={http://dx.doi.org/10.1007/s11425-012-4557-x},
      review={\MR{3063960}},
}

\bib{HWZ4}{incollection}{
      author={Hofer, Helmut},
      author={Wysocki, Kris},
      author={Zehnder, Eduard},
       title={Properties of pseudoholomorphic curves in symplectisation. {IV}.
  {A}symptotics with degeneracies},
        date={1996},
   booktitle={Contact and symplectic geometry ({C}ambridge, 1994)},
      series={Publ. Newton Inst.},
      volume={8},
   publisher={Cambridge Univ. Press, Cambridge},
       pages={78\ndash 117},
      review={\MR{1432460}},
}

\bib{HWZ1}{article}{
      author={Hofer, Helmut},
      author={Wysocki, Kris},
      author={Zehnder, Eduard},
       title={Properties of pseudoholomorphic curves in symplectisations. {I}.
  {A}symptotics},
        date={1996},
        ISSN={0294-1449},
     journal={Ann. Inst. H. Poincar\'{e} Anal. Non Lin\'{e}aire},
      volume={13},
      number={3},
       pages={337\ndash 379},
         url={https://doi.org/10.1016/s0294-1449(16)30108-1},
      review={\MR{1395676}},
}

\bib{IonelParkerRelative}{article}{
      author={Ionel, Eleny-Nicoleta},
      author={Parker, Thomas~H.},
       title={Relative {G}romov--{W}itten invariants},
        date={2003},
        ISSN={0003-486X},
     journal={Ann. of Math. (2)},
      volume={157},
      number={1},
       pages={45\ndash 96},
         url={http://dx.doi.org.umiss.idm.oclc.org/10.4007/annals.2003.157.45},
      review={\MR{1954264}},
}

\bib{JoyceCorners}{incollection}{
      author={Joyce, Dominic},
       title={On manifolds with corners},
        date={2012},
   booktitle={Advances in geometric analysis},
      series={Adv. Lect. Math. (ALM)},
      volume={21},
   publisher={Int. Press, Somerville, MA},
       pages={225\ndash 258},
      review={\MR{3077259}},
}

\bib{Kerman}{article}{
      author={Kerman, Ely},
       title={Rigid constellations of closed {R}eeb orbits},
        date={2017},
        ISSN={0010-437X},
     journal={Compos. Math.},
      volume={153},
      number={11},
       pages={2394\ndash 2444},
         url={https://doi.org/10.1112/S0010437X17007448},
      review={\MR{3705294}},
}

\bib{KimKwonLee}{article}{
      author={Kim, Joontae},
      author={Kwon, Myeonggi},
      author={Lee, Junyoung},
       title={Volume growth in the component of fibered twists},
        date={2018},
        ISSN={0219-1997},
     journal={Commun. Contemp. Math.},
      volume={20},
      number={8},
       pages={1850014, 43},
         url={https://doi.org/10.1142/S0219199718500141},
      review={\MR{3883405}},
}

\bib{Kobayashi}{book}{
      author={Kobayashi, Shoshichi},
       title={Differential geometry of complex vector bundles},
      series={Publications of the Mathematical Society of Japan},
   publisher={Princeton University Press, Princeton, NJ; Princeton University
  Press, Princeton, NJ},
        date={1987},
      volume={15},
        ISBN={0-691-08467-X},
         url={http://dx.doi.org/10.1515/9781400858682},
        note={Kan\^o Memorial Lectures, 5},
      review={\MR{909698}},
}

\bib{LiRelative}{article}{
      author={Li, Jun},
       title={Stable morphisms to singular schemes and relative stable
  morphisms},
        date={2001},
        ISSN={0022-040X},
     journal={J. Differential Geom.},
      volume={57},
      number={3},
       pages={509\ndash 578},
         url={http://projecteuclid.org/euclid.jdg/1090348132},
      review={\MR{1882667}},
}

\bib{LiRuan}{article}{
      author={Li, An-Min},
      author={Ruan, Yongbin},
       title={Symplectic surgery and {G}romov--{W}itten invariants of
  {C}alabi--{Y}au 3-folds},
        date={2001},
        ISSN={0020-9910},
     journal={Invent. Math.},
      volume={145},
      number={1},
       pages={151\ndash 218},
         url={http://dx.doi.org.umiss.idm.oclc.org/10.1007/s002220100146},
      review={\MR{1839289}},
}

\bib{LiTian}{incollection}{
      author={Li, Jun},
      author={Tian, Gang},
       title={Algebraic and symplectic geometry of {G}romov--{W}itten
  invariants},
        date={1997},
   booktitle={Algebraic geometry---{S}anta {C}ruz 1995},
      series={Proc. Sympos. Pure Math.},
      volume={62},
   publisher={Amer. Math. Soc., Providence, RI},
       pages={143\ndash 170},
      review={\MR{1492536}},
}

\bib{McClearyGeodesics}{incollection}{
      author={McCleary, John},
       title={Homotopy theory and closed geodesics},
        date={1990},
   booktitle={Homotopy theory and related topics ({K}inosaki, 1988)},
      series={Lecture Notes in Math.},
      volume={1418},
   publisher={Springer, Berlin},
       pages={86\ndash 94},
         url={http://dx.doi.org/10.1007/BFb0083695},
      review={\MR{1048178}},
}

\bib{McDuffAbsoluteRelative}{unpublished}{
      author={McDuff, Dusa},
       title={Comparing absolute and relative {G}romov--{W}itten invariants},
        date={2008},
        note={arXiv:math.SG/0809.3534},
}

\bib{McLean1}{article}{
      author={McLean, Mark},
       title={Lefschetz fibrations and symplectic homology},
        date={2009},
        ISSN={1465-3060},
     journal={Geom. Topol.},
      volume={13},
      number={4},
       pages={1877\ndash 1944},
         url={http://dx.doi.org.umiss.idm.oclc.org/10.2140/gt.2009.13.1877},
      review={\MR{2497314}},
}

\bib{McLeanMBSequence}{misc}{
      author={McLean, Mark},
       title={Computing symplectic homology of affine varieties (using spectral
  sequences)},
        date={2016},
  url={http://www.math.stonybrook.edu/~markmclean/talks/spectralsequencealltogether.pdf},
        note={Slides from minicourse given at SFT VIII},
}

\bib{Miranda}{book}{
      author={Miranda, Rick},
       title={Algebraic curves and {R}iemann surfaces},
      series={Graduate Studies in Mathematics},
   publisher={American Mathematical Society, Providence, RI},
        date={1995},
      volume={5},
        ISBN={0-8218-0268-2},
         url={https://doi.org/10.1090/gsm/005},
      review={\MR{1326604}},
}

\bib{MaulikPandharipande}{article}{
      author={Maulik, Davesh},
      author={Pandharipande, Rahul},
       title={A topological view of {G}romov--{W}itten theory},
        date={2006},
     journal={Topology},
      volume={45},
      number={5},
       pages={887\ndash 918},
      review={\MR{MR2248516 (2007e:14092)}},
}

\bib{McDuffSalamon}{book}{
      author={McDuff, Dusa},
      author={Salamon, Dietmar},
       title={{$J$}-holomorphic curves and symplectic topology},
      series={American Mathematical Society Colloquium Publications},
   publisher={American Mathematical Society, Providence, RI},
        date={2004},
      volume={52},
        ISBN={0-8218-3485-1},
         url={https://doi.org/10.1090/coll/052},
      review={\MR{MR2045629}},
}

\bib{McDuffSalamonOld}{book}{
      author={McDuff, Dusa},
      author={Salamon, Dietmar},
       title={{$J$}-holomorphic curves and quantum cohomology},
      series={University Lecture Series},
   publisher={American Mathematical Society, Providence, RI},
        date={1994},
      volume={6},
        ISBN={0-8218-0332-8},
         url={https://doi.org/10.1090/ulect/006},
      review={\MR{1286255}},
}

\bib{McDuffSalamonIntro}{book}{
      author={McDuff, Dusa},
      author={Salamon, Dietmar},
       title={Introduction to symplectic topology},
     edition={Second},
      series={Oxford Mathematical Monographs},
   publisher={The Clarendon Press, Oxford University Press, New York},
        date={1998},
        ISBN={0-19-850451-9},
      review={\MR{1698616}},
}

\bib{NguyenThesis}{thesis}{
      author={Nguyen, Khoa},
       title={On symplectic homology of the complement of a positive normal
  crossing divisor in a projective variety},
        type={Ph.D. Thesis},
        date={2014},
}

\bib{OanceaThesis}{thesis}{
      author={Oancea, Alexandru},
       title={La suite spectrale de {L}eray--{S}erre en la cohomologie de
  {F}loer pour vari\'et\'es symplectiques compactes \`a bord de type contact},
        type={Ph.D. Thesis},
        date={2003},
}

\bib{Opshtein_Polarizations}{article}{
      author={Opshtein, Emmanuel},
       title={Polarizations and symplectic isotopies},
        date={2013},
        ISSN={1527-5256},
     journal={J. Symplectic Geom.},
      volume={11},
      number={1},
       pages={109\ndash 133},
  url={http://projecteuclid.org.umiss.idm.oclc.org/euclid.jsg/1362146735},
      review={\MR{3022923}},
}

\bib{Pascaleff}{unpublished}{
      author={Pascaleff, James},
       title={On the symplectic cohomology of log {C}alabi--{Y}au surfaces},
        date={2013},
        note={arXiv:1304.5298},
}

\bib{Ritter}{article}{
      author={Ritter, Alexander~F.},
       title={Topological quantum field theory structure on symplectic
  cohomology},
        date={2013},
        ISSN={1753-8416},
     journal={J. Topol.},
      volume={6},
      number={2},
       pages={391\ndash 489},
         url={http://dx.doi.org.umiss.idm.oclc.org/10.1112/jtopol/jts038},
      review={\MR{3065181}},
}

\bib{RitterNegativeBundles}{article}{
      author={Ritter, Alexander~F.},
       title={Floer theory for negative line bundles via {G}romov--{W}itten
  invariants},
        date={2014},
        ISSN={0001-8708},
     journal={Adv. Math.},
      volume={262},
       pages={1035\ndash 1106},
         url={http://dx.doi.org/10.1016/j.aim.2014.06.009},
      review={\MR{3228449}},
}

\bib{RobbinSalamonAsymptotics}{article}{
      author={Robbin, Joel~W.},
      author={Salamon, Dietmar~A.},
       title={Asymptotic behaviour of holomorphic strips},
        date={2001},
        ISSN={0294-1449},
     journal={Ann. Inst. H. Poincar\'{e} Anal. Non Lin\'{e}aire},
      volume={18},
      number={5},
       pages={573\ndash 612},
         url={https://doi.org/10.1016/S0294-1449(00)00066-4},
      review={\MR{MR1849689}},
}

\bib{SalamonFloerHomology}{incollection}{
      author={Salamon, Dietmar},
       title={Lectures on {F}loer homology},
        date={1999},
   booktitle={Symplectic geometry and topology ({P}ark {C}ity, {UT}, 1997)},
      series={IAS/Park City Math. Ser.},
      volume={7},
   publisher={Amer. Math. Soc., Providence, RI},
       pages={143\ndash 229},
         url={http://dx.doi.org/10.1016/S0165-2427(99)00127-0},
      review={\MR{1702944}},
}

\bib{SeidelBiasedView}{incollection}{
      author={Seidel, Paul},
       title={A biased view of symplectic cohomology},
        date={2008},
   booktitle={Current developments in mathematics, 2006},
   publisher={Int. Press, Somerville, MA},
       pages={211\ndash 253},
      review={\MR{2459307 (2010k:53153)}},
}

\bib{SeidelLFIII}{unpublished}{
      author={Seidel, Paul},
       title={{F}ukaya {$A_\infty$}-structures associated to {L}efschetz
  fibrations. {III}},
        date={2016},
        note={arXiv:1608.04012},
}

\bib{SiefringAsymptotics}{article}{
      author={Siefring, Richard},
       title={Relative asymptotic behavior of pseudoholomorphic
  half-cylinders},
        date={2008},
        ISSN={0010-3640},
     journal={Comm. Pure Appl. Math.},
      volume={61},
      number={12},
       pages={1631\ndash 1684},
         url={http://dx.doi.org/10.1002/cpa.20224},
      review={\MR{2456182 (2009k:32029)}},
}

\bib{SiefringMultipleLimits}{article}{
      author={Siefring, Richard},
       title={Finite-energy pseudoholomorphic planes with multiple asymptotic
  limits},
        date={2017},
        ISSN={0025-5831},
     journal={Math. Ann.},
      volume={368},
      number={1-2},
       pages={367\ndash 390},
         url={https://doi.org/10.1007/s00208-016-1478-y},
      review={\MR{3651577}},
}

\bib{SiebertGW}{article}{
      author={Siebert, Bernd},
       title={Algebraic and symplectic {G}romov--{W}itten invariants coincide},
        date={1999},
        ISSN={0373-0956},
     journal={Ann. Inst. Fourier (Grenoble)},
      volume={49},
      number={6},
       pages={1743\ndash 1795},
         url={http://www.numdam.org/item?id=AIF_1999__49_6_1743_0},
      review={\MR{1738065}},
}

\bib{TehraniZingerSSF}{unpublished}{
      author={Tehrani, Mohammad~F.},
      author={Zinger, Aleksey},
       title={On {S}ymplectic {S}um {F}ormulas in {G}romov--{W}itten {T}heory},
        date={2014},
        note={arXiv:1404.1898},
}

\bib{TehraniZingerAbsoluteRelative}{article}{
      author={Tehrani, Mohammad~F.},
      author={Zinger, Aleksey},
       title={Absolute vs. relative {G}romov--{W}itten invariants},
        date={2016},
        ISSN={1527-5256},
     journal={J. Symplectic Geom.},
      volume={14},
      number={4},
       pages={1189\ndash 1250},
      review={\MR{3601888}},
}

\bib{Uebele2}{unpublished}{
      author={Uebele, Peter},
       title={Periodic {R}eeb flows and products in symplectic homology},
        date={2015},
        note={arXiv:1512.06208},
}

\bib{Uebele1}{article}{
      author={Uebele, Peter},
       title={Symplectic homology of some {B}rieskorn manifolds},
        date={2016},
        ISSN={0025-5874},
     journal={Math. Z.},
      volume={283},
      number={1-2},
       pages={243\ndash 274},
         url={http://dx.doi.org/10.1007/s00209-015-1596-3},
      review={\MR{3489066}},
}

\bib{ViterboSH}{article}{
      author={Viterbo, Claude},
       title={Functors and computations in {F}loer homology with applications.
  {I}},
        date={1999},
        ISSN={1016-443X},
     journal={Geom. Funct. Anal.},
      volume={9},
      number={5},
       pages={985\ndash 1033},
         url={https://doi.org/10.1007/s000390050106},
      review={\MR{1726235}},
}

\bib{WendlAutomatic}{article}{
      author={Wendl, Chris},
       title={Automatic transversality and orbifolds of punctured holomorphic
  curves in dimension four},
        date={2010},
        ISSN={0010-2571},
     journal={Comment. Math. Helv.},
      volume={85},
      number={2},
       pages={347\ndash 407},
         url={https://doi.org/10.4171/CMH/199},
      review={\MR{2595183}},
}

\bib{ZingerCompleteIntersections}{article}{
      author={Zinger, Aleksey},
       title={The genus 0 {G}romov--{W}itten invariants of projective complete
  intersections},
        date={2014},
        ISSN={1465-3060},
     journal={Geom. Topol.},
      volume={18},
      number={2},
       pages={1035\ndash 1114},
         url={https://doi.org/10.2140/gt.2014.18.1035},
      review={\MR{3190608}},
}

\end{biblist}
\end{bibdiv}

\end{document}